\numberwithin{equation}{section}
\newcommand{\N}{\mathbb{N}}
\newcommand{\Z}{\mathbb{Z}}
\newcommand{\Q}{\mathbb{Q}}
\newcommand{\K}{\mathcal{K}}
\newcommand{\B}{\mathcal{B}}
\newcommand{\R}{\mathbb{R}}
\newcommand{\PR}{\mathbb{P}}
\newcommand{\ESP}{\mathbb{E}}
\newcommand{\stas}{\mathcal{S}t\alpha\mathcal{S}}
\newtheorem{thm}{Theorem}[section]
\newtheorem{lemma}{Lemma}[section]
\newtheorem{proposition}{Proposition}[section]
\newtheorem{corollary}{Corollary}[section]
\newtheorem{definition}{Definition}[section]
\newtheorem{rem}{\textbf{Remark}}[section]
\def\og{\leavevmode\raise.3ex\hbox{$\scriptscriptstyle\langle\!\langle$~}}
\def\fg{\leavevmode\raise.3ex\hbox{~$\!\scriptscriptstyle\,\rangle\!\rangle$}}
\let\vers=\rightarrow
\def\ce{{\cal C}}
\def\o{\omega}
\def\eps{\epsilon}
\def\al{\alpha}
\def\ga{\gamma}
\def\th{\theta}
\def\nodiv{\mathrel{\mathchoice{\not|}{\not|}
{\kern-.2em\not\kern.2em|}{\kern-.2em\not\kern.2em|}}}
\def\ii{+\infty}
\def\zmat{\mathop{\raise 0.1mm\hbox{\bf Z}}\nolimits}
\newcommand{\ind}{\mathds{1}}
\begin{document}

\title{Linear Multifractional Stable Motion: fine path properties}

\author{Antoine Ayache \footnote{Corresponding author} \\UMR CNRS 8524, Laboratoire Paul Painlev\'e, B\^at. M2\\
  Universit\'e Lille 1\\ 59655 Villeneuve d'Ascq Cedex, France\\
E-mail: \texttt{Antoine.Ayache@math.univ-lille1.fr}\\
\ 
\and
 Julien Hamonier\\ UMR CNRS 8524, Laboratoire Paul Painlev\'e, B\^at. M2,\\
  Universit\'e Lille 1,\\ 
  59655 Villeneuve d'Ascq Cedex, France\\
  \&\\
  FR CNRS 2956,  LAMAV, \\
  Institut des Sciences et Techniques de Valenciennes,\\
  Universit\'e de Valenciennes et du Hainaut Cambr\'esis, \\
  F-59313 - Valenciennes Cedex 9, France\\
E-mail: \texttt{Julien.Hamonier@univ-valenciennes.fr}
}

\maketitle

\begin{abstract}
Since at least one decade, there is a considerable interest in the study of applied and theoretical issues related to Multifractional random models. Yet, only a few results about them, are known in the framework of heavy-tailed stable distributions; in the latter framework, a paradigmatic example of such models, is Linear Multifractional Stable Motion (LMSM), denoted by $\{Y(t):t\in\R\}$. It has been introduced by Stoev and Taqqu in~\cite{stoev2004stochastic,stoev2005path}, by substituting to the constant Hurst parameter of 
a classical Linear Fractional Stable Motion (LFSM), a deterministic function $H(\cdot)$ depending on the time variable $t$; we always suppose $H(\cdot)$ to be continuous and with values in $(1/\al,1)$, also, in general we restrict its range to a compact interval. The main goal of our article is to make a comprehensive study of the local and asymptotic behavior of $\{Y(t):t\in\R\}$; 
to this end, one needs to derive fine path properties of $\{X(u,v) : (u,v)\in\R \times (1/\alpha,1)\}$, the field generating the latter process (i.e. one has $Y(t)=X(t,H(t))$ for all $t\in\R$). This leads us to introduce random wavelet series representations of $\{X(u,v) : (u,v)\in\R \times (1/\alpha,1)\}$ as well as of all its pathwise partial derivatives of any order with respect to $v$. Then our strategy 
consists in using wavelet methods which are, more or less, reminiscent of those in \cite{ayache2009linear,ayache2005asymptotic}. Among other things, we solve a conjecture of Stoev and Taqqu (see Remark~1 on page 166 in \cite{stoev2005path}), concerning the existence for LMSM of a modification (in other words, a version) with almost surely continuous paths; moreover we significantly improve Theorem~4.1 in \cite{stoev2005path}, which provides some bounds for the local H\"older exponent (in other words, the uniform pointwise H\"older exponent) of LMSM: namely, we obtain a quasi-optimal global modulus of continuity for it, and also an optimal local one. It is worth noticing that,
even in the quite classical case of LFSM, the latter optimal local modulus of continuity provides a new result which was unknown so far. 
\end{abstract}

\medskip

      {\it Running head}:  Linear Multifractional Stable Motion  \\

      {\it AMS Subject Classification}: 60G22, 60G52, 60G17.\\

      {\it Key words:} Linear Fractional and Multifractional Stable Motions, Wavelet series representations,
      Moduli of continuity, H\"older regularity, laws of the iterated logarithm.


\section{Introduction}
\label{sec:intro}
Since at least one decade, there is a considerable interest in the study of applied and theoretical issues related to Multifractional random models (among many other references on this topic, one can for instance see \cite{ayache2005multifractional,ayache2007wavelet,ayache2011multiparameter,roux1997elliptic,Bi,BiP,BiPP,dozzi2011,
falconer2002tangent,falconer2003local,falconer2009localizable,falconer2009multifractional,HLS12,lacaux04,Lope,LRMT11,meerschaert2008local,peltier1995multifractional,stoev2004stochastic,stoev2005path,stoev2006rich,Sur}). These fractal nonstationary increments stochastic processes/fields, are natural extensions of the well-known Fractional Brownian Motion (FBM, for brevity); they have a more rich path behavior than it and they offer a larger spectrum of applicability, because their local properties, typically the index governing self-similarity as well as the degree of path roughness, can be controlled via a nonconstant functional Hurst parameter and thus are allowed to change with location. In the Gaussian case, and more generally when all their moments are finite, many results concerning path behavior of such random models have been derived in the literature; yet, much less is known about it, in the framework of heavy-tailed stable distributions. A paradigmatic example of a Multifractional Process in 
such a setting, is the so called Linear Multifractional Stable Motion (LMSM, for brevity), which was introduced by Stoev and Taqqu in~\cite{stoev2004stochastic,stoev2005path}; according to these two authors (see page 1086 in~\cite{stoev2004stochastic}): "a LMSM model is a good candidate to adequately describe some features of traffic traces on telecommunication networks, typically changes in operating regimes and burstiness (the presence of rare but extremely busy periods of activity)".

In order to precisely define LMSM, first, we need to fix some notations to be used throughout the article.
\begin{itemize}
\item Recall that heaviness of the tail of a stable distribution is governed by a constant parameter belonging to the open interval $(0,2)$, usually denoted by $\al$; the smaller $\al$ is, the more heavy is the tail. In the present article, we always assume that $\al\in (1,2)$, since it has been shown in \cite{stoev2004stochastic}, that the latter assumption is actually a necessary condition for the paths of LMSM to be, with probability $1$, continuous functions.
\item $H(\cdot)$ denotes an arbitrary deterministic continuous function defined on the real line and with values in an arbitrary fixed compact interval $[\underline{H},\overline{H}]\subset(1/\al,1)$; similarly to the constant Hurst parameter of FBM, this function will be an essential parameter for LMSM.
\item $Z_{\alpha}(ds)$ is an independently scattered strictly $\al$ stable ($\stas$) random measure on $\R$, with Lebesgue measure as its control measure and an arbitrary Borel function $\beta(\cdot):\R\rightarrow [-1,1]$ as its skewness intensity. Many information on such random measures and the corresponding stochastic integrals can be found in the book \cite{SamTaq}.
\end{itemize}
LMSM's are generated by the $\stas$ random field $\widetilde{X}=\{\widetilde{X}(u,v) : (u,v)\in\R \times (1/\alpha,1)\}$, defined for all $(u,v)$ as the stochastic integral,
\begin{equation}\label{def:chpX}
\widetilde{X}(u,v)=\int_{\R} \Big\{ (u-s)_+^{v-1/\alpha} - (-s)_+^{v-1/\alpha} \Big\} Z_{\alpha}(ds),
\end{equation}
with the convention that, for each real numbers $x$ and $\kappa$, 
\begin{equation}
\label{eq:adpat+}
(x)_+^{\kappa}:=
\left\{
\begin{array}{l}
x^{\kappa}, \mbox{if $x\in (0,+\infty)$},\\
\\
0, \mbox{if $x\in (-\infty,0]$}.
\end{array}
\right.
\end{equation}
Actually, $\widetilde{Y}=\{\widetilde{Y}(t) : t\in\R\}$, the LMSM of functional Hurst parameter $H(\cdot)$, is defined for every $t\in\R$, as,
\begin{equation}\label{def:LMSM}
\widetilde{Y}(t)=\widetilde{X}(t,H(t)).
\end{equation}
Observe that, assuming $\beta(\cdot)$ to be a constant, then for each fixed $v\in (1/\al,1)$, the process $\widetilde{X}(\cdot,v):=\{\widetilde{X}(u,v) : (u,v)\in\R\}$ is the usual Linear Fractional Stable Motion (LFSM, for brevity) of Hurst parameter $v$; therefore LMSM reduces to the latter process when one also assumes $H(\cdot)$ to be a constant. We note in passing that LFSM and Harmonisable Fractional Stable Motion (HFSM, for brevity) are two very classical self-similar stable processes with stationary increments; they are considered to be the most two natural extensions of FBM, to the setting of heavy-tailed distributions. Also, we note that in contrast with moving average and harmonisable representations of FBM in the Gaussian framework, path behavior of LFSM is considerably more irregular and more complex than that of HFSM; we refer to \cite{SamTaq,Tak89,EmMa} for a detailed presentation of the latter two process, as well as other classical examples of stable processes. Before ending this paragraph, let us mention that an Harmonisable Multifractional Stable Process, which extends HFSM and thus behaves very differently from LMSM, has been quite recently introduced in \cite{dozzi2011}.

The main goal of our paper is to make a comprehensive study of the local and asymptotic behavior of LMSM, under the quite general condition that its parameter $H(\cdot)$ is an arbitrary deterministic continuous function with values in an arbitrary fixed compact interval $[\underline{H},\overline{H}]\subset(1/\al,1)$; this study mainly relies on wavelet methods which are, more or less, reminiscent of those in \cite{ayache2009linear,ayache2005asymptotic}. As we will explain it more precisely very soon, among other things, we significantly improve two earlier results of Stoev and Taqqu \cite{stoev2005path}, concerning continuity and path behavior of LMSM. Also, it is worth noting that, even in the quite classical case of Linear Fractional Stable Motion (LFSM) (in other words, in the particular case where the functional parameter $H(\cdot)$ of LMSM is a constant), the optimal lower bound of the power of the logarithmic factor in a local modulus of continuity, was unknown so far; Corollary~\ref{cor:locmodcont1Y}~and Theorem~\ref{locoptim:modcont} in our article, show that, in the more general case of LMSM, this optimal lower bound is in fact $1/\al$. 

Let us now give the precise statements of the two results in \cite{stoev2005path}, we have just mentioned.
\begin{enumerate}
\item {\em {\bf Theorem~3.2 in \cite{stoev2005path}} (the existence for LMSM of a modification (in other words, a version) whose paths are, with probability $1$, H\"older continuous functions). Let $I'\subset I$ be two arbitrary nonempty bounded intervals of the real line, which are respectively closed and open; suppose that $1/\al<H(t)<1$, $t\in I$ and that, for all for $t',t''\in I$,
\begin{equation}
\label{eq1:stoevtaqquH}
\big|H(t')-H(t'')\big|\le c|t'-t''|^\rho,\quad\mbox{with $1/\al<\rho$,}
\end{equation}
where $c>0$ does not depend on $t'$ and $t''$. Then, the LMSM $\{\widetilde{Y}(t):t\in\R\}$ has a modification $\{Y(t):t\in\R\}$, whose paths are, with probability $1$, continuous functions on $I$; moreover, they are H\"older functions on $I'$, with a uniform H\"older exponent (see (\ref{eq1:ppintro})) $\rho_{Y}^{\mbox{{\tiny unif}}}\big(I'\big)$, satisfying,
$$
\rho_{Y}^{\mbox{{\tiny unif}}}\big(I'\big)\ge \left(\rho\wedge\min_{t\in I'} H(t)\right)-1/\al.
$$
}
\item {\em {\bf Theorem~4.1 in \cite{stoev2005path}} (local H\"older exponent (in other words, uniform pointwise H\"older exponent) of LMSM). Assume that $H(\cdot)$ is continuous, with values in $(1/\al,1)$ and satisfies, $\rho_{H}^{\mbox{{\tiny unif}}}(t)>1/\al$ for all $t\in\R$, 
where $\rho_{H}^{\mbox{{\tiny unif}}}(t)$ denotes the local H\"older exponent (see (\ref{eq2:ppintro})) of $H(\cdot)$ at $t$. Then, $\rho_{Y}^{\mbox{{\tiny unif}}}(t_0)$, the local H\"older exponent of 
the LMSM $\{Y(t):t\in\R\}$ at an arbitrary point $t_0\ne 0$, can be almost surely bounded, in the following way: 
\begin{equation}
\label{eq5:stoevtaqquLHE}
\rho_{H}^{\mbox{{\tiny unif}}}(t_0)\wedge H(t_0)-1/\al\le \rho_{Y}^{\mbox{{\tiny unif}}}(t_0)\le \rho_H(t_0)\wedge H(t_0),
\end{equation}
where $\rho_H(t_0)$ denotes the pointwise H\"older exponent at $t_0$ (see e.g. Definition~4.1 in \cite{stoev2005path}) of the function $H(\cdot)$.
}
\end{enumerate}
In \cite{stoev2005path}, the proof of the first one of these two theorems, and that of the first inequality in (\ref{eq5:stoevtaqquLHE}), mainly rely on the strong version of the Kolmogorov's continuity criterion (see, for example, Theorem~3.3.16 in \cite{stroock1993}); while, the main three ingredients of the proof given in the latter article, for the second inequality in (\ref{eq5:stoevtaqquLHE}), are the inequality $\rho_{Y}^{\mbox{{\tiny unif}}}(t_0)\le \rho_{Y}(t_0)$ and Relations (4.11) and (4.12) in \cite{stoev2005path}. Using a different strategy, namely wavelet methods which are, more or less, reminiscent of those in \cite{ayache2009linear,ayache2005asymptotic}, in our present work, we have been able to improve Theorems~3.2~and~4.1 in \cite{stoev2005path}. More precisely:
\begin{enumerate}
\item The condition (\ref{eq1:stoevtaqquH}) seems to be too strong if one is only interested in the existence of a modification of LMSM with almost surely continuous paths; namely, in their Remark~1 on page 166 in \cite{stoev2005path}, Stoev and Taqqu have conjectured that such a modification should exist as long as $H(\cdot)$ is a continuous function with values in $(1/\al,1)$; the latter conjecture is solved in our article. To do so, we construct $X=\{X(u,v) : (u,v)\in\R \times (1/\alpha,1)\}$ a modification with almost surely continuous paths, of the field $\{\widetilde{X}(u,v) : (u,v)\in\R \times (1/\alpha,1)\}$ which generates LMSM's; in fact $\{X(u,v) : (u,v)\in\R \times (1/\alpha,1)\}$ is obtained as a random series of functions, resulting from the decomposition of the kernel in (\ref{def:chpX}) into a Daubechies wavelet basis (see Theorem~\ref{TWSE}). Thus, denoting by $\{Y(t): t\in\R\}$ the modification of LMSM defined for each $t\in\R$, as $Y(t):=X(t,H(t))$, it is clear that the paths of the process $\{Y(t): t\in\R\}$ are continuous with probability $1$, as long as $H(\cdot)$ is a continuous function on the real line and with values in $(1/\al,1)$; observe that at this stage, we do not need to restrict the range of $H(\cdot)$ to the compact interval $[\underline{H},\overline{H}]$.
\item Theorem~\ref{theo:lheLMSM} in our article shows that, almost surely, for any $t_0\in\R$ satisfying $\rho_{H}^{\mbox{{\tiny unif}}}(t_0)>1/\al$,
one has, $\rho_{Y}^{\mbox{{\tiny unif}}}(t_0)=H(t_0)-1/\al$. Observe that the exceptional negligible event on which the latter equality fails to be true, actually does not depend on $t_0$. Also observe that this equality remains valid even in the case where $t_0=0$.
\end{enumerate}


The remaining of the paper is structured in the following way. Section~\ref{sec:wavelets} is devoted to the construction of the modification $\{X(u,v) : (u,v)\in\R \times (1/\alpha,1)\}$ of the field $\{\widetilde{X}(u,v) : (u,v)\in\R \times (1/\alpha,1)\}$ which generates LMSM's; as we have already pointed out, the latter modification
is in fact a random series of functions, resulting from the decomposition of the kernel in (\ref{def:chpX}) into a Daubechies wavelet basis. In Section~\ref{sec:Holdsp}, we show that this series and all its term by term pathwise partial derivatives of any order with respect to $v$, are convergent in a very strong sense: with probability 1, in the space ${\cal E}_{\gamma} (a,b,M):=\ce^{1}\big( [a,b], \ce^{\gamma}([-M,M],\R) \big)$, where the real numbers $M>0$, $0<1/\al<a<b<1$ and $0\le \ga<a-1/\al$ are arbitrary and fixed, and where $\ce^{\lambda}(I,\mathbb{B})$ denotes the space of the $\lambda$-H\"older functions defined on an interval $I$ and with values in a Banach space $\mathbb{B}$. Notice that an important consequence of the latter result is that, for each $q\in\Z_+$, a typical path of the field $\{(\partial_v^q X) (u,v) : (u,v)\in \R \times (1/\alpha,1)\}$ belongs to ${\cal E}_{\gamma} (a,b,M)$; thus, not only such a path is a continuous function but also it has much better properties. In Section~\ref{sec:Xpath}, fine path properties of the field $\{(\partial_v^q X) (u,v) : (u,v)\in \R \times (1/\alpha,1)\}$, are derived thanks to wavelet methods; namely we determine a global modulus of continuity on the rectangle $[-M,M]\times [a,b]$, also we give an upper bound for $\big|(\partial_v^q X) (u,v)\big|$, on the domain $(u,v)\in\R\times [a,b]$. The latter two results are used in Section~\ref{subsec:modcont}, in order to obtain global and local moduli of continuity for the LMSM $\{Y(t): t\in\R\}$. The optimality of some of these moduli of continuity is discussed in Sections~\ref{subsec:quasiopt}~and~\ref{subsec:optlocmod}; under some H\"older conditions on $H(\cdot)$, it turns out that the global one is quasi-optimal (it provides, up to a logarithmic factor, a sharp estimate of the behavior of 
$\{Y(t): t\in\R\}$, on an arbitrary fixed compact interval) and the local one is optimal (it provides, without any logarithmic gap, a sharp estimate of the behavior of $\{Y(t): t\in\R\}$ on a neighborhood of an arbitrary fixed point). In Section~\ref{subsec:lheLMSM}, by making use of the quasi-optimality of the global modulus of continuity of LMSM, we determine its local H\"older exponent. Finally, some technical lemmas as well as their proofs are given in Section~\ref{sec:appendix} (the Appendix).

\section{Wavelet series representation of the field generating LMSM's}
\label{sec:wavelets}

Let $\widetilde{X}=\{\widetilde{X}(u,v) : (u,v)\in\R \times (1/\alpha,1)\}$ be the $\stas$ stochastic field introduced in (\ref{def:chpX}), the goal of this section 
is to construct a modification of $\widetilde{X}$, denoted by $X$, which is defined as a random wavelet series. We note in passing that, random wavelet series representations of LFSM and other self-similar stable fields with stationary increments, have been introduced in~\cite{Dury01}.

First, we need to fix some notations related to wavelets that will be extensively used throughout the article.
\begin{itemize}
\item The real-valued function $\psi$ defined on the real line, denotes a 3 times continuously differentiable compactly supported Daubechies mother wavelet \cite{Dau92,Meyer90,Meyer92};
 observe that $\psi$ has $Q\ge 15 $ vanishing moments i.e.:
\begin{equation}
\label{eq:vanmopsi}
\int_{\R}t^m\psi(t)  dt = 0, \mbox{ for all $m=0,\ldots,Q-1$, and }\int_{\R} t^Q  \psi(t)dt \neq 0.
\end{equation}
The fact that $\psi$ is a compactly supported function will play a crucial role; for the sake of convenience,
we assume that $R$ is a fixed real number strictly bigger than $1$, such that
\begin{equation}
\label{eq:supp-psi-ant}
\mbox{supp}\,\psi\subseteq [-R,R].
\end{equation}
\item The real-valued function $\Psi$ is defined for all $(x,v)\in \R\times (1/\alpha,1)$ as, 
\begin{equation}\label{PSI}
\Psi(x,v):=\int_{\R} (s)_{+}^{v-1/\alpha} \psi(x-s)ds=\int_{\R} (x-s)_{+}^{v-1/\alpha} \psi(s) ds;
\end{equation}
recall that the definition of $(\cdot)_{+}^{v-1/\alpha}$ is given in (\ref{eq:adpat+}).
Denoting by $\Gamma$ the usual Gamma function: 
$$
\Gamma(u):=\int_0^{+\infty}  t^{u-1}\,e^{-t}dt, 
\mbox{ for all $u\in (0,+\infty)$,}
$$
 and denoting, for each fixed $v$, by 
$\widehat{\Psi}(\cdot,v)$ the Fourier transform of the function $\Psi(\cdot,v)$: 
$$
\widehat{\Psi}(\xi,v):=\int_{\R}e^{-i\xi x}\Psi(x,v)dx, \mbox{ for all $\xi\in\R$,} 
$$
one has,
\begin{equation} \label{FPSI}
\widehat{\Psi}(\xi,v)= \Gamma(v+1-1/\alpha) \frac{e^{-i\textrm{sgn}(\xi)(v+1-1/\alpha)\frac{\pi}{2}}\widehat{\psi}(\xi) }{|\xi|^{v+1-1/\alpha}},\mbox{ for all $\xi\in\R\setminus\{0\}$;} 
\end{equation}
the latter equality can be obtained by using a result in \cite{Samko93} concerning Fourier transforms of left-sided fractional derivatives.


\item $\lbrace \epsilon_{j,k} : (j,k)\in\Z^2 \rbrace$ is the sequence of the real-valued $\stas$ random variables defined as,
\begin{equation}
\epsilon_{j,k} := 2^{j/\alpha}\int_{\R}  \psi(2^js-k) Z_{\alpha}(ds). \label{def:ejk}
\end{equation}

\end{itemize}

Now we are in position to state the main result of this section.

\begin{thm}
\label{TWSE}
Let $\Psi$ be the function defined in (\ref{PSI}), let $\lbrace \epsilon_{j,k} : (j,k)\in\Z^2 \rbrace$ be the sequence of the real-valued $\stas$
random variables defined in (\ref{def:ejk}), and let $\Omega_{0}^*$ be the event of probability $1$ introduced in Lemma~\ref{omega0} below. The following two results hold.
\begin{itemize}
\item[(i)] For all fixed $\o\in\Omega_{0}^*$ and $(u,v)\in\R\times (1/\al,1)$, one has 
\begin{equation}
\label{eq1:absconc}
\sum_{(j,k)\in\Z^2} 2^{-jv}\big|\epsilon_{j,k}(\o)\big|\big|\Psi (2^j u-k,v)-\Psi (-k,v)\big|<\infty.
\end{equation}
Therefore, the series of real numbers: 
\begin{equation}
\label{eqmain:WSE}
\sum_{(j,k)\in\Z^2} 2^{-j v}\epsilon_{j,k}(\o)\big (\Psi (2^j u-k,v)-\Psi (-k,v)\big),
\end{equation}
converges to a finite limit which does not depend on the way the terms of the series are ordered; this limit is denoted by $X(u,v,\o)$.
Moreover for each $\o\notin \Omega_0^*$ and every $(u,v)\in\R\times (1/\al,1)$, one sets  $X(u,v,\o)=0$.
\item[(ii)] The field $\{X(u,v) : (u,v)\in\R \times (1/\alpha,1)\}$ is a modification of the $\stas$ field $\{\widetilde{X}(u,v) : (u,v)\in\R \times (1/\alpha,1)\}$
defined in (\ref{def:chpX}).
\end{itemize}
\end{thm}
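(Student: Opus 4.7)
The plan divides along the two items. For (i), I fix $\omega\in\Omega_0^*$ and establish absolute convergence by combining a probability-one size bound on the stable coefficients $\epsilon_{j,k}$ (supplied by Lemma~\ref{omega0}) with polynomial-decay estimates for $\Psi$ and $\partial_x\Psi$ coming from the vanishing moments of $\psi$. For (ii), the key is to recognize $2^{-jv}\bigl(\Psi(2^ju-k,v)-\Psi(-k,v)\bigr)$, up to a scaling renormalisation, as the Daubechies wavelet coefficients of the deterministic $L^\alpha$ kernel in (\ref{def:chpX}); the equality of $X(u,v)$ and $\widetilde{X}(u,v)$ then follows from the $L^\alpha\to\sas$ continuity of the stable integral.

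For part (i), I first derive the pointwise decay
\begin{equation*}
|\Psi(x,v)|+|\partial_x\Psi(x,v)|\leq C(v)\bigl(1+|x|\bigr)^{v-1/\alpha-Q},
\end{equation*}
by Taylor-expanding $(x-s)_+^{v-1/\alpha}$ around $s=0$ for $|x|\geq 2R$ and exploiting the vanishing moments (\ref{eq:vanmopsi}); since $Q\geq 15$ this gives very fast polynomial decay. Assuming from Lemma~\ref{omega0} a bound of the type $|\epsilon_{j,k}(\omega)|\leq C(\omega)(1+|j|)^{1/\alpha+\eta}(\log(3+|k|))^{1/\alpha+\eta}$, I split (\ref{eq1:absconc}) according to the sign of $j$. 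For $j\geq 0$, the triangle inequality combined with the decay estimate gives summability in $k$, while the factor $2^{-jv}$ (with $v>0$) handles the $j$-tail. For $j\leq 0$, the mean value theorem applied with the bound on $\partial_x\Psi$ produces an additional factor $2^j|u|$; combined with $2^{-jv}$ this yields $2^{-|j|(1-v)}|u|$, summable since $v<1$. Both cases together establish (\ref{eq1:absconc}), and $X(u,v,\omega)$ is then unambiguously defined.

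For part (ii), fix $(u,v)$ and let
\begin{equation*}
f_{u,v}(s):=(u-s)_+^{v-1/\alpha}-(-s)_+^{v-1/\alpha}.
\end{equation*}
A direct estimate shows $f_{u,v}\in L^\alpha(\R)\cap L^2(\R)$, since $|f_{u,v}(s)|\lesssim|s|^{v-1/\alpha-1}$ as $|s|\to\infty$ by the mean value theorem and $\alpha(v-1/\alpha-1)<-1$. Expanding $f_{u,v}$ in the orthonormal Daubechies basis $\{2^{j/2}\psi(2^j\cdot-k)\}_{(j,k)\in\Z^2}$ and computing the coefficients by the change of variables $s=2^{-j}(s'+k)$ together with definition (\ref{PSI}) yields, after a short calculation,
\begin{equation*}
\bigl\langle f_{u,v},\,2^{j/2}\psi(2^j\cdot-k)\bigr\rangle_{L^2(\R)}=2^{-j(v+1/2-1/\alpha)}\bigl(\Psi(2^ju-k,v)-\Psi(-k,v)\bigr).
\end{equation*}
The corresponding partial sums converge to $f_{u,v}$ in $L^\alpha(\R)$. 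Applying the stable integral, which is continuous from $L^\alpha(\R)$ to the space of $\sas$ random variables endowed with convergence in probability (cf.~\cite{SamTaq}), and using $\int_\R\psi(2^js-k)\,Z_\alpha(ds)=2^{-j/\alpha}\epsilon_{j,k}$, the images of the partial sums converge in probability to $\widetilde{X}(u,v)$. By part (i), those same truncated sums converge almost surely on $\Omega_0^*$ to $X(u,v)$, so $X(u,v)=\widetilde{X}(u,v)$ almost surely.

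The main technical obstacle will be the rigorous $L^\alpha$-convergence of the wavelet partial sums of $f_{u,v}$: because $f_{u,v}$ is neither compactly supported nor bounded (mild singularities at $0$ and $u$, slow decay at infinity), invoking abstract unconditional-basis theorems is delicate, so an explicit truncation argument controlled by the pointwise decay of $\Psi$ established in part (i) is likely the cleanest path.
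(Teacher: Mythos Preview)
Your overall architecture matches the paper's: for (i) you split $j\ge 0$ versus $j<0$, use a localization bound on $\Psi$ for the former and the mean value theorem with $\partial_x\Psi$ for the latter; for (ii) you expand the kernel $f_{u,v}$ in the Daubechies system, push the expansion through the stable integral via $L^\alpha\to$(probability) continuity, and compare with the almost-sure limit from (i). That is exactly the paper's route.

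There is, however, a genuine gap in your Part~(i). You quote Lemma~\ref{omega0} as giving
\[
|\epsilon_{j,k}(\omega)|\le C(\omega)(1+|j|)^{1/\alpha+\eta}\bigl(\log(3+|k|)\bigr)^{1/\alpha+\eta},
\]
but the actual bound is polynomial in $k$:
\[
|\epsilon_{j,k}(\omega)|\le C(\omega)(1+|j|)^{1/\alpha+\eta}(1+|k|)^{1/\alpha}\log^{1/\alpha+\eta}(2+|k|).
\]
This matters for your $j\ge 0$ estimate. With the correct bound, the $k$-sum $\sum_k (1+|k|)^{1/\alpha+\eta}\bigl(1+|2^ju-k|\bigr)^{-N}$ is of order $(1+2^j|u|)^{1/\alpha+\eta}$, i.e.\ it contributes a factor $\sim 2^{j/\alpha}$, so the $j$-tail is governed by $2^{-j(v-1/\alpha-\eta)}$, not by $2^{-jv}$ as you write. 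Summability then hinges precisely on the standing hypothesis $v>1/\alpha$, which your argument never invokes at this point. The fix is easy (and your strong decay exponent $v-1/\alpha-Q$ on $\Psi$ is more than enough), but as written the $j\ge 0$ step is incorrect.

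Two smaller remarks. First, your decay $(1+|x|)^{v-1/\alpha-Q}$ via Taylor expansion and vanishing moments is valid but overkill; the paper gets by with the cruder $(3+|x|)^{-2}$ obtained from three integrations by parts (Proposition~\ref{regulpsi}). Second, the ``technical obstacle'' you flag in (ii) --- $L^\alpha$-convergence of the wavelet partial sums of $f_{u,v}$ --- is handled in the paper simply by citing that $\{2^{j/\alpha}\psi(2^j\cdot-k)\}$ is an unconditional basis of $L^\alpha(\R)$ (Meyer), so no ad~hoc truncation argument is needed.
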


In order to prove Theorem~\ref{TWSE}, we need some preliminary results.

\begin{rem}
\label{prop:gcjk}
\begin{itemize}
\item[(i)] $\|\epsilon_{j,k}\|_{\alpha}$,  the scale parameter of $\epsilon_{j,k}$, does not depend on $(j,k)$, since classical computations, allow to show that, 
\begin{equation}
\label{eq:spepsjk}
\|\epsilon_{j,k}\|_{\alpha} =  \|\epsilon_{0,0}\|_{\alpha}=\Bigg\{ \int_{\R} |\psi(t)|^{\alpha} dt \Bigg\}^{1/\alpha}.
\end{equation}
\item[(ii)] The skewness parameter of $\epsilon_{j,k}$, is denoted by $\beta_{j,k}$ and is given by,
$$
\beta_{j,k} =  \|\epsilon_{0,0}\|_{\alpha}^{-\alpha}\int_{\R}\psi^{<\alpha >}(x)\beta(2^{-j}x+2^{-j}k) dx , \nonumber
$$
where $z^{<\alpha >}:= |z|^{\alpha} \textrm{sgn}(z)$ for all $z\in\R$, and where $\beta(\cdot)$ is the skewness intensity function of the 
$\stas$ measure $Z_{\alpha}(ds)$; notice that, when the latter function is a constant, then the random variables $\eps_{j,k}$ become identically distributed, since, not 
only they have the same scale parameter, but also the same skewness parameter.
\item[(iii)] Property~1.2.15 on page 16 in \cite{SamTaq}, as well as the fact that $\|\epsilon_{j,k}\|_{\alpha}$ does not vanish and does not depend on $(j,k)$, imply 
that there exist two constants $0<c'\le c''$ non depending on $(j,k)$, such that, one has for all real number $x\ge 1$,
\begin{equation}
\label{eq4:l3optim}
c' x^{-\al} \le \PR\big(|\eps_{j,k}|>x\big)\le c''x^{-\al}.
\end{equation}
\item[(iv)] In view of (\ref{def:ejk}), (\ref{eq:supp-psi-ant}) and the fact that $Z_{\alpha}(ds)$ is independently scattered, for each fixed integers $p>2R$ and $j\in\Z$, one has that $\lbrace \epsilon_{j,pq} : q\in \Z \rbrace$ is a sequence of independent random variables.
\end{itemize}
\end{rem}


The following lemma, which has been derived in \cite{ayache2009linear}, gives rather sharp estimates of the asymptotic behavior of the sequence 
$\big\{|\epsilon_{j,k}|\,:\, (j,k)\in\Z^2\big\}$. It can be proved by showing that for every fixed real number $\eta>0$, one has,
$$
\ESP\left(\sum_{(j,k)\in\Z^2}\ind_{\big\{|\epsilon_{j,k}|>(1+|j|)^{1/\al+\eta}(1+|k|)^{1/\al}\log^{1/\al+\eta}(2+|k|)\big\}}\right)<\infty;
$$
the latter fact, easily results from the second inequality in (\ref{eq4:l3optim}).

\begin{lemma}\label{omega0}\cite{ayache2009linear}
There exists an event of probability 1, denoted by $\Omega_0^{*}$, such that for every fixed real number $\eta >0$, one has, for all $\omega \in \Omega_0^{\ast}$ and for each $(j,k)\in\Z^2$,
\begin{equation}
\label{eq1:omega0}
\big| \epsilon_{j,k}(\omega) \big| \leq C(\omega) \big(1+|j| \big)^{1/\alpha+\eta}  \big(1+|k| \big)^{1/\alpha} \log^{1/\alpha+\eta}\big(2+ |k|\big)
\le C'(\omega) \big(3+|j| \big)^{1/\alpha+\eta}\big(3+|k| \big)^{1/\alpha+\eta}, 
\end{equation}
where $C$ and $C'$ are two positive and finite random variables only depending on $\eta$.
\end{lemma}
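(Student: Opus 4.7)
The plan is to combine the tail estimate in Remark~\ref{prop:gcjk}(iii) with a Borel--Cantelli argument, then patch together a single exceptional null event via countable intersection.

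First, I would fix $\eta>0$ and let $A_{j,k}^{\eta}$ denote the event
$$
\big\{|\eps_{j,k}|>(1+|j|)^{1/\al+\eta}(1+|k|)^{1/\al}\log^{1/\al+\eta}(2+|k|)\big\}.
$$
Applying the uniform upper tail bound $\PR(|\eps_{j,k}|>x)\le c'' x^{-\al}$ from (\ref{eq4:l3optim}) with the above threshold yields
$$
\PR\big(A_{j,k}^{\eta}\big)\le c''\,(1+|j|)^{-1-\al\eta}(1+|k|)^{-1}\log^{-1-\al\eta}(2+|k|).
$$
Summing over $(j,k)\in\Z^2$, the $j$--sum converges because $1+\al\eta>1$, and the $k$--sum converges by a standard integral test (a Bertrand series with exponent $1+\al\eta>1$ on the logarithm). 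Consequently
$$
\ESP\Bigg(\sum_{(j,k)\in\Z^2}\ind_{A_{j,k}^{\eta}}\Bigg)=\sum_{(j,k)\in\Z^2}\PR\big(A_{j,k}^{\eta}\big)<\infty,
$$
so the monotone convergence theorem gives an event $\Omega_{\eta}$ of probability $1$ on which the (integer-valued) random variable $\sum_{(j,k)}\ind_{A_{j,k}^{\eta}}$ is finite, that is, only finitely many of the $A_{j,k}^{\eta}$ occur. Absorbing these finitely many exceptional indices into a large enough random constant $C(\omega)$ produces the first inequality of (\ref{eq1:omega0}) on $\Omega_{\eta}$ with a constant depending on $\eta$.

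Next, to obtain a single exceptional null set independent of $\eta$, I would set $\Omega_0^{*}:=\bigcap_{n\ge 1}\Omega_{1/n}$, which still has probability $1$. For any given $\eta>0$, pick $n$ with $1/n<\eta$: on $\Omega_{1/n}\supseteq\Omega_0^{*}$ the bound already holds with exponent $1/n$, and since $(1+|j|)^{1/\al+1/n}\le(1+|j|)^{1/\al+\eta}$ and $\log^{1/\al+1/n}(2+|k|)\le\log^{1/\al+\eta}(2+|k|)$, the same inequality holds with exponent $\eta$, with the same random constant. This gives the first inequality of (\ref{eq1:omega0}) for every $\eta>0$ on the single event $\Omega_0^{*}$.

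Finally, the second inequality is purely deterministic. Since for every $\eta>0$ one has $\log^{1/\al+\eta}(2+|k|)=o\big((3+|k|)^{\eta}\big)$, there exists $c_\eta>0$ such that $\log^{1/\al+\eta}(2+|k|)\le c_\eta(3+|k|)^{\eta}$ for all $k\in\Z$; combining this with the trivial bound $(1+|j|)^{1/\al+\eta}(1+|k|)^{1/\al}\le(3+|j|)^{1/\al+\eta}(3+|k|)^{1/\al}$ and setting $C'(\omega):=c_\eta C(\omega)$ yields the claimed majoration. The main (and only) delicate point will be the passage from the $\eta$-dependent events $\Omega_{\eta}$ to a single event $\Omega_0^{*}$, which is handled cleanly by the countable intersection trick together with the monotonicity of the bound in $\eta$; the rest reduces to a routine Borel--Cantelli verification based on the tail estimate (\ref{eq4:l3optim}).
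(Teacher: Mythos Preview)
Your argument is correct and matches the paper's own sketch essentially verbatim: the paper indicates the result follows from showing $\ESP\big(\sum_{(j,k)}\ind_{A_{j,k}^{\eta}}\big)<\infty$ via the tail bound (\ref{eq4:l3optim}), which is precisely your Borel--Cantelli step, and your countable-intersection trick over $\eta=1/n$ together with the deterministic passage to the second inequality are the natural (and standard) completions of that sketch. One trivial nitpick: the tail bound (\ref{eq4:l3optim}) is stated only for $x\ge 1$, while your threshold can dip below $1$ for the single pair $(j,k)=(0,0)$; this is harmless since $\PR\le 1$ covers that case and a single term does not affect summability.
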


The following proposition, which shows that the function $\Psi$ and its partial derivatives of any order, have nice smoothness and localization properties, will also play an important role throughout our article.

\begin{proposition}\label{regulpsi}
The function $\Psi$ satisfies the following two properties.
\begin{itemize}
\item[(i)] For all $(p,q) \in \{ 0,1,2,3 \}\times\Z_+$ and $(x,v)\in\R\times (1/\al,1)$, the partial derivative $(\partial_x^p \partial_v^q \Psi)(x,v)$ exists and is given by,
\begin{equation}
\label{DPSI}
(\partial_x^p \partial_v^q \Psi)(x,v) 
= \int_{\R} (s)_{+}^{v-1/\alpha} \log^q((s)_{+})\psi^{(p)}(x-s)ds= \int_{\R} (x-s)_+^{v-1/\alpha} \log^q((x-s)_+) \psi^{(p)}(s) ds,
\end{equation}
where $\psi^{(p)}$ is the derivative of $\psi$ of order $p$ and $0\log^q (0):=0$. Moreover the function $\partial_x^p \partial_v^q \Psi$ is continuous 
on $\R\times (1/\al,1)$.
\item[(ii)] For each $(p,q) \in \{ 0,1,2,3 \}\times\Z_+$ and for every real numbers
  $a,b$ satisfying $1>b>a>1/\alpha$, the function $\partial_x^p \partial_v^q \Psi$ is well-localized in the variable $x$ uniformly in the variable $v\in [a,b]$; namely one has
\begin{equation}
\label{localisation}
\sup_{(x,v)\in\R\times [a,b]} (3+|x|)^2 \big|(\partial_x^p \partial_v^q \Psi)(x,v)\big|< \infty.
\end{equation}
\end{itemize}
\end{proposition}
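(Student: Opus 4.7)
The plan is to prove (i) by justifying differentiation under the integral sign inductively in $p$ and $q$, and to derive (ii) from the vanishing moments of $\psi$ via a Taylor expansion argument.

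For part (i), I would work on an arbitrary compact rectangle $K = [-M, M] \times [a', b'] \subset \R \times (1/\alpha, 1)$ and apply the classical theorem on differentiation under the integral sign. The integrand in (\ref{PSI}) is $F(s, x, v) := (s)_+^{v-1/\alpha} \psi(x - s)$. Formally differentiating $p$ times in $x$ only affects $\psi$ and produces $\psi^{(p)}(x-s)$ (legitimate since $\psi \in \ce^{3}$), while differentiating $q$ times in $v$ multiplies by $\log^q((s)_+)$, with the convention $0\log^q 0 = 0$. Because $\mathrm{supp}\,\psi^{(p)} \subseteq [-R, R]$, for $(x,v) \in K$ the $s$-integrand is supported in $[-M - R, M + R]$; on this bounded set, the function $s \mapsto (s)_+^{v - 1/\alpha} |\log((s)_+)|^q$ is integrable near $s = 0^+$ because $v - 1/\alpha \ge a' - 1/\alpha > 0$, which yields an $L^1(ds)$ dominating function independent of $(x,v) \in K$. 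This gives the first formula in (\ref{DPSI}), and a further dominated-convergence argument applied to sequences $(x_n, v_n) \to (x, v)$ inside $K$ delivers continuity of the partial derivative on $\R \times (1/\alpha, 1)$. The equivalent second expression in (\ref{DPSI}) follows from the substitution $s \mapsto x - s$.

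For part (ii), I would split according to the value of $x$. On the compact set $[-2R, 2R] \times [a, b]$, the estimate is trivial by the continuity just established in (i). For $x < -R$ the support of $s \mapsto \psi^{(p)}(x - s)$ lies in $(-\infty, 0)$, so the integrand in (\ref{DPSI}) vanishes identically. The essential case is $x > 2R$, where the substitution $t = x - s$ in the first form of (\ref{DPSI}) gives
\begin{equation*}
(\partial_x^p \partial_v^q \Psi)(x,v) = \int_{-R}^{R} (x-t)^{v - 1/\alpha} \log^q(x - t)\, \psi^{(p)}(t)\, dt,
\end{equation*}
since $x - t \ge R > 0$ throughout the interval of integration. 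The decay is then extracted from the vanishing moments of $\psi^{(p)}$: repeated integration by parts combined with (\ref{eq:vanmopsi}) shows that $\int_{\R} t^m \psi^{(p)}(t)\, dt = 0$ for every $m \in \{0, \ldots, Q + p - 1\}$, so $\psi^{(p)}$ has at least $Q$ vanishing moments. I would then Taylor expand $\varphi(t) := (x - t)^{v - 1/\alpha} \log^q(x - t)$ around $t = 0$ to order $Q - 1$; the polynomial part drops out against $\psi^{(p)}$ thanks to the vanishing moments, leaving only the $Q$-th integral remainder. Each derivative $\varphi^{(Q)}(\xi)$ is a linear combination of terms $(x - \xi)^{v - 1/\alpha - Q} \log^k(x - \xi)$ with $0 \le k \le q$ and coefficients bounded uniformly in $v \in [a, b]$. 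For $|\xi| \le R \le x/2$, this yields $|\varphi^{(Q)}(\xi)| \le C\, x^{v - 1/\alpha - Q} (\log x)^q$, and since $Q \ge 15$ forces $v - 1/\alpha - Q < -14$, the resulting decay dominates $(3 + |x|)^{-2}$ by a wide margin, uniformly in $v \in [a, b]$.

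The main obstacle I foresee is tracking the various uniformities correctly: ensuring that the dominating function in the first step stays $(x,v)$-independent on $K$ despite both the power and logarithmic singularities at $s = 0^+$, and checking that the constants appearing in the Taylor remainder do not blow up as $v$ ranges over $[a, b]$ (which they do not, since $[a, b]$ sits strictly inside $(1/\alpha, 1)$, but this must be kept explicit in the bookkeeping).
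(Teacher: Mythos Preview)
Your proof of part (i) is correct and essentially identical to the paper's: both localize to a compact rectangle, exploit the compact support of $\psi^{(p)}$ to restrict the $s$-integration to $[-M-R,M+R]$, and dominate by $\big((s)_+^{a'-1/\alpha}+(s)_+^{b'-1/\alpha}\big)|\log(s)_+|^q$ there, then apply dominated convergence for continuity.

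For part (ii) your argument is correct but takes a genuinely different route. The paper, after the same case split, handles $x>2R$ by integrating by parts three times in $s$: it introduces the successive antiderivatives $\psi^{(p-1)},\psi^{(p-2)},\psi^{(p-3)}$ (compactly supported thanks to the first three vanishing moments of $\psi$), arriving at
\[
(\partial_x^p\partial_v^q\Psi)(x,v)=-\int_{-R}^{R}(\partial_s^3 K_q)(x,v,s)\,\psi^{(p-3)}(s)\,ds,
\]
and then bounds $|\partial_s^3 K_q|\le c(x-s)^{-2}\le 4c\,x^{-2}$ directly. Your approach instead Taylor-expands the smooth factor $\varphi(t)=(x-t)^{v-1/\alpha}\log^q(x-t)$ to order $Q-1$ and kills the polynomial part against the $Q+p$ vanishing moments of $\psi^{(p)}$, leaving a remainder of size $x^{v-1/\alpha-Q}(\log x)^q$. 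Both are standard wavelet tricks; yours yields far more decay than needed (order roughly $x^{-14}$) at the cost of invoking all $Q$ vanishing moments, whereas the paper's three integrations by parts are more frugal---they use only three vanishing moments and give exactly the $x^{-2}$ required by (\ref{localisation}). Either way the uniformity in $v\in[a,b]$ is clear, since the coefficients in both computations are polynomials in $v-1/\alpha$.
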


\begin{proof}[Proof of Proposition \ref{regulpsi}]
Let us first show that Part $(i)$ holds. In view of (\ref{PSI}), the function $\Psi$ can be expressed, for all 
$(x,v)\in\R\times (1/\alpha,1)$ as,
\begin{equation*}
\label{eq0:regulpsi-ant-j}
\Psi(x,v)= \int_{\R} L(x,v,s)\,ds.
\end{equation*}
where $L(x,v,s):=(s)_{+}^{v-1/\alpha} \psi(x-s)$. Also observe that for all $(p,q)\in \{0,\ldots, 3\}\times\Z_+$ and $(x,v,s)\in\R\times (1/\alpha,1)\times\R$, the partial derivative $(\partial_x^p \partial_v^q L)(x,v,s)$ exists and is given by
\begin{equation}
\label{eq1:regulpsi-ant}
(\partial_x^p \partial_v^q L)(x,v,s)=(s)_{+}^{v-1/\alpha} \log^q((s)_{+})\psi^{(p)}(x-s).
\end{equation}
Therefore, to show that the partial derivative $(\partial_x^p \partial_v^q \Psi)(x,v)$ exists 
and is given by (\ref{DPSI}), it is sufficient to prove that for all real numbers $M$, $a$ and
$b$, satisfying 
\begin{equation}
\label{eq2:regulpsi-ant}
M>0 \mbox{ and } 1/\alpha<a<b<1, 
\end{equation}
one has
\begin{equation}
\label{eq3:regulpsi-ant}
\int_{\R}\sup_{(x,v)\in [-M,M]\times [a,b]}\big |(\partial_x^p \partial_v^q L)(x,v,s)\big |\,ds <\infty.
\end{equation}
This is true, since Relations (\ref{eq1:regulpsi-ant}), (\ref{eq:supp-psi-ant}) and (\ref{eq2:regulpsi-ant}), imply that
$$
\int_{\R}\sup_{(x,v)\in [-M,M]\times [a,b]}\big |(\partial_x^p \partial_v^q L)(x,v,s)\big |ds\le 
\|\psi^{(p)}\|_{L^\infty (\R)}\int_{-M-R}^{M+R}\Big ((s)_{+}^{a-1/\alpha}+(s)_{+}^{b-1/\alpha}\Big) 
\big|\log((s)_{+})\big|^q ds < \infty.
$$
Finally, observe that it follows from (\ref{DPSI}), (\ref{eq1:regulpsi-ant}), (\ref{eq3:regulpsi-ant}) and the dominated convergence Therorem, that for all $(p,q)\in \{0,\ldots, 3\}\times\Z_+$, the function $\partial_x^p \partial_v^q \Psi$ is continuous over $\R\times (1/\alpha, 1)$.

Let us show that Part $(ii)$ of the proposition holds. Relations (\ref{eq:supp-psi-ant}) and (\ref{DPSI}), imply that for all $(p,q)\in \{0,\ldots, 3\}\times\Z_+$ and for each $(x,v)\in (-\infty, -R)\times (1/\al, 1)$, one has
\begin{equation}
\label{eq4:regulpsi-ant}
(\partial_x^p \partial_v^q \Psi)(x,v)=0.
\end{equation}
Combining (\ref{eq4:regulpsi-ant}) with the fact $\partial_x^p \partial_v^q \Psi$ is a continuous function over the compact set $[-R,2R]\times [a,b]$, it follows that 
$$
\sup_{(x,v)\in (-\infty,2R]\times [a,b]} (3+|x|)^2 \big|(\partial_x^p \partial_v^q \Psi)(x,v)\big|< \infty.
$$
Therefore, it remains to show that
\begin{equation}
\label{eq5:regulpsi-ant}
\sup_{(x,v)\in (2R,+\infty)\times [a,b]} (3+x)^2 \big|(\partial_x^p \partial_v^q \Psi)(x,v)\big|< \infty.
\end{equation}
In view of (\ref{DPSI}) and (\ref{eq:supp-psi-ant}), one has for each $(x,v)\in (2R,+\infty)\times [a,b]$,
$$
(\partial_x^p \partial_v^q \Psi)(x,v)=\int_{-R}^{R} K_q (x,v,s)\psi^{(p)}(s)\,ds,
$$
where $K_q (x,v,s):=(x-s)^{v-1/\al}\log^q (x-s)$. For each $l\in\{1,2,3\}$ and real number $s$, one sets $\psi^{(p-l)}(s)=\int_{-\infty} ^s \psi^{(p+1-l)}(t)\,dt$; observe that, in view of (\ref{eq:vanmopsi}) and (\ref{eq:supp-psi-ant}), the 
supports of the latter three functions are included in $[-R,R]$. Thus integrating three times by parts, one 
gets that, 
\begin{equation}
\label{eq6:regulpsi-ant}
(\partial_x^p \partial_v^q \Psi)(x,v)=-\int_{-R}^{R} (\partial_{s}^3 K_q )(x,v,s)\psi^{(p-3)}(s)\,ds.
\end{equation}
Next standard computations, allow to show that there is a constant $c_{q,\al}>0$, only depending on $q$ and $\al$, such
that for all $(x,v,s)\in (2R,+\infty)\times [a,b]\times [-R,R]$, one has,
\begin{equation}
\label{eq7:regulpsi-ant}
\big |(\partial_{s}^3 K_q )(x,v,s)\big |\le c_{q,\al} (x-s)^{-2}\le  4 c_{q,\al} x^{-2}.
\end{equation}
Finally, putting together (\ref{eq6:regulpsi-ant}) and (\ref{eq7:regulpsi-ant}), one obtains (\ref{eq5:regulpsi-ant}).
\end{proof}

Now we are in position to prove Theorem~\ref{TWSE}.

\begin{proof}[Proof of Theorem~\ref{TWSE} Part $(i)$] Let $\o\in\Omega_{0}^*$ and $(u,v)\in\R\times (1/\al,1)$ be arbitrary and fixed. Also,
we assume that $\eta$ is an arbitrarily small fixed positive real number.
By using the triangle inequality, (\ref{localisation}) (in which one takes $p=q=0$ and $a,b$ such that $v\in [a,b]$), and (\ref{eq1:omega0}), it follows that for all fixed $j\in\N$,
\begin{eqnarray}
\label{eq2:absconc}
&& \sum_{k\in\Z} \big|\eps_{j,k}(\o)\big|\big|\Psi (2^j u-k,v)-\Psi (-k,v)\big|\nonumber\\
&& \le C_1 (\o) \big(3+j\big)^{1/\al+\eta}\sum_{k\in\Z} \left(\frac{\big(3+|k|\big)^{1/\al+\eta}}{\big (3+|2^j u-k|\big)^{2}}+\frac{\big(3+|k|\big)^{1/\al+\eta}}{\big (3+|k|\big)^{2}}\right)\nonumber\\
&& \le C_2 (\o) \big(3+j\big)^{1/\al+\eta}\big(3+2^j |u|\big)^{1/\al+\eta}\sum_{k\in\Z} \left(\frac{\big(3+|k|\big)^{1/\al+\eta}}{\big (3+|2^j u-[2^j u]-k|\big)^{2}}+\frac{\big(3+|k|\big)^{1/\al+\eta}}{\big (3+|k|\big)^{2}}\right),\nonumber\\
\end{eqnarray}
where $[2^j u]$ denotes the integer part of $2^j u$ and where $C_1(\o)$ and $C_2(\o)$ are two finite constants non depending on $j$ and $u$. Then, noticing that,
\begin{equation}
\label{eq3:absconc}
\sup_{x\in [0,1]}\left\{ \sum_{k\in\Z}\frac{\big(3+|k|\big)^{1/\al+\eta}}{\big (3+|x-k|\big)^{2}}\right\}\le \sum_{k\in\Z}\frac{\big(3+|k|\big)^{1/\al+\eta}}{\big (2+|k|\big)^{2}}<\infty,
\end{equation}
it follows from (\ref{eq2:absconc}) that 
\begin{equation}
\label{eq4:absconc}
\sum_{j\in\N} \sum_{k\in\Z} 2^{-jv}\big|\eps_{j,k}(\o)\big|\big|\Psi (2^j u-k,v)-\Psi (-k,v)\big|<\infty.
\end{equation}
Let us now prove that,
\begin{equation}
\label{eq5:absconc}
\sum_{j\in\Z_{-}} \sum_{k\in\Z} 2^{-jv}\big|\eps_{j,k}(\o)\big|\big|\Psi (2^j u-k,v)-\Psi (-k,v)\big|<\infty.
\end{equation}
Applying the Mean Value Theorem, one has for all $(j,k)\in\Z_{-}\times\Z$, 
\begin{equation}
\label{eq6:absconc}
\Psi (2^j u-k,v)-\Psi (-k,v)=2^{j} u (\partial_{x} \Psi)(\nu-k,v),
\end{equation}
where $\nu\in [-2^j|u|,2^j|u|]\subseteq[-|u|,|u|]$. Then putting together (\ref{eq6:absconc}), (\ref{eq1:omega0}) and (\ref{localisation}) (in which one takes $p=1$, $q=0$ and $a,b$ such that $v\in [a,b]$), one obtains that,
\begin{eqnarray}
\label{eq7:absconc}
&& \sum_{|k|\le |u|} \big|\eps_{j,k}(\o)\big|\big|\Psi (2^j u-k,v)-\Psi (-k,v)\big|\nonumber\\
&& \le C_3(\o)|u|\big (2 |u|+1\big)\big (3+|u|\big)^{1/\al +\eta}\left(\sup_{x\in\R} \big|(\partial_{x} \Psi)(x,v)\big|
\right) 2^{j} (3+|j|\big)^{1/\al +\eta}
\end{eqnarray}
and 
\begin{eqnarray}
\label{eq8:absconc}
&& \sum_{|k|>|u|} \big|\eps_{j,k}(\o)\big|\big|\Psi (2^j u-k,v)-\Psi (-k,v)\big|\nonumber\\
&& \le C_4(\o) |u|\big (3+|u|\big)^{1/\al +\eta}\left(\sum_{k\in\Z}\big(3+|k|\big)^{1/\al +\eta-2}\right) 2^{j}\big(3+|j|\big)^{1/\al +\eta},
\end{eqnarray}
where $C_3 (\o)$ and $C_4 (\o)$ are two positive finite constants non depending on $j$ and $u$. Next combining (\ref{eq7:absconc}) and (\ref{eq8:absconc}), with
the fact that $v\in (1/\al,1)$, one gets (\ref{eq5:absconc}). Finally (\ref{eq4:absconc}) and (\ref{eq5:absconc}) show that (\ref{eq1:absconc}) holds.
\end{proof}

\begin{proof}[Proof of Theorem~\ref{TWSE} Part $(ii)$] For all $(j,k) \in \Z^2$ and any $s \in \R$, we set
\begin{equation}\label{def:psijk}
\psi_{j,k}(s) = 2^{j/\alpha} \psi(2^j s-k),
\end{equation}
where $\psi$ is the Daubechies mother wavelet introduced at the very beginning of this section; observe that the sequence $\lbrace \psi_{j,k}: (j,k)\in\Z^2 \rbrace$  forms an unconditional basis of $L^{\alpha}(\R)$ and the sequence $\lbrace 2^{j(1/2-1/\alpha)}\psi_{j,k}: (j,k)\in\Z^2 \rbrace$ is an orthonormal basis of $L^{2}(\R)$ (see \cite{Meyer90,Meyer92}). Therefore, noticing that for any fixed $(u,v)\in\R\times(1/\alpha,1)$, the function $s \mapsto (u-s)_{+}^{v-1/\alpha} - (-s)_{+}^{v-1/\alpha}$ belongs to $L^{\alpha}(\R) \cap L^{2}(\R)$, it follows that,
\begin{equation}\label{noyau:wse}
(u-s)_{+}^{v-1/\alpha} - (-s)_{+}^{v-1/\alpha} = \sum_{j\in\Z} \sum_{k\in\Z} w_{j,k}(u,v) \psi_{j,k}(s),
\end{equation}
where
\begin{align}\label{coef:ker}
w_{j,k}(u,v) & := 2^{j(1-1/\alpha)} \int_{\R} \big\{ (u-s)_{+}^{v-1/\alpha} - (-s)_{+}^{v-1/\alpha}  \big\} \psi(2^js-k) ds \nonumber \\
& = 2^{-jv} \big\{ \Psi(2^ju-k,v) - \Psi(-k,v) \big\},
\end{align}
and where the convergence of the series, as a function of $s$, holds in $L^{\alpha}(\R)$ as well as in $L^{2}(\R)$; observe that the limit of the series does not depend on the way its terms are ordered. Next, using (\ref{noyau:wse}), (\ref{coef:ker}), (\ref{def:chpX}), a classical property of the stochastic integral $\int_{\R} \big(\cdot\big) Z_{\al}(ds)$, (\ref{def:psijk}) and (\ref{def:ejk}), we get that the random series 
$$
\sum_{(j,k)\in\Z^2} 2^{-j v}\epsilon_{j,k}\big (\Psi (2^j u-k,v)-\Psi (-k,v)\big),
$$
converges in probability to the random variable $\widetilde{X}(u,v)$; observe that the terms of the latter series can be 
ordered in an arbitrary way. Finally, combining the latter result with Part $(i)$ of Theorem~\ref{TWSE}, we obtain 
that the random variables $\widetilde{X}(u,v)$ and $X(u,v)$ are equal almost surely. 
\end{proof}

\section{Convergence of the wavelet series in H\"older spaces}
\label{sec:Holdsp}

The goal of this section is to show that when the terms of the series in (\ref{eqmain:WSE}), viewed as a random series of functions of the variable $(u,v)$, are ordered in an appropriate way, then not only this series
converges almost surely for every fixed $(u,v)\in\R\times (1/\al,1)$, but also, it is, as well as all its term by term pathwise partial derivatives of any order with 
respect to $v$, almost surely convergent in some H\"older spaces. Let us first precisely define these spaces.

\begin{definition}\label{def:EG}
Let $(\mathbb{B},\|\cdot \| )$ be a Banach space and $\K$ a subset of
$\R$. For every $\gamma \in [0,1]$, the Banach space of $\gamma$-H\"{o}lder
functions from $\K$ to $\mathbb{B}$, is denoted by
$\ce^{\gamma}(\K,\mathbb{B})$ and defined as,
\begin{equation}
\ce^{\gamma}(\K,\mathbb{B}):= \Big\{ f:\K \rightarrow \mathbb{B} : {\cal N}_{\gamma}(f)< \infty \Big\}, \nonumber
\end{equation}
where 
$$
{\cal N}_{\gamma}(f):= \sup_{x\in\K} \|f(x)\| + 
\sup_{x,y\in\K} \dfrac{ \|f(x)-f(y)\|}{|x-y|^{\gamma}},
$$ 
is the natural norm on this space. Notice that in the definition of ${\cal N}_{\gamma}(f)$, we assume that $0/0=0$. Also notice that
$\ce^{1}(\K,\mathbb{B})$ is usually called the space of the Lipschitz functions from $\K$ to $\mathbb{B}$.
\end{definition}

\begin{definition}
\label{de:spaceE}
Let $\gamma$, $M$, $a$ and $b$ be arbitrary and fixed real numbers satisfying $\gamma\in [0,1]$,
$M>0$ and $a < b$. We denote by ${\cal E}_{\gamma} (a,b,M)$, the Banach space
$$
{\cal E}_{\gamma} (a,b,M):=\ce^{1}\big( [a,b], \ce^{\gamma}([-M,M],\R) \big),
$$
of the Lipschitz functions defined on $[a,b]$ and with values in the H\"{o}lder space $\ce^{\gamma}([-M,M],\R)$.
Observe that each function $f$ belonging to ${\cal E}_{\gamma} (a,b,M)$, can be viewed as a bivariate real-valued function $(u,v)\mapsto f(u,v):=\big(f(v)\big)(u)$ 
 on the rectangle $[-M,M]\times [a,b]$; moreover, the natural norm on ${\cal E}_{\gamma} (a,b,M)$, is equivalent to the norm $|||\,.\,|||$ defined as,
\begin{align}
\label{eq1:normequiv}
|||f|||:=\sup_{(u,v) \in [-M,M]\times [a,b]} |f(u,v)|  & +\sup_{(u_1,u_2,v)\in [-M,M]^2\times  [a,b]}
  \frac{\big|(\Delta_{u_1-u_2}^{1,\cdot} f ) ( u_2,v)\big| } {|u_1-u_2|^\gamma} \nonumber\\
  & +\sup_{(u,v_1,v_2)\in [-M,M]\times  [a,b]^2}
  \frac{\big|( \Delta_{v_1-v_2}^{\cdot,1} f )( u,v_2)\big|} {|v_1-v_2|}\\
 & +\sup_{(u_1,u_2,v_1,v_2) \in [-M,M]^2\times [a,b]^2}\frac{\big|( \Delta_{(u_1-u_2,v_1-v_2)}^{1,1} f ) (u_2,v_2)\big|}{|u_1-u_2|^\gamma |v_1-v_2|},\nonumber
\end{align}
where, 
\begin{align}
\label{eq1:normequiv-diff}
& ( \Delta_{u_1-u_2}^{1,\cdot} f ) ( u_2,v):= f(u_1,v) - f( u_2,v),\nonumber\\
& ( \Delta_{v_1-v_2}^{\cdot,1} f )( u,v_2):= f(u,v_1) - f( u,v_2),\\
& (\Delta_{(u_1-u_2,v_1-v_2)}^{1,1} f ) (u_2,v_2):= f(u_1,v_1) - f(u_1,v_2) - f(u_2,v_1) + f(u_2,v_2).\nonumber
\end{align}
Notice that in (\ref{eq1:normequiv}), we assume that $0/0=0$.
\end{definition}

Now we are in position to state the main result of this section.
\begin{thm}
\label{TWSEbis}
We use the same notations as in Theorem~\ref{TWSE}. The following two results hold for all $\o\in\Omega_{0}^*$, the event of probability $1$ introduced in Lemma~\ref{omega0}.
\begin{itemize}
\item[(i)] For each fixed $u\in\R$, the function $X(u,\cdot,\omega): v\mapsto X (u,v,\omega)$ is infinitely differentiable over $(1/\al,1)$; its derivative of any order $q\in\Z_+$ at all $v\in (1/\al,1)$, is given by 
\begin{equation}
\label{eq1:TWSEbis}
(\partial_v^q X)(u,v,\omega)=\sum_{p=0}^q \dbinom{q}{p} \big( - \log 2 \big)^p \sum_{(j,k)\in\Z^2} j^p 2^{-jv}\epsilon_{j,k}(\omega)  \left (\big(\partial_v^{q-p}\Psi\big)(2^ju-k,v)-\big(\partial_v^{q-p}\Psi\big)(-k,v)\right),
\end{equation}
where, $0^0:=1$, for every fixed $(u,v)$ the series is absolutely convergent (its terms can therefore be ordered in an arbitrary way), and $\dbinom{q}{p}$ denotes the binomial coefficient $\frac{q!}{p!(q-p)!}$.
\item[(ii)] For each fixed $q\in\Z_+$ and $M,a,b\in\R$ satisfying $M>0$ and $1/\al <a<b<1$, the function $(\partial_v^q X)(\cdot,\cdot,\omega):(u,v)\mapsto (\partial_v^q X)(u,v,\o)$ belongs to the space ${\cal E}_{\gamma} (a,b,M)$ for all $\gamma\in [0,a-1/\al)$.
\end{itemize}
\end{thm}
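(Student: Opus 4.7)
My plan is to establish Part (i) first via termwise differentiation of the wavelet series (\ref{eqmain:WSE}), and then deduce Part (ii) by bounding the $|||\cdot|||$-norm of each term in the resulting series.

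\textbf{Part (i).} The Leibniz formula applied to $v \mapsto 2^{-jv}\Psi(x,v)$ yields
\[
\partial_v^q\!\Big[2^{-jv}\Psi(x,v)\Big] = \sum_{p=0}^q \dbinom{q}{p}(-\log 2)^p j^p\, 2^{-jv}\,(\partial_v^{q-p}\Psi)(x,v),
\]
which after specialization at $x = 2^j u - k$ and $x = -k$ and subtraction produces the inner content of (\ref{eq1:TWSEbis}). To justify termwise differentiation for fixed $u \in \R$, I prove absolute uniform convergence of the formal $q$-th derivative series on every compact $v$-interval $[a,b] \subset (1/\alpha,1)$. The argument follows the pattern already used in the proof of Theorem~\ref{TWSE}(i): for $j \geq 0$ use the localization bound (\ref{localisation}) applied to $\partial_v^{q-p}\Psi$, the $\eps_{j,k}$-estimate of Lemma~\ref{omega0}, and the geometric factor $2^{-jv}\leq 2^{-ja}$ (the extra $|j|^p$ is absorbed since $a > 1/\alpha$); for $j < 0$ apply the $u$-MVT to $x \mapsto \partial_v^{q-p}\Psi(x,v)$, whose $x$-derivative is again well-localized by Proposition~\ref{regulpsi}, gaining the geometric factor $2^j|u|$. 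A standard theorem on term-by-term differentiation of series, applied inductively in $q$, then yields both infinite differentiability and the identity (\ref{eq1:TWSEbis}).

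\textbf{Part (ii).} For each $(j,k,p,q)$ set
\[
F_{j,k,p,q}(u,v) := 2^{-jv}\Big((\partial_v^{q-p}\Psi)(2^j u - k, v) - (\partial_v^{q-p}\Psi)(-k, v)\Big),
\]
so that on $[-M,M]\times[a,b]$ one has
\[
(\partial_v^q X)(u,v,\omega) = \sum_{p=0}^q \dbinom{q}{p}(-\log 2)^p \sum_{(j,k)\in\Z^2} j^p\,\eps_{j,k}(\omega)\, F_{j,k,p,q}(u,v).
\]
The task reduces to bounding each of the four quantities entering $|||F_{j,k,p,q}|||$ (see (\ref{eq1:normequiv})) by an expression summable against $|j|^p|\eps_{j,k}(\omega)|$. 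The sup-norm and $v$-Lipschitz bounds (the latter obtained via the $v$-MVT, which merely brings out a $(\partial_v^{q-p+1}\Psi)$ factor and no additional $j$-power) follow from (\ref{localisation}) combined with the $k$-summability estimate (\ref{eq3:absconc}); for $j<0$ one uses in addition the $u$-MVT factor $2^j M$. For the $\gamma$-H\"older seminorm in $u$, I apply the classical interpolation
\[
|g(x_1) - g(x_2)| \leq \big(2\|g\|_\infty\big)^{1-\gamma}\big(\|g'\|_\infty\, |x_1 - x_2|\big)^\gamma
\]
to $g(\cdot) := (\partial_v^{q-p}\Psi)(\cdot,v)$, whose $L^\infty$ and $\partial_x$-norms are controlled by (\ref{localisation}) (with $p=0,1$); for $j \geq 0$ this introduces a factor $2^{j\gamma}$, converting the geometric prefactor into $2^{-j(v-\gamma)}\leq 2^{-j(a-\gamma)}$. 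The mixed second-difference is handled by chaining: first apply the $v$-MVT, then the $u$-interpolation on the resulting function.

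The main obstacle is the combined bookkeeping: each of the four estimates must interact with the bound $|\eps_{j,k}(\omega)|\leq C'(\omega)(3+|j|)^{1/\alpha+\eta}(3+|k|)^{1/\alpha+\eta}$ from Lemma~\ref{omega0} so that the resulting $(j,k)$-sum converges. The critical threshold $\gamma < a - 1/\alpha$ emerges precisely here: after the $k$-sum produces a factor of order $(3+2^j|u|)^{1/\alpha+\eta}\lesssim 2^{j(1/\alpha+\eta)}$ for $j\geq 0$, summability in $j$ of $2^{-j(a-\gamma)}\cdot 2^{j(1/\alpha+\eta)}|j|^{p+1/\alpha+\eta}$ requires $a - \gamma - 1/\alpha - \eta > 0$, and since $\eta>0$ may be chosen arbitrarily small, this is equivalent to $\gamma < a - 1/\alpha$. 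For $j<0$ the $u$-MVT factor $2^j$ gives geometric convergence unconditionally.
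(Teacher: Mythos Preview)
Your overall strategy is sound and parallels the paper's: the paper organizes the argument through Cauchy sequences of partial sums $X_{M,n}$ in ${\cal E}_\gamma(a,b,M)$ (Proposition~\ref{prop:cauchy}, proved via Lemmas~\ref{PXN1}--\ref{PXN4} and the technical Lemma~\ref{LA5}), whereas you bound $|||\cdot|||$ of each term directly and sum. Part~(i) is handled essentially identically in both approaches.

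There is, however, a genuine gap in your $u$-H\"older estimate. The interpolation inequality you write,
\[
|g(x_1)-g(x_2)|\le \big(2\|g\|_\infty\big)^{1-\gamma}\big(\|g'\|_\infty\,|x_1-x_2|\big)^\gamma,
\]
uses the \emph{global} sup-norms of $g=(\partial_v^{q-p}\Psi)(\cdot,v)$ and $g'$; these are finite constants independent of $k$. Applied at $x_i=2^j u_i-k$ this yields only $|g(x_1)-g(x_2)|\le C\,2^{j\gamma}|u_1-u_2|^\gamma$ with no $k$-decay, and the subsequent $k$-sum against $(3+|k|)^{1/\alpha+\eta}$ diverges. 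Your later claim that the $k$-sum produces a factor $(3+2^j|u|)^{1/\alpha+\eta}$ presupposes a bound retaining $(3+|2^j u-k|)^{-2}$ localization, which your inequality does not deliver.

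The fix---and this is exactly what the paper does in Lemma~\ref{LA5} and again in Proposition~\ref{pp1:dX}---is to replace the global interpolation by a dyadic case split: if $2^j|u_1-u_2|\le 1$, the Mean Value Theorem together with (\ref{localisation}) for $\partial_x\partial_v^{q-p}\Psi$ gives
\[
\big|(\partial_v^{q-p}\Psi)(2^j u_1-k,v)-(\partial_v^{q-p}\Psi)(2^j u_2-k,v)\big|\le c\,2^j|u_1-u_2|\,(2+|2^j u_1-k|)^{-2};
\]
if $2^j|u_1-u_2|>1$, the triangle inequality and (\ref{localisation}) give $\le c\big[(3+|2^j u_1-k|)^{-2}+(3+|2^j u_2-k|)^{-2}\big]$. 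Dividing by $|u_1-u_2|^\gamma$ yields in both cases a bound $\lesssim 2^{j\gamma}$ times a $k$-localized factor, after which Lemma~\ref{LA3} (or (\ref{eq3:absconc})) controls the $k$-sum exactly as you intended. The same correction is needed in your mixed second-difference estimate. With this repair, your direct approach goes through and is equivalent to the paper's.
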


The proof of Theorem~\ref{TWSEbis} mainly relies on the following proposition.

\begin{proposition}
\label{prop:cauchy}
Let $M$ be an arbitrary and fixed positive real number. For every $n\in\Z_+$, denote by 
$X_{M,n}=\big\{X_{M,n} (u,v)\,:\, (u,v)\in\R\times (1/\al,1)\big\}$ the $\stas$ random field defined for every $(u,v) \in \R\times (1/\al,1)$, as the finite sum,
\begin{equation}
\label{eq1:PWSEXN}
 X_{M,n}(u,v)= \sum_{(j,k)\in D_{M,n}} 2^{-jv} \epsilon_{j,k} \big( \Psi(2^ju-k,v) - \Psi(-k,v) \big),
\end{equation}
where 
\begin{equation}
\label{eq2:PWSEXN}
D_{M,n}:=\left\{(j,k)\in \Z^2 : |j| \leq n \mbox{ and } |k| \leq M2^{n+1}\right\}.
\end{equation} 
Then, the following three results hold.
\begin{itemize}
\item[(i)] For all fixed $\o\in\Omega$ (the underlying probability space) and $u\in\R$, the function $X_{M,n}(u,\cdot,\o):v\mapsto X_{M,n}(u,v,\o)$ is infinitely 
differentiable over $(1/\al,1)$; its derivative of any order $q\in\Z_+$ at any point $v\in (1/\al,1)$, is denoted by $(\partial_v^q X_{M,n})(u,v,\omega)$.
\item[(ii)] For all fixed $\o\in\Omega$, $q,n\in\Z_+$ and $a,b\in\R$ satisfying $1/\alpha < a < b <1$, the function $(\partial_v^q X_{M,n})(\cdot,\cdot,\omega)$
belongs to the Banach space ${\cal E}_{1} (a,b,M)$.
\item[(iii)] For each fixed $\omega\in\Omega_{0}^*$, $q\in\Z_{+}$, and $a,b,\ga\in\R$ satisfying $1/\alpha < a < b <1$ and $0\le\ga<a-1/\al$, $\big((\partial_v^q X_{M,n})(\cdot,\cdot,\omega)\big)_{n\in\Z_+}$ is a Cauchy sequence in the Banach space ${\cal E}_{\gamma} (a,b,M)$.
\end{itemize}
\end{proposition}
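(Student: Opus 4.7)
\textbf{Parts (i) and (ii)} are essentially bookkeeping. $X_{M,n}$ is a finite linear combination (over the finite set $D_{M,n}$) of functions of the form $(u,v)\mapsto 2^{-jv}\epsilon_{j,k}(\omega)\bigl[\Psi(2^ju-k,v)-\Psi(-k,v)\bigr]$. Proposition~\ref{regulpsi}(i) ensures that each summand is $C^{\infty}$ in $v$ on $(1/\alpha,1)$ and $C^{3}$ in $u$ on $\R$, so Part (i) will follow by differentiating termwise via Leibniz's formula applied to the product $2^{-jv}\cdot\bigl[\Psi(2^ju-k,v)-\Psi(-k,v)\bigr]$, using $\partial_v^r(2^{-jv})=(-\log 2)^r j^r\,2^{-jv}$. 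For Part (ii), on the compact rectangle $[-M,M]\times[a,b]$ the resulting finite sum is $C^{\infty}$; in particular $\partial_u$, $\partial_v$ and $\partial_u\partial_v$ of $(\partial_v^qX_{M,n})(\cdot,\cdot,\omega)$ are uniformly bounded there, which yields via the Mean Value Theorem (applied separately in $u$ and $v$ and then jointly) that all four terms of $|||\cdot|||$ in (\ref{eq1:normequiv}) with $\gamma=1$ are finite.

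The real work lies in \textbf{Part (iii)}. My plan is to apply Leibniz in $v$ to reduce the question to showing that, for each fixed $p\in\{0,\ldots,q\}$, the sequence
\begin{equation*}
W_{n,p}(u,v,\omega):=\sum_{(j,k)\in D_{M,n}} j^{p}\,2^{-jv}\epsilon_{j,k}(\omega)\Bigl\{(\partial_v^{q-p}\Psi)(2^ju-k,v)-(\partial_v^{q-p}\Psi)(-k,v)\Bigr\}
\end{equation*}
is Cauchy in $\mathcal{E}_{\gamma}(a,b,M)$. I would then group the summation by scale: letting $F_{j,p,n}$ denote the partial $k$-sum over $|k|\le M2^{n+1}$ at fixed $j$, the key step is a scale-by-scale bound
\begin{equation*}
|||F_{j,p,n}|||\le C(\omega)\,(1+|j|)^{p+1/\alpha+\eta}\,2^{-|j|\sigma},\qquad \sigma:=\min\bigl(a-\gamma-1/\alpha-\eta,\,1-b-\eta\bigr),
\end{equation*}
valid for any small $\eta>0$, with right-hand side independent of $n\ge|j|$. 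Since $\gamma<a-1/\alpha$ and $b<1$, one can pick $\eta$ small enough that $\sigma>0$, so that $\sum_{j\in\Z}(1+|j|)^{p+1/\alpha+\eta}\,2^{-|j|\sigma}<\infty$.

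Three ingredients will drive the scale-by-scale estimate: the localization $\sup_{v\in[a,b]}(3+|x|)^{2}\bigl|(\partial_x^r\partial_v^s\Psi)(x,v)\bigr|<\infty$ for $r\in\{0,1,2,3\}$ and $s\in\Z_{+}$, which follows from Proposition~\ref{regulpsi}(ii) (the derivative $\partial_v^s\Psi$ has the same structure as $\Psi$); the sample-path bound $|\epsilon_{j,k}(\omega)|\le C'(\omega)(3+|j|)^{1/\alpha+\eta}(3+|k|)^{1/\alpha+\eta}$ from Lemma~\ref{omega0}; and a dichotomy between the two signs of $j$. For $j\ge 0$, bounding the sup part of $|||F_{j,p,n}|||$ reduces to controlling sums like $\sum_k(1+|k|)^{1/\alpha+\eta}(3+|2^ju-k|)^{-2}$, which I would handle via the splitting $(1+|k|)\le(1+|2^ju|)(1+|2^ju-k|)$; this produces a growth factor $(1+2^jM)^{1/\alpha+\eta}$ offset by $2^{-jv}\le 2^{-ja}$. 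For the $\gamma$-H\"older modulus in $u$, I would interpolate between the pointwise bound and the Lipschitz-in-$u$ bound via $\min(A,B)\le A^{\gamma}B^{1-\gamma}$; the Lipschitz bound carries an extra factor $2^j$ from differentiating $\Psi(2^ju-k,v)$ in $u$, so the interpolation cost is $2^{j\gamma}$, giving exactly the exponent $\sigma$ in the first branch of the minimum. For $j<0$, I would sidestep the $(1+2^jM)$ issue by applying the Mean Value Theorem directly to the difference $(\partial_v^{q-p}\Psi)(2^ju-k,v)-(\partial_v^{q-p}\Psi)(-k,v)=2^ju\,(\partial_x\partial_v^{q-p}\Psi)(\nu_{j,k,u}-k,v)$, as done in the proof of Theorem~\ref{TWSE}(i); the factor $2^j$ combines with $2^{-jv}\le 2^{-jb}$ to yield the weight $2^{j(1-b)}$, summable at $-\infty$.

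Granted the scale-by-scale bound, the Cauchy property follows from the decomposition
\begin{equation*}
W_{m,p}-W_{n,p}=\sum_{n<|j|\le m}F_{j,p,m}+\sum_{|j|\le n}\bigl(F_{j,p,m}-F_{j,p,n}\bigr):
\end{equation*}
the first piece is a tail of a convergent $j$-series, while each $F_{j,p,m}-F_{j,p,n}$ is a $k$-tail controlled by the same arguments restricted to $|k|>M2^{n+1}$, which vanishes as $n\to\infty$. I expect the main obstacle to be the mixed difference term $\Delta^{1,1}$ in $|||\cdot|||$: the cleanest route will be the identity $\Delta^{1,1}f=\int_{v_2}^{v_1}\bigl[(\partial_v f)(u_1,v)-(\partial_v f)(u_2,v)\bigr]\,dv$, which reduces it to a H\"older-in-$u$ estimate on $\partial_v f$, itself handled by the same interpolation machinery at the cost of one extra polynomial factor in $|j|$ coming from $\partial_v(2^{-jv})=-(\log 2)j\cdot 2^{-jv}$.
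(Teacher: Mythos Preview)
Your treatment of Parts~(i) and~(ii) matches the paper's: both are immediate from Proposition~\ref{regulpsi} applied to a finite sum.

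For Part~(iii) your strategy is correct and close in spirit to the paper's, but the organization differs. The paper does not first establish a uniform scale-by-scale bound on $|||F_{j,p,n}|||$; instead it treats the four terms of the norm $|||\cdot|||$ separately in four lemmas (Lemmas~\ref{PXN1}--\ref{PXN4}), each asserting that the corresponding piece of $X_{M,n+l}-X_{M,n}$ tends to zero uniformly in $l$. The analytic core is relegated to a single technical result in the Appendix (Lemma~\ref{LA5}), which bounds exactly your two tails: the $k$-tail $A_n$ (indices $|j|\le n$, $|k|>M2^{n+1}$) and the $j$-tail $B_n$ (indices $|j|>n$, all $k$). The estimates inside Lemma~\ref{LA5} are precisely the ones you sketch---Mean Value Theorem when $2^j|t-s|\le 1$, pointwise decay otherwise, and the summation lemma $\sum_k(1+|k|)^\theta(2+|u-k|)^{-2}\lesssim (1+|u|)^\theta$---so the underlying mechanics coincide. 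For the mixed increment $\Delta^{1,1}$ the paper uses a second-order Taylor expansion in $v$ (producing a term with $\partial_v^2\Theta_{j,k}^{q-p}$) rather than your first-order integral identity; either device works and the cost is the same extra polynomial factor in $|j|$.

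One point to tighten in your write-up: when you dispose of $\sum_{|j|\le n}(F_{j,p,m}-F_{j,p,n})$ you say each $k$-tail ``vanishes as $n\to\infty$'', but you are summing $2n+1$ such tails, so you need the individual $k$-tail bound to decay faster than $1/n$. It does---the paper's $A_n$ estimate gives $O(2^{-n(1-1/\alpha-\nu)})$ uniformly in $(t,s,v)$---but this should be stated rather than left implicit.
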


\begin{proof}[Proof of Proposition~\ref{prop:cauchy}] Parts $(i)$ and $(ii)$ of Proposition~\ref{prop:cauchy} are more or less straightforward consequences of Proposition~\ref{regulpsi}. In view of Definition~\ref{de:spaceE}, Part $(iii)$ of Proposition~\ref{prop:cauchy} results from the following four lemmas.
\end{proof}



\begin{lemma}\label{PXN1}
Let $M$, $a$ and $b$ be fixed real numbers satisfying $M>0$ and $1/\al<a<b<1$. For all fixed $q\in\Z_+$ and $\omega \in \Omega_0^{*}$, when $n$ goes to infinity,
\begin{equation}\label{eq1:PXN1}
\Big| \big( \partial_v^q X_{M,n+l} \big)(u,v,\omega) -  \big( \partial_v^q X_{M,n} \big)(u,v,\omega) \Big| 
\end{equation}
converges to $0$, uniformly in $(u,v)\in [-M,M]\times [a,b]$ and in $l\in\Z_+$.
\end{lemma}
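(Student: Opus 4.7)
The plan is to express the left-hand side of (\ref{eq1:PXN1}) as a finite sum indexed by the annular set $D_{M,n+l}\setminus D_{M,n}$, decompose this set as $\mathcal{A}_{n,l}\cup\mathcal{B}_{n,l}$ with
\begin{equation*}
\mathcal{A}_{n,l}:=\{(j,k):n<|j|\le n+l,\ |k|\le M 2^{n+l+1}\},\quad \mathcal{B}_{n,l}:=\{(j,k):|j|\le n,\ M 2^{n+1}<|k|\le M 2^{n+l+1}\},
\end{equation*}
and then bound each piece by a quantity independent of $l$ and $(u,v)\in[-M,M]\times[a,b]$ that tends to $0$ as $n\to\infty$. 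Indeed, applying to the finite difference $X_{M,n+l}-X_{M,n}$ the term-by-term Leibniz differentiation that underlies (\ref{eq1:TWSEbis}) (which is justified for finite sums by Proposition~\ref{prop:cauchy}(i)), the expression in (\ref{eq1:PXN1}) is dominated by
\begin{equation*}
\sum_{p=0}^q \dbinom{q}{p}(\log 2)^p \sum_{(j,k)\in D_{M,n+l}\setminus D_{M,n}} |j|^p\,2^{-jv}\,|\epsilon_{j,k}(\omega)|\,\Delta_{j,k,p}(u,v),
\end{equation*}
where $\Delta_{j,k,p}(u,v):=|(\partial_v^{q-p}\Psi)(2^ju-k,v)-(\partial_v^{q-p}\Psi)(-k,v)|$.

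Next, I would estimate $\Delta_{j,k,p}$ in two different ways, mirroring the proof of Theorem~\ref{TWSE}(i): for $j\ge 0$, apply the localization bound (\ref{localisation}) to $\partial_v^{q-p}\Psi$; for $j<0$, invoke the Mean Value Theorem (cf.\ (\ref{eq6:absconc})) to write the difference as $2^j u\,(\partial_x \partial_v^{q-p}\Psi)(\nu-k,v)$ for some $|\nu|\le 2^j|u|\le M$, and then apply (\ref{localisation}) to $\partial_x\partial_v^{q-p}\Psi$. Combined with the bound $|\epsilon_{j,k}(\omega)|\le C'(\omega)(3+|j|)^{1/\alpha+\eta}(3+|k|)^{1/\alpha+\eta}$ from Lemma~\ref{omega0} (for arbitrarily small $\eta>0$) and the uniform $k$-sum estimate (\ref{eq3:absconc}), the part of $\mathcal{A}_{n,l}$ with $j>n$ produces a tail of the form $C(\omega,M)\sum_{j>n} j^{p+2/\alpha+2\eta}\,2^{-j(v-1/\alpha-\eta)}$, geometrically summable because $v\ge a>1/\alpha$ for $\eta$ small, and vanishing as $n\to\infty$ uniformly in $l\in\Z_+$ and in $v\in[a,b]$. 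The part of $\mathcal{A}_{n,l}$ with $j<-n$ benefits from the extra factor $2^j$ provided by the Mean Value Theorem, reducing to a tail $C(\omega,M)\sum_{j>n} j^{p+1/\alpha+\eta}\,2^{-j(1-v)}$, summable because $v\le b<1$ and again vanishing uniformly in $l$ as $n\to\infty$.

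The piece $\mathcal{B}_{n,l}$ is handled using the observation that the constraints $|j|\le n$ and $|u|\le M$ force $|2^j u|\le M 2^n<|k|/2$ whenever $|k|>M 2^{n+1}$, so $|2^j u-k|\ge |k|/2$; the localization (\ref{localisation}) then gives $\Delta_{j,k,p}(u,v)\le C(3+|k|)^{-2}$ for $j\ge 0$ and, via the Mean Value Theorem, $\Delta_{j,k,p}(u,v)\le C\,2^j(3+|k|)^{-2}$ for $j<0$. Summing over $|k|>M 2^{n+1}$ produces a factor of order $2^{n(1/\alpha+\eta-1)}\to 0$ (since $\alpha>1$ allows $\eta$ with $1/\alpha+\eta<1$), while the remaining $j$-sum is uniformly bounded in $n$ (positive $j$'s are controlled by $2^{-jv}$ with $v\ge a>0$, negative $j$'s by the trade $2^{-jv}\cdot 2^j=2^{j(1-v)}$ with $v\le b<1$). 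The main technical obstacle will be the careful bookkeeping ensuring that every constant depends on $u$ only through the uniform bound $|u|\le M$, so that the four tail estimates above combine into a single bound uniform over $(u,v)\in[-M,M]\times[a,b]$ and over $l\in\Z_+$.
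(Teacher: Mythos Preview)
Your proposal is correct and follows essentially the same route as the paper. The paper does not give a separate proof of Lemma~\ref{PXN1} but refers to the proof of Lemma~\ref{PXN4}, whose engine is the technical Lemma~\ref{LA5} in the Appendix: one bounds the sum over $D_{M,n+l}\setminus D_{M,n}$ by the full tail over $D_{M,n}^c$, splits $D_{M,n}^c$ into $\{|j|\le n,\ |k|>M2^{n+1}\}$ (the quantity $A_n$) and $\{|j|\ge n+1\}$ (the quantity $B_n$), and then combines Lemma~\ref{omega0}, the localization (\ref{localisation}), and the Mean Value Theorem. Your sets $\mathcal B_{n,l}$ and $\mathcal A_{n,l}$ are precisely subsets of these two ranges, and by letting the sums run over the supersets (which you do implicitly when you produce bounds ``independent of $l$'') you recover exactly the paper's estimate. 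In the paper's language, Lemma~\ref{PXN1} is the special case $t=u$, $s=0$, $\kappa=0$ of Lemma~\ref{LA5}; you have essentially rederived that special case by hand.

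The only cosmetic difference is the split point in $j$: Lemma~\ref{LA5} cuts at the data-dependent integer $j_0\approx -\log_2|t-s|$, whereas you cut at $j=0$. For the present lemma (where $s=0$ and $|t|=|u|\le M$) both choices work, and your fixed cut at $0$ is arguably cleaner here; the paper's moving cut $j_0$ is needed only for the companion Lemmas~\ref{PXN2}--\ref{PXN4}, where one must divide by $|u_1-u_2|^\gamma$.
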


\begin{lemma}\label{PXN2}
Let $M$, $a$, $b$ and $\ga$ be fixed real numbers satisfying $M>0$, $1/\al<a<b<1$ and $\gamma < a-1/\alpha$. For all fixed $q\in\Z_+$ and $\omega \in \Omega_0^{*}$, when $n$ goes to infinity,
\begin{equation}\label{eq1:PXN2}
\frac{ \Big| \big(\Delta_{u_1-u_2}^{1,\cdot} \big( \partial_v^q X_{M,n+l}\big)\big)(u_2,v,\omega) - \big(\Delta_{u_1-u_2}^{1,\cdot} \big( \partial_v^q X_{M,n}\big)\big) (u_2,v,\omega)\Big|}{ |u_1-u_2|^{\gamma} }  
\end{equation}
converges to $0$ uniformly in $(u_1,u_2,v)\in [-M,M]^2\times [a,b]$ and in $l\in\Z_+$.
\end{lemma}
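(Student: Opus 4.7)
My plan is to expand the expression in~(\ref{eq1:PXN2}) using the differentiation formula for $(\partial_v^q X_{M,m})$ obtained by term-by-term differentiation of the finite sum~(\ref{eq1:PWSEXN}); this formula is the evident finite-sum counterpart of~(\ref{eq1:TWSEbis}). Setting $T_{j,k}^{(p)}(u,v):=(\partial_v^{q-p}\Psi)(2^j u-k,v)$ and observing that in the $u_1,u_2$-difference the $(\partial_v^{q-p}\Psi)(-k,v)$ contributions, being independent of $u$, cancel, I obtain
$$\big(\Delta_{u_1-u_2}^{1,\cdot}(\partial_v^q X_{M,n+l})\big)(u_2,v,\omega)-\big(\Delta_{u_1-u_2}^{1,\cdot}(\partial_v^q X_{M,n})\big)(u_2,v,\omega)=\sum_{p=0}^q\dbinom{q}{p}(-\log 2)^p\sum_{(j,k)\in D_{M,n+l}\setminus D_{M,n}}j^p 2^{-jv}\epsilon_{j,k}(\omega)\big[T_{j,k}^{(p)}(u_1,v)-T_{j,k}^{(p)}(u_2,v)\big].$$
Thus it suffices to bound $|T_{j,k}^{(p)}(u_1,v)-T_{j,k}^{(p)}(u_2,v)|/|u_1-u_2|^\gamma$ in a way summable over $(j,k)\in\Z^2$, and then to show that the tail over $D_{M,n+l}\setminus D_{M,n}$ vanishes as $n\to+\infty$, uniformly in $l\ge 0$.

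The heart of the argument is the following interpolation estimate: for $j\ge 0$,
$$\frac{\big|T_{j,k}^{(p)}(u_1,v)-T_{j,k}^{(p)}(u_2,v)\big|}{|u_1-u_2|^\gamma}\le C\,2^{j\gamma}\left(\frac{1}{(3+|2^j u_1-k|)^2}+\frac{1}{(3+|2^j u_2-k|)^2}\right),$$
while for $j<0$,
$$\frac{\big|T_{j,k}^{(p)}(u_1,v)-T_{j,k}^{(p)}(u_2,v)\big|}{|u_1-u_2|^\gamma}\le C\,(2M)^{1-\gamma}\,2^j\left(\frac{1}{(3+|2^j u_1-k|)^2}+\frac{1}{(3+|2^j u_2-k|)^2}\right).$$
The $j\ge 0$ bound is obtained by a dichotomy on the size of $2^j|u_1-u_2|$: if $2^j|u_1-u_2|\le 1$, the Mean Value Theorem applied to $\partial_x\partial_v^{q-p}\Psi$ together with the localization~(\ref{localisation}) yields the factor $2^j|u_1-u_2|^{1-\gamma}\le 2^{j\gamma}$ (the $k$-localization being preserved since every intermediate point lies within unit distance of $2^j u_1$); if $2^j|u_1-u_2|>1$, the triangle inequality applied to~(\ref{localisation}) together with $|u_1-u_2|^{-\gamma}\le 2^{j\gamma}$ gives the same form. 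For $j<0$, MVT is used throughout and $|u_1-u_2|^{1-\gamma}$ is absorbed into $(2M)^{1-\gamma}$.

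Inserting the bound $|\epsilon_{j,k}(\omega)|\le C(\omega)(3+|j|)^{1/\alpha+\eta}(3+|k|)^{1/\alpha+\eta}$ from Lemma~\ref{omega0} and summing in $k$ via
$$\sum_{k\in\Z}\frac{(3+|k|)^{1/\alpha+\eta}}{(3+|2^j u_i-k|)^2}\le C\,(3+2^j|u_i|)^{1/\alpha+\eta},$$
the $k$-summed contribution at level $j$ is dominated by $C(\omega)(3+|j|)^{q+1/\alpha+\eta}\,2^{j(\gamma-v+1/\alpha+\eta)}$ for $j\ge 0$ (using $(3+2^j M)^{1/\alpha+\eta}\le C\,2^{j(1/\alpha+\eta)}$) and by $C(\omega)(3+|j|)^{q+1/\alpha+\eta}\,2^{j(1-v)}$ for $j<0$. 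Since $\gamma<a-1/\alpha\le v-1/\alpha$, one may fix $\eta>0$ small enough so that $\gamma-v+1/\alpha+\eta<0$; both series then converge geometrically and uniformly in $(u_1,u_2,v)\in[-M,M]^2\times[a,b]$.

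To conclude, I split the index set as $D_{M,n+l}\setminus D_{M,n}=A_1\cup A_2$ with $A_1=\{(j,k):n<|j|\le n+l\}$ and $A_2=\{(j,k):|j|\le n,\,M 2^{n+1}<|k|\le M 2^{n+l+1}\}$. The $A_1$-contribution is a tail of the two geometric $j$-series and is thus $O(2^{-n\delta})$ for some $\delta>0$, uniformly in $l$. For $A_2$, the condition $|k|>M 2^{n+1}$ together with $|2^j u_i|\le M 2^n$ yields $|2^j u_i-k|\ge |k|/2$, and the resulting $k$-tail is controlled by $\sum_{|k|>M 2^{n+1}}(3+|k|)^{1/\alpha+\eta-2}=O\big((M 2^n)^{1/\alpha+\eta-1}\big)$, which also tends to $0$ since $1/\alpha+\eta<1$ for $\eta$ small. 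The main technical obstacle is the interpolation estimate: one must retain the $k$-localization through the argument, which forces the explicit case split on $2^j|u_1-u_2|$ (a naive Hölder interpolation based on a supremum along $[u_1,u_2]$ would lose this decay), as well as the separate treatment of positive and negative frequencies $j$ to avoid blow-up at $j\to-\infty$.
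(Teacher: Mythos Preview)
Your proof is correct and follows essentially the same route as the paper. The paper does not write out the proof of Lemma~\ref{PXN2} separately but remarks that it is analogous to that of Lemma~\ref{PXN4}; following that template, the argument reduces to bounding the sum over $(j,k)\in D_{M,n}^c$ by Lemma~\ref{omega0} and then invoking Lemma~\ref{LA5} with $\phi=\partial_v^{q-p}\Psi$ and $\kappa=\gamma$. Your interpolation estimate (the dichotomy $2^j|u_1-u_2|\le 1$ versus $>1$, combined with the localization~(\ref{localisation})) and your decomposition $D_{M,n+l}\setminus D_{M,n}\subset A_1\cup A_2$ are exactly the ingredients that make up the proof of Lemma~\ref{LA5}: there the cases $j\le j_0$ and $j>j_0$ (with $2^{-j_0-1}<|t-s|\le 2^{-j_0}$) play the role of your dichotomy, and the quantities $A_n$, $B_n$ correspond to your $A_2$, $A_1$. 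The only organizational difference is that you treat $j<0$ by MVT throughout (absorbing $|u_1-u_2|^{1-\gamma}$ into $(2M)^{1-\gamma}$) rather than via the $j_0$-based split, which is a harmless variant.
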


\begin{lemma}\label{PXN3}
Let $M$, $a$ and $b$ be fixed real numbers satisfying $M>0$ and $1/\al<a<b<1$. For all fixed $q\in\Z_+$ and $\omega \in \Omega_0^{*}$, when $n$ goes to infinity,
\begin{equation}\label{eq1:PXN3}
\frac{ \Big| \big(\Delta_{v_1-v_2}^{\cdot,1} \big( \partial_v^q X_{M,n+l}\big)\big)(u,v_2,\omega) - \big(\Delta_{v_1-v_2}^{\cdot,1} \big( \partial_v^q X_{M,n}\big)\big)(u,v_2,\omega)\Big|}{ |v_1-v_2|} 
\end{equation}
converges to $0$ uniformly in $(u,v_1,v_2)\in [-M,M]\times [a,b]^2$ and in $l\in\Z_+$.
\end{lemma}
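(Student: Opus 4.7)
The plan is to reduce Lemma~\ref{PXN3} to Lemma~\ref{PXN1}, applied with $q$ replaced by $q+1\in\Z_+$, via a single use of the Mean Value Theorem in the variable~$v$. Fix $q\in\Z_+$ and $\omega\in\Omega_0^{*}$. Since $D_{M,n}$ and $D_{M,n+l}$ are finite by (\ref{eq2:PWSEXN}), and since for each $p$ the function $v\mapsto 2^{-jv}(\partial_v^{q-p}\Psi)(\cdot,v)$ is smooth on $(1/\al,1)$ by Proposition~\ref{regulpsi}$(i)$, the maps $v\mapsto X_{M,n}(u,v,\omega)$ and $v\mapsto X_{M,n+l}(u,v,\omega)$ are infinitely differentiable on $(1/\al,1)$, with term-by-term differentiation in $v$ legitimate at any order; this was already recorded in part~$(i)$ of Proposition~\ref{prop:cauchy}.

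Set
\[
g_{n,l}(u,v):=(\partial_v^q X_{M,n+l})(u,v,\omega)-(\partial_v^q X_{M,n})(u,v,\omega).
\]
For any $u\in[-M,M]$ and any $v_1,v_2\in [a,b]$ with $v_1\ne v_2$, the Mean Value Theorem applied to the smooth map $v\mapsto g_{n,l}(u,v)$ produces some $\tilde v$ strictly between $v_1$ and $v_2$, hence in $[a,b]$, such that
\[
\frac{g_{n,l}(u,v_1)-g_{n,l}(u,v_2)}{v_1-v_2}=(\partial_v^{q+1}X_{M,n+l})(u,\tilde v,\omega)-(\partial_v^{q+1}X_{M,n})(u,\tilde v,\omega).
\]
Taking absolute values and passing to the supremum on both sides yields
\[
\sup_{\substack{(u,v_1,v_2)\in[-M,M]\times [a,b]^2\\ v_1\ne v_2}}\frac{\big|g_{n,l}(u,v_1)-g_{n,l}(u,v_2)\big|}{|v_1-v_2|}\le \sup_{(u,v)\in[-M,M]\times [a,b]}\big|(\partial_v^{q+1}X_{M,n+l})(u,v,\omega)-(\partial_v^{q+1}X_{M,n})(u,v,\omega)\big|.
\]

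Lemma~\ref{PXN1}, invoked with $q+1$ in place of $q$, asserts that the right-hand side tends to $0$ as $n\to\infty$, uniformly in $l\in\Z_+$; this is precisely the claim (\ref{eq1:PXN3}) of Lemma~\ref{PXN3}. Since the substantive estimate is already contained in Lemma~\ref{PXN1}, I do not anticipate any serious obstacle here: the only subtlety is the commutation of $\partial_v$ with the finite sum defining $X_{M,n}$ (immediate) and with the integrals defining the functions $\partial_v^{q-p}\Psi$, which is already handled by Proposition~\ref{regulpsi}$(i)$.
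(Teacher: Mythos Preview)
Your argument is correct. The Mean Value Theorem applied to the smooth map $v\mapsto g_{n,l}(u,v)$ gives a point $\tilde v\in[a,b]$, and the resulting bound by $\sup_{(u,v)\in[-M,M]\times[a,b]}\big|(\partial_v^{q+1}X_{M,n+l})(u,v,\omega)-(\partial_v^{q+1}X_{M,n})(u,v,\omega)\big|$ is exactly the quantity controlled by Lemma~\ref{PXN1} at level $q+1$. Nothing is missing.

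This differs from the paper's intended route. The paper declares that all four lemmas are proved ``similarly'' and gives only the proof of Lemma~\ref{PXN4}; by analogy, the implied argument for Lemma~\ref{PXN3} would expand the difference as a sum over $D_{M,n}^c$, apply a Taylor expansion in $v$ to the functions $\Theta_{j,k}^{q-p}$, and then bound each resulting term directly via Lemma~\ref{omega0} and the technical Lemma~\ref{LA5}. Your reduction is more economical: rather than redoing those estimates, you observe that the $v$-derivative of the difference $g_{n,l}$ is precisely the object already handled by Lemma~\ref{PXN1} with the index shifted to $q+1$. The paper's approach has the advantage of being self-contained (each lemma is proved from the same core estimate, Lemma~\ref{LA5}), whereas yours exploits the logical dependence among the four lemmas and is shorter. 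Both are valid; yours is the cleaner packaging.
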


\begin{lemma}\label{PXN4}
Let $M$, $a$, $b$ and $\ga$ be fixed real numbers satisfying $M>0$, $1/\al<a<b<1$ and $\gamma < a-1/\alpha$. For all fixed $q\in\Z_+$ and $\omega \in \Omega_0^{*}$, when $n$ goes to infinity,
\begin{equation}\label{eq1:PXN4}
\frac{\Big| \Big( \Delta_{(u_1-u_2,v_1-v_2)}^{1,1} \big(\partial_v^q X_{M,n+l}\big)\Big) (u_2,v_2,\omega) - \Big(\Delta_{(u_1-u_2,v_1-v_2)}^{1,1}  \big(\partial_v^q X_{M,n}\big)\Big)(u_2,v_2,\omega) \Big|}{ |u_1-u_2|^{\gamma} |v_1-v_2| }
\end{equation}
converges to $0$ uniformly in $(u_1,u_2,v_1,v_2)\in [-M,M]^2\times [a,b]^2$ and in $l\in\Z_+$.
\end{lemma}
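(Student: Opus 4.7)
} The strategy closely mirrors Lemmas~\ref{PXN1}--\ref{PXN3}; the only twist is that we must simultaneously produce a factor $|u_1-u_2|^{\gamma}$ and a factor $|v_1-v_2|$ in the estimates. Starting from the explicit formula~(\ref{eq1:TWSEbis}) applied on the truncated index set, the difference in the numerator of~(\ref{eq1:PXN4}) is a finite linear combination (over $p\in\{0,\dots,q\}$) of tail sums of the form
$$
\sum_{(j,k)\in D_{M,n+l}\setminus D_{M,n}} j^{p}\,\epsilon_{j,k}(\omega)\,\Delta^{1,1}_{(u_1-u_2,v_1-v_2)}\!\Bigl[2^{-jv}(\partial_{v}^{q-p}\Psi)(2^{j}u-k,v)\Bigr],
$$
since the contribution $(\partial_{v}^{q-p}\Psi)(-k,v)$ from~(\ref{eq1:TWSEbis}) is independent of $u$ and is therefore annihilated by $\Delta^{1,1}$. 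So the key point is to bound this mixed difference uniformly.

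The next step is to rewrite the mixed difference as the integral
$$
\Delta^{1,1}_{(u_1-u_2,v_1-v_2)}[g](u_2,v_2)=\int_{v_2}^{v_1}\partial_{v}\bigl(g(u_1,v)-g(u_2,v)\bigr)\,dv,
$$
apply it to $g(u,v):=2^{-jv}(\partial_{v}^{q-p}\Psi)(2^{j}u-k,v)$, and use the product rule. The integration in $v$ supplies the desired factor $|v_1-v_2|$ and reduces matters, uniformly in $v\in[a,b]$, to a sum of terms of the form
$$
\bigl(1+|\log 2|\,|j|\bigr)\,2^{-jv}\,\bigl|(\partial_{v}^{r}\Psi)(2^{j}u_1-k,v)-(\partial_{v}^{r}\Psi)(2^{j}u_2-k,v)\bigr|,\qquad r\in\{q-p,\,q-p+1\}.
$$

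To extract the factor $|u_1-u_2|^{\gamma}$ one splits the sum over $j$ according to sign. For $j\ge 0$, one interpolates between the $L^{\infty}$-bound and the Mean Value Theorem combined with the localization in Proposition~\ref{regulpsi}(ii) to get
$$
|(\partial_{v}^{r}\Psi)(2^{j}u_1-k,v)-(\partial_{v}^{r}\Psi)(2^{j}u_2-k,v)|\le C\bigl(2^{j}|u_1-u_2|\bigr)^{\gamma}\Bigl((3+|2^{j}u_1-k|)^{-2}+(3+|2^{j}u_2-k|)^{-2}\Bigr),
$$
which produces the decay factor $2^{-j(v-\gamma)}$, geometrically summable because $v-\gamma>a-\gamma>1/\alpha$. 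For $j<0$, the Mean Value Theorem alone gives a bound by $C\,2^{j}|u_1-u_2|\,(3+|2^{j}u_2-k|)^{-2}$, which, using $|u_1-u_2|\le 2M$, is re-expressed as $C_{M}\,2^{j}|u_1-u_2|^{\gamma}(3+|2^{j}u_2-k|)^{-2}$, producing the decay factor $2^{j(1-v)}$ that is geometrically summable because $v<1$.

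Finally, one inserts the sub-polynomial bound $|\epsilon_{j,k}(\omega)|\le C'(\omega)(3+|j|)^{1/\alpha+\eta}(3+|k|)^{1/\alpha+\eta}$ from Lemma~\ref{omega0}, performs the shift $k\mapsto k-[2^{j}u]$ as in the proof of Theorem~\ref{TWSE}(i) to make the $k$-sum uniformly convergent (using the Riesz-type bound analogous to~(\ref{eq3:absconc})), and observes that the polynomial prefactors $|j|^{p}(1+|j|)(3+|j|)^{1/\alpha+\eta}$ are harmlessly absorbed into the geometric decays obtained above. The resulting double series is summable over all of $\Z^{2}$, so its tail over $\Z^{2}\setminus D_{M,n}$ tends to $0$ as $n\to\infty$, uniformly in $l\in\Z_{+}$ and in $(u_1,u_2,v_1,v_2)\in[-M,M]^{2}\times[a,b]^{2}$, which is the claim. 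The main obstacle is purely bookkeeping: ensuring that the geometric decay in $|j|$ wins against both the logarithmic factor $|j|^{p+1}$ produced by differentiating $2^{-jv}$ and the sub-polynomial growth of $|\epsilon_{j,k}(\omega)|$, while the $u$-interpolation works simultaneously at every scale $j\in\Z$ using a single exponent $\gamma<a-1/\alpha$.
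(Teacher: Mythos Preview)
Your outline is correct and follows essentially the same route as the paper: write the numerator as a tail sum over $D_{M,n+l}\setminus D_{M,n}\subset D_{M,n}^c$, pull out $|v_1-v_2|$ by differentiating in $v$, pull out $|u_1-u_2|^\gamma$ by interpolating between the Mean Value Theorem and the $L^\infty$ localization of Proposition~\ref{regulpsi}(ii), and control what remains with Lemma~\ref{omega0}. Two small differences are worth noting. First, you use the first-order identity $\Delta^{1,1}[g]=\int_{v_2}^{v_1}\partial_v\bigl(\Delta^{1,\cdot}_{u_1-u_2}g\bigr)\,dv$, which is a bit cleaner than the paper's second-order Taylor expansion (the paper splits into $G^{1,q}_{M,n}+|v_1-v_2|\,G^{2,q}_{M,n}$, invoking $\partial_v^2\Theta_{j,k}^{q-p}$ as well). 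Second, the paper packages the $u$-interpolation and the tail estimate into the technical Lemma~\ref{LA5}, which handles the two regions $\{|j|>n\}$ and $\{|j|\le n,\ |k|>M2^{n+1}\}$ of $D_{M,n}^c$ separately, whereas you do the same estimate inline.

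One point deserves a word of care. Your final step (``the resulting double series is summable over $\Z^2$, so its tail over $\Z^2\setminus D_{M,n}$ tends to $0$ uniformly'') is not quite automatic, because the termwise bound you derive still depends on $u_1,u_2$ through the localization factors $(3+|2^ju_m-k|)^{-2}$; taking the supremum over $u_m$ termwise before summing in $k$ would cost an extra factor $2^j$ and spoil convergence. The fix is exactly what you hint at: either perform the shift $k\mapsto k-[2^ju_m]$ and observe that the shifted tail set is contained in a parameter-free set that still exhausts $\Z^2$ (since $|[2^ju_m]|\le M2^{n}$ for $|j|\le n$), or argue the two regions of $D_{M,n}^c$ separately as in Lemma~\ref{LA5}. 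Either way is routine and your sketch contains all the needed ingredients.
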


The proofs of the previous four lemmas are quite similar, so we will only give that of Lemma~\ref{PXN4}.

\begin{proof}[Proof of Lemma \ref{PXN4}]
In view of the convention that $0/0=0$, there is no restriction to assume that $u_1\ne u_2$ and $v_1\ne v_2$. By using (\ref{eq1:PWSEXN}), (\ref{eq1:normequiv-diff}) and Leibniz formula, one can rewrite (\ref{eq1:PXN4}) as, 
\begin{equation}\label{eq2:PXN4}
\frac{ \Big| \sum_{p=0}^q \dbinom{q}{p} \big( - \log 2 \big)^p \sum_{(j,k)\in D_{M,n+l} \setminus D_{M,n}} j^p \epsilon_{j,k}(\omega) \Big( \Delta_{(u_1-u_2,v_1-v_2)}^{1,1} \Theta^{q-p}_{j,k}\Big) (u_2,v_2) \Big| } { |u_1-u_2|^{\gamma} |v_1-v_2| },
\end{equation}
where for all $(u,v) \in \R\times (1/\alpha,1)$, 
\begin{equation}
\label{eq2bis:PXN4}
\Theta_{j,k}^{q-p}(u,v)=2^{-jv} \big(\partial_v^{q-p}\Psi\big)(2^ju-k,v).
\end{equation}
In the sequel, we denote by $D_{M,n}^c$ the set defined as $D_{M,n}^c=\big\{(j,k)\in\Z^2\,:\,(j,k)
\notin D_{M,n}\big\}$; recall that $D_{M,n}$ has been introduced in (\ref{eq2:PWSEXN}). Using (\ref{eq2:PXN4}), Taylor formula with respect to the variable $v$, (\ref{eq1:normequiv-diff}), and the triangle inequality, one obtains that
\begin{align}
\label{eq3:PXN4}
& \frac{\Big| \Big( \Delta^{1,1}_{(u_1-u_2,v_1-v_2)} \big(\partial_v^q X_{n+l}\big)\Big) (u_2,v_2,\omega) - \Big(\Delta^{1,1}_{(u_1-u_2,v_1-v_2)} \big(\partial_v^q X_{n}\big)\Big)(u_2,v_2,\omega) \Big|}{ |u_1-u_2|^{\gamma} |v_1-v_2| } \nonumber \\
&\hspace{7.5cm}\le G_{M,n}^{1,q}(u_1,u_2,v_2,\omega) + |v_1-v_2| G_{M,n}^{2,q}(u_1,u_2,v_1,v_2,\omega), \nonumber
\end{align}
where
\begin{equation}
\label{eq3:PXN4-ant1}
G_{M,n}^{1,q}(u_1,u_2,v_2,\omega):=\sum_{p=0}^q \dbinom{q}{p} (\log 2)^p \sum_{(j,k)\in D_{M,n}^c} |j|^p |\epsilon_{j,k}(\omega)| \frac{ \Big| \Big(\Delta^{1,\cdot}_{u_1-u_2} \big( \partial_v \Theta_{j,k}^{q-p} \big) \Big) (u_2,v_2) \Big|}{ |u_1-u_2|^{\gamma} }
\end{equation}
and 
\begin{align}
\label{eq3:PXN4-ant2}
& G_{M,n}^{2,q}(u_1,u_2,v_1,v_2,\omega)\nonumber\\
&:=\sum_{p=0}^q \dbinom{q}{p} (\log 2)^p \sum_{(j,k)\in D_{M,n}^c} |j|^p |\epsilon_{j,k}(\omega)| \frac{ \Big| \int_0^1 (1-s)  \Big( \Delta^{1,\cdot}_{u_1-u_2}\big( \partial_v^2 \Theta_{j,k}^{q-p} \big) \Big) (u_2, v_2+s(v_1-v_2)) ds \Big|}{ |u_1-u_2|^{\gamma} }.
\end{align}
Thus, for proving the lemma, it is sufficient to show that, when $n\rightarrow +\infty$, $G_{M,n}^{1,q}(u_1,u_2,v_2,\omega)$ and 
$G_{M,n}^{2,q}(u_1,u_2,v_1,v_2,\omega)$ converge to $0$, uniformly in $(u_1,u_2,v_1,v_2)\in [-M,M]^2 \times [a,b]^2$.

First, let us study $G_{M,n}^{1,q}(u_1,u_2,v_2,\omega)$. It follows from (\ref{eq2bis:PXN4}), that,
\begin{equation}
\label{eq2ter:PXN4}
(\partial_v \Theta_{j,k}^{q-p})(u,v)= 2^{-jv} \big(\partial_v^{q+1-p}\Psi\big)(2^ju-k,v)-(\log 2) j 2^{-jv} \big(\partial_v^{q-p}\Psi\big)(2^ju-k,v).
\end{equation}
Next, putting together (\ref{eq2ter:PXN4}), (\ref{eq1:normequiv-diff}), the triangle inequality, Lemma~\ref{omega0}, (\ref{SAN}) and (\ref{SBN}), one has,
\begin{align*}
& \sum_{(j,k)\in D_{M,n}^c} |j|^p |\epsilon_{j,k}(\omega)| \frac{ \Big| \Big(\Delta_{u_1-u_2}^{1,\cdot} \big( \partial_v \Theta_{j,k}^{q-p} \big) \Big) (u_2,v_2) \Big|}{ |u_1-u_2|^{\gamma} }\\
&\le \sum_{(j,k) \in D_{M,n}^c} 2^{-jv_2} |j|^p |\epsilon_{j,k}(\omega)| \frac{ \Big| \big(\partial_v^{q+1-p}\Psi \big)(2^ju_1-k,v_2) - \big(\partial_v^{q+1-p}\Psi \big)(2^ju_2-k,v_2) \Big|}{ |u_1-u_2|^{\gamma} } \nonumber \\
& \hspace{0.3cm}+ (\log 2) \sum_{(j,k) \in D_{M,n}^c} 2^{-jv_2} |j|^{p+1} |\epsilon_{j,k}(\omega)| \frac{ \Big| \big(\partial_v^{q-p}\Psi \big)(2^ju_1-k,v_2) - \big(\partial_v^{q-p}\Psi \big)(2^ju_2-k,v_2) \Big|}{ |u_1-u_2|^{\gamma} } \nonumber \\
& \leq C_{1}(\omega)\Big(A_n\big(u_1,u_2,v_2;M,\gamma,\eta,p,\partial_v^{q+1-p}\Psi\big) +  B_n\big(u_1,u_2,v_2;M,\gamma,\eta,p,\partial_v^{q+1-p}\Psi\big)\Big) \nonumber \\
& \hspace{0.3cm}+ C_{1}(\omega) (\log 2) \Big( A_{n}\big(u_1,u_2,v_2;M,\gamma,\eta,p+1,\partial_v^{q-p}\Psi\big) + B_{n}\big(u_1,u_2,v_2;M,\gamma,\eta,p+1,\partial_v^{q-p}\Psi\big)\Big),
\end{align*}
where $C_1$ denotes the random variable $C'$ introduced in Lemma~\ref{omega0}. Then Lemma~\ref{LA5} and (\ref{eq3:PXN4-ant1}) imply that, when $n\rightarrow +\infty$, $G_{M,n}^{1,q}(u_1,u_2,v_2,\omega)$ converges to $0$, uniformly 
in $(u_1,u_2,v_1, v_2)\in [-M,M]^2 \times [a,b]^2$.

Let us now study $G_{M,n}^{2,q}(u_1,u_2,v_1,v_2,\omega)$. It follows from (\ref{eq2ter:PXN4}) that,
\begin{eqnarray}
\label{eq2quat:PXN4}
(\partial_{v}^2 \Theta_{j,k}^{q-p})(u,v)= 2^{-jv}\big(\partial_v^{q+2-p}\Psi\big)(2^ju-k,v) &-& 2(\log 2) j 2^{-jv} \big(\partial_v^{q+1-p}\Psi\big)(2^ju-k,v)\nonumber\\
&+&(\log 2)^2 j^2 2^{-jv}\big(\partial_v^{q-p}\Psi\big)(2^ju-k,v).
\end{eqnarray}
Next, putting together (\ref{eq2quat:PXN4}), (\ref{eq1:normequiv-diff}), the triangle inequality, Lemma~\ref{omega0}, (\ref{SAN}) and (\ref{SBN}), one has,
\begin{align*}
& \sum_{(j,k)\in D_{M,n}^c} |j|^p |\epsilon_{j,k}(\omega)| \frac{ \Big| \int_0^1 (1-s)  \Big( \Delta_{u_1-u_2}^{1,\cdot}\big( \partial_v^2 \Theta_{j,k}^{q-p} \big) \Big) (u_2, v_2+s(v_1-v_2)) ds \Big|}{ |u_1-u_2|^{\gamma} }\\
& \le C_1(\omega)\int_{0}^1 \Big( A_n\big(u_1,u_2,v_2+s(v_1-v_2);M,\gamma,\eta,p,\partial_v^{q+2-p}\Psi\big) +\\   &\hspace{7cm} B_n\big(u_1,u_2,v_2+s(v_1-v_2);M,\gamma,\eta,p,\partial_v^{q+2-p}\Psi\big) \Big) ds\nonumber \\
& \hspace{0.1cm}+C_{2}(\omega) \int_{0}^1\Big( A_{n}\big(u_1,u_2,v_2+s(v_1-v_2);M,\gamma,\eta,p+1,\partial_v^{q+1-p}\Psi\big) +\\ 
&\hspace{7cm} B_{n}\big(u_1,u_2,v_2+s(v_1-v_2);M,\gamma,\eta,p+1,\partial_v^{q+1-p}\big) \Big)ds\\
&\hspace{0.1cm}+C_{2}(\omega)\int_{0}^1\Big(A_{n}\big(u_1,u_2,v_2+s(v_1-v_2);M,\gamma,\eta,p+2,\partial_v^{q-p}\Psi\big) + \\
&\hspace{7cm}B_{n}\big(u_1,u_2,v_2+s(v_1-v_2);M,\gamma,\eta,p+2,\partial_v^{q-p}\Psi\big) \Big)ds,
\end{align*}
where $C_2(\omega)=(2\log 2)C_1(\omega)$. Then Lemma~\ref{LA5} and (\ref{eq3:PXN4-ant2}) imply that, when $n\rightarrow +\infty$, $G_{M,n}^{2,q}(u_1,u_2,v_1,v_2,\omega)$ converges to $0$, uniformly 
in $(u_1,u_2,v_1,v_2)\in [-M,M]^2 \times [a,b]^2$.
\end{proof}

Now we are in position to prove Theorem~\ref{TWSEbis}.

\begin{proof}[Proof of Theorem~\ref{TWSEbis}]  Let $\o\in\Omega_{0}^*$ be arbitrary and fixed. First we show that Part $(i)$ of the theorem holds. By using Lemma~\ref{omega0}, Proposition~\ref{regulpsi} and a method similar to the one which allowed to derive (\ref{eq1:absconc}), we can prove that, for all 
fixed $q\in\N$ and $(u,v)\in\R\times (1/\al,1)$, one has,
$$
\sum_{p=0}^q \dbinom{q}{p} \big(\log 2 \big)^p \sum_{(j,k)\in\Z^2} |j|^p 2^{-jv}\big|\epsilon_{j,k}(\omega)\big|  \left |\big(\partial_v^{q-p}\Psi\big)(2^ju-k,v)-\big(\partial_v^{q-p}\Psi\big)(-k,v)\right|<\infty.
$$
Therefore, the series of real numbers,
$$
\sum_{p=0}^q \dbinom{q}{p} \big(-\log 2 \big)^p \sum_{(j,k)\in\Z^2} j^p 2^{-jv}\epsilon_{j,k}(\omega) \left (\big(\partial_v^{q-p}\Psi\big)(2^ju-k,v)-\big(\partial_v^{q-p}\Psi\big)(-k,v)\right),
$$
is convergent, and its finite limit, denoted by $\check{X}^{(q)}(u,v,\o)$, does not depend on the way the terms of the series are ordered. Let us now assume 
that $u\in\R$ is arbitrary and fixed and that the variable $v$ belongs to an arbitrary fixed compact interval $[a,b]$ contained in $(1/\al,1)$. 
We denote by $M$ an arbitrary fixed positive real number such that $u\in [-M,M]$. In view of Theorem~\ref{TWSE} Part $(i)$, Proposition~\ref{prop:cauchy} Part $(iii)$, and (\ref{eq1:normequiv}), when $n$ goes to infinity, the following two results are satisfied:
\begin{itemize}
\item the function $v\mapsto X_{M,n}(u,v,\o)$ converges to the function $v\mapsto X(u,v,\o)$, uniformly in $v\in [a,b]$;
\item for each fixed $q\in\N$, the function $v\mapsto (\partial_{v}^q X_{M,n})(u,v,\o)$ converges to the function $v\mapsto \check{X}^{(q)}(u,v,\o)$, uniformly in $v\in [a,b]$.
\end{itemize}
The latter two results imply that $v\mapsto X(u,v,\o)$ is an infinitly differentiable function over $[a,b]$ and one has, for all $q\in\N$ and $v\in [a,b]$,
\begin{equation}
\label{eq2:TWSEbis}
(\partial_{v}^q X)(u,v,\o)=\lim_{n\rightarrow +\infty} (\partial_{v}^q X_{M,n})(u,v,\o)=\check{X}^{(q)}(u,v,\o);
\end{equation}
these equalities mean that (\ref{eq1:TWSEbis}) is satisfied. Thus, it remains to show that Part $(ii)$ of the theorem holds. In fact, the equality $X(u,v,\o)=\lim_{n\rightarrow +\infty}  X_{M,n}(u,v,\o)$, (\ref{eq2:TWSEbis}), and Proposition~\ref{prop:cauchy} Part $(iii)$, imply that this is indeed the case.
\end{proof}

Before ending this section, let us stress that for each fixed $\o\in \Omega_{0}^*$, $q\in\Z_+$ and $M,a,b\in\R$ satisfying $M>0$ and $1/\al <a<b<1$,  Theorem~\ref{TWSEbis} Part $(ii)$, allows to derive, uniformly in $v\in [a,b]$, a global modulus of continuity of the function $u\mapsto (\partial_v^q X)(u,v,\o)$, on the interval $[-M,M]$; also, it allows to derive, uniformly in $u\in [-M,M]$, a global modulus of continuity of the function $v\mapsto (\partial_v^q X)(u,v,\o)$, on the interval $[a,b]$. More precisely, in view of Definition~\ref{de:spaceE}, a straightforward consequence of Theorem~\ref{TWSEbis} Part $(ii)$, is the following:

\begin{corollary}
\label{cor:TWSEbis}
For each fixed $\o\in \Omega_{0}^*$, $q\in\Z_+$ and $M,a,b,\eta\in\R$ satisfying $M>0$, $1/\al <a<b<1$ and $\eta>0$, one has,
\begin{equation}
\label{eq1:CTWSEbis}
\sup_{(u_1,u_2,v) \in [-M,M]^2\times [a,b]} \left\{\frac{\big| \big( \partial_v^q X \big)(u_1,v,\omega) - \big( \partial_v^q X \big)(u_2,v,\omega)\big|}{ |u_1-u_2|^{a-1/\alpha- \eta} }  \right\}< \infty, 
\end{equation}
and 
\begin{equation}
\label{eq2:CTWSEbis}
\sup_{(u,v_1,v_2) \in [-M,M]\times [a,b]^2} \left\{\frac{\big| \big( \partial_v^q X \big)(u,v_1,\omega) - \big( \partial_v^q X \big)(u,v_2,\omega)\big|}{ |v_1-v_2|}\right\}  < \infty.
\end{equation}
\end{corollary}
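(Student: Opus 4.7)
The plan is to read both inequalities directly off Theorem~\ref{TWSEbis} Part~$(ii)$ via the equivalent norm $|||\cdot|||$ given in (\ref{eq1:normequiv}). Fix $\o\in\Omega_0^*$, $q\in\Z_+$, and $M,a,b,\eta$ as in the statement. Set $f:=(\partial_v^q X)(\cdot,\cdot,\o)$ regarded, following Definition~\ref{de:spaceE}, as a bivariate function on $[-M,M]\times[a,b]$.

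First I would treat the "generic" case $0<\eta<a-1/\al$. Then $\gamma:=a-1/\al-\eta\in (0,a-1/\al)$, so Theorem~\ref{TWSEbis} Part~$(ii)$ applies and yields $f\in{\cal E}_\gamma(a,b,M)$; equivalently, $|||f|||<\infty$. In view of (\ref{eq1:normequiv-diff}), the second summand in (\ref{eq1:normequiv}) is exactly the supremum appearing in (\ref{eq1:CTWSEbis}), so that inequality follows at once. Similarly, the third summand in (\ref{eq1:normequiv}) coincides with the left-hand side of (\ref{eq2:CTWSEbis}), which is therefore also finite. Note that (\ref{eq2:CTWSEbis}) does not involve the parameter $\eta$, so this already proves the second assertion of the corollary for every admissible $\eta$.

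It remains to extend (\ref{eq1:CTWSEbis}) to arbitrary $\eta>0$, which is straightforward. Given any $\eta>0$, pick $\eta_0\in(0,\,\min(\eta,a-1/\al))$ and set $\gamma_0:=a-1/\al-\eta_0$. By the previous paragraph there exists a finite constant $C=C(\o,q,M,a,b,\eta_0)$ such that
\[
\big|f(u_1,v)-f(u_2,v)\big|\le C\,|u_1-u_2|^{\gamma_0}\quad\mbox{for all }(u_1,u_2,v)\in [-M,M]^2\times [a,b].
\]
Since $\gamma_0\ge a-1/\al-\eta$ and $|u_1-u_2|\le 2M$, one has $|u_1-u_2|^{\gamma_0}\le (2M)^{\gamma_0-(a-1/\al-\eta)_+}\,|u_1-u_2|^{a-1/\al-\eta}$ (interpreting the negative-exponent case by the trivial bound $|u_1-u_2|^{a-1/\al-\eta}\ge (2M)^{a-1/\al-\eta}$ when $a-1/\al-\eta<0$), which gives (\ref{eq1:CTWSEbis}) with a (possibly larger) finite constant.

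I do not expect any real obstacle here: the content of the corollary is already packaged in Theorem~\ref{TWSEbis} Part~$(ii)$ via the equivalence of norms built into Definition~\ref{de:spaceE}. The only mildly delicate point is the bookkeeping for $\eta\ge a-1/\al$, handled by the trivial monotonicity argument above.
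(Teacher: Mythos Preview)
Your proposal is correct and follows exactly the route indicated by the paper, which gives no explicit proof but simply states that the corollary is, in view of Definition~\ref{de:spaceE}, a straightforward consequence of Theorem~\ref{TWSEbis} Part~$(ii)$. Your extra paragraph handling arbitrary $\eta>0$ is more than the paper bothers to spell out; it is a harmless bookkeeping add-on (the constant in the monotonicity step is simply $(2M)^{\eta-\eta_0}$, your displayed exponent with the positive part is slightly garbled but the intended bound is clear and valid).
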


\section{Fine path properties of the field generating LMSM's}
\label{sec:Xpath}
The main two goals of this section are the following:
\begin{itemize}
\item to give an improved version of the global modulus of continuity (\ref{eq1:CTWSEbis});
\item to derive, an upper bound of $\big| \big( \partial_v^q X \big)(u,v,\omega)\big|$, for all $\o\in\Omega_{0}^*$, $q\in\Z_+$, $v\in [a,b]\subset (1/\al,1)$ and $u\in\R$.
\end{itemize}
More precisely, we will show that the following two results hold.

\begin{proposition}
\label{pp1:dX}
For each fixed $\o\in \Omega_{0}^*$, $q\in\Z_+$ and $M,a,b,\eta\in\R$ satisfying $M>0$, $1/\al <a<b<1$ and $\eta>0$, one has,
\begin{align}
\label{eq1:dX}
& \nonumber\sup_{(u_1,u_2,v) \in [-M,M]^2\times [a,b]} \left\{\frac{\big| \big( \partial_v^q X \big)(u_1,v,\omega) - \big( \partial_v^q X \big)(u_2,v,\omega)\big|}{ |u_1-u_2|^{v-1/\alpha} \big( 1+\big| \log |u_1-u_2| \big| \big)^{q+2/\alpha+\eta}} \right\} \\
\nonumber
&\le\sup_{(u_1,u_2,v) \in [-M,M]^2\times [a,b]}\\
&\nonumber\hspace{1.5cm}\left\{\frac{\sum_{p=0}^q \dbinom{q}{p} \big(\log 2 \big)^p \sum_{(j,k)\in\Z^2} |j|^p 2^{-jv}\big|\epsilon_{j,k}(\omega)\big|  \left |\big(\partial_v^{q-p}\Psi\big)(2^j u_1-k,v)-\big(\partial_v^{q-p}\Psi\big)(2^j u_2-k,v)\right|}{|u_1-u_2|^{v-1/\alpha} \big( 1+\big| \log |u_1-u_2| \big| \big)^{q+2/\alpha+\eta}} \right\}\\
& < \infty.
\end{align}
\end{proposition}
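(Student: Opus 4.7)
The plan proceeds in two stages. First, the leftmost inequality in (\ref{eq1:dX}) is essentially immediate from Theorem~\ref{TWSEbis} Part~$(i)$. Writing $(\partial_v^q X)(u_i,v,\omega)$ via (\ref{eq1:TWSEbis}) for $i\in\{1,2\}$ and subtracting, the constant-in-$u$ terms $(\partial_v^{q-p}\Psi)(-k,v)$ cancel, yielding
\begin{align*}
&(\partial_v^q X)(u_1,v,\omega)-(\partial_v^q X)(u_2,v,\omega)\\
&=\sum_{p=0}^{q}\binom{q}{p}(-\log 2)^{p}\sum_{(j,k)\in\Z^2}j^{p}2^{-jv}\epsilon_{j,k}(\omega)\bigl[(\partial_v^{q-p}\Psi)(2^{j}u_1-k,v)-(\partial_v^{q-p}\Psi)(2^{j}u_2-k,v)\bigr].
\end{align*}
Taking absolute values termwise produces precisely the middle expression in (\ref{eq1:dX}), which is therefore a pointwise majorant of the left-hand one.

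The substantial step is the finiteness of the middle supremum in (\ref{eq1:dX}). Fix $\omega\in\Omega_{0}^{*}$, $p\in\{0,\dots,q\}$, and $u_1\ne u_2$ in $[-M,M]$. I would introduce $j_{0}=j_{0}(u_1,u_2)\in\Z$, the unique integer with $2^{j_{0}}|u_1-u_2|\in[1,2)$, and split the $j$-sum into a low-frequency part $j\le j_{0}-1$ and a high-frequency part $j\ge j_{0}$. For the low frequencies, the mean value theorem in the variable $x$ gives, for some $\widetilde{u}\in[-M,M]$,
$$
\bigl|(\partial_v^{q-p}\Psi)(2^{j}u_1-k,v)-(\partial_v^{q-p}\Psi)(2^{j}u_2-k,v)\bigr|\le 2^{j}|u_1-u_2|\bigl|(\partial_x\partial_v^{q-p}\Psi)(2^{j}\widetilde{u}-k,v)\bigr|,
$$
so the localization bound (\ref{localisation}) applied with $p=1$ controls the right-hand side. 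For the high frequencies, the triangle inequality separates the two contributions and (\ref{localisation}) applied with $p=0$ controls each.

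The inner $k$-summation at each fixed $j$ is estimated using Lemma~\ref{omega0} by splitting $k$ according to $|k|\le 2^{\max(j,0)+1}M+3$ or not: one obtains the uniform bound
\begin{equation*}
\sum_{k\in\Z}(1+|k|)^{1/\alpha}\log^{1/\alpha+\eta'}(2+|k|)(3+|2^{j}u-k|)^{-2}\le c_{M}\bigl(1+2^{\max(j,0)}\bigr)^{1/\alpha}(1+|j|)^{1/\alpha+\eta'}
\end{equation*}
for all $u\in[-M,M]$, where $\eta'>0$ is chosen small. Combined with the $(1+|j|)^{1/\alpha+\eta'}$ factor on $|\epsilon_{j,k}(\omega)|$ coming from Lemma~\ref{omega0}, the problem reduces to one-dimensional sums in $j$: the low-frequency sum has common ratio $2^{1-v+1/\alpha}>1$ and is dominated by its top term at $j=j_{0}-1$; the high-frequency sum has common ratio $2^{1/\alpha-v}<1$ and is dominated by its bottom term at $j=j_{0}$; the indices $j<0$ contribute at most $O(|u_1-u_2|)$ and are absorbed. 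Using $2^{-j_{0}}\asymp|u_1-u_2|$ and $j_{0}\asymp 1+|\log|u_1-u_2||$, both the low- and high-frequency contributions match to produce a bound of order
$$
C_{p}(\omega)|u_1-u_2|^{v-1/\alpha}\bigl(1+|\log|u_1-u_2||\bigr)^{p+2/\alpha+2\eta'}.
$$
Summing over $p\in\{0,\dots,q\}$ and choosing $\eta'$ small enough that $2\eta'\le\eta$ yields the announced exponent $q+2/\alpha+\eta$, and hence the finiteness of the middle supremum in (\ref{eq1:dX}).

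The main obstacle I anticipate is the uniform $k$-summation above, which requires carefully pairing the $(1+|k|)^{1/\alpha}\log^{1/\alpha+\eta'}(2+|k|)$ growth of $|\epsilon_{j,k}(\omega)|$ with the $(3+|\cdot|)^{-2}$ decay of $(\partial_x^{p'}\partial_v^{q-p}\Psi)$, uniformly in $u\in[-M,M]$ and $v\in[a,b]$. This bookkeeping is of the same nature as the quantities $A_{n},B_{n}$ used in the proof of Lemma~\ref{PXN4}, and should be provable by analogous (untruncated) versions of Lemma~\ref{LA5}. Once those uniform sums are in place, the critical-scale split at $j_{0}$ is routine and matches the two frequency regimes to produce the announced exponents.
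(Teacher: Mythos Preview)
Your proposal is correct and follows essentially the same route as the paper: split the $j$-sum at the critical scale $j_0\asymp -\log_2|u_1-u_2|$, apply the mean value theorem together with (\ref{localisation}) for the low frequencies and the triangle inequality together with (\ref{localisation}) for the high frequencies, bound $|\epsilon_{j,k}|$ via Lemma~\ref{omega0}, and then sum first in $k$ and next geometrically in $j$. The paper packages the uniform $k$-summation you flag as the main obstacle into Lemma~\ref{LA3}, and the resulting $j$-sums into Lemma~\ref{LA2}, landing on precisely the bound $C_p(\omega)\,|u_1-u_2|^{v-1/\alpha}\bigl(1+|\log|u_1-u_2||\bigr)^{p+2/\alpha+2\eta'}$ you predict.
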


\begin{proposition}
\label{prop:asympinf}
For each fixed $\o\in \Omega_{0}^*$, $q\in\Z_+$ and $a,b,\eta\in\R$ satisfying $1/\al <a<b<1$ and $\eta>0$, one has,
\begin{align}
\label{eq1:pasymp}
& \nonumber \sup_{(u,v) \in \R\times [a,b]} \left\{\frac{ \big|\big(\partial_v^q X \big)(u,v,\omega) \big|}{ |u|^{v} \big(1+\big|\log |u| \big|\big)^{q+1/\alpha+\eta}} \right\}\\
\nonumber
& \le \sup_{(u,v) \in \R\times [a,b]} \left\{\frac{\sum_{p=0}^q \dbinom{q}{p} \big(\log 2 \big)^p \sum_{(j,k)\in\Z^2} |j|^p 2^{-jv}\big|\epsilon_{j,k}(\omega)\big|  \left |\big(\partial_v^{q-p}\Psi\big)(2^ju-k,v)-\big(\partial_v^{q-p}\Psi\big)(-k,v)\right|}{|u|^{v} \big(1+\big|\log |u| \big|\big)^{q+1/\alpha+\eta}} \right\}\\
& < \infty.
\end{align}
\end{proposition}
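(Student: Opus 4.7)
The plan is to leverage the explicit wavelet series from Theorem~\ref{TWSEbis}(i). The left inequality in (\ref{eq1:pasymp}) is immediate by the triangle inequality applied term by term to the representation in (\ref{eq1:TWSEbis}). For the finiteness of the supremum, it suffices to show that, for each fixed $p \in \{0,\ldots,q\}$, the series
\begin{equation*}
T_p(u,v) := \sum_{(j,k)\in\Z^2} 2^{-jv} |j|^p |\epsilon_{j,k}(\omega)| \cdot \big|(\partial_v^{q-p}\Psi)(2^ju-k,v) - (\partial_v^{q-p}\Psi)(-k,v)\big|
\end{equation*}
is bounded, uniformly in $(u,v) \in (\R\setminus\{0\})\times [a,b]$, by a constant multiple of $|u|^v(1+|\log|u||)^{p+1/\alpha+\eta'}$ for some $\eta'$ with $0 < \eta' < \eta$ (the case $u = 0$ is trivial since every summand vanishes). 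Fix such an $\eta'$ satisfying in addition $\eta' < a-1/\alpha$ and $\eta' < 1-1/\alpha$, and apply Lemma~\ref{omega0} to bound $|\epsilon_{j,k}(\omega)|$ by $C_1(\omega)(3+|j|)^{1/\alpha+\eta'}(3+|k|)^{1/\alpha+\eta'}$.

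The crucial step is to split the sum over $j$ at the natural scale $j_0 := \lfloor -\log_2|u| \rfloor$, at which $2^{j_0}|u|$ has order one. For $j \le j_0$, so that $2^j|u| \le 1$, the Mean Value Theorem applied to $(\partial_v^{q-p}\Psi)(\cdot,v)$, combined with the localization bound on $\partial_x\partial_v^{q-p}\Psi$ from Proposition~\ref{regulpsi}(ii) (using that the intermediate point $\xi$ satisfies $|\xi| \le 1$), yields
\begin{equation*}
\big|(\partial_v^{q-p}\Psi)(2^ju-k,v) - (\partial_v^{q-p}\Psi)(-k,v)\big| \le C\cdot 2^j|u|\cdot(2+|k|)^{-2}.
\end{equation*}
For $j > j_0$, so that $2^j|u| \ge 1$, the triangle inequality and Proposition~\ref{regulpsi}(ii) directly give
\begin{equation*}
\big|(\partial_v^{q-p}\Psi)(2^ju-k,v) - (\partial_v^{q-p}\Psi)(-k,v)\big| \le C\big[(3+|2^ju-k|)^{-2} + (3+|k|)^{-2}\big].
\end{equation*}

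The $k$-sum in the first regime is dominated by $\sum_k(3+|k|)^{1/\alpha+\eta'}(2+|k|)^{-2}$, which converges since $1/\alpha+\eta' < 1$. In the second regime, after a change of variable $k = k' + \lfloor 2^j u\rfloor$ and the elementary subadditivity $(3+|k|)^{1/\alpha+\eta'} \le C_2\big[(2^j|u|)^{1/\alpha+\eta'} + (3+|k'|)^{1/\alpha+\eta'}\big]$, the $k$-sum is controlled by $C_3(2^j|u|)^{1/\alpha+\eta'}$ (the residual additive constant is absorbed using $2^j|u| \ge 1$). Therefore
\begin{equation*}
T_p(u,v) \le C_4|u|\sum_{j \le j_0} 2^{j(1-v)}(3+|j|)^{p+1/\alpha+\eta'} + C_4|u|^{1/\alpha+\eta'}\sum_{j > j_0} 2^{j(1/\alpha+\eta'-v)}(3+|j|)^{p+1/\alpha+\eta'}.
\end{equation*}
Both series are geometric: the first has positive exponent $1-v > 0$ and the second has negative exponent $1/\alpha+\eta'-v < 0$ (thanks to $\eta' < a-1/\alpha$), so each is dominated by its term at $j \approx j_0$. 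Using $2^{j_0}$ of order $|u|^{-1}$ and $|j_0|$ of order $|\log|u||$, both contributions are bounded by a constant multiple of $|u|^v(1+|\log|u||)^{p+1/\alpha+\eta'}$; summing over $p \in \{0,\ldots,q\}$ with the $\binom{q}{p}(\log 2)^p$ prefactors and using $p \le q$ together with $\eta' < \eta$ then gives the claim.

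The main difficulty lies in executing the double splitting (in $j$ and in $k$) consistently with the localization estimates, and in verifying that the geometric sums are indeed dominated by their boundary terms at $j = j_0$---this relies critically on the constraints $\eta' < a-1/\alpha$ and $\eta' < 1-1/\alpha$, and on the uniformity of the constants in Proposition~\ref{regulpsi}(ii) for $v \in [a,b]$. Once one identifies $j_0$ as the scale at which the Mean Value Theorem estimate balances the pure localization estimate, the remaining manipulations are routine.
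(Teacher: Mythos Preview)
Your proof is correct and follows essentially the same strategy as the paper: split the $j$-sum at the scale $j_0$ (the paper's $j_1$) where $2^{j_0}|u|\asymp 1$, use the Mean Value Theorem together with the localization of $\partial_x\partial_v^{q-p}\Psi$ for $j\le j_0$, use the plain localization bound for $j>j_0$, control the $k$-sums, and then observe that the two resulting $j$-sums are geometric and dominated by their boundary terms at $j_0$. The only cosmetic differences are that you invoke the cruder second inequality in Lemma~\ref{omega0} (with $(3+|k|)^{1/\al+\eta'}$) and handle the $k$- and $j$-sums by elementary shifts and subadditivity, whereas the paper uses the sharper first inequality and the technical Lemmas~\ref{LA2}--\ref{LA4}; both routes give the same final bound $|u|^v(1+|\log|u||)^{p+1/\al+\eta'}$.
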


The proofs of Propositions~\ref{pp1:dX}~and~\ref{prop:asympinf} are, to a certain extent, inspired by that of Theorem~1~in~\cite{ayache2009linear}. 

\begin{proof}[Proof of Proposition \ref{pp1:dX}]
Let $(u_1,u_2,v)\in[-M,M]^2\times [a,b]$ be arbitrary and fixed; in all the sequel we assume that $u_1\ne u_2$. Observe that,
in view of (\ref{localisation}), there is a constant $c_1>0$, non depending on $(u_1,u_2,v)$, such that for all $p\in\{0,\ldots,q\}$ and $(j,k)\in\Z^2$, one has,
\begin{equation}\label{eq1:pp1dX}
\big| \big(\partial_v^{q-p} \Psi\big)(2^j u_1-k,v) - \big(\partial_v^{q-p} \Psi\big)(2^j u_2-k,v) \big| \leq c_1 \left ( \big( 3+|2^j u_1-k| \big)^{-2} + \big( 3+|2^j u_2-k| \big)^{-2} \right).
\end{equation}
Also notice that $\big| \big(\partial_v^{q-p} \Psi \big)(2^j u_1-k,v) -  \big(\partial_v^{q-p} \Psi \big)(2^j u_2-k,v) \big|$ can be bounded more sharply when the condition 
\begin{equation}\label{eq1bis:pp1dX}
2^j|u_1-u_2| \leq 1 
\end{equation}
holds, namely using the Mean Value Theorem  and (\ref{localisation}), one has, 
\begin{align}\label{eq2:pp1dX}
\big| \big(\partial_v^{q-p} \Psi\big)(2^j u_1-k,v) - \big(\partial_v^{q-p} \Psi\big)(2^j u_2-k,v) \big| & \leq 2^j |u_1-u_2| \sup_{(u,v)\in [u_1\wedge u_2, u_1\vee u_2]\times [a,b]}\big| \big(\partial_x\partial_v^{q-p} \Psi\big)(2^j u-k,v)\big|\nonumber \\
& \leq c_1 2^j |u_1-u_2| \sup_{u\in [u_1\wedge u_2, u_1\vee u_2]} \big(3+|2^ju-k|\big)^{-2} \nonumber \\
& \leq c_1 2^j |u_1-u_2| \big(2+|2^j u_1-k|\big)^{-2},
\end{align}
where the last inequality results from the triangle inequality and (\ref{eq1bis:pp1dX}). Denote by $j_0 > -\log_2(4M)$ the unique integer satisfying
\begin{equation}\label{eq3:defj0}
2^{-1} < 2^{j_0} |u_1-u_2| \leq 1. 
\end{equation}
Then, the first inequality in (\ref{eq1:omega0}), (\ref{eq1:pp1dX}) and (\ref{eq2:pp1dX}), entail that, for all $\eta>0$ and $\o\in\Omega_{0}^*$,
\begin{align}
\label{eq1:decAB}
& \sum_{(j,k)\in\Z^2} |j|^p 2^{-jv} \big|\epsilon_{j,k}(\omega)\big| \left |\big(\partial_v^{q-p}\Psi\big)(2^ju_1-k,v)-\big(\partial_v^{q-p}\Psi\big)(u_2-k,v)\right| \le \nonumber\\
& C(\o)\sum_{(j,k) \in \Z^2} 2^{-jv} (1+|j|)^{p+1/\alpha+\eta}(1+|k|)^{1/\alpha} \log^{1/\alpha+\eta}(2+|k|) \big| \big(\partial_v^{q-p} \Psi\big)(2^ju_1-k,v) - \big(\partial_v^{q-p} \Psi\big)(2^j u_2-k,v) \big| \nonumber \\
& \leq C(\o)c_1 \Big( \check{A}_{j_0}(u_1,v) |u_1-u_2| + \check{B}_{j_0}(u_1,u_2,v) \Big),
\end{align}
where the random variable $C$ has been introduced in Lemma~\ref{omega0} and where for each $J\in\Z$, $(y_1,y_2)\in\R^2$, and $v\in [a,b]$,
\begin{equation}
\label{Aj0:pasymp}
\check{A}_{J}(y_1,v):=\sum_{j\leq J} \sum_{k\in \Z} 2^{j(1-v)} (1+|j|)^{p+1/\alpha+\eta} (1+|k|)^{1/\alpha} \log^{1/\alpha+\eta}(2+|k|) \big(2+|2^j y_1-k|\big)^{-2}  
\end{equation}
and
\begin{align}
\label{Bj0:pasymp}
& \check{B}_{J}(y_1,y_2,v)\\ 
& := \sum_{j>J} \sum_{k\in\Z} 2^{-jv} (1+|j|)^{p+1/\alpha+\eta} (1+|k|)^{1/\alpha} \log^{1/\alpha+\eta}(2+|k|) \left ( \big(3+|2^j y_1-k|\big)^{-2} + \big(3+|2^j y_2-k|\big)^{-2} \right)\nonumber. 
\end{align}
Let us now give an appropriate upper bound for $\check{A}_{j_0}(u_1,v)$. Assume that $j\leq j_0$; using Lemma~\ref{LA3} (in which one takes $\theta=1/\al$, $\zeta=1/\alpha+\eta$ and $u=2^j u_1$) and the inequality $|u_1|\le M$, one obtains that,
\begin{equation}
\sum_{k\in\Z} \frac{ (1+|k|)^{1/\alpha} \log^{1/\alpha+\eta}(2+|k|)}{ \big(2+|2^j u_1-k|\big)^2} \leq c_2 2^{j_0/\alpha} (1+|j_0|)^{1/\alpha+\eta}, \nonumber
\end{equation}
where $c_2$ is a constant only depending on $M$, $\al$ and $\eta$. Next, it follows from the latter inequality, (\ref{Aj0:pasymp}) and Lemma~\ref{LA2} (in which one takes $\theta=1-v$, $\theta_0=1-b$, $\lambda=p+1/\alpha +\eta$, $n_0=-\infty$ and $n_1=j_0$) that,
\begin{align}\label{eq4:Aj0}
\check{A}_{j_0}(u_1,v) & \leq c_2 2^{j_0/\alpha} (1+|j_0|)^{1/\alpha+\eta}\sum_{j\leq j_0} 2^{j(1-v)} (1+|j|)^{p+1/\alpha +\eta} \leq c_3 2^{j_0(1-v+1/\alpha)} (1+|j_0|)^{p+2/\alpha +2\eta} \nonumber \\
& \leq c_4 |u_1-u_2|^{v-1/\alpha-1} \big( 1 + \big|\log |u_1-u_2| \big| \big)^{p+2/\alpha+2\eta},
\end{align}
where the last inequality results from (\ref{eq3:defj0}), and where $c_3$ and $c_4$ are two constants non depending $(u_1,u_2,v)$.
Let us now give an appropriate upper bound for $\check{B}_{j_0}(u_1,u_2,v)$. In view of (\ref{Bj0:pasymp}), this quantity can be expressed as,
\begin{equation}
\label{eq6:Tj0}
\check{B}_{j_0}(u_1,u_2,v) = T_{j_0}(u_1,v) + T_{j_0}(u_2,v), 
\end{equation}
where, for each $J\in\Z$, $y\in\R$ and $v\in [a,b]$,
\begin{equation}
\label{eq:defTJ}
T_{J}(y,v):= \sum_{j> J} \sum_{k\in\Z} 2^{-jv} \frac{ (1+|j|)^{p+1/\alpha+\eta} (1+|k|)^{1/\alpha} \log^{1/\alpha+\eta}(2+|k|) }{ \big(3+|2^j y-k|\big)^2 }. 
\end{equation}
Assume that $j> j_0$ and that $x\in \{u_1,u_2\}$; using Lemma~\ref{LA3} (in which one takes $\theta=1/\al$, $\zeta=1/\alpha+\eta$ and $u=2^j x$) and the inequality 
$|x|\le M$, one gets that, 
\begin{equation}
\sum_{k\in\Z} \frac{ (1+|k|)^{1/\alpha} \log^{1/\alpha+\eta}(2+|k|)}{ \big(2+|2^j x-k|\big)^2} \leq c_2 2^{j/\alpha} (1+|j|)^{1/\alpha+\eta}. \nonumber
\end{equation}
Next, in view of (\ref{eq:defTJ}), it follows from the latter inequality and Lemma~\ref{LA2} (in which one takes $\theta=v-1/\al$, $\theta_0=a-1/\al$, $\lambda=p+2/\alpha +2\eta$, $n_0=j_0+1$ and $n_1=+\infty$) that,
\begin{align}\label{eq5:Tj0}
T_{j_0}(x,v) & \leq c_2 \sum_{j > j_0} 2^{-j(v-1/\al)} (1+|j|)^{p+2/\alpha +2\eta} \leq c_5 2^{-j_0(v-1/\alpha)} (1+|j_0|)^{p+2/\alpha +2\eta} \nonumber \\
& \leq c_6 |u_1-u_2|^{v-1/\alpha} \big( 1 + \big|\log |u_1-u_2| \big| \big)^{p+2/\alpha+2\eta},
\end{align}
where the last inequality results from (\ref{eq3:defj0}), and where $c_5$ and $c_6$ are two constants non depending on $(x,v)$.
Next, (\ref{eq5:Tj0}) and (\ref{eq6:Tj0}) imply that
\begin{equation}\label{eq7:Bj0}
\check{B}_{j_0}(u_1,u_2,v) \leq 2c_6 |u_1-u_2|^{v-1/\alpha} \big( 1+\big| \log |u_1-u_2| \big| \big)^{p+2/\alpha+2\eta}.
\end{equation} 
Next putting together, (\ref{eq4:Aj0}), (\ref{eq7:Bj0})  and (\ref{eq1:decAB}), one obtains that, for all $\eta>0$ and $\o\in\Omega_{0}^*$,
\begin{align}
\label{eq1:majA+B}
& \sum_{(j,k)\in\Z^2} |j|^p 2^{-jv} \big|\epsilon_{j,k}(\omega)\big| \left |\big(\partial_v^{q-p}\Psi\big)(2^ju_1-k,v)-\big(\partial_v^{q-p}\Psi\big)(u_2-k,v)\right|\nonumber\\
& \leq C(\o)c_7 |u_1-u_2|^{v-1/\alpha} \big( 1+\big| \log |u_1-u_2| \big| \big)^{p+2/\alpha+2\eta}, 
\end{align}
where $c_7$ is a constant non depending on $(u_1,u_2,v)$. Finally, (\ref{eq1:TWSEbis}), the triangle inequality and (\ref{eq1:majA+B}) entail that
(\ref{eq1:dX}) holds.
\end{proof}

\begin{proof}[Proof of Proposition \ref{prop:asympinf}] Let $(u,v)\in\R\times [a,b]$ be arbitrary and fixed, in all the sequel we assume that $u\ne 0$. 
Observe that,
in view of (\ref{localisation}), there is a constant $c_1>0$, non depending on $(u,v)$, such that for all $p\in\{0,\ldots,q\}$ and $(j,k)\in\Z^2$, one has,
\begin{equation}\label{eq2:pasymp}
\big| \big(\partial_v^{q-p} \Psi \big)(2^j u-k,v) - \big(\partial_v^{q-p} \Psi \big)(-k,v) \big| \leq c_1 \Big ( \big(3+|2^j u-k|\big)^{-2} + \big(3+|k|\big)^{-2} \Big).
\end{equation}
Also notice that $| (\partial_v^{q-p} \Psi )(2^ju-k,v) - (\partial_v^{q-p} \Psi )(-k,v) |$ can be bounded more sharply when the condition 
\begin{equation}
\label{eq2bis:pasymp}
2^j |u| \le 1
\end{equation}
 holds, namely, using the Mean Value Theorem and (\ref{localisation}), one has,
\begin{align}\label{eq3:pasymp}
\big| \big(\partial_v^{q-p} \Psi \big)(2^j u-k,v) - \big(\partial_v^{q-p} \Psi \big)(-k,v) \big| & \le 2^j |u| \sup_{y\in [u\wedge 0,u\vee 0]}
\big| \big(\partial_x \partial_v^{q-p} \Psi\big)(2^j y-k,v)\big|\nonumber\\
& \leq c_1 2^j |u| \sup_{y\in [u\wedge 0,u\vee 0]} \big( 3+|2^j y-k| \big)^{-2} \nonumber \\
& \leq c_1 2^j |u| \big(2+|k|\big)^{-2},
\end{align}
where the last inequality results from the triangle inequality and (\ref{eq2bis:pasymp}). Denote by $j_1\in \Z$ the unique integer satisfying
\begin{equation}\label{defj0:pasymp}
2^{-1} < 2^{j_1} |u| \leq 1.
\end{equation} 
Then the first inequality in (\ref{eq1:omega0}), (\ref{eq2:pasymp}) and (\ref{eq3:pasymp}) entail that, for all $\eta>0$ and $\o\in\Omega_{0}^*$,

\begin{align}
\label{eq2:decAB}
& \sum_{(j,k)\in\Z^2} |j|^p 2^{-jv}\big|\epsilon_{j,k}(\omega)\big|  \left |\big(\partial_v^{q-p}\Psi\big)(2^ju-k,v)-\big(\partial_v^{q-p}\Psi\big)(-k,v)\right|\nonumber\\
& \le C(\o)\sum_{(j,k)\in\Z^2} 2^{-jv} \big(1+|j|\big)^{p+1/\alpha+\eta} \big(1+|k|\big)^{1/\alpha} \log^{1/\alpha+\eta}\big(2+|k|\big) \big| \big(\partial_v^{q-p} \Psi \big)(2^j u-k,v) - \big(\partial_v^{q-p} \Psi \big)(-k,v) \big|\nonumber \\
& \leq C(\o)c_1 \Big( |u| \check{A}_{j_1}(0,v) + \check{B}_{j_1}(u,0,v) \Big), 
\end{align}
where the random variable $C$ has been introduced in Lemma~\ref{omega0} and where $\check{A}_{j_1}(0,v)$ and $\check{B}_{j_1}(u,0,v)$ are defined respectively by (\ref{Aj0:pasymp}) and 
(\ref{Bj0:pasymp}).
Let us now give an appropriate upper bound for $\check{A}_{j_1}(0,v)$. Observe that 
\begin{equation}
c_{2} := \sum_{k\in\Z} \frac{ \big(1+|k|)^{1/\alpha} \log^{1/\alpha+\eta}\big(2+|k|\big) }{ \big(2+|k|\big)^{2} } < \infty. \nonumber
\end{equation}
Thus, (\ref{Aj0:pasymp}) and Lemma \ref{LA2} (in which one takes $\theta=1-v$, $\theta_0=1-b$, $\lambda=p+1/\alpha +\eta$, $n_0=-\infty$ and $n_1=j_1$) imply that,
\begin{align}
\label{eq4Aj0:psymp}
\check{A}_{j_1}(0,v) & = c_{2} \sum_{j\leq j_1} 2^{j(1-v)} \big(1+|j|\big)^{p+1/\alpha+\eta} \leq c_{3} 2^{j_1(1-v)} \big(1+|j_1| \big)^{p+1/\alpha+\eta} \nonumber \\
& \leq c_{4} |u|^{v-1} \big(1+\big| \log |u| \big| \big)^{p+1/\alpha+\eta},
\end{align}
where the last inequality results from (\ref{defj0:pasymp}) and where $c_3$ and $c_4$ are two constants non depending on $(u,v)$. 
Let us now give an appropriate upper bound for $\check{B}_{j_1}(u,0,v)$. In view of (\ref{Bj0:pasymp}), this quantity can be expressed as,
\begin{equation}
\label{eq2:decompBj0}
\check{B}_{j_1}(u,0,v) := T_{j_1}(u,v) + T_{j_1}(0,v), 
\end{equation}
where $T_{j_1}(u,v)$ and $T_{j_1}(0,v)$ are defined by (\ref{eq:defTJ}).
Assume that $j>j_1$ and that $x\in\{u,0\}$; it follows from Lemma~\ref{LA3} in which one takes $\theta=1/\alpha$ and $\zeta=1/\alpha+\eta$, that, 
$$
\sum_{k\in\Z} \frac{ \big(1+|k|)^{1/\alpha} \log^{1/\alpha+\eta}\big(2+|k| \big) }{\big( 3+ |2^j x-k| \big)^{2} } \leq c_5 \big(1+2^j |x| \big)^{1/\alpha} \log^{1/\alpha+\eta}\big( 2+2^j |x| \big)\le c_6 2^{(j-j_1)/\alpha}(1+j-j_1)^{1/\alpha+\eta},
$$
where the last inequality results from (\ref{defj0:pasymp}) and where $c_5$ and $c_6$ are two constants non depending on $x$, $v$, $j$ and $j_1$. Therefore, in view of (\ref{eq:defTJ}), one obtains that
\begin{equation}\label{eq5:Btj0}
T_{j_1}(x,v)\leq c_6 \sum_{j> j_1} 2^{-jv} 2^{(j-j_1)/\alpha} \big(1+|j| \big)^{p+1/\alpha+\eta} \big( 1+j-j_1 \big)^{1/\alpha+\eta}.
\end{equation}
Next, setting $l=j-j_1$ in the right-hand side of (\ref{eq5:Btj0}) and using Lemma~\ref{LA4}, it follows that,
\begin{align}
\label{eq6:Btj0}
 T_{j_1}(x,v) & \leq c_6 \sum_{l=1}^{+\infty} 2^{-j_1 v} 2^{-l(v-1/\alpha)} (1+l)^{1/\alpha+\eta} \big( 1+|l+j_1| \big)^{p+1/\alpha+\eta} \nonumber \\
& \leq c_7 2^{-j_1 v} \sum_{l=1}^{+\infty} 2^{-l(v-1/\alpha)} (1+l)^{1/\alpha+\eta} \Big( \big(1+l\big)^{p+1/\alpha+\eta} + \big(1+|j_1|\big)^{p+1/\alpha+\eta} \Big) \nonumber\\
& \leq c_7 2^{-j_1 v} \sum_{l=1}^{+\infty} 2^{-l(a-1/\alpha)} (1+l)^{1/\alpha+\eta} \Big( \big(1+l\big)^{p+1/\alpha+\eta} + \big(1+|j_1|\big)^{p+1/\alpha+\eta} \Big)\nonumber\\
&\le c_8 2^{-j_1 v} \big(1+|j_1|\big)^{p+1/\alpha+\eta},\nonumber\\
&\le c_9 |u|^v \left(1+ \big|\log |u|\big| \right)^{p+1/\alpha+\eta},
\end{align}
where the last inequality results from (\ref{defj0:pasymp}) and where the constants $c_7$, $c_8$ and $c_9$ do not depend on $x$, $v$ and $j_1$.
Next, (\ref{eq2:decompBj0}) and (\ref{eq6:Btj0}) imply that,
\begin{equation}
\label{eq:majBj0u0}
\check{B}_{j_1}(u,0,v)\le 2 c_9 |u|^v \left(1+ \big|\log |u|\big| \right)^{p+1/\alpha+\eta}.
\end{equation}
Next, putting together (\ref{eq2:decAB}), (\ref{eq4Aj0:psymp}) and (\ref{eq:majBj0u0}), one gets that,
\begin{align}
\label{eq:majXp0infty}
& \sum_{(j,k)\in\Z^2} |j|^p 2^{-jv}\big|\epsilon_{j,k}(\omega)\big|  \left |\big(\partial_v^{q-p}\Psi\big)(2^ju-k,v)-\big(\partial_v^{q-p}\Psi\big)(-k,v)\right|\nonumber\\
& \le C(\o)c_{10}|u|^v \left(1+ \big|\log |u|\big| \right)^{p+1/\alpha+\eta},
\end{align}
where $c_{10}$ is a constant non depending on $(u,v)$. Finally, (\ref{eq1:TWSEbis}), the triangle inequality and (\ref{eq:majXp0infty}) entail that 
(\ref{eq1:pasymp}) holds.
\end{proof}

Before ending this section, let us stress that, thanks to 
(\ref{eq2:CTWSEbis}) and (\ref{eq1:dX}), for each fixed $\o\in \Omega_{0}^*$, $q\in\Z_+$ and $M,a,b\in\R$ satisfying $M>0$ and $1/\al <a<b<1$, one can derive, a global modulus of continuity of the function $(u,v)\mapsto (\partial_v ^q X)(u,v,\o)$, on the rectangle $[-M,M]\times [a,b]$. More precisely, the following result holds.
\begin{corollary}
\label{cor:modcont}
For each fixed $\o\in \Omega_{0}^*$, $q\in\Z_+$ and $M,a,b,\eta\in\R$ satisfying $M>0$, $1/\al <a<b<1$ and $\eta>0$, one has,
\begin{equation}
\label{eq1:modcont}
\sup_{(u_1,u_2,v_1,v_2) \in [-M,M]^2\times [a,b]^2} \left\{\frac{\big| \big( \partial_v^q X \big)(u_1,v_1,\omega) - \big( \partial_v^q X \big)(u_2,v_2,\omega)\big|}{ |u_1-u_2|^{v_1\vee v_2-1/\alpha}\big(1+\big|\log |u_1-u_2| \big| \big)^{q+2/\alpha+\eta} +|v_1-v_2|} \right\}< \infty.
\end{equation}
\end{corollary}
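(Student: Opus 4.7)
The plan is to combine the two moduli of continuity already at our disposal: the spatial estimate (\ref{eq1:dX}) from Proposition~\ref{pp1:dX} and the Lipschitz bound (\ref{eq2:CTWSEbis}) from Corollary~\ref{cor:TWSEbis}. Since the statement is invariant under the exchange $(u_1,v_1)\leftrightarrow (u_2,v_2)$, I may assume without loss of generality that $v_1\vee v_2=v_1$, i.e. $v_2\le v_1$.

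The key idea is to split the bivariate increment through an intermediate point whose second coordinate is the \emph{larger} of $v_1,v_2$, so that the spatial estimate of Proposition~\ref{pp1:dX} is applied at $v=v_1\vee v_2$. Concretely, I write
\[
(\partial_v^q X)(u_1,v_1,\omega)-(\partial_v^q X)(u_2,v_2,\omega)=\mathcal{D}_1+\mathcal{D}_2,
\]
where
\[
\mathcal{D}_1:=(\partial_v^q X)(u_1,v_1,\omega)-(\partial_v^q X)(u_2,v_1,\omega),\qquad
\mathcal{D}_2:=(\partial_v^q X)(u_2,v_1,\omega)-(\partial_v^q X)(u_2,v_2,\omega).
\]

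Now I apply Proposition~\ref{pp1:dX} at the frozen level $v=v_1\in[a,b]$; since $v_1=v_1\vee v_2$, this immediately yields, with a constant $C_1(\omega)$ independent of $(u_1,u_2,v_1,v_2)$,
\[
|\mathcal{D}_1|\le C_1(\omega)\,|u_1-u_2|^{\,v_1\vee v_2-1/\alpha}\bigl(1+\bigl|\log|u_1-u_2|\bigr|\bigr)^{q+2/\alpha+\eta}.
\]
Next, I apply (\ref{eq2:CTWSEbis}) from Corollary~\ref{cor:TWSEbis} at the frozen point $u=u_2\in[-M,M]$, which gives, with a constant $C_2(\omega)$ independent of $(u_2,v_1,v_2)$,
\[
|\mathcal{D}_2|\le C_2(\omega)\,|v_1-v_2|.
\]
Adding these two bounds and dividing by the denominator of (\ref{eq1:modcont}) (which is a sum of the two quantities just controlled) gives the supremum bound by $C_1(\omega)+C_2(\omega)<\infty$. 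In the complementary case $v_2>v_1$ one uses the symmetric decomposition inserting $(u_1,v_2)$ as intermediate point, and the argument is identical.

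There is no real obstacle here; the proof is essentially bookkeeping: one only has to be careful to insert the intermediate point with the \emph{larger} of the two $v$-coordinates, so that the $|u_1-u_2|^{v-1/\alpha}$ factor produced by Proposition~\ref{pp1:dX} matches the exponent $v_1\vee v_2-1/\alpha$ that appears in the denominator of (\ref{eq1:modcont}).
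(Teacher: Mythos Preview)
Your proof is correct and follows essentially the same route as the paper: both use the symmetry of the ratio under $(u_1,v_1)\leftrightarrow(u_2,v_2)$ to reduce to the case $v_1\ge v_2$, then split through the intermediate point $(u_2,v_1\vee v_2)$ and apply Proposition~\ref{pp1:dX} to the spatial increment and (\ref{eq2:CTWSEbis}) to the $v$-increment. The only cosmetic difference is that the paper writes out the symmetry via an auxiliary function $f$ and the identity $f(u_1,u_2,v_1,v_2)=f(u_2,u_1,v_2,v_1)$, while you phrase it as a WLOG assumption (and then, slightly redundantly, also describe the symmetric decomposition for $v_2>v_1$).
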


\begin{proof}[Proof of Corollary \ref{cor:modcont}] For each $(u_1,u_2,v_1,v_2) \in [-M,M]^2\times [a,b]^2$, one sets,
$$
f(u_1,u_2,v_1,v_2):=\frac{\big| \big( \partial_v^q X \big)(u_1,v_1,\omega) - \big( \partial_v^q X \big)(u_2,v_2,\omega)\big|}{ |u_1-u_2|^{v_1\vee v_2-1/\alpha}\big(1+\big|\log |u_1-u_2| \big| \big)^{q+2/\alpha+\eta} +|v_1-v_2|},
$$
with the convention that $0/0=0$. Using the fact that $f(u_1,u_2,v_1,v_2)=f(u_2,u_1,v_2,v_1)$, it follows that,
\begin{align}
\label{eq2:modcont}
& \sup_{(u_1,u_2,v_1,v_2) \in [-M,M]^2\times [a,b]^2} \left\{\frac{\big| \big( \partial_v^q X \big)(u_1,v_1,\omega) - \big( \partial_v^q X \big)(u_2,v_2,\omega)\big|}{ |u_1-u_2|^{v_1\vee v_2-1/\alpha}\big(1+\big|\log |u_1-u_2| \big| \big)^{q+2/\alpha+\eta} +|v_1-v_2|} \right\}\\
& = \sup_{(u_1,u_2,v_1,v_2) \in [-M,M]^2\times [a,b]^2} \left\{\frac{\big| \big( \partial_v^q X \big)(u_1,v_1\vee v_2,\omega) - \big( \partial_v^q X \big)(u_2,v_1\wedge v_2,\omega)\big|}{ |u_1-u_2|^{v_1\vee v_2-1/\alpha}\big(1+\big|\log |u_1-u_2| \big| \big)^{q+2/\alpha+\eta} +|v_1-v_2|} \right\}.\nonumber
\end{align}
Moreover, using the triangle inequality, and the inequality for all $(u_1,u_2,v_1,v_2) \in [-M,M]^2\times [a,b]^2$,
\begin{align*}
& \max\left\{|u_1-u_2|^{v_1\vee v_2-1/\alpha}\big(1+\big|\log |u_1-u_2| \big| \big)^{q+2/\alpha+\eta}, |v_1-v_2|\right\} \\
& \le |u_1-u_2|^{v_1\vee v_2-1/\alpha}\big(1+\big|\log |u_1-u_2| \big| \big)^{q+2/\alpha+\eta} +|v_1-v_2|,
\end{align*}
one gets that,
\begin{align}
\label{eq3:modcont}
& \sup_{(u_1,u_2,v_1,v_2) \in [-M,M]^2\times [a,b]^2} \left\{\frac{\big| \big( \partial_v^q X \big)(u_1,v_1\vee v_2,\omega) - \big( \partial_v^q X \big)(u_2,v_1\wedge v_2,\omega)\big|}{ |u_1-u_2|^{v_1\vee v_2-1/\alpha}\big(1+\big|\log |u_1-u_2| \big| \big)^{q+2/\alpha+\eta} +|v_1-v_2|} \right\}\nonumber\\
& \le \sup_{(u_1,u_2,v_1,v_2) \in [-M,M]^2\times [a,b]^2} \left\{\frac{\big| \big( \partial_v^q X \big)(u_1,v_1\vee v_2,\omega) - \big( \partial_v^q X \big)(u_2,v_1\vee v_2,\omega)\big|}{ |u_1-u_2|^{v_1\vee v_2-1/\alpha}\big(1+\big|\log |u_1-u_2| \big| \big)^{q+2/\alpha+\eta} +|v_1-v_2|} \right\}\nonumber\\
& \hspace{1cm}+\sup_{(u_1,u_2,v_1,v_2) \in [-M,M]^2\times [a,b]^2} \left\{\frac{\big| \big( \partial_v^q X \big)(u_2,v_1\vee v_2,\omega) - \big( \partial_v^q X \big)(u_2,v_1\wedge v_2,\omega)\big|}{ |u_1-u_2|^{v_1\vee v_2-1/\alpha}\big(1+\big|\log |u_1-u_2| \big| \big)^{q+2/\alpha+\eta} +|v_1-v_2|} \right\}\nonumber\\
& \le \sup_{(u_1,u_2,v) \in [-M,M]^2\times [a,b]} \left\{\frac{\big| \big( \partial_v^q X \big)(u_1,v,\omega) - \big( \partial_v^q X \big)(u_2,v,\omega)\big|}{ |u_1-u_2|^{v-1/\alpha}\big(1+\big|\log |u_1-u_2| \big| \big)^{q+2/\alpha+\eta} } \right\}\\
& \hspace{3cm}+\sup_{(u, v_1,v_2) \in [-M,M]\times [a,b]^2} \left\{\frac{\big| \big( \partial_v^q X \big)(u,v_1,\omega) - \big( \partial_v^q X \big)(u, v_2,\omega)\big|}{ |v_1-v_2|} \right\}.\nonumber
\end{align}
Finally, putting together, (\ref{eq2:modcont}), (\ref{eq3:modcont}), (\ref{eq2:CTWSEbis}) and (\ref{eq1:dX}), one obtains (\ref{eq1:modcont}).

\end{proof}

\section{Global and local moduli of continuity of LMSM}
\label{subsec:modcont}

From now on and till the end of the article, LMSM is identified with its modification $\{Y(t) : t\in\R\}$, defined for all $t\in\R$, by,
\begin{equation}
\label{def:LMSMbis}
Y(t)=X(t,H(t)), 
\end{equation}
where $\{X(u,v): (u,v)\in\R\times (1/\al,1)\}$ is the $\stas$ field introduced in Theorem~\ref{TWSE}; recall that
$H(\cdot)$ denotes an arbitrary continuous function defined on the real line and with values in a compact interval $[\underline{H},\overline{H}]\subset(1/\al,1)$. 

First we provide a global modulus of continuity for $\{Y(t) : t\in\R\}$ on an arbitrary nonempty compact interval; there is no restriction to assume 
the latter interval of the form $[-M,M]$ where $M$ is an arbitrary positive real number.
\begin{thm}\label{modcont1Y}
Let $\Omega_0^{*}$ be the event of probability 1 introduced in Lemma~\ref{omega0}. Then for each $\omega\in\Omega_0^{*}$ and for all positive real numbers $M$ and 
$\eta$, one has,
\begin{equation}\label{eq1:modcont1Y}
\sup_{(t,s) \in [-M,M]^2} \left\{\frac{\big| \big( Y(t,\omega) - Y(s,\omega)\big|}{ |t-s|^{H(t)\vee H(s)-1/\alpha}\big(1+\big|\log |t-s| \big| \big)^{2/\alpha+\eta} +\big|H(t)-H(s)\big|} \right\}< \infty.
\end{equation}
\end{thm}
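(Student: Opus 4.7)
The plan is to reduce Theorem~\ref{modcont1Y} to a direct application of Corollary~\ref{cor:modcont} in the case $q=0$. Since the field $X$ has already been constructed and its joint modulus of continuity in both variables has been established, the only remaining task is to compose with the deterministic function $H(\cdot)$ and recognise that the resulting bound has exactly the form claimed.

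First I would fix $\omega\in\Omega_0^{\ast}$, $M>0$ and $\eta>0$ arbitrary. Since $H(\cdot)$ takes its values in the compact interval $[\underline{H},\overline{H}]\subset (1/\alpha,1)$, I would set $a:=\underline{H}$ and $b:=\overline{H}$, so that $H(t)\in[a,b]$ for every $t\in[-M,M]$. Applying Corollary~\ref{cor:modcont} with $q=0$ and these choices of $a,b,M,\eta$, there exists a finite constant $C=C(\omega,M,\eta,a,b)$ such that for all $(u_1,u_2,v_1,v_2)\in[-M,M]^2\times[a,b]^2$,
\begin{equation*}
\bigl|X(u_1,v_1,\omega)-X(u_2,v_2,\omega)\bigr|\le C\Bigl(|u_1-u_2|^{v_1\vee v_2-1/\alpha}\bigl(1+\bigl|\log|u_1-u_2|\bigr|\bigr)^{2/\alpha+\eta}+|v_1-v_2|\Bigr).
\end{equation*}

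Next, for arbitrary $t,s\in[-M,M]$, specialising the previous inequality to $(u_1,u_2,v_1,v_2)=(t,s,H(t),H(s))$ and invoking the definition (\ref{def:LMSMbis}) of $Y$, I obtain
\begin{equation*}
\bigl|Y(t,\omega)-Y(s,\omega)\bigr|=\bigl|X(t,H(t),\omega)-X(s,H(s),\omega)\bigr|\le C\Bigl(|t-s|^{H(t)\vee H(s)-1/\alpha}\bigl(1+\bigl|\log|t-s|\bigr|\bigr)^{2/\alpha+\eta}+\bigl|H(t)-H(s)\bigr|\Bigr).
\end{equation*}
Dividing by the strictly positive quantity appearing in the denominator of (\ref{eq1:modcont1Y}) (with the usual convention $0/0=0$ to handle the trivial case $t=s$), taking the supremum over $(t,s)\in[-M,M]^2$, and noting the constant $C$ is independent of $(t,s)$, yields the desired bound.

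There is no genuine obstacle here: the entire technical effort has already been spent in constructing the wavelet-series modification $X$ (Theorem~\ref{TWSE}), in establishing its joint H\"older/Lipschitz regularity (Theorem~\ref{TWSEbis} and the sharpened bound of Proposition~\ref{pp1:dX}), and in combining these into Corollary~\ref{cor:modcont}. The theorem is essentially a transfer result: the deterministic continuous function $H(\cdot)$ plays no active role beyond taking values in $[\underline{H},\overline{H}]$, and in particular no H\"older-type hypothesis on $H$ is needed, which is consistent with the fact that the $|H(t)-H(s)|$ term appears additively in the denominator.
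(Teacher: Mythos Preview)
Your proof is correct and follows essentially the same route as the paper's: both reduce the theorem to a direct application of Corollary~\ref{cor:modcont} with $q=0$, composing the field $X$ with $(t,H(t))$ via (\ref{def:LMSMbis}). The only minor difference is that the paper takes $a=\min_{x\in[-M,M]}H(x)$ and $b=\max_{x\in[-M,M]}H(x)$ rather than your global choice $a=\underline{H}$, $b=\overline{H}$; this localised choice is what enables the extension noted in Remark~\ref{rem:contY}\,(i), where $H(\cdot)$ is merely assumed continuous with values in $(1/\alpha,1)$ without a prescribed compact range, but under the paper's standing hypothesis either choice works.
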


\begin{proof}[Proof of Theorem~\ref{modcont1Y}] The theorem easily results from (\ref{def:LMSMbis}) and Corollary~\ref{cor:modcont} in which one takes $q=0$,
$a=\min_{x\in [-M,M]} H(x)$ and $b=\max_{x\in [-M,M]} H(x)$.
\end{proof}

\begin{rem}
\label{rem:contY}
\begin{itemize}
\item[(i)] Theorem~\ref{modcont1Y} remains valid under the weaker condition that $H(\cdot)$ is a continuous function on the real line with values in the open 
interval $(1/\al,1)$; indeed, even in this case, $H\big ([-M,M]\big)$ is still a compact interval included in $(1/\al,1)$. 
\item[(ii)] A straightforward consequence of Theorem~\ref{modcont1Y} is that: LMSM has a modification with almost surely continuous paths, as soon as its functional Hurst parameter $H(\cdot)$ is a continuous function 
with values in $(1/\al,1)$; this solves and provides a positive answer to the conjecture made by Stoev and Taqqu in Remark~1 at page 166 of \cite{stoev2005path}.
\end{itemize}
\end{rem} 

The following corollary easily follows from Theorem~\ref{modcont1Y}.

\begin{corollary}
\label{modcont2Y}
\begin{itemize}
\item[(i)] Assume that for some real numbers $M_1<M_2$, one has for each $\eta>0$,
\begin{equation}
\label{condeq2:modcont2Y}
\sup_{(t,s)\in [M_1,M_2]^2} \frac{\big|H(t)-H(s)\big|}{|t-s|^{H(t)\vee H(s)-1/\alpha} \big(1+\big|\log |t-s|\big| \big)^{2/\alpha+\eta}}<\infty,
\end{equation}
then it follows that, for all $\omega\in\Omega_{0}^*$ and $\eta>0$, 
\begin{equation}
\label{eq2:modcont2Y}
\sup_{(t,s)\in [M_1,M_2]^2} \left\{\frac{\big| Y(t,\omega)-Y(s,\omega)\big|}{ |t-s|^{H(t)\vee H(s)-1/\alpha} \big(1+\big|\log |t-s|\big| \big)^{2/\alpha+\eta}} \right\}< \infty.
\end{equation}
\item[(ii)] Assume that for some real numbers $M_1<M_2$, one has for each $\eta>0$,
\begin{equation}
\label{condeq3:modcont2Y}
\sup_{(t,s)\in [M_1,M_2]^2} \frac{\big|H(t)-H(s)\big|}{|t-s|^{\min_{x\in [M_1,M_2]} H(x)-1/\alpha} \big(1+\big|\log |t-s|\big| \big)^{2/\alpha+\eta}}<\infty,
\end{equation}
then it follows that, for all $\omega\in\Omega_{0}^*$ and $\eta>0$, 
\begin{equation}
\label{eq3:modcont2Y}
\sup_{(t,s)\in [M_1,M_2]^2} \left\{\frac{\big|Y(t,\omega)-Y(s,\omega)\big|}{ |t-s|^{\min_{x\in [M_1,M_2]} H(x)-1/\alpha} \big(1+\big|\log |t-s| \big| \big)^{2/\alpha+\eta}} \right\}< \infty.
\end{equation}
\end{itemize}
\end{corollary}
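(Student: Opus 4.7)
The plan is to reduce both parts to a direct application of Theorem~\ref{modcont1Y} by absorbing the summand $|H(t)-H(s)|$ into the desired power-logarithmic expression via the corresponding H\"older-type hypothesis on $H(\cdot)$. Setting $M:=\max(|M_1|,|M_2|)$ so that $[M_1,M_2]\subset[-M,M]$, fixing $\omega\in\Omega_0^*$ and $\eta>0$, and denoting by $C_1(\omega,\eta)<\infty$ the constant supplied by Theorem~\ref{modcont1Y}, one will have for every $(t,s)\in[M_1,M_2]^2$,
\[
|Y(t,\omega)-Y(s,\omega)|\le C_1(\omega,\eta)\Bigl(|t-s|^{H(t)\vee H(s)-1/\alpha}\bigl(1+\bigl|\log|t-s|\bigr|\bigr)^{2/\alpha+\eta}+|H(t)-H(s)|\Bigr).
\]
The whole task is then to dominate the second summand by a constant multiple of the first.

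For part (i), I would simply invoke hypothesis (\ref{condeq2:modcont2Y}) at the same $\eta$: it yields a finite constant $C_2(\eta)$ with $|H(t)-H(s)|\le C_2(\eta)\,|t-s|^{H(t)\vee H(s)-1/\alpha}(1+|\log|t-s||)^{2/\alpha+\eta}$ uniformly on $[M_1,M_2]^2$. Inserting this bound into the display above, dividing by the target denominator, and taking the supremum gives (\ref{eq2:modcont2Y}) with total constant $C_1(\omega,\eta)(1+C_2(\eta))$.

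Part (ii) follows the same scheme but requires one cheap intermediate comparison, because Theorem~\ref{modcont1Y} produces the exponent $H(t)\vee H(s)-1/\alpha$ whereas (\ref{eq3:modcont2Y}) demands the smaller exponent $\underline{h}-1/\alpha$ with $\underline{h}:=\min_{x\in[M_1,M_2]}H(x)$. I would establish the elementary inequality
\[
|t-s|^{H(t)\vee H(s)-1/\alpha}\le C_3\,|t-s|^{\underline{h}-1/\alpha},\qquad (t,s)\in[M_1,M_2]^2,
\]
with $C_3:=\max\bigl(1,(M_2-M_1)^{\overline{H}-\underline{H}}\bigr)$, by a short case split according to $|t-s|\le 1$ or $|t-s|>1$. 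Combining this with hypothesis (\ref{condeq3:modcont2Y}) and the display above in the same fashion as in part (i) yields (\ref{eq3:modcont2Y}).

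No serious obstacle is expected: the whole argument is algebraic manipulation on top of Theorem~\ref{modcont1Y}. The only point deserving attention is the elementary but non-monotone comparison in part (ii) between the two powers of $|t-s|$, whose direction flips in the regimes $|t-s|\le 1$ and $|t-s|>1$; this is precisely why the authors impose the stronger H\"older-type condition (\ref{condeq3:modcont2Y}) in that case.
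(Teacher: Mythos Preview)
Your proposal is correct and matches the paper's approach: the paper does not write out a proof, stating only that the corollary ``easily follows from Theorem~\ref{modcont1Y}'', and your argument---absorbing $|H(t)-H(s)|$ via the hypothesis and, in part~(ii), comparing the two powers of $|t-s|$ by a bounded-interval case split---is precisely the routine verification this remark invites. One cosmetic point: in your constant $C_3$ you write $\overline{H}-\underline{H}$ but presumably mean $\overline{H}-\underline{h}$ (or the local maximum on $[M_1,M_2]$); either choice works since the global gap dominates the local one.
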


\begin{rem}
\label{r1modcont2Y}
\begin{itemize}
\item[(i)] The Condition (\ref{condeq2:modcont2Y}) is satisfied as soon as
$$
H(\cdot)\in\ce^{\max_{x\in [M_1,M_2]} H(x)-1/\al}\big([M_1,M_2],\R\big).
$$
\item[(ii)] The Condition (\ref{condeq3:modcont2Y}) is satisfied as soon as
$$
H(\cdot)\in\ce^{\min_{x\in [M_1,M_2]} H(x)-1/\al}\big([M_1,M_2],\R\big).
$$
\end{itemize}
\end{rem}

Let us now provide a local modulus of continuity for  $\{Y(t) : t\in\R\}$.
\begin{thm}
\label{locmodcont1Y}
Assume that the skewness intensity function $\beta(\cdot)$ of the $\stas$ measure $Z_\al(ds)$ is a constant. Let $t_0\in\R$ be arbitrary and fixed.
Then, one has almost surely, for all positive real numbers $M$ and $\eta$,
\begin{equation}
\label{eq1:locmodcont1Y}
\sup_{t\in [-M,M]} \left\{\frac{\big| Y(t)-Y(t_0)\big|}{ |t-t_0|^{H(t_0)} \big(1+\big|\log |t-t_0|\big| \big)^{1/\alpha+\eta}+\big|H(t)-H(t_0)\big|} \right\}< \infty.
\end{equation}
\end{thm}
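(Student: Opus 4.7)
The proof will decompose $Y(t) - Y(t_0)$ into an $H$-variation piece and a pure LFSM-increment piece, and treat each separately. Writing $v_0 := H(t_0)$, we have
\begin{equation*}
Y(t) - Y(t_0) = \bigl[X(t, H(t)) - X(t, v_0)\bigr] + \bigl[X(t, v_0) - X(t_0, v_0)\bigr].
\end{equation*}
The first bracket is controlled by the Lipschitz-in-$v$ estimate (\ref{eq2:CTWSEbis}) of Corollary~\ref{cor:TWSEbis}, applied with $q = 0$ and with $[a,b] \subset (1/\alpha,1)$ any compact interval containing $H([-M,M])$; this produces, on an event of probability $1$, the bound $|X(t, H(t)) - X(t, v_0)| \leq C_1(\omega)\, |H(t) - H(t_0)|$, uniformly in $t \in [-M,M]$. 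The whole difficulty therefore lies in proving, for $t\in [-M,M]$,
\begin{equation*}
|X(t, v_0) - X(t_0, v_0)| \leq C_2(\omega)\, |t-t_0|^{v_0}\, \bigl(1 + \bigl|\log |t-t_0|\bigr|\bigr)^{1/\alpha+\eta}.
\end{equation*}

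The strategy is to recenter the wavelet series (\ref{eqmain:WSE}) at $t_0$. Setting $m_j := \lfloor 2^j t_0 \rfloor$ and $r_j := 2^j t_0 - m_j \in [0,1)$, and substituting $k = m_j + k'$, one obtains
\begin{equation*}
X(t, v_0) - X(t_0, v_0) = \sum_{(j,k') \in \Z^2} 2^{-jv_0}\, \epsilon_{j, m_j + k'}\, \bigl[\Psi(r_j + 2^j(t-t_0) - k', v_0) - \Psi(r_j - k', v_0)\bigr],
\end{equation*}
which is structurally identical to (\ref{eqmain:WSE}) with $u := t - t_0$, up to the bounded shifts $r_j$ and the reindexing of the random coefficients. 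Under the hypothesis that $\beta(\cdot)$ is constant, Remark~\ref{prop:gcjk}(ii) shows that the shifted family $\{\epsilon_{j, m_j + k'}\}_{(j,k')\in\Z^2}$ consists of identically distributed $\stas$ random variables, with the uniform tail estimate (\ref{eq4:l3optim}). A Borel-Cantelli argument mirroring verbatim the proof of Lemma~\ref{omega0} then furnishes an event of probability $1$ (depending on $t_0$) on which, for every $\eta > 0$,
\begin{equation*}
|\epsilon_{j, m_j+k'}(\omega)| \leq C_3(\omega)\,(1+|j|)^{1/\alpha+\eta}\,(1+|k'|)^{1/\alpha}\,\log^{1/\alpha+\eta}(2+|k'|), \qquad (j,k') \in \Z^2.
\end{equation*}
Crucially, $|k'|$ appears here instead of the much larger $|m_j + k'| \sim 2^j |t_0|$ that naive use of Lemma~\ref{omega0} would produce; this is the sharpening that will deliver the pointwise exponent $v_0$ in place of the global H\"older exponent $v_0 - 1/\alpha$.

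With this sharper bound in hand, the proof of Proposition~\ref{prop:asympinf} (case $q = 0$) transfers essentially verbatim to the recentered series: one splits the summation at the scale $j_1$ with $2^{-j_1} \sim |t-t_0|$, applies the Mean Value Theorem together with the localization (\ref{localisation}) on the range $j \leq j_1$, and uses (\ref{localisation}) in conjunction with Lemmas~\ref{LA2}, \ref{LA3}, \ref{LA4} on the range $j > j_1$; the bounded shifts $r_j \in [0,1)$ are absorbed into the constants and affect none of the decay or summation estimates. Combined with the $H$-variation bound, this yields (\ref{eq1:locmodcont1Y}). The main obstacle is the Borel-Cantelli step, which depends essentially on the constancy of $\beta$ (ensuring identical marginals for the shifted family), and the careful accounting in the adapted Proposition~\ref{prop:asympinf} to verify that no hidden factor of $2^{j/\alpha}$ creeps in from the shifts $r_j$ or from mishandling of the log exponents (so that $(1+|\log |t-t_0||)^{1/\alpha+\eta}$, and not $(1+|\log |t-t_0||)^{2/\alpha+\eta}$, is obtained).
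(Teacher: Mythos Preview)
Your proposal is correct but takes a genuinely different route from the paper. The paper does not recenter the wavelet series; instead it exploits that when $\beta(\cdot)$ is constant, the process $\{X(t, v_0) : t \in \R\}$ is an LFSM with \emph{stationary increments}, whence $\{X(t, v_0) - X(t_0, v_0) : t \in \R\}$ and $\{X(t - t_0, v_0) : t \in \R\}$ have the same finite-dimensional distributions. Path continuity and the density of dyadics then imply that the two corresponding suprema over $[-M,M]$ are equal in law, and the latter is almost surely finite directly by Proposition~\ref{prop:asympinf} with $q=0$. This is shorter because it reuses Proposition~\ref{prop:asympinf} off the shelf rather than rerunning its proof on a shifted series. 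Your hands-on approach, however, has a hidden advantage you do not exploit: the Borel--Cantelli step does \emph{not} actually require identical marginals, since the uniform tail bound (\ref{eq4:l3optim}) of Remark~\ref{prop:gcjk}(iii) depends only on the common scale parameter and holds for all $\epsilon_{j,k}$ regardless of skewness; thus the argument behind Lemma~\ref{omega0} applies verbatim to the shifted family $\{\epsilon_{j, m_j+k'}\}$ with no hypothesis on $\beta(\cdot)$. Your route therefore points toward dropping the constancy assumption on $\beta$ altogether---something the paper's stationary-increments argument cannot do, since stationarity of LFSM increments genuinely requires constant $\beta$.
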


\begin{proof}[Proof of Theorem~\ref{locmodcont1Y}] First observe that for any fixed $t_0\in\R$, the process $\big\{X(t,H(t_0)):t\in\R\big\}$ has stationary increments since it is a Linear Fractional Stable Motion of Hurst parameter $H(t_0)$; hence, the processes $\big\{X(t,H(t_0))-X(t_0,H(t_0)) :t\in\R\big\}$ and $\big\{X(t-t_0,H(t_0)): t\in\R\big\}$ have the same finite dimensional distributions. Therefore, using their path continuity, and the fact that the set of the dyadic numbers in $[-M,M]$ is dense in $[-M,M]$, it follows that the random variables,
$$
\sup_{t\in [-M,M]} \left\{\frac{\big| X(t,H(t_0))-X(t_0,H(t_0))\big|}{ |t-t_0|^{H(t_0)} \big(1+\big|\log |t-t_0|\big| \big)^{1/\alpha+\eta}}\right\}
$$
and
$$
\sup_{t\in [-M,M]} \left\{\frac{\big| X(t-t_0,H(t_0))\big|}{ |t-t_0|^{H(t_0)} \big(1+\big|\log |t-t_0|\big| \big)^{1/\alpha+\eta}}\right\},
$$
are equals in law; thus, taking in Proposition~\ref{prop:asympinf}, $q=0$ and $a,b$ such that $H(t_0)\in [a,b]$, one gets that, almost surely,
\begin{equation}
\label{eq2:locmodcont1Y}
\sup_{t\in [-M,M]} \left\{\frac{\big| X(t,H(t_0))-X(t_0,H(t_0))\big|}{ |t-t_0|^{H(t_0)} \big(1+\big|\log |t-t_0|\big| \big)^{1/\alpha+\eta}}\right\}<\infty.
\end{equation}
On the other hand, taking in (\ref{eq2:CTWSEbis}), $q=0$, $a=\underline{H}:=\inf_{x\in\R} H(x)$ and $b=\overline{H}:=\sup_{x\in\R} H(x)$, one obtains that,
\begin{equation}
\label{eq3:locmodcont1Y}
\sup_{t\in [-M,M]} \left\{\frac{\big| X(t,H(t))-X(t,H(t_0))\big|}{\big|H(t)-H(t_0)\big|}\right\}<\infty.
\end{equation}
Finally putting together, (\ref{def:LMSMbis}), (\ref{eq2:locmodcont1Y}) and (\ref{eq3:locmodcont1Y}), it follows that (\ref{eq1:locmodcont1Y}) holds.
\end{proof}

The following result is a straightforward consequence of Theorem~\ref{locmodcont1Y}. 
\begin{corollary}
\label{cor:locmodcont1Y}
Assume that the skewness intensity function $\beta(\cdot)$ of the $\stas$ measure $Z_\al(ds)$ is a constant. Also assume that $t_0\in\R$ is such that,
for each $\eta>0$, one has for all $t\in\R$,
\begin{equation}
\label{eq4:locmodcont1Y}
\big|H(t)-H(t_0)\big|\le c|t-t_0|^{H(t_0)} \big(1+\big|\log |t-t_0|\big| \big)^{1/\alpha+\eta},
\end{equation}
where $c>0$ is a constant only depending on $t_0$ and $\eta$. Then, one has almost surely, for each positive real numbers $M$ and $\eta$,
\begin{equation}
\label{eq5:locmodcont1Y}
\sup_{t\in [-M,M]} \left\{\frac{\big| Y(t)-Y(t_0)\big|}{ |t-t_0|^{H(t_0)} \big(1+\big|\log |t-t_0|\big| \big)^{1/\alpha+\eta}} \right\}< \infty.
\end{equation}
\end{corollary}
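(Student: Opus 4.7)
My plan is to read off the corollary directly from Theorem~\ref{locmodcont1Y} by using the hypothesis (\ref{eq4:locmodcont1Y}) to absorb the term $|H(t)-H(t_0)|$ appearing in the denominator of (\ref{eq1:locmodcont1Y}) into the term $|t-t_0|^{H(t_0)}(1+|\log|t-t_0||)^{1/\alpha+\eta}$.

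First, I would fix an arbitrary $\o$ in the almost-sure event of Theorem~\ref{locmodcont1Y} and fix arbitrary positive reals $M$ and $\eta$. Theorem~\ref{locmodcont1Y} then provides a finite constant $C_1(\o,M,\eta)$ such that, for every $t\in[-M,M]$,
\begin{equation*}
\big|Y(t,\o)-Y(t_0,\o)\big|\le C_1(\o,M,\eta)\Big(|t-t_0|^{H(t_0)}\big(1+\big|\log|t-t_0|\big|\big)^{1/\alpha+\eta}+\big|H(t)-H(t_0)\big|\Big).
\end{equation*}

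Next, I apply the hypothesis (\ref{eq4:locmodcont1Y}) with this same $\eta$: there exists $c=c(t_0,\eta)>0$ such that for every $t\in\R$ (in particular for every $t\in[-M,M]$),
\begin{equation*}
\big|H(t)-H(t_0)\big|\le c\,|t-t_0|^{H(t_0)}\big(1+\big|\log|t-t_0|\big|\big)^{1/\alpha+\eta}.
\end{equation*}
Substituting this bound into the previous display yields
\begin{equation*}
\big|Y(t,\o)-Y(t_0,\o)\big|\le C_1(\o,M,\eta)(1+c)\,|t-t_0|^{H(t_0)}\big(1+\big|\log|t-t_0|\big|\big)^{1/\alpha+\eta},
\end{equation*}
and dividing both sides by $|t-t_0|^{H(t_0)}(1+|\log|t-t_0||)^{1/\alpha+\eta}$ (with the usual convention $0/0=0$ to handle $t=t_0$) and taking the supremum over $t\in[-M,M]$ gives (\ref{eq5:locmodcont1Y}).

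There is essentially no obstacle: the argument is a two-line comparison of denominators, together with a careful matching of the two occurrences of the parameter $\eta$. The only point worth noting is that the almost sure event on which (\ref{eq1:locmodcont1Y}) holds is produced by Theorem~\ref{locmodcont1Y} simultaneously for all $M$ and all $\eta$ in a countable dense subset of $(0,\infty)$; since the right-hand side of (\ref{eq5:locmodcont1Y}) is monotone in $\eta$, this yields the conclusion for every $\eta>0$ on a single event of probability one.
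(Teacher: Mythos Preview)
Your proposal is correct and matches the paper's approach exactly: the paper simply states that the corollary is ``a straightforward consequence of Theorem~\ref{locmodcont1Y}'' without spelling out the two-line argument you give. Your final remark about extracting a single null set via a countable dense set of $\eta$'s is unnecessary, since Theorem~\ref{locmodcont1Y} is already stated as holding almost surely \emph{for all} positive $M$ and $\eta$ simultaneously; but this extra caution does no harm.
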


\section{Quasi-optimality of global modulus of continuity of LMSM}
\label{subsec:quasiopt}
The goal of this section is to show that, under some conditions, a bit stronger than (\ref{condeq3:modcont2Y}), the global modulus of continuity, given in (\ref{eq3:modcont2Y}), is quasi-optimal, more precisely:
\begin{thm}
\label{optim:modcont}
Assume that $M_1<M_2$ are two arbitrary fixed real numbers such that the condition,
\begin{itemize}
\item[$(\mathcal{A}):$] $H(\cdot)$ belongs to the H\"older space $\ce^{\gamma_*}\big([M_1,M_2],\R\big)$ for some $\gamma_* \in \big (\min_{x\in [M_1,M_2]} H(x)-1/\al,1\big]$,
\end{itemize}
is satisfied. Let us set 
\begin{equation}
\label{eq:defrho}
\rho:=\sup\left\{\theta\in\R_+\,:\,\exists\, t_0\in [M_1,M_2] \mbox{ s.t. }  H(t_0)=\min_{x\in [M_1,M_2]} H(x) \mbox{ and } \sup_{t\in [M_1,M_2]}\frac{|H(t)-H(t_0)|}{|t-t_0|^{\theta}}<\infty\right\}
\end{equation}
and 
\begin{equation}
\label{eq:deftau}
\tau:= \frac{1+2\alpha^{-1}}{\alpha\rho -1},
\end{equation}
with the convention that $\tau:=0$ when $\rho=+\infty$. Assume that 
\begin{equation}
\label{eq:inegalrho}
\al\rho>1,
\end{equation}
then, $\tau$ is a well-defined nonnegative real number, and one has, almost surely, for all $\eta >0$, 
\begin{equation}
\label{eq1:optim}
\sup_{(t,s)\in [M_1,M_2]^2} \left\{\frac{ |Y(t)-Y(s)|}{ |t-s|^{\min_{x\in [M_1,M_2]} H(x)-1/\alpha}\big( 1+ \big| \log |t-s| \big| \big)^{-\tau-\eta}  } \right\}= \infty. 
\end{equation}  
\end{thm}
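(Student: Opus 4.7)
The approach is by contradiction. Suppose for some $\eta_0>0$, $K(\omega):=\sup_{(t,s)\in[M_1,M_2]^2}|Y(t)-Y(s)|/\{|t-s|^{\underline{H}-1/\alpha}(1+|\log|t-s||)^{-\tau-\eta_0}\}<\infty$, where $\underline{H}:=\min_{[M_1,M_2]}H$. I would construct, along a random subsequence of scales, pairs $(t_j,s_j)\in[M_1,M_2]^2$ with $|t_j-s_j|\sim 2^{-j}$ for which $|Y(t_j)-Y(s_j)|>K\cdot 2^{-j(\underline{H}-1/\alpha)}j^{-\tau-\eta_0}$, contradicting the assumption. The key is to produce an atypically large wavelet coefficient $\epsilon_{j,k_j}$ localised near a minimizer of $H$, and to show that this coefficient drives the $Y$-increment above the threshold set by the contradiction hypothesis.

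Fix a point $t_0\in[M_1,M_2]$ at which $H$ attains its minimum and that realizes the supremum in (\ref{eq:defrho}), so that for any $\theta<\rho$ and $\eta'>0$ one has $|H(t)-H(t_0)|\le c_\theta|t-t_0|^\theta$ in a neighborhood of $t_0$. Fix an integer $p>2R$ and a radius $r_j\to 0$ to be specified. Consider the index set $Q_j:=\{q\in\Z:pq\cdot 2^{-j}\in[t_0,t_0+r_j]\}$, of cardinality $\sim r_j 2^j/p$. By Remark~\ref{prop:gcjk}~(iv), $\{\epsilon_{j,pq}\}_{q\in Q_j}$ is an i.i.d.\ family, and by the tail lower bound (\ref{eq4:l3optim}), $\PR(|\epsilon_{j,pq}|>x)\ge c'x^{-\alpha}$. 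A standard heavy-tail extremal estimate combined with Borel--Cantelli (applied along a sparse subsequence of scales, so that the weak dependence between $\epsilon$'s at different scales can be absorbed) yields, almost surely, infinitely many $j$ for which some $k_j=pq_j$ satisfies $|\epsilon_{j,k_j}|\ge c_1(r_j 2^j)^{1/\alpha}$. Then pick integers $\kappa_1>\kappa_2$ with $D:=\Psi(\kappa_1,\underline{H})-\Psi(\kappa_2,\underline{H})\ne 0$ (non-vanishing follows, for instance, from (\ref{FPSI}) and the non-triviality of $\Psi(\cdot,\underline{H})$), and set $t_j:=(k_j+\kappa_1)2^{-j}$, $s_j:=(k_j+\kappa_2)2^{-j}$; both lie within $O(r_j)$ of $t_0$ and satisfy $|t_j-s_j|=(\kappa_1-\kappa_2)2^{-j}$.

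The wavelet representation of Theorem~\ref{TWSE} at $v=\underline{H}$ then isolates the dominant contribution $2^{-j\underline{H}}\epsilon_{j,k_j}D$ in $X(t_j,\underline{H})-X(s_j,\underline{H})$, with the residual $R_j$ bounded by $C_2(\omega)2^{-j(\underline{H}-1/\alpha)}j^{2/\alpha+\eta'}$ via Proposition~\ref{pp1:dX}. Decomposing
$$Y(t_j)-Y(s_j)=\bigl[X(t_j,\underline{H})-X(s_j,\underline{H})\bigr]+\bigl[X(t_j,H(t_j))-X(t_j,\underline{H})\bigr]-\bigl[X(s_j,H(s_j))-X(s_j,\underline{H})\bigr],$$
and bounding each of the last two brackets by $C_4 r_j^{\rho-\eta'}$ via (\ref{eq2:CTWSEbis}) and the Hölder bound on $H$ at $t_0$, the contradiction hypothesis $|Y(t_j)-Y(s_j)|\le K\cdot 2^{-j(\underline{H}-1/\alpha)}j^{-\tau-\eta_0}$ combines with the lower bound on $|\epsilon_{j,k_j}|$ to give
$$c_1|D|\,r_j^{1/\alpha}\le Kj^{-\tau-\eta_0}+C_2 j^{2/\alpha+\eta'}+2C_4 r_j^{\rho-\eta'}\cdot 2^{j(\underline{H}-1/\alpha)}.$$
Choosing $r_j$ of the form $2^{-\vartheta j}$ with $\vartheta=(\underline{H}-1/\alpha)/(\rho-1/\alpha)$ (up to polylogarithmic corrections) balances the $r_j^{1/\alpha}$ and $r_j^{\rho-\eta'}\cdot 2^{j(\underline{H}-1/\alpha)}$ terms; tracking the remaining logarithmic factors then reduces the above inequality to a purely logarithmic one that forces $\tau\ge(1+2/\alpha)/(\alpha\rho-1)$, delivering the contradiction. \textbf{The main obstacle} is precisely this final balancing: the $j^{2/\alpha+\eta'}$ residual from Proposition~\ref{pp1:dX}, the polynomial-in-$j$ trade-off between the atypically large coefficient and the $H$-variation error, and the log-corrections entering the Borel--Cantelli threshold must combine exactly so as to reconstruct $\tau=(1+2\alpha^{-1})/(\alpha\rho-1)$.
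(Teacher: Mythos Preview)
Your contradiction strategy and the idea of locating an atypically large $\epsilon_{j,k_j}$ near a minimizer of $H$ are correct in spirit, and match the paper's. However, the step where you ``isolate the dominant contribution $2^{-j\underline{H}}\epsilon_{j,k_j}D$'' and bound the residual $R_j$ by $C_2(\omega)2^{-j(\underline H-1/\alpha)}j^{2/\alpha+\eta'}$ via Proposition~\ref{pp1:dX} is a genuine gap, and it breaks the argument. Proposition~\ref{pp1:dX} bounds the \emph{entire} sum of absolute values of the wavelet terms by $C_2\,2^{-j(\underline H-1/\alpha)}j^{2/\alpha+\eta'}$; it gives you no improvement for the sum with one term removed. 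Your ``main term'' has size $c_1|D|\,2^{-j(\underline H-1/\alpha)}r_j^{1/\alpha}$, so the ratio main/residual is of order $r_j^{1/\alpha}/j^{2/\alpha+\eta'}$, and since $r_j\le M_2-M_1$ this ratio tends to $0$. In particular, your displayed inequality $c_1|D|\,r_j^{1/\alpha}\le Kj^{-\tau-\eta_0}+C_2 j^{2/\alpha+\eta'}+\cdots$ is trivially true for large $j$ (the right-hand side contains $C_2 j^{2/\alpha+\eta'}\to\infty$), so no contradiction follows. Your proposed choice $r_j=2^{-\vartheta j}$ with $\vartheta>0$ only makes this worse: the main term then decays like $2^{-j(\underline H-1/\alpha+\vartheta/\alpha)}$, strictly faster than the modulus you are trying to violate.

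The paper resolves exactly this difficulty by \emph{not} comparing a single wavelet term against the rest of the series inside a $Y$-increment. Instead it introduces the biorthogonal functions $\widetilde\Psi$ (Proposition~\ref{orthopsi}) and the coefficients $g_{j,k}=2^{j(1+H_*)}\int Y(t)\widetilde\Psi(2^jt-k,H_*)\,dt$. By biorthogonality (Proposition~\ref{gcjk=ejk}), $2^{j(1+H_*)}\int X(t,H_*)\widetilde\Psi(2^jt-k,H_*)\,dt=\epsilon_{j,k}$ exactly, so the ``residual from the other $\epsilon$'s'' is zero; the only errors come from replacing $H(t)$ by $H_*$, and those are genuinely small (Lemmas~\ref{optim:lem1bis} and~\ref{optim:lem2}). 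The window of indices $k$ is taken of polylogarithmic width $|t_0-k2^{-j}|\in[j^{-e},j^{-d}]$ (Remark~\ref{rem:existed}), not width $2^{-\vartheta j}$; this is what produces the exponent $\tau=(1+2\alpha^{-1})/(\alpha\rho-1)$ in the Borel--Cantelli computation of Lemma~\ref{optim:lem3}, and is also what keeps the $H$-variation error under control. If you want to salvage your approach, you would need an independent mechanism to show that a single large $\epsilon_{j,k_j}$ forces a large $Y$-increment---i.e.\ a lower bound on $|X(t_j,\underline H)-X(s_j,\underline H)|$ in terms of $|\epsilon_{j,k_j}|$---and Proposition~\ref{pp1:dX} alone cannot provide that.
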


\begin{rem}
\label{rem:tau}
Notice that the Conditions $(\mathcal{A})$ and (\ref{eq:inegalrho}) are satisfied when $H(\cdot)$ belongs to the H\"older space $\ce^{\gamma}\big([M_1,M_2],\R\big)$, for some 
$\gamma>1/\al$. 
\end{rem}

%
%

In order to prove Theorem~\ref{optim:modcont}, we need some preliminary results. Let us first introduce $\widetilde{\Psi}$ the real-valued deterministic continuous function defined, for all $(x,v)\in \R\times (1/\alpha,1)$, as, 
\begin{equation}\label{PSIder}
\widetilde{\Psi}(x,v):= \frac{1}{\Gamma(v+1-1/\alpha)\Gamma(1/\alpha-v+1)}\int_{\R} (s-x)_{+}^{1/\alpha-v} \psi^{(2)}(s) ds,
\end{equation}
where $\psi^{(2)}$ is the second derivative of the Daubechies mother wavelet $\psi$ introduced at the very beginning of Section~\ref{sec:wavelets}, and where $\Gamma$ is the usual Gamma function; also, recall that the definition of $(\cdot)_{+}^{1/\alpha-v}$ is given in (\ref{eq:adpat+}).  
By using a result in \cite{Samko93} concerning Fourier transforms of right-sided fractional derivatives, one has for each $(\xi,v)\in \R\times (1/\alpha,1)$,  
\begin{equation} 
\label{FPSIder}
 \widehat{\widetilde{\Psi}}(\xi,v)= \frac{1}{\Gamma(v+1-1/\alpha)} |\xi|^{v+1-1/\alpha} e^{-i\textrm{sgn}(\xi)(v+1-1/\alpha)\frac{\pi}{2}}  \widehat{\psi}(\xi),
\end{equation}
where $\widehat{\widetilde{\Psi}}(\cdot,v)$ denotes the Fourier transform of the function $\widetilde{\Psi}(\cdot,v)$.
Let us now give some useful properties of the function $\widetilde{\Psi}$.

\begin{proposition}\label{orthopsi}
The function $\widetilde{\Psi}$ satisfies the following three properties.
\begin{itemize}
\item[(i)] For all real numbers
  $a,b$ such that $1>b>a>1/\alpha$, the function $\widetilde{\Psi}$ is well-localized in the variable $x$ uniformly in the variable $v\in [a,b]$; namely one has,
\begin{equation}
\label{localpsitilde}
\sup_{(x,v)\in\R\times [a,b]} (3+|x|)^2 \big|\widetilde{\Psi}(x,v)\big|< \infty.
\end{equation}
\item[(ii)] For any fixed $v\in \big(1/\alpha,1\big)$, the first moment of the function $\widetilde{\Psi}(\cdot,v)$ vanishes, which means that
\begin{equation}
 \int_{\R} \widetilde{\Psi}(x,v) dx =0. \label{mompsi}
\end{equation}
\item[(iii)] Let $\Psi$ be the function introduced in (\ref{PSI}) then, for each fixed $v \in \big(1/\alpha,1\big)$, the system of functions $\big\lbrace 2^{j/2} \Psi(2^j\cdot-k,v) : (j,k)\in\Z^2\big\rbrace$ and $\big\lbrace 2^{j/2} \widetilde{\Psi}(2^j\cdot-k,v) : (j,k)\in\Z^2\big\rbrace$ is biorthogonal; this means that for any $j\in\Z$, $j'\in\Z$, $k\in\Z$ and $k'\in\Z$, one has, 
\begin{equation}\label{OTN}
2^{(j+j')/2}\int_{\R} \Psi(2^jt-k,v)\widetilde{\Psi}(2^{j'}t-k',v) dt =\delta_{(j,k;j',k')}, 
\end{equation}
where $\delta_{(j,k;j',k')}= 1$ if $(j,k)=(j',k')$ and $0$ otherwise.
\end{itemize}
\end{proposition}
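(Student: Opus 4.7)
My plan is to handle the three parts in order, using techniques very close to those already in the paper.

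For part (i), I would mirror the argument used for Proposition~\ref{regulpsi}(ii). First, note that for $x \ge R$ and $s \in \supp{\psi^{(2)}} \subseteq [-R,R]$, the factor $(s-x)_+^{1/\alpha-v}$ vanishes, so $\widetilde{\Psi}(x,v)=0$ on this range. Next, by a routine dominated-convergence argument (relying on the fact that $1/\alpha-v > -1$ makes the singular kernel integrable), the function $\widetilde{\Psi}$ is continuous on $\R\times (1/\alpha,1)$, and in particular is bounded on the compact set $[-R,R]\times [a,b]$. It remains to control $\widetilde{\Psi}(x,v)$ for $x<-R$, where $(s-x)_+^{1/\alpha-v}=(s-x)^{1/\alpha-v}$ is smooth in $s\in[-R,R]$. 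Integrating by parts twice in $s$ (the boundary terms vanish since $\psi(\pm R)=\psi^{(1)}(\pm R)=0$ by compact support), one obtains
\[
\int_{-R}^R (s-x)^{1/\alpha-v}\psi^{(2)}(s)\,ds = (1/\alpha-v)(1/\alpha-v-1)\int_{-R}^R (s-x)^{1/\alpha-v-2}\psi(s)\,ds.
\]
For $v\in [a,b]$, the exponent $1/\alpha-v-2$ is strictly less than $-2$; and for $|x|\ge 2R$ one has $s-x\ge |x|/2$, yielding $|\widetilde{\Psi}(x,v)|\le C_{a,b}|x|^{1/\alpha-a-2}=O(|x|^{-2-\varepsilon})$ uniformly in $v\in[a,b]$. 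Combining the three cases gives (\ref{localpsitilde}).

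For part (ii), I would rely on the Fourier representation (\ref{FPSIder}). Because part (i) gives $\widetilde{\Psi}(\cdot,v)\in L^1(\R)$, its Fourier transform is continuous and
\[
\int_{\R}\widetilde{\Psi}(x,v)\,dx = \widehat{\widetilde{\Psi}}(0,v) = \lim_{\xi\to 0} \widehat{\widetilde{\Psi}}(\xi,v).
\]
On the right-hand side of (\ref{FPSIder}), the factor $|\xi|^{v+1-1/\alpha}$ tends to $0$ since $v+1-1/\alpha>0$, the complex exponential is bounded, and $\widehat{\psi}$ is continuous, so the limit is $0$. This yields (\ref{mompsi}).

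For part (iii), the strategy is Plancherel together with the algebraic cancellation built into the definitions of $\Psi$ and $\widetilde{\Psi}$. Set $I:=\int\Psi(2^jt-k,v)\widetilde{\Psi}(2^{j'}t-k',v)\,dt$. Both factors are real and lie in $L^1\cap L^2$ (by Proposition~\ref{regulpsi}(ii) and part~(i) above), so Plancherel applies. The scaling/translation rules give $\widehat{\Psi(2^j\cdot-k,v)}(\xi)=2^{-j}e^{-ik\xi/2^j}\widehat{\Psi}(\xi/2^j,v)$ and its analogue for $\widetilde{\Psi}$. The key algebraic point is that substituting (\ref{FPSI}) and (\ref{FPSIder}) makes the $\Gamma$-factors, the power-factors $|\cdot|^{v+1-1/\alpha}$, and the unimodular phases cancel in pairs, leaving
\[
\widehat{\Psi}(\xi/2^j,v)\,\overline{\widehat{\widetilde{\Psi}}(\xi/2^{j'},v)} = 2^{(j-j')(v+1-1/\alpha)}\,\widehat{\psi}(\xi/2^j)\,\overline{\widehat{\psi}(\xi/2^{j'})}.
\]
Rewriting $e^{-ik\xi/2^j}\widehat{\psi}(\xi/2^j)=2^{j}\widehat{\psi(2^j\cdot-k)}(\xi)$ and likewise for the primed variables, then applying Plancherel in reverse, converts the $\xi$-integral into $2\pi\cdot 2^{j+j'}\int\psi(2^jt-k)\psi(2^{j'}t-k')\,dt$. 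Gathering all powers of $2$ yields
\[
2^{(j+j')/2}I = 2^{(j-j')(v+1-1/\alpha)}\int \psi_{j,k}(t)\,\psi_{j',k'}(t)\,dt,
\]
where $\psi_{j,k}(t):=2^{j/2}\psi(2^jt-k)$. Orthonormality of the Daubechies basis $\{\psi_{j,k}\}$ in $L^2(\R)$ forces the integral to equal $\delta_{(j,k),(j',k')}$; and whenever this Kronecker delta is nonzero one has $j=j'$, so the prefactor $2^{(j-j')(v+1-1/\alpha)}$ equals $1$. This gives (\ref{OTN}).

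The only delicate step is the pairwise cancellation in part (iii); the cleanest way to minimize bookkeeping errors is to first verify the pointwise identity $\widehat{\Psi}(\xi,v)\overline{\widehat{\widetilde{\Psi}}(\xi,v)}=|\widehat{\psi}(\xi)|^2$ (the $j=j'=0$, $k=k'=0$ case) and then introduce the dilations and translations via the standard Fourier rules.
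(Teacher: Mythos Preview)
Your proposal is correct and follows essentially the same route as the paper. For (i) you mimic the localization argument of Proposition~\ref{regulpsi}(ii) (the paper explicitly says to do this, after noting the Gamma prefactor is bounded on $[a,b]$); for (ii) you evaluate $\widehat{\widetilde{\Psi}}(0,v)$ via (\ref{FPSIder}); and for (iii) you use Parseval together with the cancellation between (\ref{FPSI}) and (\ref{FPSIder}) to reduce to the $L^2$-orthonormality of $\{2^{j/2}\psi(2^j\cdot-k)\}$, which is exactly the paper's computation. One cosmetic remark: in part (i) your two integrations by parts already give exponent $1/\alpha-v-2<-2$, so you need only extend the ``bounded on a compact set'' observation from $[-R,R]$ to $[-2R,R]$ (which your continuity claim covers) to patch the three regions together.
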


\begin{proof}[Proof of Proposition \ref{orthopsi}] Part $(i)$ can be obtained by using the fact that 
$$
\sup_{v\in [a,b]} \left\{\frac{1}{\Gamma(v+1-1/\alpha)\Gamma(1/\alpha-v+1)}\right\}<\infty
$$
and a method similar to the one used in the proof of Proposition~\ref{regulpsi} Part $(ii)$. In view of the definition of a Fourier transform, taking in (\ref{FPSIder}) $\xi=0$ one gets Part $(ii)$. Let us now give the proof of Part $(iii)$. Using Parseval formula, (\ref{FPSI}) and (\ref{FPSIder}), one obtains for all $(j,k)\in\Z^2$,
\begin{align}
& \int_{\R} 2^{j/2}\Psi(2^jt-k,v) 2^{j'/2}\widetilde{\Psi}(2^{j'}t-k',v) dt \nonumber \\
& = 2^{-(j+j')/2}(2\pi)^{-1} \int_{\R} e^{-i\xi(k/2^j-k'/2^{j'})} \widehat{\Psi}(2^{-j}\xi,v) \overline{\widehat{\widetilde{\Psi}}(2^{-j'}\xi,v)} d\xi \nonumber \\
& = 2^{-(j+j')/2+(j-j')(v+1-1/\alpha)} (2\pi)^{-1}  \int_{\R} e^{-i\xi(k/2^j-k'/2^{j'})} \widehat{\psi}(2^{-j}\xi) \overline{\widehat{\psi}(2^{-j'}\xi)} d\xi \nonumber \\
& = 2^{(j-j')(v+1-1/\alpha)} \int_{\R} 2^{j/2} \psi(2^jt-k) 2^{j'/2} \psi(2^{j'}t-k') dt \nonumber \\
& = \delta_{(j,k;j',k')}, \nonumber
\end{align}
where the last equality results from the fact that $\big\lbrace 2^{j/2}\psi(2^j\cdot-k) : (j,k)\in\Z^2\big\rbrace$ is an orthonormal basis for $L^2(\R)$.
\end{proof}

In all the remaining of this section, $M_1<M_2$ denote two arbitrary real numbers such that the Conditions $(\mathcal{A})$ and (\ref{eq:inegalrho}) hold. For the sake of simplicity, we set,
\begin{equation}
\label{Hstarl}
H_*:=\min_{x\in [M_1,M_2]} H(x).
\end{equation}

\begin{lemma}\label{optim:lem1}
Let $\Omega_0^{*}$ be the event of probability 1 introduced in Lemma~\ref{omega0} and let $\big\{g_{j,k}\,:\, (j,k)\in\N\times\Z\big\}$ be the sequence of the random variables defined on $\Omega_0^{*}$ as, 
\begin{equation}
\label{def:gjk}
g_{j,k} = 2^{j(1+H_{*})} \int_{\R} Y(t) \widetilde{\Psi}(2^jt-k,H_{*}) dt. 
\end{equation}
Assume that there exists $\omega_0 \in \Omega_0^{*}$, $\tau_0 > \tau$ and $\eta_0 >0$ such that
\begin{equation}\label{eq1:l1optim}
\sup_{(t,s)\in [M_1,M_2]^2} \frac{ \big|Y(t,\omega_0)-Y(s,\omega_0) \big|}{ |t-s|^{H_{*}-1/\alpha} \big( 1+ \big| \log |t-s| \big| \big)^{-\tau_0-\eta_0} } <\infty.
\end{equation}
Then one has
\begin{equation}\label{eq2:l1optim}
\limsup_{j \rightarrow +\infty} j^{\tau_0} 2^{-j/\alpha} \max\Big\{ \big|g_{j,k}(\omega_0)\big|: k \in\Z \mbox{ and } M_1+2^{-\frac{j}{2\al}}\le k/2^j\le   M_2-2^{-\frac{j}{2\al}}\Big\} = 0.
\end{equation}
\end{lemma}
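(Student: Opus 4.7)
The plan is to exploit the cancellation property (\ref{mompsi}) of $\widetilde{\Psi}(\cdot,H_*)$ to rewrite $g_{j,k}(\omega_0)$ as an integral of increments of $Y$, then apply the H\"older-type hypothesis (\ref{eq1:l1optim}) in the region where both endpoints lie in $[M_1,M_2]$, and use Proposition~\ref{prop:asympinf} to control the remaining tail.

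First I would note that since $\int_\R \widetilde{\Psi}(x,H_*)\,dx=0$, we may insert the constant $-Y(k/2^j,\omega_0)$ inside the integral defining $g_{j,k}$ without altering its value. The change of variable $u=2^j t-k$ then yields
\[
g_{j,k}(\omega_0) \;=\; 2^{jH_*}\int_\R \Bigl(Y\bigl((u+k)/2^j,\omega_0\bigr) - Y(k/2^j,\omega_0)\Bigr)\,\widetilde{\Psi}(u,H_*)\,du.
\]
Set $N_j:=2^{j(1-1/(2\alpha))}$ and split this integral at $|u|=N_j$. For the inner range $|u|\le N_j$, the lower bound $k/2^j\ge M_1+2^{-j/(2\alpha)}$ and the upper bound $k/2^j\le M_2-2^{-j/(2\alpha)}$ guarantee that both $(u+k)/2^j$ and $k/2^j$ belong to $[M_1,M_2]$, so the hypothesis (\ref{eq1:l1optim}) applies. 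Moreover $|u|/2^j\le 2^{-j/(2\alpha)}$ throughout this range, so $\bigl|\log(|u|/2^j)\bigr|\ge (j\log 2)/(2\alpha)$ and hence $(1+|\log(|u|/2^j)|)^{-\tau_0-\eta_0}\le c\,j^{-\tau_0-\eta_0}$. Combined with the decay estimate $|\widetilde{\Psi}(u,H_*)|\le C(3+|u|)^{-2}$ from (\ref{localpsitilde}), and the convergence of $\int_\R |u|^{H_*-1/\alpha}(3+|u|)^{-2}\,du$ (which uses $0<H_*-1/\alpha<1$), the inner part is bounded by
\[
c\,j^{-\tau_0-\eta_0}\,2^{-j(H_*-1/\alpha)}\cdot\int_\R |u|^{H_*-1/\alpha}(3+|u|)^{-2}\,du.
\]
Multiplying by $2^{jH_*}$ produces an inner contribution of order $c\,j^{-\tau_0-\eta_0}\,2^{j/\alpha}$, uniformly in the admissible $k$, and then $j^{\tau_0}\,2^{-j/\alpha}$ times it is $O(j^{-\eta_0})\to 0$.

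For the outer range $|u|>N_j$, the hypothesis (\ref{eq1:l1optim}) is unavailable, so I would fall back on Proposition~\ref{prop:asympinf} with $q=0$ applied on any $[a,b]\supseteq[\underline{H},\overline{H}]$, yielding a deterministic pointwise bound of the form $|Y(t,\omega_0)|\le c(1+|t|)^{\overline{H}}(1+|\log(1+|t|)|)^{1/\alpha+\eta}$. Together with $|\widetilde{\Psi}(u,H_*)|\le C(3+|u|)^{-2}$ and $\overline{H}<1$, the tail integral is $O(N_j^{-1})=O(2^{-j(1-1/(2\alpha))})$ up to logarithmic factors; after multiplying by $2^{jH_*}$, this becomes $O(2^{j(H_*-1+1/(2\alpha))})$, which is $o(2^{j/\alpha})$ since $H_*<1<1+1/(2\alpha)$. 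Hence $j^{\tau_0}2^{-j/\alpha}$ times the outer contribution also tends to $0$, uniformly in $k$.

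The main obstacle is the outer-tail estimate: the decay of $\widetilde{\Psi}$ is only quadratic, so one needs the polynomial-log growth bound for $Y$ given by Proposition~\ref{prop:asympinf} and a precise comparison of the resulting exponent with $j/\alpha$. Once this is in hand, combining the inner and outer estimates gives the uniform bound $j^{\tau_0}2^{-j/\alpha}|g_{j,k}(\omega_0)|\le c\,j^{-\eta_0}+o(1)$ for $k$ in the prescribed range, which establishes (\ref{eq2:l1optim}).
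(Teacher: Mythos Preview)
Your proposal is correct and follows essentially the same strategy as the paper: subtract $Y(k/2^j)$ via the vanishing moment (\ref{mompsi}), use the H\"older hypothesis (\ref{eq1:l1optim}) where both endpoints lie in $[M_1,M_2]$, and invoke Proposition~\ref{prop:asympinf} together with the decay (\ref{localpsitilde}) of $\widetilde{\Psi}$ for the remainder. The only organizational difference is that the paper splits the $t$-domain into three fixed pieces $[M_1,M_2]$, $[-2M_0,2M_0]\setminus[M_1,M_2]$, $\R\setminus[-2M_0,2M_0]$ (and then needs a secondary split at $|u|=2^{j/2}$ to control the log factor on $\mathcal{B}_1$), whereas you change variables first and split once at the $j$-dependent threshold $|u|=2^{j(1-1/(2\alpha))}$, which directly forces $|\log(|u|/2^j)|\gtrsim j$ on the inner region and so avoids that secondary split.
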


\begin{rem}
\label{rem:wedegjk}
Notice that (\ref{localpsitilde}) (in which one takes $a,b$ such that $H_*\in [a,b]$), Proposition~\ref{prop:asympinf} (in which one takes $q=0$, $a=\underline{H}:=\inf_{x\in\R} H(x)$,
$b=\overline{H}:=\sup_{x\in\R} H(x)$ and $\eta$ an arbitrary positive real number) and Relation (\ref{def:LMSMbis}), imply that the random variables $g_{j,k}$ are well-defined and finite on $\Omega_{0}^*$. 
\end{rem}

\begin{proof}[Proof of Lemma \ref{optim:lem1}] In all the sequel, we assume that $j\in\N$ and $k\in\Z$ are arbitrary and
satisfy 
\begin{equation}
\label{eq1bis:l1optim}
M_1+2^{-\frac{j}{2\al}}\le \frac{k}{2^j}\le M_2-2^{-\frac{j}{2\al}}.
\end{equation}
It follows from (\ref{def:gjk}) and (\ref{mompsi}) in which one takes $v=H_*$, that, 
\begin{equation}\label{eq3:l1optim}
g_{j,k}(\omega_0) = 2^{j(1+H_*)} \int_{\R} \big( Y(t,\omega_0)-Y(k2^{-j},\omega_0) \big) \widetilde{\Psi}(2^jt-k,H_*) dt.
\end{equation}
In order to  conveniently bound $\big|g_{j,k}(\omega_0)\big|$, we split the integration domain $\R$ into the following three disjoint subdomains:
\begin{equation}\label{def1:l1optim}
\B_1:=[M_1,M_2]\mbox{, } \B_2:=[-2M_0,2M_0]\setminus [M_1,M_2] \mbox{ and } \B_3:=\R\setminus [-2M_0,2M_0], \mbox{where } M_0:=|M_1|+|M_2|.
\end{equation}
Therefore, (\ref{eq3:l1optim}) implies that,
\begin{equation}\label{eq4:l1optim}
\big|g_{j,k}(\omega_0)\big| \leq \sum_{l=1}^3 A_{j,k}^l(\omega_0),
\end{equation}
where, for each $l\in\{1,2,3\}$, one has set,
\begin{equation}\label{eq5:l1optim}
A_{j,k}^l(\omega_0) = 2^{j(1+H_*)} \int_{\B_l} \big|Y(t,\omega_0)-Y(k2^{-j},\omega_0)\big|  \big| \widetilde{\Psi}(2^jt-k,H_*)\big| dt.
\end{equation}
First, we show that (\ref{eq2:l1optim}) holds when $\big|g_{j,k}(\omega_0)\big|$ is replaced by $A_{j,k}^1(\o_0)$. Relation (\ref{eq1:l1optim}) and the change of variable $u=2^jt-k$, yield
\begin{align}
\label{eq6:l1optim}
A_{j,k}^1(\omega_0) & \leq C_1(\omega_0) 2^{j(1+H_*)} \int_{\B_1} |t-k2^{-j}|^{H_*-1/\alpha} \Big(1+ \big| \log |t-k2^{-j}| \big| \Big)^{-\tau_0-\eta_0} |\widetilde{\Psi}(2^jt-k,H_*)| dt \nonumber \\
& \leq C_1(\omega_0) 2^{j(1+H_*)} \int_{\R} |t-k2^{-j}|^{H_*-1/\alpha} \Big(1+ \big|\log |t-k2^{-j}| \big| \Big)^{-\tau_0-\eta_0} |\widetilde{\Psi}(2^jt-k,H_*)| dt \nonumber \\
& = C_1(\omega_0) 2^{j/\alpha} \int_{\R} |u|^{H_{*}-1/\alpha} \Big(1+ \big| \log |2^{-j}u| \big| \Big)^{-\tau_0-\eta_0} |\widetilde{\Psi}(u,H_*)| du \nonumber \\
& = C_1(\omega_0) j^{-\tau_0-\eta_0}2^{j/\alpha} \int_{\R} |u|^{H_*-1/\alpha} \Bigg( \frac{1}{j} + \bigg|\log(2) - \frac{\log |u|}{j} \bigg| \Bigg)^{-\tau_0-\eta_0} |\widetilde{\Psi}(u,H_*)| du, 
\end{align}
where
$$
C_1(\o_0):=\sup_{(t,s)\in\B_1^2} \frac{ \big|Y(t,\omega_0)-Y(s,\omega_0) \big|}{ |t-s|^{H_{*}-1/\alpha} \big( 1+ \big| \log |t-s| \big| \big)^{-\tau_0-\eta_0} }<\infty.
$$
Let us now show that,
\begin{equation}\label{eq7:l1optim}
\sup_{j\geq 1} \int_{\R} |u|^{H_*-1/\alpha} \Bigg( \frac{1}{j} + \bigg| \log(2) - \frac{\log |u|}{j} \bigg| \Bigg)^{-\tau_0-\eta_0} |\widetilde{\Psi}(u,H_*)| du <\infty.
\end{equation}
In view of (\ref{localpsitilde}) and the inequality, 
$$
\Bigg( \frac{1}{j} + \bigg| \log(2) - \frac{\log |u|}{j} \bigg| \Bigg)^{-\tau_0-\eta_0}\le  \Big( \frac{\log 2}{2} \Big)^{-\tau_0-\eta_0},
$$
which holds for all real number $u$ satisfying $|u|\leq 2^{j/2}$, one gets, for some constants $c_2,\ldots, c_5$ and all integer $j\ge 1$, that,
\begin{align}
& \int_{\R} |u|^{H_*-1/\alpha} \Bigg( \frac{1}{j} + \bigg| \log(2) - \frac{\log |u|}{j}\bigg| \Bigg)^{-\tau_0-\eta_0} |\widetilde{\Psi}(u,H_*)| du \nonumber \\
& \leq c_2 j^{\tau_0+\eta_0} \int_{|u|>2^{j/2}} \frac{|u|^{H_*-1/\alpha}}{(3+|u|)^2}du + c_2 \Big( \frac{\log 2}{2} \Big)^{-\tau_0-\eta_0} \int_{|u|\leq 2^{j/2}} \frac{|u|^{H_*-1/\alpha}}{(3+|u|)^2}du \nonumber \\
& \leq 2 c_2 j^{\tau_0+\eta_0} \int_{u>2^{j/2}} \frac{u^{H_*-1/\alpha}}{(3+u)^2}du + c_3 \int_{\R} \frac{|u|^{H_*-1/\alpha}}{(3+|u|)^2}du \nonumber \\
& \leq c_4 j^{\tau_0+\eta_0} \frac{2^{-j/2(1+1/\alpha-H_*)}}{1+1/\alpha-H_*}+c_5, \nonumber
\end{align}
which shows that (\ref{eq7:l1optim}) is satisfied. Next, (\ref{eq6:l1optim}) and (\ref{eq7:l1optim}) entail that
\begin{equation}
\label{eq8:l1optim}
\limsup_{j \rightarrow +\infty} j^{\tau_0} 2^{-j/\alpha} \max\Big\{ A_{j,k}^1(\omega_0):  k \in\Z \mbox{ and } M_1+2^{-\frac{j}{2\al}}\le k/2^j\le   M_2-2^{-\frac{j}{2\al}}\Big\} = 0.
\end{equation}
Next, we prove that (\ref{eq2:l1optim}) holds when $\big|g_{j,k}(\omega_0)\big|$ is replaced by $A_{j,k}^2(\o_0)$. Let us set,
\begin{equation}
\label{eq8bis:l1optim}
C_6 (\o_0):= \sup_{t\in [-2M_0,2M_0]}\big|Y(t,\omega_0)\big|<\infty;
\end{equation}
observe that $C_6 (\o_0)$ is finite, since the function $t\mapsto Y(t,\omega_0)$ is continuous over the compact interval $[-2M_0,2M_0]$. Also, observe that, in view of 
(\ref{eq1bis:l1optim}) and (\ref{def1:l1optim}), one has that for all $t\in\B_2$,
$$
|2^j t-k|>2^{j(1-\frac{1}{2\al})}.
$$
Therefore, it follows from (\ref{localpsitilde}), that for each $t\in\B_2$,
\begin{equation}
\label{eq8ter:l1optim}
|\widetilde{\Psi}(2^jt-k,H_*)|\le c_7 2^{-j(2-\frac{1}{\al})},
\end{equation}
where $c_7$ is a constant non depending on $t$, $j$ and $k$. Putting together, (\ref{eq5:l1optim}), (\ref{eq8bis:l1optim}) and (\ref{eq8ter:l1optim}), one gets that,
$$
A_{j,k}^2(\omega_0)\le C_8 (\o_0) 2^{-j(1-H_{*}-\frac{1}{\al})},
$$
where $C_8 (\o_0)$ is a constant non depending on $j$ and $k$. The latter inequality and the inequality $H_*<1$, imply that,
\begin{equation}
\label{eq8quat:l1optim}
\limsup_{j \rightarrow +\infty} j^{\tau_0} 2^{-j/\alpha} \max\Big\{ A_{j,k}^2(\omega_0): k \in\Z \mbox{ and } M_1+2^{-\frac{j}{2\al}}\le k/2^j\le   M_2-2^{-\frac{j}{2\al}}\Big\} = 0.
\end{equation}
Next, we prove that (\ref{eq2:l1optim}) holds when $\big|g_{j,k}(\omega_0)\big|$ is replaced by $A_{j,k}^3(\o_0)$. Observe that by using the triangle inequality, (\ref{eq1bis:l1optim}) and (\ref{def1:l1optim}), one has, for each $t\in\B_3$,
$$
|2^j t-k|=2^{j}\left|t-\frac{k}{2^j}\right|\ge 2^j \left (|t|-\frac{|k|}{2^j}\right)> 2^j\left (|t|-M_0\right)>2^{j-1}|t|.
$$
Therefore, it follows from (\ref{localpsitilde}), that for each $t\in\B_3$,
\begin{equation}
\label{eq9:l1optim}
|\widetilde{\Psi}(2^jt-k,H_*)|\le c_{9} 2^{-2j} |t|^{-2},
\end{equation}
where $c_9$ is a constant non depending on $t$, $j$ and $k$. On the other hand, using (\ref{def:LMSMbis}) and Proposition~\ref{prop:asympinf}, in the case where $q=0$, $a=\underline{H}
:=\inf_{x\in\R} H(x)$ and $b=\overline{H}:=\sup_{x\in\R} H(x)$, one obtains that for any fixed $\eta>0$, and for each $t\in\B_3$,
$$
|Y(t,\omega_0)|\le C_{10}(\o_0) |t|^{\overline{H}} \big(1+\big|\log |t| \big|\big)^{1/\alpha+\eta},
$$
where $C_{10}(\o_0)$ is a positive finite constant non depending on $t$. Next, combining the latter inequality with (\ref{eq1bis:l1optim}) and (\ref{eq8bis:l1optim}),
one gets that, for all $j\in\N$ and $k\in\Z$ satisfying (\ref{eq1bis:l1optim}), and for each $t\in\B_3$, one has,
\begin{equation}
\label{eq10:l1optim}
|Y(t,\omega_0)-Y(k2^{-j},\omega_0)|\le C_{11}(\o_0) |t|^{\overline{H}} \big(1+\big|\log |t| \big|\big)^{1/\alpha+\eta},
\end{equation}
where $C_{11}(\o_0)$ is a constant non depending on $j$, $k$ and $t$. Next, (\ref{eq5:l1optim}), (\ref{eq9:l1optim}) and (\ref{eq10:l1optim}), yield
$$
A_{j,k}^3(\omega_0)\le C_{12}(\o_0) 2^{-(1-H_*) j},
$$
where $C_{12}(\o_0)$ is a constant non depending on $j$ and $k$. Moreover, the latter inequality implies that,
\begin{equation}
\label{eq11:l1optim}
\limsup_{j \rightarrow +\infty} j^{\tau_0} 2^{-j/\alpha} \max\Big\{ A_{j,k}^3(\omega_0):  k \in\Z \mbox{ and } M_1+2^{-\frac{j}{2\al}}\le k/2^j\le   M_2-2^{-\frac{j}{2\al}}\Big\} = 0.
\end{equation}
Finally, putting together, (\ref{eq4:l1optim}), (\ref{eq8:l1optim}), (\ref{eq8quat:l1optim}) and (\ref{eq11:l1optim}), it follows that (\ref{eq2:l1optim}) holds.
\end{proof}

\begin{lemma}\label{optim:lem1bis}
Let $\Omega_0^{*}$ be the event of probability 1 introduced in Lemma~\ref{omega0} and let $\big\{\widetilde{g}_{j,k}\,:\, (j,k)\in\N\times\Z\big\}$ be the sequence of the random variables defined on $\Omega_0^{*}$ as, 
\begin{equation}
\label{def:gtjk}
\widetilde{g}_{j,k} = 2^{j(1+H_{*})} \int_{\R} X(t,H(k2^{-j})) \widetilde{\Psi}(2^jt-k,H_{*}) dt.
\end{equation}
Assume that $H(\cdot)$ satisfies the Condition $(\mathcal{A})$. Then, for each $\o\in\Omega_0^{*}$ and all \\
$\th\in \left[0,\min\{\ga_*+1/\al-H_*,1-H_*\}\right)$, one has,
\begin{equation}\label{eq1:l2optim}
\limsup_{j \vers \ii}  2^{j(\th-1/\alpha)} \max\Big\{ \big|g_{j,k}(\omega)-\widetilde{g}_{j,k}(\omega)\big| : k \in\Z \mbox{ and } M_1+2^{-\frac{j}{2\al}}\le k/2^j\le   M_2-2^{-\frac{j}{2\al}} \Big\}=0.
\end{equation}
\end{lemma}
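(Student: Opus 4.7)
The plan is to mimic the proof of Lemma~\ref{optim:lem1} by exploiting \emph{linearity of the integrand in the $X$-values}, and then splitting the integration domain in exactly the same way. The starting point is the identity
\begin{equation*}
g_{j,k}(\o)-\widetilde{g}_{j,k}(\o)=2^{j(1+H_{*})}\int_{\R}\bigl[X(t,H(t),\o)-X(t,H(k 2^{-j}),\o)\bigr]\,\widetilde{\Psi}(2^j t-k,H_{*})\,dt,
\end{equation*}
which follows from (\ref{def:gjk}), (\ref{def:gtjk}) and (\ref{def:LMSMbis}). As in Lemma~\ref{optim:lem1}, I would decompose $\R=\B_1\cup\B_2\cup\B_3$ as in (\ref{def1:l1optim}) and write $g_{j,k}-\widetilde{g}_{j,k}=\sum_{l=1}^{3}B_{j,k}^l$, where $B_{j,k}^l$ is the contribution from $\B_l$. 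The aim is then to bound each $B_{j,k}^l(\o)$ in a way that, once multiplied by $2^{j(\theta-1/\al)}$ and maximized over the admissible $k$'s, tends to $0$.

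On $\B_1=[M_1,M_2]$, both $t$ and $k2^{-j}$ lie in $[M_1,M_2]$, so Condition $(\mathcal{A})$ yields $|H(t)-H(k2^{-j})|\le c|t-k2^{-j}|^{\gamma_{*}}$; combined with Corollary~\ref{cor:TWSEbis} (applied with $M=\max(|M_1|,|M_2|)$, $a=\underline{H}$, $b=\overline{H}$, $q=0$), this gives
\begin{equation*}
|X(t,H(t),\o)-X(t,H(k2^{-j}),\o)|\le C(\o)|t-k2^{-j}|^{\gamma_{*}}.
\end{equation*}
The change of variable $u=2^jt-k$ and the localization (\ref{localpsitilde}) then produce $|B_{j,k}^1(\o)|\le C'(\o)\,(1+j)\,2^{j(H_{*}-\gamma_{*})}$, so that $2^{j(\theta-1/\al)}|B_{j,k}^1(\o)|\to 0$ as soon as $\theta<\gamma_{*}+1/\al-H_{*}$. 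On $\B_2$, for admissible $k$ one has $|2^j t-k|>2^{j(1-1/(2\al))}$ for $t\in\B_2$, hence by (\ref{localpsitilde}) $|\widetilde{\Psi}(2^jt-k,H_{*})|\le c\,2^{-j(2-1/\al)}$; the factor $|X(t,H(t),\o)-X(t,H(k2^{-j}),\o)|$ is dominated by $2\sup_{(t,v)\in[-2M_0,2M_0]\times[\underline{H},\overline{H}]}|X(t,v,\o)|$, which is finite by Theorem~\ref{TWSEbis}(ii). This gives $|B_{j,k}^2(\o)|\le C''(\o)\,2^{-j(1-H_{*}-1/\al)}$, so $2^{j(\theta-1/\al)}|B_{j,k}^2(\o)|\le C''(\o)2^{j(\theta+H_{*}-1)}\to 0$ iff $\theta<1-H_{*}$. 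Finally on $\B_3$, copying the Lemma~\ref{optim:lem1} argument verbatim (decay of $\widetilde{\Psi}$ and the asymptotic bound from Proposition~\ref{prop:asympinf}) gives $|B_{j,k}^3(\o)|\le C'''(\o)\,2^{-j(1-H_{*})}$, so $2^{j(\theta-1/\al)}|B_{j,k}^3(\o)|\to 0$ under the even weaker constraint $\theta<1-H_{*}+1/\al$.

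Combining the three contributions, the claim (\ref{eq1:l2optim}) follows for every $\theta\in[0,\min\{\gamma_{*}+1/\al-H_{*},1-H_{*}\})$. The main obstacle I expect is purely book-keeping on $\B_1$: the localization (\ref{localpsitilde}) only provides $(3+|u|)^{-2}$ decay, so the auxiliary integral $\int_{\R}|u|^{\gamma_{*}}|\widetilde{\Psi}(u,H_{*})|du$ need not converge when $\gamma_{*}$ is close to (or equal to) $1$; this is handled by truncating the change-of-variable integral at $|u|\le 2^j(M_2-M_1)$, which only introduces a harmless $\log$-factor $1+j$ and does not affect the exponent. Modulo this minor technical point, everything else is a direct transcription of the estimates carried out in the proof of Lemma~\ref{optim:lem1}.
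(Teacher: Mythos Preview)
Your argument is correct and follows essentially the same route as the paper: the same identity for $g_{j,k}-\widetilde{g}_{j,k}$, the same splitting $\R=\B_1\cup\B_2\cup\B_3$, the Lipschitz bound in $v$ (Corollary~\ref{cor:TWSEbis}) combined with Condition~$(\mathcal{A})$ on $\B_1$, the decay (\ref{eq8ter:l1optim}) of $\widetilde{\Psi}$ together with the uniform bound on $X$ on $\B_2$, and Proposition~\ref{prop:asympinf} on $\B_3$. Your treatment of the borderline case $\gamma_*=1$ (truncating the $u$-integral at $|u|\le 2^j\,\mathrm{diam}(\B_1)$, picking up a harmless factor $1+j$) is in fact more careful than the paper, which simply writes $\int_{\R}|u|^{\gamma_*}|\widetilde{\Psi}(u,H_*)|\,du<\infty$; that integral is only guaranteed by (\ref{localpsitilde}) when $\gamma_*<1$, so your remark closes a small gap.
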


\begin{rem}
\label{rem:wedegjkbis}
Notice that (\ref{localpsitilde}) (in which one takes $a,b$ such that $H_*\in [a,b]$) and Proposition~\ref{prop:asympinf} (in which one takes $q=0$, $a=\underline{H}$,
$b=\overline{H}$ and $\eta$ an arbitrary positive real number), imply that the random variables $\widetilde{g}_{j,k}$ are well-defined and finite on $\Omega_{0}^*$. 
\end{rem}

\begin{proof}[Proof of Lemma \ref{optim:lem1bis}] In all the sequel, we assume that $j\in\N$ and $k\in\Z$ are arbitrary and
satisfy (\ref{eq1bis:l1optim}). Using (\ref{def:LMSMbis}), (\ref{def:gjk}) and (\ref{def:gtjk}), one has,
\begin{equation}\label{eq4:l2optim}
|g_{j,k}(\omega)-\widetilde{g}_{j,k}(\omega)| \leq \sum_{l=1}^3 L_{j,k}^l(\omega),
\end{equation}
where for all $l\in\{1,2,3\}$,
\begin{equation}
\label{eq5:l2optim}
L_{j,k}^l(\omega) = 2^{j(1+H_*)} \int_{\B_l} |X(t,H(t),\omega)-X(t,H(k2^{-j}),\omega)| |\widetilde{\Psi}(2^jt-k,H_*)| dt;
\end{equation}
recall that the sets $\B_1$, $\B_2$ and $\B_3$ have been defined in (\ref{def1:l1optim}). Let us now prove that (\ref{eq1:l2optim}) holds, when $\big|g_{j,k}(\omega)-\widetilde{g}_{j,k}(\omega)\big|$ is replaced by $L_{j,k}^1(\omega)$. It follows from the definition of $\B_1$, (\ref{eq2:CTWSEbis}) (in which one takes $q=0$, $M=M_0$, $a=\underline{H}$ and  $b=\overline{H}$), (\ref{eq1bis:l1optim}), the Condition $({\cal A})$, and the change of variable $u=2^jt-k$, that,
\begin{align}
\label{eq6:l2optim}
L_{j,k}^1(\omega) & \leq C_1(\omega) 2^{j(1+H_*)}\int_{\B_1} |H(t)-H(k2^{-j})| |\widetilde{\Psi}(2^jt-k,H_*)| dt \nonumber \\
& \leq C_2(\omega) 2^{j(1+H_*)} \int_{\B_1} |t-k2^{-j}|^{\ga_*} |\widetilde{\Psi}(2^jt-k,H_*)| dt \nonumber \\
& \leq C_2(\omega) 2^{j(1+H_*)} \int_{\R} |t-k2^{-j}|^{\ga_*} |\widetilde{\Psi}(2^jt-k,H_*)| dt \nonumber \\
& = C_2(\omega) 2^{jH_*} \int_{\R} |2^{-j}u|^{\ga_*} |\widetilde{\Psi}(u,H_*)| du \nonumber \\
& \leq C_3(\omega) 2^{j(H_{*}-\ga_*)},
\end{align}
where the positive and finite constants $C_1(\o)$, $C_2(\o)$ and $C_3(\o)$, do not depend on $j$ and $k$. Then, using (\ref{eq6:l2optim}) and the inequality $\th <\ga_*+1/\al -H_*$, one gets that,
\begin{equation}
\label{eq7:l2optim}
\limsup_{j \vers \ii}  2^{j(\th-1/\alpha)} \max\Big\{ L_{j,k}^1(\omega) : k \in\Z \mbox{ and } M_1+2^{-\frac{j}{2\al}}\le k/2^j\le   M_2-2^{-\frac{j}{2\al}}\Big\}=0.
\end{equation}
Next, let us prove that (\ref{eq1:l2optim}) holds, when $\big|g_{j,k}(\omega)-\widetilde{g}_{j,k}(\omega)\big|$ is replaced by $L_{j,k}^2(\omega)$. Let us set,
\begin{equation}
\label{eq8:l2optim}
C_4 (\o):= \sup_{(u,v)\in [-2M_0,2M_0]\times [\underline{H},\overline{H}]}\big|X(u,v,\omega)\big|<\infty;
\end{equation}
observe that $C_4(\o)$ is finite, since the function $(u,v)\mapsto X(u,v,\omega)$ is continuous over the compact rectangle $[-2M_0,2M_0]\times [\underline{H},\overline{H}]$.
Putting together, (\ref{eq5:l2optim}), (\ref{eq8:l2optim}), and (\ref{eq8ter:l1optim}), one obtains that,
\begin{equation}
\label{eq9:l2optim}
L_{j,k}^2(\omega)\le C_5 (\o) 2^{-j(1-H_{*}-\frac{1}{\al})},
\end{equation}
where $C_5 (\o)$ is a constant non depending on $j$ and $k$. Then, using (\ref{eq9:l2optim}) and the inequality $\th <1-H_*$, it follows that,
\begin{equation}
\label{eq10bis:l2optim}
\limsup_{j \vers \ii}  2^{j(\th-1/\alpha)} \max\Big\{ L_{j,k}^2(\omega) : k \in\Z \mbox{ and } M_1+2^{-\frac{j}{2\al}}\le k/2^j\le   M_2-2^{-\frac{j}{2\al}}\}=0.
\end{equation}
Next, let us prove that (\ref{eq1:l2optim}) holds, when $\big|g_{j,k}(\omega)-\widetilde{g}_{j,k}(\omega)\big|$ is replaced by $L_{j,k}^3(\omega)$. Setting in Proposition~\ref{prop:asympinf}, $q=0$, $a=\underline{H}$ and $b=\overline{H}$, one gets that for any fixed $\eta>0$ and for each $t\in\B_3$,
$$
|X(t,H(t),\omega)-X(t,H(k2^{-j}),\omega)|\le C_{6}(\o) |t|^{\overline{H}} \big(1+\big|\log |t| \big|\big)^{1/\alpha+\eta},
$$
where $C_{6}(\o)$ is a constant non depending on $t$ and $(j,k)$. Next combining the latter inequality with (\ref{eq5:l2optim}) and (\ref{eq9:l1optim}), it follows that,
\begin{equation}
\label{eq11bis:l2optim}
L_{j,k}^3(\omega)\le C_{7}(\o) 2^{-(1-H_*) j},
\end{equation}
where $C_7 (\o)$ is a constant non depending on $j$ and $k$. Then, using (\ref{eq11bis:l2optim}) and the inequality $\th <1-H_*$, it follows that,
\begin{equation}
\label{eq12bisred:l2optim}
\limsup_{j \vers \ii}  2^{j(\th-1/\alpha)} \max\Big\{ L_{j,k}^3(\omega) : k \in\Z \mbox{ and } M_1+2^{-\frac{j}{2\al}}\le k/2^j\le   M_2-2^{-\frac{j}{2\al}}\Big\}=0.
\end{equation}
Finally, putting together, (\ref{eq4:l2optim}), (\ref{eq7:l2optim}), (\ref{eq10bis:l2optim}) and (\ref{eq12bisred:l2optim}), it follows that (\ref{eq1:l2optim}) holds.
\end{proof}


\begin{proposition}
\label{gcjk=ejk}
Let $\Omega_{0}^*$ be the event of probability $1$ which has been introduced in Lemma~\ref{omega0}. Then for all $\o\in \Omega_{0}^*$, $v\in (1/\al,1)$ and $(j,k)\in\Z^2$, one has, 
\begin{equation}
\label{eq1:gcjk=ejk}
2^{j(1+v)} \int_{\R} X(t,v,\o) \widetilde{\Psi}(2^jt-k,v) dt=\epsilon_{j,k}(\o),
\end{equation}
where $\epsilon_{j,k}$ is the random variable defined in (\ref{def:ejk}).
\end{proposition}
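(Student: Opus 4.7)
The plan is to substitute into the left-hand side of (\ref{eq1:gcjk=ejk}) the wavelet series representation of $X(\cdot,v,\omega)$ given by Theorem~\ref{TWSE}~$(i)$, interchange sum and integral by a Fubini-type argument, and then collapse the resulting series to $\epsilon_{j,k}(\omega)$ by combining the biorthogonality relation (\ref{OTN}) of Proposition~\ref{orthopsi}~$(iii)$ with the vanishing moment (\ref{mompsi}). Fix an arbitrary $\omega\in\Omega_0^*$, $v\in(1/\al,1)$ and $(j,k)\in\Z^2$. First, I would observe that the integral on the left-hand side is absolutely convergent: Proposition~\ref{prop:asympinf} (applied with $q=0$ and any compact $[a,b]\ni v$) gives a polynomial-logarithmic upper bound for $|X(t,v,\omega)|$, and (\ref{localpsitilde}) provides the quadratic decay of $\widetilde{\Psi}(2^j\cdot-k,v)$ needed to make the product integrable over $\R$.

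The central step is to justify the interchange, which reduces to showing
\begin{equation*}
\sum_{(j',k')\in\Z^2} 2^{-j' v}\big|\epsilon_{j',k'}(\omega)\big|\int_{\R}\big|\Psi(2^{j'}t-k',v)-\Psi(-k',v)\big|\,\big|\widetilde{\Psi}(2^j t-k,v)\big|\,dt<\infty.
\end{equation*}
I would bound this by splitting the $j'$-sum into $j'\ge 0$ and $j'<0$, mimicking the scheme used in the proof of Theorem~\ref{TWSE}~$(i)$. For $j'\ge 0$, I would use the pointwise bound $|\Psi(2^{j'}t-k',v)-\Psi(-k',v)|\le c\bigl((3+|2^{j'}t-k'|)^{-2}+(3+|k'|)^{-2}\bigr)$ from (\ref{localisation}), combine it with the decay (\ref{localpsitilde}) of $\widetilde{\Psi}$, integrate in $t$, and finally absorb the sub-exponential growth of $|\epsilon_{j',k'}(\omega)|$ from Lemma~\ref{omega0} against the decay in $k'$ and the factor $2^{-j'v}$. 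For $j'<0$, I would apply the mean value theorem (as in (\ref{eq6:absconc})) to get the extra factor $2^{j'}|t|$, which is what makes the $j'<0$ tail summable in view of $v<1$. The resulting estimates are entirely parallel to those obtained in Section~\ref{sec:wavelets} and in the proof of Proposition~\ref{prop:asympinf}; this is the main, but essentially routine, technical obstacle.

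Once Fubini is legitimate, the left-hand side becomes
\begin{equation*}
\sum_{(j',k')\in\Z^2} 2^{-j' v}\epsilon_{j',k'}(\omega)\,2^{j(1+v)}\left\{\int_{\R}\Psi(2^{j'}t-k',v)\widetilde{\Psi}(2^j t-k,v)\,dt-\Psi(-k',v)\int_{\R}\widetilde{\Psi}(2^j t-k,v)\,dt\right\}.
\end{equation*}
The second integral equals $2^{-j}\int_{\R}\widetilde{\Psi}(u,v)\,du$ and vanishes by (\ref{mompsi}); by (\ref{OTN}), the first integral equals $2^{-(j+j')/2}\delta_{(j,k;j',k')}$. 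Only the term $(j',k')=(j,k)$ survives, and a direct computation gives $2^{-jv}\epsilon_{j,k}(\omega)\cdot 2^{j(1+v)}\cdot 2^{-j}=\epsilon_{j,k}(\omega)$, which is precisely (\ref{eq1:gcjk=ejk}).
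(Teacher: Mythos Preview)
Your proposal is correct and follows essentially the same approach as the paper: substitute the wavelet series, justify the interchange of sum and integral, then collapse using Proposition~\ref{orthopsi}~$(ii)$ and~$(iii)$. The only difference is efficiency: the paper observes that the middle term of~(\ref{eq1:pasymp}) (with $q=0$) already bounds the \emph{absolute} series $\sum_{(j',k')}2^{-j'v}|\epsilon_{j',k'}(\omega)||\Psi(2^{j'}t-k',v)-\Psi(-k',v)|$ by $C|t|^{v}(1+|\log|t||)^{1/\alpha+\eta}$, so multiplying by $|\widetilde{\Psi}(2^jt-k,v)|$ and invoking~(\ref{localpsitilde}) immediately gives the $L^1_t(\R)$ bound needed for dominated convergence---there is no need to redo the $j'\ge 0$ / $j'<0$ split by hand.
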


\begin{proof}[Proof of the Proposition \ref{gcjk=ejk}]
First observe that by using (\ref{localpsitilde}) and (\ref{eq1:pasymp}) in which one takes $q=0$ and $a,b$ such that $v\in [a,b]$, it follows that, for all $\o\in \Omega_{0}^*$ and $(j,k)\in\Z^2$,
$$
2^{j(1+v)}\left(\sum_{(j',k')\in\Z^2} 2^{-j'v} |\epsilon_{j',k'}(\o)| \big|\Psi(2^{j'}\cdot-k',v)-\Psi(-k',v)\big|\right) \big|\widetilde{\Psi}(2^j\cdot-k,v)\big|\in L_{t}^1(\R);
$$
therefore we are allowed to apply the dominated convergence Theorem, and we obtain, in view of Theorem~\ref{TWSE} Part $(i)$, that 
\begin{align*}
& 2^{j(1+v)} \int_{\R} X(t,v,\o) \widetilde{\Psi}(2^jt-k,v) dt\\
& =2^{j(1+v)}\sum_{(j',k')\in\Z^2} 2^{-j'v} \epsilon_{j',k'}(\o) \int_\R \big(\Psi(2^{j'}t-k',v)-\Psi(-k',v)\big) \widetilde{\Psi}(2^jt-k,v)dt.
\end{align*}
Finally, combining the latter equality with Proposition~\ref{orthopsi} Parts $(ii)$ and $(iii)$, one gets (\ref{eq1:gcjk=ejk}).
\end{proof}

\begin{rem}
\label{rem:existed}
Let $\tau$ and $\rho$ be as in Theorem~\ref{optim:modcont}, also we suppose that (\ref{eq:inegalrho}) holds. We denote by $\tau_0$ 
an arbitrary real number such that $\tau_0>\tau\ge 0$.
\begin{itemize}
\item[(i)] One has,
\begin{equation}
\label{eq:addinrt}
\frac{1+2\alpha^{-1}+\tau_0}{\rho} < \alpha \tau_0.
\end{equation}
\item[(ii)] Denote by $d(\tau_0)$ and $e(\tau_0)$ the positive real numbers defined as,
\begin{equation}
\label{eq:defdetau0}
d(\tau_0):= \frac{2}{3}\left(\frac{1+2\alpha^{-1}+\tau_0}{\rho}\right)+\frac{1}{3} (\alpha \tau_0) \,\,\mbox{ and }\,\, e(\tau_0):= \frac{1}{3}\left(\frac{1+2\alpha^{-1}+\tau_0}{\rho}\right)+\frac{2}{3} (\alpha \tau_0),
\end{equation}
then,
\begin{equation}
\label{ineg:de}
\frac{1+2\alpha^{-1}+\tau_0}{\rho} < d(\tau_0)<e(\tau_0)< \alpha \tau_0.
\end{equation}
\item[(iii)] For any fixed $t_0\in [M_1,M_2]$ and $j\in\N$, denote by $D_{j}(t_0,\tau_0)$ the set of indices, defined as,
\begin{equation}
\label{eq:defDjt0}
D_{j}(t_0,\tau_0):=\Big\{ k\in\Z \,:\, k2^{-j} \in [M_1,M_2] \mbox{ and } j^{-e(\tau_0)} \leq |t_0-k2^{-j}| \leq j^{-d(\tau_0)} \Big\},
\end{equation}
then, for all $j$ big enough, the set $D_{j}(t_0,\tau_0)$ is nonempty and satisfies,
\begin{equation}
\label{eq1:existed}
D_{j}(t_0,\tau_0)\subseteq \Big\{k\in\Z : M_1+2^{-\frac{j}{2\al}}\le k/2^j\le   M_2-2^{-\frac{j}{2\al}}\Big\}.
\end{equation}
\end{itemize}
\end{rem}

\begin{proof}[Proof of Remark~\ref{rem:existed}.] Observe that, in view of (\ref{eq:deftau}), one has 
$$
\frac{1+2\alpha^{-1}+\tau}{\rho}=\al \tau;
$$
therefore, (\ref{eq:inegalrho}), implies that Part $(i)$ holds. Part $(ii)$ easily follows from (\ref{eq:addinrt}) and (\ref{eq:defdetau0}). 
Let us give the proof of Part $(iii)$, for the sake of simplicity we set $d=d(\tau_0)$ and $e=e(\tau_0)$. Observe that the set $D_{j}(t_0,\tau_0)$ is nonempty 
for all $j$ big enough, since $\lim_{j\rightarrow +\infty} 2^j (j^{-d}-j^{-e})=+\infty$. Let $j\ge 1$ and $k$ be two 
arbitrary integers such that $j$ is big enough and $k\in D_{j}(t_0,\tau_0)$. 
In order to show that, they satisfy (\ref{eq1bis:l1optim}), we will study three cases: $t_0\in (M_1,M_2)$, $t_0=M_1$ and $t_0=M_2$. 

Let us first suppose that
$M_1<t_0<M_2$ i.e. $\min\{t_0-M_1, M_2-t_0\}>0$, then in view of the fact that $j$ is big enough, one can assume that $j^{-d}+2^{-\frac{j}{2\al}}\le \min\{t_0-M_1, M_2-t_0\}$; the latter inequality and the inequality $|t_0-k2^{-j}| \leq j^{-d}$ imply that (\ref{eq1bis:l1optim}) holds.

Let us now assume that $t_0=M_1$. It follows from the equality $|t_0-k2^{-j}|=k2^{-j}-M_1$ and the inequalities $j^{-e} \leq |t_0-k2^{-j}| \leq j^{-d}$, that, $M_1+j^{-e}\le k 2^{-j}\le M_1+j^{-d}$.
Moreover, in view of the fact that $j$ is big enough, one can assume $M_1+j^{-e}\ge M_1+ 2^{-\frac{j}{2\al}}$ and $M_1+j^{-d}\le M_2-2^{-\frac{j}{2\al}}$; thus (\ref{eq1bis:l1optim}) holds. At last, the case where $t_0=M_2$, can be treated similarly to the case where $t_0=M_1$.
\end{proof}




\begin{lemma}\label{optim:lem3}
Let $\tau$ be as in Theorem~\ref{optim:modcont}, also we suppose that (\ref{eq:inegalrho}) holds. We denote by $\tau_0$ an arbitrary fixed real number such that $\tau_0>\tau\ge 0$. Then, for all $t_0\in [M_1,M_2]$, there exists $\Omega_{1,\tau_0}^{*}(t_0)$ an event of probability~1 (which a priori depends on $\tau_0$ and $t_0$) included in $\Omega_{0}^*$ (recall that the latter event has been introduced in Lemma~\ref{omega0}), such that, for each $\o\in\Omega_{1,\tau_0}^{*}(t_0)$, one has,
\begin{equation}\label{eq1:l3optim}
\liminf_{j\vers \ii} j^{\tau_0} 2^{-j/\alpha} \max\Big\{ \big| \epsilon_{j,k}(\omega) \big| : k\in D_j(t_0,\tau_0) \Big\} >0,
\end{equation}
where the $\epsilon_{j,k}$'s are the random variables defined in (\ref{def:ejk}) and where $D_j(t_0,\tau_0)$ is the set introduced in (\ref{eq:defDjt0}).
\end{lemma}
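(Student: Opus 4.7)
The plan is a Borel--Cantelli argument that exploits the lower tail estimate $\PR(|\epsilon_{j,k}|>x)\ge c'x^{-\alpha}$ for $x\ge 1$ from Remark~\ref{prop:gcjk}(iii), together with the independence structure of Remark~\ref{prop:gcjk}(iv). Fix an integer $p>2R$, so that for each $j\in\N$ the family $\{\epsilon_{j,pq}:q\in\Z\}$ is independent. Define
$$\widetilde{D}_j(t_0,\tau_0):=\big\{k\in D_j(t_0,\tau_0)\,:\,k\in p\Z\big\};$$
the family $\{\epsilon_{j,k}:k\in\widetilde{D}_j(t_0,\tau_0)\}$ is then independent, and $\widetilde{D}_j(t_0,\tau_0)\subseteq D_j(t_0,\tau_0)$.

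Next, I would lower-bound $N_j:=\#\widetilde{D}_j(t_0,\tau_0)$. The constraint defining $D_j(t_0,\tau_0)$ forces $k/2^j$ into the annular set $\{x\in[M_1,M_2]:j^{-e(\tau_0)}\le|t_0-x|\le j^{-d(\tau_0)}\}$, whose Lebesgue measure is at least $j^{-d(\tau_0)}-j^{-e(\tau_0)}$ (possibly halved if $t_0\in\{M_1,M_2\}$, since only one half-annulus then lies in $[M_1,M_2]$). Since $d(\tau_0)<e(\tau_0)$ by Remark~\ref{rem:existed}(ii), this quantity is asymptotic to $j^{-d(\tau_0)}$ as $j\to\infty$; multiplying by $2^j$ and dividing by $p$ to account for the thinning by $p$, one obtains $N_j\ge c_1\,2^j j^{-d(\tau_0)}$ for all $j$ large, with $c_1>0$ a suitable constant.

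Set $x_j:=\delta\, j^{-\tau_0}2^{j/\alpha}$ with some small fixed $\delta\in(0,1)$; then $x_j\ge 1$ for $j$ large, and $c'x_j^{-\alpha}\to 0$. By independence, the tail bound, and the elementary inequality $(1-y)^{N_j}\le e^{-y N_j}$ valid for $y\in[0,1]$,
$$\PR\Big(\max_{k\in\widetilde{D}_j(t_0,\tau_0)}|\epsilon_{j,k}|\le x_j\Big)\le\big(1-c'x_j^{-\alpha}\big)^{N_j}\le\exp\!\big(-c'\delta^{-\alpha}c_1\,j^{\alpha\tau_0-d(\tau_0)}\big).$$
By Remark~\ref{rem:existed}(ii) one has $\alpha\tau_0-d(\tau_0)>0$, so the right-hand side decays faster than any polynomial and the probabilities are summable in $j$. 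The Borel--Cantelli lemma then yields an event $\Omega_{1,\tau_0}^*(t_0)\subseteq\Omega_0^*$ of probability~$1$ on which $\max_{k\in\widetilde{D}_j(t_0,\tau_0)}|\epsilon_{j,k}(\omega)|>x_j$ for all $j$ large enough. Since $\widetilde{D}_j(t_0,\tau_0)\subseteq D_j(t_0,\tau_0)$, this immediately proves (\ref{eq1:l3optim}) with liminf value at least $\delta>0$.

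The main obstacle I anticipate is purely technical: the uniform cardinality bound $N_j\ge c_1\,2^j j^{-d(\tau_0)}$ must be argued carefully at the endpoint cases $t_0=M_1$ and $t_0=M_2$, where only one side of the annulus lies in $[M_1,M_2]$, and at the same time the thinning by $p$ must respect the confinement $D_j(t_0,\tau_0)\subseteq\{k:M_1+2^{-j/(2\alpha)}\le k/2^j\le M_2-2^{-j/(2\alpha)}\}$ guaranteed by Remark~\ref{rem:existed}(iii). In each of these cases the asymptotic order $2^j j^{-d(\tau_0)}$ is preserved, so no new idea beyond careful bookkeeping is required.
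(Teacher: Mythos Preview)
Your proposal is correct and follows essentially the same Borel--Cantelli argument as the paper's proof: the paper likewise fixes $p>2R$, passes to the thinned index set $\{q:pq\in D_j(t_0,\tau_0)\}$ (your $\widetilde{D}_j$), uses the cardinality lower bound $c_1\,2^j j^{-d(\tau_0)}$, the tail estimate (\ref{eq4:l3optim}), and independence to get $\PR(\Gamma_j)\le\exp(-c\,j^{\alpha\tau_0-d(\tau_0)})$, and concludes via summability since $\alpha\tau_0>d(\tau_0)$. The only cosmetic differences are that the paper takes the threshold $j^{-\tau_0}2^{j/\alpha}$ without your extra factor~$\delta$, and it does not flag the endpoint cases $t_0\in\{M_1,M_2\}$ as a separate issue (the half-annulus still has length of order $j^{-d(\tau_0)}$, so the constant absorbs this).
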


\begin{proof}[Proof of Lemma \ref{optim:lem3}.] Let $p$ be a fixed integer such that $p >2R$ (see (\ref{eq:supp-psi-ant}) for the definition of $R$). We assume that $j$ is an arbitrary big enough integer, so that the set,
\begin{equation}\label{eq3:l3optim}
\overline{D_j}(t_0,\tau_0):= \big\{ q\in\Z \,: \,pq\in D_j (t_0,\tau_0)\big\}=\bigg\{ q\in\Z \,: \, \frac{pq}{2^j}\in [M_1,M_2] \mbox{ and } j^{-e(\tau_0)} \leq |pq2^{-j}-t_0| \leq j^{-d(\tau_0)} \bigg\}.
\end{equation}
is nonempty. From now on, for the sake of simplicity, $d(\tau_0)$ and $e(\tau_0)$ are respectively denoted by $d$ and $e$. Notice that, since $j$ is big enough, the cardinality of the set $\overline{D_j} (t_0,\tau_0)$ satisfies, 
\begin{equation}
\label{eq3bis:l3optim}
c_1 j^{-d} 2^{j} \le \mbox{card}\left(\overline{D_j} (t_0,\tau_0)\right)\le c_2 j^{-d} 2^{j},
\end{equation}
where $c_1$ and $c_2$ are two positive constants non depending on $j$. We denote by $\Gamma_j$ the event defined as,
\begin{equation}\label{eq2:l3optim}
\Gamma_j:=\left\{ \omega\in \Omega_0^{*}\,:\,\,\max\Big\{ \big| \epsilon_{j,k}(\omega)\big| : k\in D_j(t_0,\tau_0) \Big\}\leq j^{-\tau_0} 2^{j/\alpha} \right\}.
\end{equation}
Let us provide an appropriate upper bound for the probability $\PR\big(\Gamma_j\big)$; since $j$ is big enough, there is no restriction to suppose that $j^{-\tau_0} 2^{j/\alpha}\ge 1$ and 
that $c_3j^{\alpha\tau_0}2^{-j}<1$, where $c_3$ is the positive constant $c'$ in (\ref{eq4:l3optim}). Next, using, (\ref{eq2:l3optim}), (\ref{eq3:l3optim}), Part $(iv)$ of Remark~\ref{prop:gcjk}, (\ref{eq4:l3optim}), and the first inequality in (\ref{eq3bis:l3optim}), one obtains that,
\begin{align}\label{eq5:l3optim}
 \PR\big(\Gamma_j\big) & \leq \PR\left( \bigcap_{q\in \overline{D_j}(t_0,\tau_0)}\left\{\big|\epsilon_{j,pq}\big|\leq j^{-\tau_0} 2^{j/\alpha}\right\}\right) = \prod_{q \in \overline{D_j}(t_0,\tau_0)} \PR\Big(  \big|\epsilon_{j,pq}\big|\leq  j^{-\tau_0} 2^{j/\alpha}\Big) \nonumber \\
&=\prod_{q \in \overline{D_j}(t_0,\tau_0)} \left (1-\PR\Big(  \big|\epsilon_{j,pq}\big|>  j^{-\tau_0} 2^{j/\alpha}\Big) \right)\nonumber \\
& \leq \Big(1-c_3 j^{\alpha\tau_0}2^{-j}\Big)^{c_1 j^{-d} 2^{j}};
\end{align}
moreover, the inequality $\log(1-x)\le -x$ for all $x\in [0,1)$, allows to prove that,
\begin{equation}
\label{eq7:l3optim}
\Big(1-c_3 j^{\alpha\tau_0}2^{-j}\Big)^{c_1 j^{-d} 2^{j}}:=\exp\left(c_1 j^{-d} 2^{j}\log(1-c_3 j^{\alpha\tau_0}2^{-j})\right)\leq \exp\big( -c_1 c_3 j^{\alpha\tau_0-d} \big).
\end{equation}
Finally putting together (\ref{ineg:de}), (\ref{eq5:l3optim}) and (\ref{eq7:l3optim}), one gets that,
$$
\sum_{j\in\N}\PR\big( \Gamma_j \big) < \infty ;
$$
thus Borel-Cantelli Lemma implies that (\ref{eq1:l3optim}) holds.
\end{proof}

\begin{lemma}\label{optim:lem2}
Let $\tau$ be as in Theorem~\ref{optim:modcont} also we suppose that (\ref{eq:inegalrho}) holds. We denote by $\tau_0$ an arbitrary fixed real number such that $\tau_0>\tau\ge 0$. Then there exists $t_0\in [M_1,M_2]$ (a priori $t_0$ depends on $\tau_0$) such that, for all $\omega \in \Omega_0^*$ (the event of probability 1 introduced in Lemma~\ref{omega0}), one has,
\begin{equation}\label{eq2:l2optim}
\limsup_{j \vers \ii} j^{\tau_0} 2^{-j/\alpha} \max\Big\{ \big|\widetilde{g}_{j,k}(\omega)-\epsilon_{j,k}(\omega)\big| : k\in D_j(t_0,\tau_0)  \Big\}=0;
\end{equation}
recall that the random variables $\widetilde{g}_{j,k}$ and $\epsilon_{j,k}$ have been defined respectively in (\ref{def:gtjk}) and (\ref{def:ejk}), also recall that the 
set $D_j (t_0,\tau_0)$ has been introduced in (\ref{eq:defDjt0}).
\end{lemma}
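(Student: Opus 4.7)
The strategy is to apply Proposition~\ref{gcjk=ejk} at the ``variable'' Hurst level $H(k2^{-j})$, which yields
$$\epsilon_{j,k}(\omega)=2^{j(1+H(k2^{-j}))}\int_\R X(t,H(k2^{-j}),\omega)\,\widetilde{\Psi}(2^jt-k,H(k2^{-j}))\,dt,$$
and to compare this with (\ref{def:gtjk}); after the change of variable $u=2^jt-k$ one obtains
$$\widetilde{g}_{j,k}(\omega)-\epsilon_{j,k}(\omega)=\int_\R X\bigl((u+k)2^{-j},H(k2^{-j}),\omega\bigr)\,\Phi_{j,k}(u)\,du,$$
with $\Phi_{j,k}(u):=2^{jH_*}\widetilde{\Psi}(u,H_*)-2^{jH(k2^{-j})}\widetilde{\Psi}(u,H(k2^{-j}))$, so the whole difference is driven by the kernel being evaluated at the two nearby values $H_*$ and $H(k2^{-j})$.

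First I would select the H\"older exponent. Since $\tau_0>\tau$, the definition (\ref{eq:deftau}) of $\tau$ yields $\alpha\rho\tau_0>1+2/\alpha+\tau_0$; combined with (\ref{eq:defdetau0}) this gives $d(\tau_0)\rho>1+2/\alpha+\tau_0$. By continuity I then pick $\theta\in(0,\rho)$ close enough to $\rho$, and $\eta>0$ small enough, so that $d(\tau_0)\,\theta>1+2/\alpha+\tau_0+\eta$. The supremum definition (\ref{eq:defrho}) of $\rho$ furnishes $t_0\in[M_1,M_2]$ with $H(t_0)=H_*$ and $|H(t)-H_*|\le c\,|t-t_0|^\theta$ on $[M_1,M_2]$. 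For $k\in D_j(t_0,\tau_0)$ this gives $|H(k2^{-j})-H_*|\le c\,j^{-d(\tau_0)\theta}$, and in particular $j\,|H(k2^{-j})-H_*|\to 0$ (since $d(\tau_0)\theta>1$), so that $2^{j(H(k2^{-j})-H_*)}$ remains bounded.

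The crucial analytic input is the identity $\int_\R\Phi_{j,k}(u)\,du=0$, which follows from the vanishing first moment (\ref{mompsi}) applied at both $v=H_*$ and $v=H(k2^{-j})$; it allows me to subtract the constant $X(k2^{-j},H(k2^{-j}),\omega)$ from the integrand. The mean value theorem applied to $v\mapsto 2^{jv}\widetilde{\Psi}(u,v)$, combined with Proposition~\ref{orthopsi}(i) and the analogous uniform-in-$v$ localization for $\partial_v\widetilde{\Psi}$ (proved by the same integration-by-parts argument as in Proposition~\ref{regulpsi}(ii)), yields
$$|\Phi_{j,k}(u)|\le C\,j^{1-d(\tau_0)\theta}\,2^{jH_*}\,(3+|u|)^{-2}.$$
I would then split the integral at $|u|=2^j$. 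On $\{|u|\le 2^j\}$, both $k2^{-j}$ and $(u+k)2^{-j}$ lie in a fixed compact interval, and Proposition~\ref{pp1:dX} with $q=0$ bounds the increment $|X((u+k)2^{-j},H(k2^{-j}))-X(k2^{-j},H(k2^{-j}))|$ by $C_\omega(|u|/2^j)^{H(k2^{-j})-1/\alpha}(1+|\log(|u|/2^j)|)^{2/\alpha+\eta}$; estimating the logarithm crudely by $C\,j$ and using the integrability of $|u|^{H(k2^{-j})-1/\alpha}(3+|u|)^{-2}$ produces a term of order $j^{1-d(\tau_0)\theta+2/\alpha+\eta}\,2^{j/\alpha}$, which is $o(j^{-\tau_0}\,2^{j/\alpha})$ by the choice of $\theta$. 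On $\{|u|>2^j\}$, Proposition~\ref{prop:asympinf} controls $|X((u+k)2^{-j},H(k2^{-j}))|$ by a polylogarithmic multiple of $(|u|/2^j)^{\overline H}$; since $\overline H<1$, this contribution decays like $2^{j(H_*-1)}$ up to polynomial factors in $j$ and is therefore exponentially negligible compared to $j^{-\tau_0}\,2^{j/\alpha}$. Taking the maximum over $k\in D_j(t_0,\tau_0)$ and letting $j\to\infty$ produces (\ref{eq2:l2optim}).

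The main obstacle is that a naive $L^\infty\cdot L^1$ bound on the integrand would give an estimate of order $2^{jH_*}\cdot j^{1-d(\tau_0)\theta}$, which is ruinous since $H_*>1/\alpha$. The vanishing moment (\ref{mompsi}) of $\widetilde{\Psi}$ is what rescues the argument: the extra H\"older factor $(|u|/2^j)^{H(k2^{-j})-1/\alpha}$ produced by the subtraction converts $2^{jH_*}$ into $2^{j/\alpha}$, and the remaining logarithmic exponent $1-d(\tau_0)\theta+2/\alpha+\eta$ is matched against the target $-\tau_0$ precisely via the computation underlying the definition (\ref{eq:deftau}) of $\tau$, which is why the hypothesis $\tau_0>\tau$ is exactly the right one.
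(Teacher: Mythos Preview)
Your argument is correct and provides a genuine alternative to the paper's proof. The key difference is \emph{where} the mean-value theorem in $v$ is applied. The paper uses Proposition~\ref{gcjk=ejk} at $v=H_*$, so that
\[
\widetilde{g}_{j,k}-\epsilon_{j,k}=2^{j(1+H_*)}\int_\R\bigl(X(t,H(k2^{-j}))-X(t,H_*)\bigr)\widetilde{\Psi}(2^jt-k,H_*)\,dt,
\]
and then differentiates $X$ in $v$; this brings in $\partial_v X$ and forces the use of Proposition~\ref{pp1:dX} with $q=1$, whose logarithmic factor has exponent $1+2/\alpha+\eta$. You instead invoke Proposition~\ref{gcjk=ejk} at $v=H(k2^{-j})$, which puts all the $v$-variation into the kernel $\Phi_{j,k}$; the mean-value theorem applied to $v\mapsto 2^{jv}\widetilde{\Psi}(u,v)$ produces an extra factor $j$, while the H\"older bound on $X$ now uses Proposition~\ref{pp1:dX} with $q=0$ and logarithmic exponent $2/\alpha+\eta$. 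The two routes are dual and yield the same critical power $j^{1+2/\alpha+\eta-d(\tau_0)\theta}$, matched against $j^{-\tau_0}$ via the same arithmetic encoded in~(\ref{eq:deftau}).

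What each buys: the paper's route reuses only results already proved in Sections~\ref{sec:Holdsp}--\ref{sec:Xpath} and never needs $v$-regularity of $\widetilde{\Psi}$. Your route is conceptually cleaner (the $X$-factor stays at a single level) but requires the localization $(3+|x|)^{-2}$ for $\partial_v\widetilde{\Psi}$, which the paper does not state; your parenthetical remark that this follows from the same integration-by-parts mechanism as Proposition~\ref{regulpsi}(ii) is correct and easy to carry out. Two minor points to tighten: (a) the phrase ``estimating the logarithm crudely by $Cj$'' should really be the splitting $(1+|\log(|u|2^{-j})|)^{2/\alpha+\eta}\le C\bigl(j^{2/\alpha+\eta}+(1+|\log|u||)^{2/\alpha+\eta}\bigr)$, since $|\log|u||$ is unbounded near $0$; (b) on $\{|u|>2^j\}$ the point $(u+k)2^{-j}$ may still lie in a compact set, so the far-region bound should start from the uniform estimate $|X(t,v,\omega)|\le C_\omega(1+|t|)^{\overline H}(1+|\log(1+|t|)|)^{1/\alpha+\eta}$ rather than directly from Proposition~\ref{prop:asympinf}. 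Neither affects the outcome.
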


\begin{proof}[Proof of Lemma \ref{optim:lem2}.] Let $\rho$ be as in (\ref{eq:defrho}).
Assume that $\rho_0\in (1/\al,\rho)$ is arbitrary and such that,
\begin{equation}
\label{inegbis:de}
\frac{1+2\alpha^{-1}+\tau_0}{\rho} <\frac{1+2\alpha^{-1}+\tau_0}{\rho_0}< d(\tau_0)<e(\tau_0)< \alpha \tau_0,
\end{equation}
where $d(\tau_0)$ and $e(\tau_0)$ are defined in (\ref{eq:defdetau0}).
Then, in view of (\ref{eq:defrho}) and (\ref{Hstarl}), there exists $t_0\in[M_1,M_2]$, which satisfies,
\begin{equation}
\label{eq2bis:optim}
\left\{
\begin{array}{l}
H(t_0)=H_*\\
\\
\sup_{t\in [M_1,M_2]}\frac{|H(t)-H(t_0)|}{|t-t_0|^{\rho_0}}<\infty.
\end{array}
\right.
\end{equation}
In all the sequel, we suppose that $j$ is an arbitrary big enough integer, thus the set $D_j(t_0,\tau_0)$ is nonempty and (\ref{eq1:existed}) holds; also we suppose that $k\in D_j(t_0,\tau_0)$ is arbitrary. Using (\ref{eq1:gcjk=ejk}) in which one takes $v=H_*$, (\ref{def:gtjk}), and the equality, for each fixed $t\in\R$, 
$$
X(t,H(k2^{-j}),\o) -X(t,H_*,\o)=\big (H(k2^{-j})-H_*\big)\int_{0}^{1}\big(\partial_v X\big)\big(t,H_*+\theta(H(k2^{-j})-H_*),\o\big) d\th, 
$$
one gets that
\begin{align}
\label{eq10ter:l2optim}
\widetilde{g}_{j,k}(\omega)-\epsilon_{j,k}(\omega) &= 2^{j(1+H_{*})} \int_{\R} \big ( X(t,H(k2^{-j}),\o) -X(t,H_*,\o)\big)\widetilde{\Psi}(2^jt-k,H_{*})dt\nonumber\\
&=2^{j(1+H_{*})} \big (H(k2^{-j})-H_*\big)\int_{\R} \int_{0}^{1}\big(\partial_v X\big)\big(t,H_*+\theta(H(k2^{-j})-H_*),\omega\big)\widetilde{\Psi}(2^jt-k,H_{*})d\theta dt.\nonumber
\end{align}
Therefore, it follows from (\ref{mompsi}) in which one takes $v=H_*$, that,
\begin{equation}
\label{eq10:l2optim}
|\epsilon_{j,k}(\omega)-\widetilde{g}_{j,k}(\omega)| \leq |H(k2^{-j})-H_*|  \sum_{l=1}^{3} F_{j,k}^l(\omega),
\end{equation}
where, for each $l\in\{1,2,3\}$
\begin{align}
\label{eq11:l2optim}
F_{j,k}^l(\omega) & = 2^{j(1+H_*)} \int_{\B_l} \int_{0}^{1} |\widetilde{\Psi}(2^jt-k,H_*)| \\
	& \hspace{2cm}\times \big| \big(\partial_v X\big)\big(t,H_*+\theta(H(k2^{-j})-H_*),\omega\big)-\big(\partial_v X\big)\big(k2^{-j},H_*+\theta(H(k2^{-j})-H_*),\omega\big)\big| d\theta dt;\nonumber
\end{align}
recall that the sets $\B_l$ are defined in (\ref{def1:l1optim}). Observe that, in view of (\ref{eq2bis:optim}) and (\ref{eq:defDjt0}), one has,
\begin{equation}
\label{eq10quat:l2optim}
|H(k2^{-j})-H_*|\le c_1 j^{-d(\tau_0)\rho_0},
\end{equation}
where $c_1$ is a constant non depending on $j$ and $k$. Also, observe that in view of (\ref{inegbis:de}), there
exists $\eta_1$, an arbitrarily small positive real number such that
\begin{equation}
\label{eq:optieta0}
\frac{1+2\alpha^{-1}+\tau_0+\eta_1}{\rho_0}<d(\tau_0).
\end{equation}
Let us now, prove that (\ref{eq2:l2optim}) holds when $\big|\widetilde{g}_{j,k}(\omega)-\epsilon_{j,k}(\omega)\big|$ is replaced by $|H(k2^{-j})-H_*|F_{j,k}^1(\omega)$. 
Using Proposition \ref{pp1:dX} (in which one takes 
$q=1$, $M=M_0$, $a=\underline{H}$, $b=\overline{H}$ and $\eta=\eta_1$), the inequality $H(k2^{-j})\ge H_*$ and the fact that $k2^{-j}\in\B_1\subset[-M_0,M_0]$, one gets that,
\begin{align*}
F_{j,k}^1(\omega) & \leq C_2(\omega) 2^{j(1+H_*)} \int_{\B_1} \int_0^1 |\widetilde{\Psi}(2^jt-k,H_*)| \nonumber \\
& \hspace{2cm}\times |t-k2^{-j}|^{H_*-1/\alpha+\theta(H(k2^{-j})-H_*)} \Big( 1+ \big| \log |t-k2^{-j}| \big| \Big)^{1+2/\alpha+\eta_1} d\theta dt \nonumber\\
&  \leq C_2(\omega) 2^{j(1+H_*)} \int_{\B_1} |\widetilde{\Psi}(2^jt-k,H_*)||t-k2^{-j}|^{H_*-1/\alpha}\Big( 1+ \big| \log |t-k2^{-j}| \big| \Big)^{1+2/\alpha+\eta_1} \nonumber \\
& \hspace{4cm}\times \Big\{\int_0^1 |t-k2^{-j}|^{\theta(H(k2^{-j})-H_*)}  d\theta \Big\}dt \nonumber\\
& \le C_3 (\o) 2^{j(1+H_*)} \int_{\R} |\widetilde{\Psi}(2^jt-k,H_*)||t-k2^{-j}|^{H_*-1/\alpha}\Big( 1+ \big| \log |t-k2^{-j}| \big| \Big)^{1+2/\alpha+\eta_1}dt,
\end{align*}
where $C_2(\o)$ is a constant non depending on $j$ and $k$ and where $C_3(\o)=\big(1+2M_0\big)^{\overline{H}-H_*}C_2(\o)$. Then, setting $u=2^j t-k$ in the last integral, and using Lemma~\ref{LA4}, one obtains that,
\begin{align}
\label{eq12:l2optim}
F_{j,k}^1(\omega) & \leq C_3(\omega) 2^{j/\alpha} \int_{\R} |u|^{H_*-1/\alpha} \Big( 1+ \big| \log |2^{-j}u| \big|  \Big)^{1+2/\alpha+\eta_1} |\widetilde{\Psi}(u,H_*)| du \nonumber \\
& \leq C_4(\omega) 2^{j/\alpha} \int_{\R} |u|^{H_*-1/\alpha} \Big( j^{1+2/\alpha+\eta_1} + \left(1+\big| \log |u| \big| \right)^{1+2/\alpha+\eta_1} \Big) |\widetilde{\Psi}(u,H_*)| du \nonumber \\
& \leq C_5(\omega) j^{1+2/\alpha+\eta_1} \, 2^{j/\alpha},
\end{align}
where $C_4(\o)$ and $C_5 (\o)$ are two constants non depending on $j$ and $k$. Putting together, (\ref{eq10quat:l2optim}), (\ref{eq:optieta0}) and (\ref{eq12:l2optim}),
it follows that 
\begin{equation}
\label{eq12bis:l2optim}
\limsup_{j \vers \ii} j^{\tau_0} 2^{-j/\alpha} \max\Big\{|H(k2^{-j})-H_*|F_{j,k}^1(\omega) : k\in D_j(t_0,\tau_0)  \Big\}=0.
\end{equation}
Let us now prove that (\ref{eq2:l2optim}) holds when $\big|\widetilde{g}_{j,k}(\omega)-\epsilon_{j,k}(\omega)\big|$ is replaced by $|H(k2^{-j})-H_*|F_{j,k}^2(\omega)$. 
We set,
\begin{equation}
\label{eq13:l2optim}
C_6 (\o):= \sup_{(u,v)\in [-2M_0,2M_0]\times [\underline{H},\overline{H}]}\big|(\partial_v X)(u,v,\omega)\big|<\infty;
\end{equation}
observe that $C_6(\o)$ is finite, since the function $(u,v)\mapsto (\partial_v X)(u,v,\omega)$ is continuous over the compact rectangle $[-2M_0,2M_0]\times [\underline{H},\overline{H}]$.
Putting together, (\ref{eq11:l2optim}), (\ref{eq13:l2optim}), (\ref{eq1:existed}) and (\ref{eq8ter:l1optim}), one obtains that,
\begin{equation}
\label{eq14:l2optim}
F_{j,k}^2(\omega)\le C_7 (\o) 2^{-j(1-H_{*}-\frac{1}{\al})},
\end{equation}
where $C_7 (\o)$ is a constant non depending on $j$ and $k$. Then, using (\ref{eq14:l2optim}), the fact that $H(\cdot)$ is a bounded function, and the inequality $0<1-H_*$, it follows that,
\begin{equation}
\label{eq15:l2optim}
\limsup_{j \vers \ii}  j^{\tau_0} 2^{-j/\alpha}\max\Big\{ |H(k2^{-j})-H_*|F_{j,k}^2(\omega) : k\in D_j(t_0,\tau_0) \Big\}=0.
\end{equation}
Let us now prove that (\ref{eq2:l2optim}) holds when $\big|\widetilde{g}_{j,k}(\omega)-\epsilon_{j,k}(\omega)\big|$ is replaced by $|H(k2^{-j})-H_*|F_{j,k}^3(\omega)$.
Setting in Proposition~\ref{prop:asympinf}, $q=1$, $a=\underline{H}$ and $b=\overline{H}$, one gets , in view of (\ref{eq13:l2optim}), that for any fixed $\eta>0$, for each $t\in\B_3$ and for all $\th\in [0,1]$,
$$
\big| \big(\partial_v X\big)\big(t,H_*+\theta(H(k2^{-j})-H_*),\omega\big)-\big(\partial_v X\big)\big(k2^{-j},H_*+\theta(H(k2^{-j})-H_*),\omega\big)\big|\le C_{8}(\o) |t|^{\overline{H}} \big(1+\big|\log |t| \big|\big)^{1+1/\alpha+\eta},
$$
where $C_{8}(\o)$ is a constant non depending on $t$, $\th$ and $(j,k)$. Next combining the latter inequality with (\ref{eq11:l2optim}) and (\ref{eq9:l1optim}), it follows that,
\begin{equation}
\label{eq16:l2optim}
F_{j,k}^3(\omega_0)\le C_{9}(\o) 2^{-(1-H_*) j},
\end{equation}
where $C_9 (\o)$ is a constant non depending on $j$ and $k$. Then, using (\ref{eq16:l2optim}), the fact that $H(\cdot)$ is a bounded function, and the inequality $0<1-H_*$, it follows that,
\begin{equation}
\label{eq17:l2optim}
\limsup_{j \vers \ii}  j^{\tau_0} 2^{-j/\alpha}\max\Big\{ |H(k2^{-j})-H_*|F_{j,k}^3(\omega) : k\in D_j(t_0,\tau_0)\Big\}=0.
\end{equation}
Finally, putting together, (\ref{eq10:l2optim}), (\ref{eq12bis:l2optim}) (\ref{eq15:l2optim}) and (\ref{eq17:l2optim}), it follows that (\ref{eq2:l2optim}) holds.
\end{proof}

\begin{lemma}
\label{optim:lem4}
Let $\tau$ be as in Theorem~\ref{optim:modcont}, also we suppose that the Conditions $(\mathcal{A})$ and (\ref{eq:inegalrho}) hold. We denote by $\tau_0$ an arbitrary fixed real number such that $\tau_0>\tau\ge 0$. Then there exists $\Omega_{2,\tau_0}^*$ an event of probability~1 (which a priori depends on $\tau_0$) included in $\Omega_{0}^*$ (recall that the latter event has been introduced in Lemma~\ref{omega0}), such that, for each $\o\in\Omega_{2,\tau_0}^{*}$, one has,
\begin{equation}\label{eq1:l4optim}
\liminf_{j\vers \ii} j^{\tau_0} 2^{-j/\alpha} \max\Big\{ \big| g_{j,k}(\omega) \big| :  k \in\Z \mbox{ and } M_1+2^{-\frac{j}{2\al}}\le k/2^j\le   M_2-2^{-\frac{j}{2\al}}\Big\} >0,
\end{equation}
where the $g_{j,k}$'s are the random variables defined in (\ref{def:gjk}).
\end{lemma}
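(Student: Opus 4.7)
The plan is to combine the three preceding lemmas in a single triangle-inequality argument. First, I apply Lemma~\ref{optim:lem2} to obtain a point $t_0=t_0(\tau_0)\in[M_1,M_2]$ such that, for every $\omega\in\Omega_{0}^{*}$,
\begin{equation*}
\limsup_{j\to\infty}\, j^{\tau_0} 2^{-j/\alpha}\, \max\Big\{\big|\widetilde{g}_{j,k}(\omega)-\epsilon_{j,k}(\omega)\big|\,:\,k\in D_j(t_0,\tau_0)\Big\}=0.
\end{equation*}
Then, for this particular $t_0$, I invoke Lemma~\ref{optim:lem3} which furnishes an event $\Omega_{1,\tau_0}^{*}(t_0)\subseteq\Omega_{0}^{*}$ of probability $1$ on which
\begin{equation*}
\liminf_{j\to\infty}\, j^{\tau_0} 2^{-j/\alpha}\, \max\Big\{\big|\epsilon_{j,k}(\omega)\big|\,:\,k\in D_j(t_0,\tau_0)\Big\}>0.
\end{equation*}
I would then set $\Omega_{2,\tau_0}^{*}:=\Omega_{1,\tau_0}^{*}(t_0)$; it has probability $1$ and is contained in $\Omega_{0}^{*}$.

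The key observation is that $D_j(t_0,\tau_0)$ is, by (\ref{eq1:existed}) of Remark~\ref{rem:existed}, a subset of the index set appearing in (\ref{eq1:l4optim}) as soon as $j$ is large enough. Hence, for any such $j$ and for any index $k^*\in D_j(t_0,\tau_0)$ achieving the maximum of $|\epsilon_{j,k}(\omega)|$ on $D_j(t_0,\tau_0)$, the triangle inequality yields
\begin{equation*}
\max\Big\{|g_{j,k}(\omega)|\,:\,M_1+2^{-\frac{j}{2\alpha}}\le k/2^j\le M_2-2^{-\frac{j}{2\alpha}}\Big\}\ge |g_{j,k^*}(\omega)|\ge |\epsilon_{j,k^*}(\omega)|-R_j^{(1)}(\omega)-R_j^{(2)}(\omega),
\end{equation*}
where $R_j^{(1)}(\omega):=\max_{k\in D_j(t_0,\tau_0)}|\widetilde{g}_{j,k}(\omega)-\epsilon_{j,k}(\omega)|$ and $R_j^{(2)}(\omega):=\max_{k\in D_j(t_0,\tau_0)}|g_{j,k}(\omega)-\widetilde{g}_{j,k}(\omega)|$.

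Finally, I would handle $R_j^{(1)}$ by Lemma~\ref{optim:lem2}, and $R_j^{(1)}$ by Lemma~\ref{optim:lem1bis} applied with some $\theta\in(0,\min\{\gamma_*+1/\alpha-H_*,1-H_*\})$: the latter gives $j^{\tau_0} 2^{-j/\alpha} R_j^{(2)}(\omega)\le C(\omega) j^{\tau_0} 2^{-j\theta}\to 0$. Multiplying the previous inequality by $j^{\tau_0}2^{-j/\alpha}$ and letting $j\to\infty$, both remainder terms vanish, so the $\liminf$ of the left-hand side is bounded below by the positive $\liminf$ coming from Lemma~\ref{optim:lem3}, which proves (\ref{eq1:l4optim}). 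No new obstacle is expected here; the genuine difficulty was already absorbed into Lemma~\ref{optim:lem3} (probabilistic lower bound via Borel--Cantelli on the independent subfamily $\{\epsilon_{j,pq}\}_q$) and Lemma~\ref{optim:lem2} (careful deterministic control on the three spatial regions $\B_1,\B_2,\B_3$).
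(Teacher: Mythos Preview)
Your proof is correct and follows exactly the same route as the paper: the paper's proof is the single sentence ``Putting together (\ref{eq1:existed}) and Lemmas~\ref{optim:lem2}, \ref{optim:lem3}, \ref{optim:lem1bis}; one gets the lemma,'' and you have simply spelled out the triangle-inequality argument that this sentence encodes. One trivial typo: in the last paragraph you wrote ``$R_j^{(1)}$'' twice where the second occurrence should be ``$R_j^{(2)}$''.
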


\begin{proof}[Proof of Lemma \ref{optim:lem4}.] Putting together (\ref{eq1:existed}) and Lemmas~\ref{optim:lem2}, \ref{optim:lem3}, \ref{optim:lem1bis}; one gets the lemma.
\end{proof}

Now, we are in position to prove Theorem~\ref{optim:modcont}.

\begin{proof}[Proof of Theorem \ref{optim:modcont}.]
Denote by $\Omega_3 ^*$, the event of probability 1, defined as,
$$
\Omega_3 ^*:=\bigcap_{\tau_0\in \Q \,\mbox{{\tiny and }}\tau_0>\tau} \Omega_{2,\tau_0}^*;
$$
recall that the events $\Omega_{2,\tau_0}^*$ have been introduced in Lemma~\ref{optim:lem4}. It is clear that (\ref{eq1:l4optim}) holds
for all $\o\in \Omega_3 ^*$ and for all real number $\tau_0>\tau\ge 0$; therefore, it follows from Lemma~\ref{optim:lem1}, that for each $\o\in \Omega_3 ^*$, $\tau_0>\tau$ and $\eta_0>0$,
$$
\sup_{(t,s)\in [M_1,M_2]^2} \frac{ \big|Y(t,\omega)-Y(s,\omega) \big|}{ |t-s|^{H_{*}-1/\alpha} \big( 1+ \big| \log |t-s| \big| \big)^{-\tau_0-\eta_0} } =\infty.
$$
Then, in view of (\ref{Hstarl}), one gets the theorem.
\end{proof}

\section{Optimality of local modulus of continuity of LMSM}
\label{subsec:optlocmod}

The goal of this section is to show that under a condition a bit stronger than (\ref{eq4:locmodcont1Y}), the local modulus of continuity given in (\ref{eq5:locmodcont1Y}), is optimal, more precisely:

\begin{thm}
\label{locoptim:modcont}  
Let $M$ be an arbitrary positive real number. Assume that $t_0\in (-M,M)$ satisfies for some constant $c>0$ and 
all $t\in\R$, 
\begin{equation}
\label{eq1:tlocmodcont}
\big|H(t)-H(t_0)\big| \le c |t-t_0|^{H(t_0)} \big(1+\big|\log |t-t_0|\big| \big)^{1/\alpha}.
\end{equation}
Then, one has almost surely, 
\begin{equation}
\label{eq2:tlocmodcont}
\sup_{t\in [-M,M]} \left\{\frac{\big| Y(t)-Y(t_0)\big|}{ |t-t_0|^{H(t_0)} \big(1+\big|\log |t-t_0|\big| \big)^{1/\alpha}} \right\}= \infty.
\end{equation}
\end{thm}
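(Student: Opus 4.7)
The plan is to argue by contradiction, paralleling the strategy of Section~\ref{subsec:quasiopt} but tailored to a neighborhood of $t_0$. Since the hypothesis (\ref{eq1:tlocmodcont}) is local, I first need to produce a genuinely independent family of $\stas$ random variables concentrated near $t_0$. To this end I construct a sequence $(j_n,k_n)_{n\ge 1}$ with $j_n=n$ and $k_n/2^n\to t_0$ such that the supports of the wavelets $s\mapsto\psi(2^ns-k_n)$ are pairwise disjoint subsets of $[-M,M]$; this is straightforward by letting them fit in a geometrically decreasing sequence of pairwise disjoint subintervals of $(t_0,M)$ (possible because $t_0\in(-M,M)$) shrinking to $t_0$, which forces $|\mu_n|:=|k_n-2^nt_0|$ to remain uniformly bounded in $n$. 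By the independent scattering of $Z_\alpha$, the variables $\epsilon_{j_n,k_n}$ are then independent, and by Part~$(i)$ of Remark~\ref{prop:gcjk} they share the common scale $\|\epsilon_{0,0}\|_\alpha$; consequently, the heavy-tail estimate (\ref{eq4:l3optim}) and the second Borel--Cantelli lemma give almost surely
\[
\limsup_{n\to\infty}n^{-1/\alpha}|\epsilon_{j_n,k_n}(\omega)|=+\infty.
\]

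Next I introduce the biorthogonal wavelet coefficients $g_{j,k}:=2^{j(1+H(t_0))}\int_{\R}Y(t)\widetilde\Psi(2^jt-k,H(t_0))\,dt$, mirroring (\ref{def:gjk}) with $H(t_0)$ in place of $H_*$. Suppose for contradiction that the supremum in (\ref{eq2:tlocmodcont}) is finite on an event of positive probability: then a standard monotone exhaustion argument yields $C_0<\infty$ and an event $\Omega_{C_0}$ with $\PR(\Omega_{C_0})>0$ on which $|Y(t,\omega)-Y(t_0,\omega)|\le C_0|t-t_0|^{H(t_0)}(1+|\log|t-t_0||)^{1/\alpha}$ for every $t\in[-M,M]$. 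Proposition~\ref{gcjk=ejk} applied with $v=H(t_0)$ gives $\epsilon_{j,k}=2^{j(1+H(t_0))}\int X(t,H(t_0))\widetilde\Psi(2^jt-k,H(t_0))\,dt$, whence
\[
g_{j,k}-\epsilon_{j,k}=2^{j(1+H(t_0))}\int_{\R}\bigl(X(t,H(t))-X(t,H(t_0))\bigr)\widetilde\Psi(2^jt-k,H(t_0))\,dt,
\]
and the Mean Value decomposition $X(t,H(t))-X(t,H(t_0))=(H(t)-H(t_0))\int_0^1(\partial_v X)(t,H(t_0)+\theta(H(t)-H(t_0)))\,d\theta$ renders this difference amenable to the same integral estimates as $g_{j,k}$ itself.

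The heart of the argument is to establish two pointwise bounds along the subsequence: first, $|g_{j_n,k_n}(\omega)|\le C_1(\omega)n^{1/\alpha}$ on $\Omega_{C_0}$, obtained by using the vanishing moment $\int\widetilde\Psi(\cdot,H(t_0))\,dx=0$ of Proposition~\ref{orthopsi}~$(ii)$ to subtract $Y(t_0)$ from the integrand and then invoking the modulus assumption; second, $|g_{j_n,k_n}(\omega)-\epsilon_{j_n,k_n}(\omega)|\le C_2(\omega)n^{1/\alpha}$ on an event of probability one, obtained by feeding (\ref{eq1:tlocmodcont}) into the Mean Value decomposition above together with the finiteness of $\sup_{[-2M,2M]\times[\underline{H},\overline{H}]}|(\partial_v X)(\cdot,\cdot,\omega)|$ guaranteed by Theorem~\ref{TWSEbis}. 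For both quantities the change of variables $u=2^nt-k_n$ turns the relevant integral over $[-M,M]$ into
\[
\int_{\R}|u+\mu_n|^{H(t_0)}(n+|\log|u+\mu_n||)^{1/\alpha}|\widetilde\Psi(u,H(t_0))|\,du,
\]
whose bulk is genuinely $O(n^{1/\alpha})$ since $|\mu_n|$ is uniformly bounded; the tail contributions from $t\notin[-M,M]$ decay like $2^{-n(1-\overline{H})}$ thanks to (\ref{localpsitilde}) combined with the growth estimate of Proposition~\ref{prop:asympinf}. The triangle inequality then gives $|\epsilon_{j_n,k_n}(\omega)|\le(C_1(\omega)+C_2(\omega))n^{1/\alpha}$ for $\omega\in\Omega_{C_0}$ and all large $n$, directly contradicting the Borel--Cantelli conclusion; hence $\PR(\Omega_{C_0})=0$, and a union over rational $C_0$ completes the proof.

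The hardest point will be to keep the power of $n$ in both of the bounds above \emph{exactly} at $1/\alpha$, with no logarithmic slack. In contrast with Section~\ref{subsec:quasiopt}, where an arbitrary $\eta>0$ could be introduced on the right-hand side of the candidate modulus, here a factor $(\log n)^{\eta}$ in either integral estimate would ruin the contradiction with Borel--Cantelli. It is precisely the matching between the exponent $H(t_0)$ in the candidate local modulus and the H\"older-log exponent $H(t_0)$ imposed on $H$ at $t_0$ by (\ref{eq1:tlocmodcont}) that makes (\ref{eq1:tlocmodcont}) the borderline condition under which the local modulus (\ref{eq5:locmodcont1Y}) of Corollary~\ref{cor:locmodcont1Y} is optimal.
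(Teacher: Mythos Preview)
Your proposal is correct and follows essentially the same strategy as the paper: construct an independent sequence $(\epsilon_{j_n,k_n})$ with $k_n2^{-j_n}\to t_0$ and disjoint wavelet supports, use the second Borel--Cantelli lemma to force $\limsup_n n^{-1/\alpha}|\epsilon_{j_n,k_n}|=+\infty$ almost surely, and then show that finiteness of the supremum in (\ref{eq2:tlocmodcont}) would bound $|\epsilon_{j_n,k_n}|$ by $Cn^{1/\alpha}$ via the biorthogonal coefficients and the localization of $\widetilde{\Psi}$.

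The only organizational difference is that the paper factors the argument through an intermediate statement about LFSM (Proposition~\ref{prop:locmodcont}, showing the supremum for $X(\cdot,H(t_0))$ is infinite), and then reduces LMSM to LFSM by the elementary triangle inequality $|X(t,H(t_0))-X(t_0,H(t_0))|\le |Y(t)-Y(t_0)|+|X(t,H(t))-X(t,H(t_0))|$ together with (\ref{eq2:CTWSEbis}) and (\ref{eq1:tlocmodcont}); you instead handle the difference $g_{j,k}-\epsilon_{j,k}$ directly inside the contradiction. The paper's version has the advantage of isolating the LFSM result as a standalone proposition of independent interest. On the construction side, the paper uses scales $r_j=jm_0$ with $m_0=[\log_2(3R+2)]+1$ and $l_j=[2^{r_j}t_0+R+2]$, whereas you use consecutive scales $j_n=n$ with a larger offset $\mu_n$; both yield disjoint supports and bounded $|\mu_n|$, so the difference is cosmetic.
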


\begin{rem}
Let us mention that, even in the quite classical case of Linear Fractional Stable Motion (LFSM) (in other words, in the particular case where the functional parameter $H(\cdot)$ of LMSM is a constant), the optimal lower bound of the power of the logarithmic factor in a local modulus of continuity, was unknown so far; Corollary~\ref{cor:locmodcont1Y}~and Theorem~\ref{locoptim:modcont} in our article, show that, in the more general case of LMSM, this optimal lower bound is in fact $1/\al$. 
\end{rem}

The proof of Theorem~\ref{locoptim:modcont} relies on (\ref{eq2:CTWSEbis}) in which one takes $q=0$, also, more importantly, it relies on the following proposition. 

\begin{proposition}
\label{prop:locmodcont}
Let $M$ be an arbitrary positive real number. For all $t_0\in (-M,M)$, one has almost surely, 
\begin{equation}
\label{eq1:plocmodcont}
\sup_{t\in [-M,M]} \left\{\frac{\big| X(t,H(t_0))-X(t_0,H(t_0))\big|}{ |t-t_0|^{H(t_0)} \big(1+\big|\log |t-t_0|\big| \big)^{1/\alpha}} \right\}= \infty.
\end{equation}
\end{proposition}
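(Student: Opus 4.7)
The plan is to argue by contradiction, paralleling the scheme of Theorem~\ref{optim:modcont} but now requiring a delicate scale-by-scale decoupling argument since the local regime concentrates near a single point. Suppose the supremum in~(\ref{eq1:plocmodcont}) is finite on some $\Omega_1 \subseteq \Omega_0^*$ with $\PR(\Omega_1)>0$, and write $\Omega_1^{(n)}$ for the subset of $\Omega_1$ on which this supremum is bounded by $n$; then $\PR(\Omega_1^{(n)})>0$ for some $n \in \N$. Set $v_0 := H(t_0)$ and $k_j := \lfloor 2^j t_0 \rfloor$, so $|k_j - 2^j t_0| < 1$. By Proposition~\ref{gcjk=ejk} and the vanishing moment~(\ref{mompsi}) of $\widetilde{\Psi}(\cdot, v_0)$,
\begin{equation*}
\epsilon_{j, k_j}(\omega) = 2^{j(1+v_0)} \int_\R \bigl[X(t, v_0, \omega) - X(t_0, v_0, \omega)\bigr] \widetilde{\Psi}(2^j t - k_j, v_0) \,dt.
\end{equation*}
Splitting this integral into $[-M,M]$ and $\R\setminus[-M,M]$: on the first subdomain, the modulus hypothesis with constant $n$, the change of variable $u = 2^j t - k_j$, the subadditivity $(a+b)^{1/\al}\le a^{1/\al}+b^{1/\al}$, and the localization~(\ref{localpsitilde}) give a contribution $\le C_1 n\,j^{1/\al}$; on the second subdomain, Proposition~\ref{prop:asympinf} (with $q=0$), the decay $|\widetilde{\Psi}(u,v_0)|\le C(3+|u|)^{-2}$, and the inequality $|2^j t-k_j|\ge 2^{j-1}(|t|-|t_0|)$ for $|t|>|t_0|$ make the contribution $o(1)$ as $j\to\infty$. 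Hence, on $\Omega_1^{(n)}$,
\begin{equation*}
|\epsilon_{j, k_j}(\omega)| \le C_1\, n\, j^{1/\alpha} \quad \text{for all } j \ge J_0(\omega),
\end{equation*}
for some deterministic constant $C_1$.

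The crux is to show that this deterministic upper bound is almost surely violated infinitely often, forcing $\PR(\Omega_1^{(n)})=0$. Decompose $\epsilon_{j,k_j} = \hat\epsilon_j + \check\epsilon_j$, where $\hat\epsilon_j$ and $\check\epsilon_j$ are the stochastic integrals of $2^{j/\al}\psi(2^j\cdot - k_j)$ against $Z_\al$ restricted respectively to $A_j := \supp{\psi(2^j\cdot - k_j)}\setminus B_j$ and $B_j := \bigl\{s\in\supp{\psi(2^j\cdot - k_j)} : |s - t_0| < 2^{-j}/(\log j)^2\bigr\}$. Since $Z_\al$ is independently scattered and $A_j, B_j$ are disjoint, $\hat\epsilon_j$ and $\check\epsilon_j$ are independent $\stas$ variables, and a direct calculation of $\int_{B_j}|\psi(2^j s - k_j)|^\al ds$ shows that the scale parameter of $\check\epsilon_j$ is $O((\log j)^{-2/\al})$ while that of $\hat\epsilon_j$ stays bounded below by a positive constant for $j$ large. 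Choose a subsequence $(j_m)_{m\ge 1}$ satisfying $j_{m+1} - j_m \ge 2\log_2\log j_m + \log_2(R+1) + 1$; one may take $j_m\sim m\log\log m$ as $m\to\infty$, whence $\sum_m 1/j_m = \infty$ while $\sum_m 1/[j_m(\log j_m)^2]<\infty$. This spacing ensures $\supp{\psi(2^{j_{m+1}}\cdot - k_{j_{m+1}})}\subset B_{j_m}$; iterating and using the obvious $B_{j_{m'}}\subset B_{j_m}$ for $m'>m$ gives $A_{j_{m'}}\subset B_{j_m}$ for all $m'>m$, so the sets $\{A_{j_m}\}_m$ are pairwise disjoint and the $\stas$ variables $\{\hat\epsilon_{j_m}\}_m$ are mutually independent. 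The lower tail estimate~(\ref{eq4:l3optim}), applied to each $\hat\epsilon_{j_m}$ (whose scale is bounded below), yields $\PR\bigl(|\hat\epsilon_{j_m}|> 2C_1 n\, j_m^{1/\al}\bigr)\ge c/j_m$, so the second Borel--Cantelli lemma gives $|\hat\epsilon_{j_m}| > 2C_1 n\, j_m^{1/\al}$ infinitely often, almost surely; the summability of $\sum_m \PR\bigl(|\check\epsilon_{j_m}|>C_1 n\, j_m^{1/\al}\bigr)\le \sum_m C/[j_m(\log j_m)^2]$ gives, via the first Borel--Cantelli lemma, $|\check\epsilon_{j_m}|\le C_1 n\, j_m^{1/\al}$ eventually, almost surely. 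Combining, $|\epsilon_{j_m, k_{j_m}}| > C_1 n\, j_m^{1/\al}$ infinitely often, almost surely, contradicting the deterministic upper bound on $\Omega_1^{(n)}$.

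The main obstacle is the coordinated choice of the inner-interval scale and the subsequence spacing. The scale $2^{-j}/(\log j)^2$ for $B_j$ is just small enough to make $\sum_m \PR(|\check\epsilon_{j_m}|\text{ large})$ summable (so the first Borel--Cantelli lemma applies) while keeping the scale of $\hat\epsilon_{j_m}$ bounded below. The spacing $j_{m+1}-j_m\sim\log\log j_m$ is just small enough that $\sum_m 1/j_m$ diverges (so the second Borel--Cantelli lemma applies), and just large enough to guarantee the disjointness of the $A_{j_m}$'s that underlies the independence of $\{\hat\epsilon_{j_m}\}_m$.
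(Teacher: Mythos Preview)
Your argument is correct in structure, and the independence mechanism you set up (splitting $\epsilon_{j,k_j}$ into an ``annular'' part $\hat\epsilon_j$ and a ``core'' part $\check\epsilon_j$, then choosing a subsequence with gaps $\sim\log\log j_m$ so the annuli separate) does work; the minor quantitative issue is that the spacing condition $j_{m+1}-j_m\ge 2\log_2\log j_m + O(1)$ forces $j_m\sim c\,m\log\log m$ with $c>2/\log 2$, not $c=1$, but this does not affect the divergence/convergence dichotomy.

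The paper's route is considerably simpler and avoids the splitting altogether. Instead of centering at $k_j=\lfloor 2^j t_0\rfloor$ (which forces all wavelet supports to overlap at $t_0$), the paper shifts each wavelet slightly to the right of $t_0$: it sets $r_j=jm_0$ (geometric scale spacing, with $m_0$ a fixed integer depending only on the wavelet support) and $l_j=\lfloor 2^{r_j}t_0+R+2\rfloor$, so that the support of $\psi(2^{r_j}\cdot-l_j)$ lies in the interval $(t_0+(R+1)2^{-r_j},\,t_0+(R+1)2^{1-r_j})$. These intervals are pairwise disjoint for different $j$, hence $\check\eps_j:=\epsilon_{r_j,l_j}$ are independent outright (Lemma~\ref{lem1:locmodcont}). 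The second Borel--Cantelli lemma then gives $\limsup_j |\check\eps_j|/(j^{1/\alpha}\log^{1/\alpha}j)\ge 1$ a.s.\ (Lemma~\ref{lem2:locmodcont}); note the extra $\log^{1/\alpha}j$ here means the limsup beats \emph{any} constant, so the paper does not need to fix a specific $C_1 n$ as you do---it simply shows (Lemma~\ref{lem3:locmodcont}) that finiteness of the supremum would force $\limsup_j |\check\eps_j|/j^{1/\alpha}<\infty$, an immediate contradiction. Your decomposition buys nothing here that the paper's shifted-support trick does not deliver more cheaply; on the other hand, your technique of carving out a shrinking core would be the natural move in settings where one cannot displace the sampling location away from $t_0$.
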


In order to show that Proposition~\ref{prop:locmodcont} holds, we need to introduce some additional notations, also we need to derive some preliminary results. Let $m_0$ be the positive integer defined as,
\begin{equation}
\label{eq:defm0}
m_0:=\big [\log_2 (3R+2)\big]+1;
\end{equation}
recall that, $R$ is a fixed real number strictly bigger than $1$, such that (\ref{eq:supp-psi-ant}) holds. For all $j\in\N$, 
one sets, 
\begin{equation}
\label{eq:ljm0}
r(j,m_0):=jm_0 \mbox{ and } l(j,m_0):=\big[2^{r(j,m_0)} t_0 +R+2\big];
\end{equation}
observe that, the inequalities,
\begin{equation}
\label{eq:inegrl}
(R+1)2^{-r(j,m_0)}< l(j,m_0)2^{-r(j,m_0)}-t_0 < (R+1)2^{1-r(j,m_0)}<4/5,
\end{equation}
hold. One denotes by $\check{\eps}_j$ the $\stas$ random variable,
\begin{equation}
\label{eq:defchecesp}
\check{\eps}_j :=\eps_{r(j,m_0),l(j,m_0)}\,,
\end{equation}
in other words, $\check{\eps}_j$ is defined through (\ref{def:ejk}) in which $j$ and $k$ are replaced, respectively by $r(j,m_0)$ and $l(j,m_0)$. 

\begin{lemma}
\label{lem1:locmodcont}
The $\stas$ random variables $\check{\eps}_j$, $j\in\N$ are independent and they all have the same scale parameter, namely, for each $j\in\N$, 
\begin{equation}
\label{eq:spchepsjk}
\|\check{\eps}_j \|_{\alpha} =  \Bigg\{ \int_{\R} |\psi(t)|^{\alpha} dt \Bigg\}^{1/\alpha}. 
\end{equation}
\end{lemma}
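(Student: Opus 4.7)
The plan is to dispose first of the scale-parameter claim, which is immediate: by the very definition (\ref{eq:defchecesp}), $\check{\eps}_j=\eps_{r(j,m_0),l(j,m_0)}$, so (\ref{eq:spchepsjk}) is just a particular instance of (\ref{eq:spepsjk}) in Remark~\ref{prop:gcjk}(i). All the work therefore lies in independence.

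For independence, I would exploit the fact that $Z_\alpha(ds)$ is independently scattered together with the compact support of $\psi$: if I can show that the supports
$$I_j:=\operatorname{supp}\!\big(s\mapsto\psi(2^{r(j,m_0)}s-l(j,m_0))\big)\subseteq\bigl[2^{-r(j,m_0)}(l(j,m_0)-R),\,2^{-r(j,m_0)}(l(j,m_0)+R)\bigr]$$
(the inclusion follows from (\ref{eq:supp-psi-ant})) are pairwise disjoint, then by the property of independently scattered $\stas$ random measures (see \cite{SamTaq}), the stochastic integrals $\int_\R \psi(2^{r(j,m_0)}s-l(j,m_0))\,Z_\alpha(ds)$ will be jointly independent, and hence so will be the $\check{\eps}_j$'s after multiplication by the deterministic factors $2^{r(j,m_0)/\alpha}$ from (\ref{def:ejk}).

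The key arithmetical step is to localize each $I_j$ sharply enough. Using (\ref{eq:inegrl}) I get the two-sided estimate
$$t_0+2^{-r(j,m_0)}\;<\;\inf I_j\;\le\;\sup I_j\;<\;t_0+(3R+2)\,2^{-r(j,m_0)},$$
simply by adding/subtracting $R\cdot 2^{-r(j,m_0)}$ to the bounds provided by (\ref{eq:inegrl}). Then, for any $j_1<j_2$ in $\N$, since $r(j_2,m_0)-r(j_1,m_0)=(j_2-j_1)m_0\ge m_0$, and since the definition (\ref{eq:defm0}) of $m_0$ guarantees $2^{m_0}>3R+2$, one obtains
$$\sup I_{j_2}\;<\;t_0+(3R+2)\,2^{-r(j_2,m_0)}\;\le\;t_0+(3R+2)\,2^{-m_0}\,2^{-r(j_1,m_0)}\;<\;t_0+2^{-r(j_1,m_0)}\;<\;\inf I_{j_1},$$
so the two intervals are disjoint (they in fact all cluster from the right at $t_0$, with $I_{j_2}$ strictly closer to $t_0$ than $I_{j_1}$).

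The main obstacle is just bookkeeping in the chain of inequalities above, in particular confirming that the shift by $R$ on each side of $l(j,m_0)\,2^{-r(j,m_0)}$ still leaves $I_j$ strictly to the right of $t_0+2^{-r(j,m_0)}$; this is exactly why the factor $R+1$ (rather than $R$) appears in (\ref{eq:inegrl}), and why $m_0$ was chosen via $[\log_2(3R+2)]+1$ rather than $[\log_2 R]+1$. No probabilistic subtlety is involved beyond the defining property of an independently scattered random measure.
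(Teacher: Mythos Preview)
Your proposal is correct and follows essentially the same approach as the paper: both reduce independence to disjointness of the supports $I_j$ via the independently scattered property of $Z_\alpha(ds)$, and both extract the needed arithmetic from (\ref{eq:inegrl}) and the choice (\ref{eq:defm0}) of $m_0$. The only cosmetic difference is that the paper verifies disjointness by bounding the distance between the centres of $I_j$ and $I_{j+p}$ below by the sum of their half-lengths (a ``centres-and-radii'' argument), whereas you sandwich each $I_j$ inside the explicit interval $\big(t_0+2^{-r(j,m_0)},\,t_0+(3R+2)\,2^{-r(j,m_0)}\big)$ and show these sandwich intervals are pairwise disjoint; your version makes the right-clustering at $t_0$ a bit more transparent, but the content is the same.
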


\begin{proof}[Proof of Lemma~\ref{lem1:locmodcont}.] First observe that (\ref{eq:spchepsjk}) is a straightforward consequence of (\ref{eq:defchecesp}) and (\ref{eq:spepsjk}). Let us now prove that the random variables $\check{\eps}_j$, $j\in\N$ are independent. Notice that (\ref{eq:supp-psi-ant}), entails that,
$$
\mbox{supp}\, \psi \big(2^{r(j,m_0)}\cdot-l(j,m_0)\big)\subseteq \big [l(j,m_0)2^{-r(j,m_0)}-R2^{-r(j,m_0)},l(j,m_0)2^{-r(j,m_0)}+R2^{-r(j,m_0)}\big];
$$
therefore, in view of (\ref{def:ejk}) and the fact that the $\stas$ random measure $Z_{\alpha}(ds)$ is independently scattered, it is sufficient to show that the intervals $\big [l(j,m_0)2^{-r(j,m_0)}-R2^{-r(j,m_0)},l(j,m_0)2^{-r(j,m_0)}+R2^{-r(j,m_0)}\big]$, $j\in\N$ are disjoint.
The latter result can be obtained by proving that, the inequality,
\begin{equation}
\label{eq1lem1:locmodcont}
R 2^{-r(j,m_0)}+R2^{-r(j+p,m_0)}<\big|l(j,m_0)2^{-r(j,m_0)}-l(j+p,m_0)2^{-r(j+p,m_0)}\big|,
\end{equation}
holds for all $(j,p)\in\N^2$. By using the triangle inequality, (\ref{eq:inegrl}) and the first equality in (\ref{eq:ljm0}), one has,
\begin{align}
\label{eq2lem1:locmodcont}
& \big|l(j,m_0)2^{-r(j,m_0)}-l(j+p,m_0)2^{-r(j+p,m_0)}\big|  \ge \big|l(j,m_0)2^{-r(j,m_0)}-t_0\big|-\big|l(j+p,m_0)2^{-r(j+p,m_0)}-t_0\big|\nonumber\\
& > (R+1)2^{-r(j,m_0)}-(R+1)2^{1-r(j+p,m_0)}= (R+1)2^{-jm_0}\big(1-2^{1-p m_0}\big)\ge (R+1)2^{-jm_0}\big(1-2^{1-m_0}\big).
\end{align}
On the other hand, the first equality in (\ref{eq:ljm0}), imply that 
\begin{equation}
\label{eq3lem1:locmodcont}
R 2^{-r(j,m_0)}+R2^{-r(j+p,m_0)}=R2^{-jm_0}\big(1+2^{-p m_0}\big)\le R 2^{-j m_0}\big(1+2^{-m_0}\big).
\end{equation}
Next, notice that (\ref{eq:defm0}), implies that $2^{-m_0}<(3R+2)^{-1}$ and consequently that,
\begin{equation}
\label{eq4lem1:locmodcont}
R \big(1+2^{-m_0}\big)<\frac{3R(R+1)}{3R+2}< (R+1)\big(1-2^{1-m_0}\big).
\end{equation}
Finally, putting together (\ref{eq2lem1:locmodcont}), (\ref{eq3lem1:locmodcont}) and (\ref{eq4lem1:locmodcont}), one gets (\ref{eq1lem1:locmodcont})
\end{proof}

\begin{lemma}
\label{lem2:locmodcont}
One has, almost surely,
\begin{equation}
\label{eq1lem2:locmodcont}
\limsup_{j\rightarrow +\infty} \frac{|\check{\eps}_j|}{j^{1/\al}\log^{1/\al}(j)}\ge 1.
\end{equation}
\end{lemma}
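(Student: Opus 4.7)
The plan is to apply the second Borel--Cantelli lemma, leveraging two facts that have already been established in the preceding results: the mutual independence of the $\stas$ random variables $\check{\eps}_j$, asserted in Lemma~\ref{lem1:locmodcont}, together with the uniform two-sided power-law tail bound~(\ref{eq4:l3optim}) from Remark~\ref{prop:gcjk}~$(iii)$, which in particular applies to the subsequence $\check{\eps}_j=\eps_{r(j,m_0),l(j,m_0)}$.

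For each integer $j$ large enough so that $j^{1/\al}\log^{1/\al}(j)\ge 1$, I introduce the event
$$
A_j:=\Big\{\omega\in\Omega \,:\, |\check{\eps}_j(\omega)|>j^{1/\al}\log^{1/\al}(j)\Big\}.
$$
The lower bound in~(\ref{eq4:l3optim}) applied with $x=j^{1/\al}\log^{1/\al}(j)$ immediately gives $\PR(A_j)\ge c'/(j\log j)$, and the integral test yields $\sum_j 1/(j\log j)=\infty$, so $\sum_j \PR(A_j)=\infty$. On the other hand, each $A_j$ is $\sigma(\check{\eps}_j)$-measurable, and Lemma~\ref{lem1:locmodcont} guarantees that the family $\{\check{\eps}_j\}_{j\in\N}$ is independent, so the events $\{A_j\}_{j\in\N}$ are themselves mutually independent. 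The second Borel--Cantelli lemma then yields $\PR(A_j \text{ infinitely often})=1$, which is precisely the assertion (\ref{eq1lem2:locmodcont}).

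There is no genuine obstacle here: the hard work has already been carried out in Lemma~\ref{lem1:locmodcont}, whose proof relies on the careful choice of the spacing $r(j,m_0)=jm_0$ with $m_0$ large enough to ensure disjoint supports of the rescaled wavelets, so that the independent-scattering property of $Z_\al(ds)$ decouples the $\check{\eps}_j$'s. The Borel--Cantelli step itself rests only on the divergence of $\sum 1/(j\log j)$; the exponent $1/\al$ on the logarithmic factor in~(\ref{eq1lem2:locmodcont}) arises precisely because $\big(j^{1/\al}\log^{1/\al}(j)\big)^{-\al}=1/(j\log j)$ is the critical boundary-divergent series, and any strictly smaller power of $\log j$ in the denominator would yield a summable series and, via the first Borel--Cantelli lemma applied to the upper bound in~(\ref{eq4:l3optim}), the opposite conclusion.
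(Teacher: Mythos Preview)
Your proof is correct and follows essentially the same argument as the paper: define the events $A_j=\{|\check{\eps}_j|>j^{1/\al}\log^{1/\al}(j)\}$, use the lower tail bound in~(\ref{eq4:l3optim}) to get $\PR(A_j)\ge c'/(j\log j)$, note that $\sum_j 1/(j\log j)=\infty$, invoke the independence of the $\check{\eps}_j$ from Lemma~\ref{lem1:locmodcont}, and apply the second Borel--Cantelli lemma. Your additional commentary on why the exponent $1/\al$ on the logarithm is sharp is accurate and illuminating, though it goes beyond what the paper records.
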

\begin{proof}[Proof of Lemma~\ref{lem2:locmodcont}.] Notice that, in view of Lemma~\ref{lem1:locmodcont}, the events $\left\{|\check{\eps}_j|>j^{1/\al}\log^{1/\al}(j)\right\}$, $j\in\N$, are independent; moreover, (\ref{eq:defchecesp}) and the first inequality in (\ref{eq4:l3optim}), imply that,
$$
\sum_{j=2}^{+\infty} \PR\left(|\check{\eps}_j|>j^{1/\al}\log^{1/\al}(j)\right)\ge c'\sum_{j=2}^{+\infty} j^{-1}\log^{-1}(j)=+\infty.
$$
Thus, applying the second Borel-Cantelli Lemma, one gets (\ref{eq1lem2:locmodcont}).
\end{proof}

\begin{lemma}
\label{lem3:locmodcont}
Let $\Omega_{0}^*$ be the event of probability 1 introduced in Lemma~\ref{omega0}. Assume that for some $t_0\in (-M,M)$ and $\o_0 \in \Omega_{0}^*$, one has,
\begin{equation}
\label{eq1lem3:locmodcont}
\sup_{t\in [-M,M]} \left\{\frac{\big| X(t,H(t_0),\o_0)-X(t_0,H(t_0),\o_0)\big|}{ |t-t_0|^{H(t_0)} \big(1+\big|\log |t-t_0|\big| \big)^{1/\alpha}} \right\}< \infty.
\end{equation}
Then, it follows that, 
\begin{equation}
\label{eq2lem3:locmodcont}
\limsup_{j\rightarrow +\infty} \frac{|\check{\eps}_j(\o_0)|}{j^{1/\al}}<\infty.
\end{equation}
\end{lemma}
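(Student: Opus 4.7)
The plan is to express $\check{\eps}_j(\o_0)$ as the wavelet-type coefficient of the LFSM $\{X(t,H(t_0)):t\in\R\}$ produced by Proposition~\ref{gcjk=ejk}, then exploit the vanishing moment (\ref{mompsi}) of $\widetilde{\Psi}(\cdot,H(t_0))$ to recenter against $X(t_0,H(t_0),\o_0)$, and finally estimate the resulting integral by splitting the integration domain into the ``local'' part $[-M,M]$, where the hypothesis~(\ref{eq1lem3:locmodcont}) applies, and the ``far'' part, where Proposition~\ref{prop:asympinf} (with $q=0$) provides global control on $|X(t,H(t_0),\o_0)|$.

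More concretely, setting $r:=r(j,m_0)$ and $l:=l(j,m_0)$, Proposition~\ref{gcjk=ejk} combined with (\ref{mompsi}) yields
\begin{equation*}
\check{\eps}_j(\o_0)=2^{r(1+H(t_0))}\int_{\R}\bigl(X(t,H(t_0),\o_0)-X(t_0,H(t_0),\o_0)\bigr)\widetilde{\Psi}(2^r t-l,H(t_0))\,dt,
\end{equation*}
and the change of variable $u=2^r t-l$, together with $\delta_j:=l-2^r t_0\in[R+1,R+2]$ by (\ref{eq:ljm0})--(\ref{eq:inegrl}), gives $|(u+l)2^{-r}-t_0|=|u+\delta_j|2^{-r}$ and reduces $\check{\eps}_j(\o_0)$ to $2^{rH(t_0)}$ times a $u$-integral. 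I would then split that integral into the region $\mathcal{I}_j:=\{u\in\R:(u+l)2^{-r}\in[-M,M]\}$ and its complement. On $\mathcal{I}_j$ the hypothesis produces the pointwise bound $C\,2^{-rH(t_0)}|u+\delta_j|^{H(t_0)}\bigl(1+|\log(|u+\delta_j|2^{-r})|\bigr)^{1/\al}$ on the increment of $X$; the prefactor $2^{rH(t_0)}$ cancels, and the elementary decomposition $(1+|\log(|u+\delta_j|2^{-r})|)^{1/\al}\le C\bigl(r^{1/\al}+(1+|\log|u+\delta_j||)^{1/\al}\bigr)$ combined with the localization (\ref{localpsitilde}) of $\widetilde{\Psi}$ shows that the contribution of $\mathcal{I}_j$ is bounded by a constant multiple of $j^{1/\al}$, uniformly in $j$, since $H(t_0)<1$ makes $\int_{\R}|u+\delta_j|^{H(t_0)}(1+|\log|u+\delta_j||)^{1/\al}(3+|u|)^{-2}\,du$ finite and bounded in $j$.

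On the complement $\R\setminus\mathcal{I}_j$, the constraint $|(u+l)2^{-r}|>M$ together with $|t_0|<M$ forces $|u|\ge(M-|t_0|)2^r-O(1)$, so I would invoke Proposition~\ref{prop:asympinf} (with $q=0$, $a=\underline{H}$, $b=\overline{H}$, and small $\eta>0$) to bound $|X((u+l)2^{-r},H(t_0),\o_0)|$ by a constant multiple of $(|u|2^{-r})^{H(t_0)}(1+r)^{1/\al+\eta}$, while $|X(t_0,H(t_0),\o_0)|$ is a fixed finite number. The resulting contribution, again combined with (\ref{localpsitilde}), is easily seen to be $O(2^{r(H(t_0)-1)}r^{1/\al+\eta})$, which is negligible compared to $j^{1/\al}$ thanks to $H(t_0)<1$. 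Summing the two contributions yields (\ref{eq2lem3:locmodcont}).

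The main delicate point I anticipate lies in the analysis of $\mathcal{I}_j$: one must verify that the logarithmic blow-up of the increment bound, sharpest when $|u+\delta_j|2^{-r}$ is small, translates into at most a single $j^{1/\al}$ factor after integration against the localized wavelet, and that the possibly unbounded part of $\mathcal{I}_j$ (where $|u|$ may reach a multiple of $2^r$) does not degrade the estimate. The splitting of the log factor into a dominant $r\log 2$ piece and a locally integrable residual, together with the strict inequality $H(t_0)<1$ ensuring decay at infinity, is what makes everything fit together.
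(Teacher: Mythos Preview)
Your proposal is correct and follows essentially the same approach as the paper: express $\check{\eps}_j(\o_0)$ via Proposition~\ref{gcjk=ejk} and (\ref{mompsi}), then split the resulting integral into a local piece handled by the hypothesis~(\ref{eq1lem3:locmodcont}) and a far piece handled by Proposition~\ref{prop:asympinf}. The only cosmetic differences are that the paper uses the change of variable $x=t-l_j2^{-r_j}$ (followed by a rescaling $u=x/\nu_j$) rather than your $u=2^r t-l$, and splits at a fixed threshold $|x|<s_*:=|t_0|+2$ (after noting that continuity extends (\ref{eq1lem3:locmodcont}) to any compact interval) rather than at the moving boundary $t\in[-M,M]$; both choices lead to the same $O(j^{1/\al})$ bound.
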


\begin{proof}[Proof of Lemma~\ref{lem3:locmodcont}.] First notice that, (\ref{eq:defchecesp}), (\ref{eq1:gcjk=ejk}) in which one takes $v=H(t_0)$, (\ref{mompsi}), and the change of variable $x=t-l_j2^{-r_j}$, imply that,
\begin{align}
\label{eq3lem3:locmodcont}
\check{\eps}_j(\o_0)&=2^{r_j(1+H(t_0))} \int_{\R} \Big(X(t,H(t_0),\o_0)-X(t_0,H(t_0),\o_0)\Big) \widetilde{\Psi}(2^{r_j}t-l_j,H(t_0)) dt\nonumber\\
&=2^{r_j(1+H(t_0))} \int_{\R} \Big(X\big(x+l_j2^{-r_j},H(t_0),\o_0\big)-X\big(t_0,H(t_0),\o_0\big)\Big) \widetilde{\Psi}(2^{r_j}x,H(t_0)) dx,
\end{align}
where, for the sake of simplicity, we have set $r_j=r(j,m_0)$ and $l_j=l(j,m_0)$. Let $s_*:=|t_0|+2$, observe that, in view of (\ref{eq:inegrl}), one has,
\begin{equation}
\label{eq4lem3:locmodcont}
\forall\, x\in\R,\, |x|\ge s_*\,\Longrightarrow \big|x+l_j2^{-r_j}\big|\ge 1.
\end{equation}
Also, observe that (\ref{eq3lem3:locmodcont}) entails that,
\begin{equation}
\label{eq5lem3:locmodcont}
\big|\check{\eps}_j(\o_0)\big| \le S_j+Z_j,
\end{equation}
where 
\begin{equation}
\label{eq6lem3:locmodcont}
S_j=2^{r_j(1+H(t_0))} \int_{|x|<s_*} \Big|X\big(x+l_j2^{-r_j},H(t_0),\o_0\big)-X\big(t_0,H(t_0),\o_0\big)\Big| \big|\widetilde{\Psi}(2^{r_j}x,H(t_0))\big| dx
\end{equation}
and 
\begin{equation}
\label{eq7lem3:locmodcont}
Z_j=2^{r_j(1+H(t_0))} \int_{|x|\ge s_*} \Big|X\big(x+l_j2^{-r_j},H(t_0),\o_0\big)-X\big(t_0,H(t_0),\o_0\big)\Big| \big|\widetilde{\Psi}(2^{r_j}x,H(t_0))\big| dx.
\end{equation}
Let us now give an appropriate upper bound for $S_j$. Notice that, the fact that $t\mapsto X(t,H(t_0),\o_0)$ is a continuous function over $\R$, entails that, (\ref{eq1lem3:locmodcont}) remains valid, when $[-M,M]$ is replaced by any other compact interval; also notice that, in view of (\ref{eq:inegrl}), when $|x|<s_*$, then $x+l_j2^{-r_j}$ belongs to the compact interval $\big[-s_*-|t_0|-4/5, s_*+|t_0|+4/5\big]$. Thus, using (\ref{eq1lem3:locmodcont}) in which $M$ is replaced by 
$s_*+|t_0|+4/5$, one gets that,
\begin{align}
\label{eq8lem3:locmodcont}
S_j & \le C_1 (\o_0) 2^{r_j(1+H(t_0))}\int_{|x|<s_*} |\nu_j+x|^{H(t_0)}\Big (1+\big|\log|\nu_j+x|\big|\Big)^{1/\al}\big|\widetilde{\Psi}(2^{r_j}x,H(t_0))\big| dx\nonumber\\
&\le  C_1 (\o_0) 2^{r_j(1+H(t_0))}\int_{\R} |\nu_j+x|^{H(t_0)}\Big (1+\big|\log|\nu_j+x|\big|\Big)^{1/\al}\big|\widetilde{\Psi}(2^{r_j}x,H(t_0))\big| dx,
\end{align}
where, $C_1 (\o_0)$ is a constant non depending on $j$, and
\begin{equation}
\label{eq9lem3:locmodcont}
\nu_j:=l_j2^{-r_j}-t_0;
\end{equation}
observe that (\ref{eq:inegrl}) implies that,
\begin{equation}
\label{eq10lem3:locmodcont}
R+1 < 2^{r_j}\nu_j < 2R+2.
\end{equation}
For the sake of convenience, let us set,
\begin{equation}
\label{eq11lem3:locmodcont}
c_2:=\sup_{y\in\R}\,(3+|y|)^2 \big|\widetilde{\Psi}(y,H(t_0))\big|<\infty;
\end{equation}
observe that the inequality in (\ref{eq11lem3:locmodcont}) results from (\ref{localpsitilde}).
Next, making in (\ref{eq8lem3:locmodcont}) the change of variable $u=x/\nu_j$, and using the triangle inequality, (\ref{eq10lem3:locmodcont}),
(\ref{eq11lem3:locmodcont}), (\ref{eq9lem3:locmodcont}), the last two inequalities in (\ref{eq:inegrl}), and the the first equality in (\ref{eq:ljm0}), it follows that,
\begin{align}
\label{eq12lem3:locmodcont}
S_j & \le C_1 (\o_0)\,2^{r_j(1+H(t_0))}\nu_j \int_{\R} |\nu_j+\nu_j u|^{H(t_0)}\Big(1+\big|\log|\nu_j+\nu_j u|\big|\Big)^{1/\al}\big|\widetilde{\Psi}(2^{r_j}\nu_j u,H(t_0))\big| du\nonumber\\
& = C_1 (\o_0)\, 2^{r_j(1+H(t_0))}\nu_{j}^{1+H(t_0)}\int_{\R} |1+u|^{H(t_0)}\Big(1+\big|\log (\nu_j)+\log|1+u|\big|\Big)^{1/\al}\big|\widetilde{\Psi}(2^{r_j}\nu_j u,H(t_0))\big| du
\nonumber\\
&\le C_3 (\o_0)\big( 2^{r_j}\nu_{j}\big)^{1+H(t_0)}\,\big|\log (\nu_j)\big|^{1/\al},\nonumber\\
&\le C_4(\o_0) \, j^{1/\al}
\end{align}
where, the constant,
$$
C_3 (\o_0):=c_2\big(\log(5/4)\big)^{-1/\al} C_1 (\o_0)\int_{\R} \frac{\big|2+|\log|1+u||\big|^{1/\al}}
{(3+|u|)^{2-H(t_0)}}du<\infty,
$$
and the constant $C_4(\o_0)=C_3 (\o_0)\big(2R+2\big)^{1+H(t_0)} m_{0}^{1/\al}$. Let us now give an appropriate upper bound for $Z_j$. Using (\ref{eq7lem3:locmodcont}), (\ref{eq11lem3:locmodcont}) and the triangle inequality, one obtains that,
\begin{equation}
\label{eq13lem3:locmodcont}
Z_j \le c_2 2^{-r_j (1-H(t_0))} \int_{|x|\ge s_*} \big|X\big(x+l_j2^{-r_j},H(t_0),\o_0\big)\big|\, x^{-2}dx+C_5(\o_0) 2^{-r_j (1-H(t_0))},
\end{equation}
where, the constant
$$
C_5(\o_0):=c_2 \big|X\big(t_0,H(t_0),\o_0\big)\big|\int_{|x|\ge s_*} x^{-2}dx<\infty.
$$
Next, observe that, (\ref{eq4lem3:locmodcont}) and (\ref{eq:inegrl}) imply that for all real number $x$ which satisfies $|x|\ge s_*$, and for each $j\in\N$,
one has, 
$$
1\le \big|x+l_j2^{-r_j}\big|\le |x|+|t_0|+1;
$$
thus, taking in (\ref{eq1:pasymp}), $q=0$, $a,b$ such that $H(t_0)\in [a,b]$, and $\eta$ an arbitrary fixed positive real number, it follows that,
\begin{equation}
\label{eq14lem3:locmodcont}
\big|X\big(x+l_j2^{-r_j},H(t_0),\o_0\big)\big|\le C_6(\o_0) \big (|x|+|t_0|+1)^{H(t_0)}
\Big (1+\log\big(|x|+|t_0|+1\big)\Big)^{1/\al+\eta},
\end{equation}
where the finite constant $C_6(\o_0)$ does not depend on $x$ and $j$. Next, combining (\ref{eq13lem3:locmodcont}) with (\ref{eq14lem3:locmodcont}),
one gets, that,
\begin{equation}
\label{eq15lem3:locmodcont}
Z_j \le C_7(\o_0)2^{-r_j (1-H(t_0))},
\end{equation}
where the finite constant
$$
C_7(\o_0):=C_5(\o_0)+c_2\int_{|x|\ge s_*}\big (|x|+|t_0|+1)^{H(t_0)}\Big (1+\log\big(|x|+|t_0|+1\big)\Big)^{1/\al+\eta}\, x^{-2}dx.
$$
Finally, putting together, (\ref{eq5lem3:locmodcont}), (\ref{eq12lem3:locmodcont}), (\ref{eq15lem3:locmodcont}) and the first equality in (\ref{eq:ljm0}), one obtains (\ref{eq2lem3:locmodcont})
\end{proof}

Now, we are in position to prove Proposition~\ref{prop:locmodcont} and Theorem~\ref{locoptim:modcont}.

\begin{proof}[Proof of Proposition~\ref{prop:locmodcont}.] The proposition is a straightforward consequence of Lemmas~\ref{lem2:locmodcont}~and~\ref{lem3:locmodcont}.
\end{proof}

\begin{proof}[Proof of Theorem~\ref{locoptim:modcont}.] Using (\ref{def:LMSMbis}) and the triangle inequality, one has, for all $t\in [-M,M]$,
$$
\big| X(t,H(t_0))-X(t_0,H(t_0))\big|\le \big|Y(t)-Y(t_0)\big|+\big|X(t,H(t))-X(t,H(t_0))\big|,
$$
and, as a consequence, 
\begin{align*}
& \sup_{t\in [-M,M]} \left\{\frac{\big| X(t,H(t_0))-X(t_0,H(t_0))\big|}{ |t-t_0|^{H(t_0)} \big(1+\big|\log |t-t_0|\big| \big)^{1/\alpha}} \right\}\\
& \hspace{0.5cm} \le \sup_{t\in [-M,M]} \left\{\frac{\big|Y(t)-Y(t_0)\big|}{ |t-t_0|^{H(t_0)} \big(1+\big|\log |t-t_0|\big| \big)^{1/\alpha}} \right\}
+\sup_{t\in [-M,M]} \left\{\frac{\big|X(t,H(t))-X(t,H(t_0))\big|}{ |t-t_0|^{H(t_0)} \big(1+\big|\log |t-t_0|\big| \big)^{1/\alpha}} \right\}.
\end{align*}
Thus, in view of (\ref{eq1:plocmodcont}), in order to show that (\ref{eq2:tlocmodcont}) holds, it is sufficient to prove that,
\begin{equation}
\label{eq3:tlocmodcont}
\sup_{t\in [-M,M]} \left\{\frac{\big|X(t,H(t))-X(t,H(t_0))\big|}{ |t-t_0|^{H(t_0)} \big(1+\big|\log |t-t_0|\big| \big)^{1/\alpha}} \right\}<\infty.
\end{equation}
Taking in (\ref{eq2:CTWSEbis}) $q=0$, $a=\underline{H}:=\inf_{x\in\R} H(x)$ and $b:=\overline{H}:=\sup_{x\in\R} H(x)$, one gets that,
\begin{equation}
\label{eq4:tlocmodcont}
\sup_{t\in [-M,M]} \left\{\frac{\big|X(t,H(t))-X(t,H(t_0))\big|}{|H(t)-H(t_0)|} \right\}<\infty.
\end{equation}
Finally, combining (\ref{eq1:tlocmodcont}) with (\ref{eq4:tlocmodcont}), it follows that (\ref{eq3:tlocmodcont}) holds.
\end{proof}

\section{Local H\"older exponent of LMSM}
\label{subsec:lheLMSM}
The goal of this section is to determine the local H\"older exponent of a typical path of LMSM. Let us first recall, in a general framework, the definition of this exponent.

Denote by $f$ an arbitrary deterministic real-valued continuous function defined on the real line. The critical global H\"older regularity of $f$, over an arbitrary nonempty compact interval $[M_1,M_2]$, can be measured through, 
\begin{equation}
\label{eq1:ppintro}
\rho_{f}^{\mbox{{\tiny unif}}}\big([M_1,M_2]\big):=\sup\left\{\rho\ge 0: \sup_{s',s''\in [M_1,M_2]}\frac{|f(s')-f(s'')|}{|s'-s''|^\rho} <\infty\right\},
\end{equation}
the uniform (or global) H\"older exponent of $f$ over $[M_1,M_2]$; observe that one has 
\begin{equation}
\label{eq1bis:ppintro}
\rho_{f}^{\mbox{{\tiny unif}}}\big([M'_1,M'_2]\big)\ge \rho_{f}^{\mbox{{\tiny unif}}}\big([M_1,M_2]\big),
\end{equation}
 when $[M'_1,M'_2]\subseteq [M_1,M_2]$. The local H\"older regularity of $f$ in a neighborhood of some point $t_0\in\R$, can be measured through,
\begin{equation}
\label{eq2:ppintro}
\rho_{f}^{\mbox{{\tiny unif}}}(t_0):=\sup\left\{\rho_{f}^{\mbox{{\tiny unif}}}\big([M_1,M_2]\big): M_1\in\R,\, M_2\in\R \mbox{ and } M_1<t_0<M_2\right\},
\end{equation}
the local H\"older exponent of $f$ at $t_0$; notice that the latter exponent is sometime called the uniform pointwise H\"older exponent of $f$ at $t_0$ (see \cite{stoev2005path}).

Let $t\mapsto Y(t,\o)$ be a continuous path of the LMSM $\{Y(t):t\in\R\}$. The uniform H\"older exponent of $t\mapsto Y(t,\o)$ over $[M_1,M_2]$, is denoted by $\rho_{Y}^{\mbox{{\tiny unif}}}\big([M_1,M_2],\o\big)$; the local H\"older exponent of $t\mapsto Y(t,\o)$ at $t_0$, is denoted by $\rho_{Y}^{\mbox{{\tiny unif}}}(t_0,\o)$.

Thanks to Part $(ii)$ of Corollary~\ref{modcont2Y} and thanks to Theorem~\ref{optim:modcont}, under some H\"older condition on $H(\cdot)$, one can, almost surely for all 
$t_0\in\R$, completely determine $\rho_{Y}^{\mbox{{\tiny unif}}}(t_0,\o)$, more precisely:
\begin{thm}
\label{theo:lheLMSM}
There is $\Omega_4 ^*$ an event of probability 1 (non depending on $t_0$), such that for all $\o\in\Omega_4 ^*$ and for each $t_0\in\R$ satisfying,
\begin{equation}
\label{eq1:tlheLMSM}
\rho_{H}^{\mbox{{\tiny unif}}}(t_0)>1/\al,
\end{equation}
one has,
\begin{equation}
\label{eq2:tlheLMSM}
\rho_{Y}^{\mbox{{\tiny unif}}}(t_0,\o)=H(t_0)-1/\al. 
\end{equation}
\end{thm}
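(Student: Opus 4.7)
The plan is to establish the two inequalities $\rho_Y^{\mbox{{\tiny unif}}}(t_0,\omega) \ge H(t_0)-1/\alpha$ and $\rho_Y^{\mbox{{\tiny unif}}}(t_0,\omega) \le H(t_0)-1/\alpha$ separately, using respectively Corollary~\ref{modcont2Y}(ii) and Theorem~\ref{optim:modcont}. The core observation is that, by the continuity of $H$ at $t_0$ and the characterization (\ref{eq2:ppintro}), both bounds reduce to a shrinking-interval statement of the form $\rho_Y^{\mbox{{\tiny unif}}}([M_1,M_2],\omega) = \min_{[M_1,M_2]} H - 1/\alpha$ on sufficiently small rational intervals $[M_1,M_2]\ni t_0$ carrying a suitable Hölder control on $H$. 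The hypothesis $\rho_H^{\mbox{{\tiny unif}}}(t_0)>1/\alpha$ supplies a neighborhood of $t_0$ on which $H\in\ce^{\gamma_*}([M_1,M_2],\R)$ for some $\gamma_*\in(1/\alpha,1]$, a fact that will be exploited throughout.

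For the lower bound, fix $\omega\in\Omega_0^*$ and $\eps>0$. By continuity of $H$ and the preceding neighborhood statement, choose a rational interval $[M_1,M_2]\ni t_0$ small enough so that $\min_{[M_1,M_2]} H \ge H(t_0)-\eps$ and $H\in\ce^{\gamma_*}([M_1,M_2],\R)$ for some $\gamma_*>1/\alpha$. Since $\alpha\in(1,2)$ forces $\min_{[M_1,M_2]} H < 1 < 2/\alpha$, one has $\gamma_*>1/\alpha\ge\min_{[M_1,M_2]} H - 1/\alpha$, so Remark~\ref{r1modcont2Y}(ii) verifies the hypothesis of Corollary~\ref{modcont2Y}(ii). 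That corollary then furnishes the estimate (\ref{eq3:modcont2Y}), from which $\rho_Y^{\mbox{{\tiny unif}}}([M_1,M_2],\omega) \ge \min_{[M_1,M_2]} H - 1/\alpha \ge H(t_0)-1/\alpha-\eps$ follows, because for any strictly smaller Hölder exponent the logarithmic factor in (\ref{eq3:modcont2Y}) is absorbed by a positive power of $|t-s|$. Letting $\eps\downarrow 0$ and applying (\ref{eq2:ppintro}) yields the lower bound on the event $\Omega_0^*$.

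For the upper bound, I would define $\Omega_4^*$ to be the intersection of $\Omega_0^*$ with the countable family of probability-one events delivered by Theorem~\ref{optim:modcont}, one per rational pair $(M_1,M_2)$ with $M_1<M_2$ for which $H_{|[M_1,M_2]}\in\ce^{\gamma}([M_1,M_2],\R)$ for some rational $\gamma\in(1/\alpha,1]$. For each such pair, conditions $(\mathcal{A})$ and (\ref{eq:inegalrho}) are automatic: $(\mathcal{A})$ just as in the lower-bound paragraph, and $\alpha\rho\ge\alpha\gamma>1$ because uniform $\gamma$-Hölder regularity of $H$ on $[M_1,M_2]$ forces, at the minimizer of $H$ on that compact interval, a pointwise Hölder exponent at least $\gamma$. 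Then, for $\omega\in\Omega_4^*$ and any $t_0$ satisfying (\ref{eq1:tlheLMSM}), I shrink $t_0$ through rational intervals of the above type: a short contradiction argument turns the divergence (\ref{eq1:optim}) into $\rho_Y^{\mbox{{\tiny unif}}}([M_1,M_2],\omega) \le \min_{[M_1,M_2]} H - 1/\alpha$, for otherwise any exponent $\rho$ with $\min_{[M_1,M_2]} H - 1/\alpha<\rho<\rho_Y^{\mbox{{\tiny unif}}}([M_1,M_2],\omega)$ would bound the ratio in (\ref{eq1:optim}) by a constant multiple of $|t-s|^{\delta}(1+|\log|t-s||)^{\tau+\eta}$ with $\delta>0$, which stays finite on $[M_1,M_2]^2$. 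Since $\min_{[M_1,M_2]} H \le H(t_0)$ and $\min_{[M_1,M_2]} H \to H(t_0)$ as the interval shrinks, (\ref{eq2:ppintro}) yields $\rho_Y^{\mbox{{\tiny unif}}}(t_0,\omega) \le H(t_0)-1/\alpha$.

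The principal difficulty is engineering the single exceptional null set $\Omega_4^*$ that must work for every admissible $t_0$ simultaneously, since the exceptional event produced by Theorem~\ref{optim:modcont} a priori depends on the interval $[M_1,M_2]$. This is handled as above by restricting to a countable cofinal family of rational intervals and invoking the monotonicity (\ref{eq1bis:ppintro}), which guarantees that the supremum defining $\rho_Y^{\mbox{{\tiny unif}}}(t_0)$ is unchanged under this restriction. Verifying the hypotheses of Theorem~\ref{optim:modcont} on all sufficiently small rational intervals around any $t_0$ with $\rho_H^{\mbox{{\tiny unif}}}(t_0)>1/\alpha$ is then a routine unwinding of (\ref{eq1:ppintro})--(\ref{eq2:ppintro}).
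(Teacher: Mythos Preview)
Your proposal is correct and follows essentially the same strategy as the paper: reduce to the interval statement $\rho_Y^{\mbox{{\tiny unif}}}([M_1,M_2],\omega)=\min_{[M_1,M_2]}H-1/\alpha$ via Corollary~\ref{modcont2Y}(ii) and Theorem~\ref{optim:modcont} (with Remark~\ref{rem:tau} supplying $(\mathcal A)$ and $\alpha\rho>1$), intersect over the countable family of rational intervals with $\rho_H^{\mbox{{\tiny unif}}}([M_1,M_2])>1/\alpha$ to obtain a single $\Omega_4^*$, and then pass to the local exponent using (\ref{eq1bis:ppintro}), (\ref{eq2:ppintro}) and the continuity of $H$. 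The only cosmetic difference is that the paper packages both inequalities into the single interval identity before taking the supremum, whereas you handle the lower and upper bounds separately.
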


Notice that the latter theorem is a more precise result than Theorem~4.1 in \cite{stoev2005path}.

\begin{proof}[Proof of Theorem~\ref{theo:lheLMSM}.] The theorem does not make sense if there is no $t_0\in\R$ which satisfies (\ref{eq1:tlheLMSM}), so in all
the sequel, we assume that (\ref{eq1:tlheLMSM}) is satisfied for some $t_0\in\R$. In view of (\ref{eq1bis:ppintro}) and (\ref{eq2:ppintro}), this assumption implies 
that the set,
$$
\Lambda:=\left\{(\mu_1,\mu_2)\in\Q^2 : \mu_1<\mu_2 \mbox{ and } \rho_{H}^{\mbox{{\tiny unif}}}\big([\mu_1,\mu_2]\big)>1/\al\right\},
$$
is nonempty. Next, observe that, (\ref{eq1:ppintro}), Part $(ii)$ of Corollary~\ref{modcont2Y}, Theorem~\ref{optim:modcont} and Remark~\ref{rem:tau}, implies that, for all $(\mu_1,\mu_2)\in \Lambda$, one has, almost surely, 
\begin{equation}
\label{eq3bis:tlheLMSM}
\rho_{Y}^{\mbox{{\tiny unif}}}\big([\mu_1,\mu_2]\big)=\min_{x\in [\mu_1,\mu_2]} H(x)-1/\al;
\end{equation}
moreover, the fact that $\Lambda$ is a countable set, entails that (\ref{eq3bis:tlheLMSM}) even holds on $\Omega_4 ^*$, an event of probability 1 which does not depend
on $(\mu_1,\mu_2)$. Also, observe that, for each $t_0\in \R$ which satisfies (\ref{eq1:tlheLMSM}), one has, for all $\o\in\Omega_{4}^*$,
\begin{equation}
\label{eq3ter:tlheLMSM}
\rho_{Y}^{\mbox{{\tiny unif}}}(t_0,\o)=\sup\left\{\rho_{Y}^{\mbox{{\tiny unif}}}\big([\mu_1,\mu_2]\big): (\mu_1,\mu_2)\in\Lambda \mbox{ and } \mu_1 < t_0 <\mu_2\right\};
\end{equation}
the latter equality can be obtained by using (\ref{eq1bis:ppintro}), (\ref{eq2:ppintro}) and the fact that the set of the rational numbers is dense in the set of the 
real numbers. Finally, since $H(\cdot)$ is a continuous function, combining (\ref{eq3bis:tlheLMSM}) with (\ref{eq3ter:tlheLMSM}), one gets (\ref{eq2:tlheLMSM}).
\end{proof}

\section{Appendix}
\label{sec:appendix}


The following lemma is a standard result.

\begin{lemma}\label{LA4}
For all fixed positive real number $\lambda$, there exists a finite constant $c$ which only depends on $\lambda$, such that for each nonnegative real numbers $x$ and $y$, one has,
\begin{equation}
(x+y)^{\lambda} \leq c (x^{\lambda}+y^{\lambda}), \nonumber
\end{equation}
with the convention that $0^\lambda=0$.
\end{lemma}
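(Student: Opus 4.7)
The plan is to handle the two cases $\lambda\ge 1$ and $0<\lambda<1$ separately, since the constant $c$ behaves differently in each regime. Throughout I may assume $(x,y)\ne (0,0)$, as the statement is trivial otherwise, and by homogeneity I can normalize so that $x+y=1$; then the claim reduces to showing that $1\le c\,(x^\lambda+(1-x)^\lambda)$ for all $x\in[0,1]$.

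For $\lambda\ge 1$, the function $t\mapsto t^\lambda$ is convex on $[0,+\infty)$, so Jensen's inequality applied to the two-point average $\frac{x+y}{2}$ yields
\[
\Bigl(\frac{x+y}{2}\Bigr)^{\lambda}\le \frac{x^\lambda+y^\lambda}{2},
\]
which rearranges to $(x+y)^\lambda\le 2^{\lambda-1}(x^\lambda+y^\lambda)$. Thus $c=2^{\lambda-1}$ is admissible.

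For $0<\lambda<1$, using the normalization $x+y=1$ with $x\in[0,1]$, I would study the function $\varphi(x):=x^\lambda+(1-x)^\lambda$ on $[0,1]$. One has $\varphi(0)=\varphi(1)=1$, and $\varphi'(x)=\lambda\bigl(x^{\lambda-1}-(1-x)^{\lambda-1}\bigr)$, which shows that $\varphi$ attains its minimum on the endpoints; hence $\varphi(x)\ge 1$ for every $x\in[0,1]$, i.e.\ $(x+y)^\lambda\le x^\lambda+y^\lambda$ whenever $x,y\ge 0$. In this regime $c=1$ is admissible.

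Combining the two cases, the constant $c:=\max(1,2^{\lambda-1})$, which depends only on $\lambda$, satisfies the desired inequality for all nonnegative $x,y$. There is no serious obstacle here; the only minor point worth noting is the use of the convention $0^\lambda=0$ to cover the degenerate boundary cases, which is already granted in the statement.
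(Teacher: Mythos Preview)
Your proof is correct and complete. The paper itself does not supply a proof of this lemma, merely stating it as ``a standard result,'' so there is nothing to compare against; your argument via convexity for $\lambda\ge 1$ and subadditivity of $t\mapsto t^\lambda$ for $0<\lambda<1$ is exactly the kind of elementary verification one would expect.
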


The following technical lemma plays a crucial role in the proof of Part $(iii)$ of Proposition~\ref{prop:cauchy} as well as in those of other important results in our article.

\begin{lemma}\label{LA5}
Let $(p,q)\in\{0,1,2\}\times\Z_+$ be arbitrary and fixed. We set $\phi:=\partial_{x}^p \partial_{v}^q \Psi$, where $\Psi$ is the function introduced in (\ref{PSI}). Let $M$, $\nu$, $a$, $b$ and $\kappa$ be arbitrary and fixed real numbers satisfying $M>0$,  $1>b>a>1/\al$, $a-1/\al>\kappa$ and $a-1/\al-\kappa>\nu\ge 0$. At last, let $i$ be an arbitrary and fixed nonnegative integer.
For all $n\in\Z_+$ and $(t,s,v)\in \R^2\times (1/\al,1)$ we set,
\begin{align}\label{SAN}
& A_n(t,s,v;M,\kappa,\nu,i,\phi)\nonumber \\
&:=  \sum_{|j| \leq n} \sum_{|k| > M2^{n+1}} 2^{-jv} \frac{ \big| \phi(2^jt-k,v) - \phi(2^js-k,v)\big| }{|t-s|^{\kappa}}(3+|j|)^{i+1/\alpha+\nu} (3+|k|)^{1/\alpha+\nu}
\end{align}
and 
\begin{align}\label{SBN}
& B_n(t,s,v;M,\kappa,\nu,i,\phi)\nonumber \\
&:= \sum_{|j| \geq n+1} \sum_{k\in \Z} 2^{-jv} \frac{ \big|\phi(2^jt-k,v) - \phi(2^js-k,v)\big| }{|t-s|^{\kappa}}(3+|j|)^{i+1/\alpha+\nu} (3+|k|)^{1/\alpha+\nu}, 
\end{align}
with the convention that $A_n(t,t,v;M,\kappa,\nu,i,\phi)=B_n(t,t,v;M,\kappa,\nu,i,\phi)=0$ for any $t\in\R$. Then, when $n$ goes to $+\infty$, $A_n(t,s,v;M,\kappa,\nu,i,\phi)$ and $B_n(t,s,v;M,\kappa,\nu,i,\phi)$ converge to $0$, uniformly in $(t,s,v) \in [-M,M]^2 \times [a,b]$.
\end{lemma}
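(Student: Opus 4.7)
The plan is to combine two pointwise bounds on the increment $|\phi(2^jt-k,v)-\phi(2^js-k,v)|$, interpolate between them through a threshold $j_\star\simeq\log_2|t-s|^{-1}$, and thereby reduce both $A_n$ and $B_n$ to geometric series in $j$ and $k$ whose summability and explicit rates of vanishing are \emph{uniform} in $v\in[a,b]$; the strict hypotheses $\kappa<a-1/\al$ and $\nu<a-1/\al-\kappa$ are used at precisely the points where a $v$-exponent must have a definite sign.

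The two ingredients come from Proposition~\ref{regulpsi}~(ii): since $p\in\{0,1,2\}$, both $\phi=\partial_x^p\partial_v^q\Psi$ and $\partial_x\phi=\partial_x^{p+1}\partial_v^q\Psi$ are uniformly well-localized in the first variable on $\R\times[a,b]$. This gives the trivial bound $|\phi(2^jt-k,v)-\phi(2^js-k,v)|\le C\bigl((3+|2^jt-k|)^{-2}+(3+|2^js-k|)^{-2}\bigr)$ and, through the mean value theorem in $x$ plus the triangle inequality (used when $2^j|t-s|\le 1$ to control the intermediate point), the MVT bound $|\phi(2^jt-k,v)-\phi(2^js-k,v)|\le C\cdot 2^j|t-s|\,(3+|2^jt-k|)^{-2}$. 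Taking the minimum yields the key pointwise inequality
\[
|\phi(2^jt-k,v)-\phi(2^js-k,v)|\le C\min\bigl(1,2^j|t-s|\bigr)\bigl((3+|2^jt-k|)^{-2}+(3+|2^js-k|)^{-2}\bigr).
\]

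For $A_n$, the constraints $|t|,|s|\le M$, $|j|\le n$, $|k|>M2^{n+1}$ yield $(3+|2^jt-k|)\ge c(3+|k|)$ and similarly for $s$, so the $k$-factor decouples and contributes the tail $\sum_{|k|>M2^{n+1}}(3+|k|)^{1/\al+\nu-2}=O\bigl(2^{n(1/\al+\nu-1)}\bigr)\to 0$, using $\nu<1-1/\al$. The residual $j$-sum carries $|t-s|^{-\kappa}\,2^{-jv}\min(1,2^j|t-s|)$; splitting at $j_\star$, for $j\le j_\star$ one uses $\min\le 2^j|t-s|$ (absorbing $|t-s|^{-\kappa}$ into a bounded $|t-s|^{1-\kappa}$), and for $j>j_\star$ one uses $\min\le 1$ together with $|t-s|^{-\kappa}<2^{j\kappa}$; the two resulting geometric series have $v$-exponents $1-v>0$ and $\kappa-v<0$ respectively, both uniformly signed on $[a,b]$, hence both $O(1)$. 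Multiplying by the vanishing $k$-tail yields the claim for $A_n$.

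For $B_n$ the $k$-sum cannot be extracted so cheaply; I invoke the standard weighted estimate (Lemma~\ref{LA3} in the Appendix)
\[
\sum_{k\in\Z}(3+|k|)^{1/\al+\nu}(3+|u-k|)^{-2}\le C(1+|u|)^{1/\al+\nu},
\]
valid because $1/\al+\nu<1$. Split $|j|\ge n+1$ into $j\le-(n+1)$ and $j\ge n+1$. In the first regime, for $n$ large, both $2^j|t-s|\le 1$ and $2^j|t|\le 1$, so only the MVT bound is needed and the $j$-series contributes $C\sum_{j\le-(n+1)}2^{j(1-v)}(3+|j|)^{i+1/\al+\nu}=O\bigl(2^{-(n+1)(1-b)}\bigr)\to 0$. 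In the second regime, another split at $j_\star$ is required: the MVT subcase produces a positive $v$-exponent $1-v+1/\al+\nu$ (geometric sum controlled by its top term at $j_\star$), and the trivial subcase, via $|t-s|^{-\kappa}<2^{j\kappa}$, produces the exponent $\kappa-v+1/\al+\nu$, which is uniformly \emph{negative} on $[a,b]$ precisely by $\nu<a-1/\al-\kappa$ (geometric sum controlled by its bottom term at $\max(n+1,j_\star)$). Both subcases give a bound of the form $C\cdot 2^{-(n+1)(a-\kappa-1/\al-\nu)}$ up to a polynomial factor in $n$ and $\log|t-s|^{-1}$, which vanishes uniformly in $(t,s,v)\in[-M,M]^2\times[a,b]$.

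The main obstacle is the bookkeeping required to thread the regimes determined by the two thresholds ($|j|$ against $n$ and $2^j|t-s|$ against $1$) while keeping every constant uniform in $v\in[a,b]$. The two strict hypotheses on $\kappa$ and $\nu$ are used at exactly the places where an exponent needs a uniform sign, and the strict positivity of $a-\kappa-1/\al-\nu$ delivers the rate of convergence.
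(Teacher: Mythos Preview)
Your proof plan is correct and follows essentially the same strategy as the paper: the same two pointwise bounds from Proposition~\ref{regulpsi}, the same threshold $j_\star$ (the paper's $j_0$), the decoupling of the $k$-tail for $A_n$, and the appeal to Lemma~\ref{LA3} together with the four-way case split $\{j\le -(n+1)\}\cup\{j\ge n+1\}$ against $\{j\le j_\star\}\cup\{j>j_\star\}$ for $B_n$. The only point that needs to be spelled out more carefully is your phrase ``up to a polynomial factor in $n$ and $\log|t-s|^{-1}$'': in the subcase $n+1\le j\le j_\star$ the bound is $2^{-j_\star\delta}(3+j_\star)^{i+1/\alpha+\nu}$ with $\delta=a-\kappa-1/\alpha-\nu>0$, and one must invoke the monotonicity of $x\mapsto 2^{-x\delta}(3+x)^{i+1/\alpha+\nu}$ for large $x$ (together with $j_\star\ge n+1$ in that subcase) to replace $j_\star$ by $n$ and obtain a bound that is genuinely uniform in $(t,s)$---this is exactly what the paper does in (\ref{m11SBN}) and (\ref{m12SBN}).
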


In order to prove Lemma~\ref{LA5}, we need some preliminary results.

\begin{lemma} \label{LA1}
For all fixed real numbers $\xi >0$ and $M>0$, there exists a constant $c>0$ such that 
for each integer $n\ge 0$,  
\begin{equation}
\sum_{k>M2^{n+1}} (1+k)^{-1-\xi} \leq c 2^{-n\xi}. \nonumber
\end{equation}
\end{lemma}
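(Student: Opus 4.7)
The plan is to bound the tail sum by a tail integral and then estimate the integral explicitly. Since the function $x \mapsto (1+x)^{-1-\xi}$ is positive and strictly decreasing on $[0,+\infty)$, the standard comparison between a monotone series and the corresponding integral gives
\begin{equation*}
\sum_{k>M2^{n+1}} (1+k)^{-1-\xi} \le \int_{M2^{n+1}}^{+\infty} (1+x)^{-1-\xi}\,dx = \frac{1}{\xi}\,\bigl(1+M2^{n+1}\bigr)^{-\xi}.
\end{equation*}
One small technicality is that the sum is over integers $k$ strictly greater than $M2^{n+1}$ (which need not be an integer), so I would write the sum as $\sum_{k \ge \lceil M2^{n+1}\rceil + 1}$ (or $\lfloor M 2^{n+1}\rfloor + 1$, depending on whether $M 2^{n+1}$ is an integer) and dominate each term $(1+k)^{-1-\xi}$ by $\int_{k-1}^{k}(1+x)^{-1-\xi}\,dx$; summing telescopes to the integral over $[\lceil M2^{n+1}\rceil,+\infty)$, which is at most the integral over $[M2^{n+1},+\infty)$.

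Next, I would estimate the right-hand side from above using $1+M2^{n+1}\ge M 2^{n+1} = 2M\cdot 2^n$, which gives
\begin{equation*}
\bigl(1+M2^{n+1}\bigr)^{-\xi}\le (2M)^{-\xi}\,2^{-n\xi}.
\end{equation*}
Combining both inequalities yields the desired bound with the constant $c := (2M)^{-\xi}/\xi$, which depends only on $M$ and $\xi$, as required.

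There is essentially no obstacle here: the lemma is a routine tail estimate for a convergent $p$-series, and the only point to be careful about is the nonintegrality of the threshold $M 2^{n+1}$, which is handled by the standard integral comparison for monotone decreasing functions.
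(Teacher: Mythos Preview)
Your proof is correct and follows essentially the same approach as the paper: both bound each term $(1+k)^{-1-\xi}$ by $\int_{k-1}^{k}(1+x)^{-1-\xi}\,dx$ and then evaluate the resulting tail integral explicitly. The only cosmetic difference is that the paper takes the lower limit of integration to be $M2^{n+1}-1$ (yielding the constant $\xi^{-1}M^{-\xi}2^{-\xi}$), while you take it to be $M2^{n+1}$ and obtain $\xi^{-1}(2M)^{-\xi}$.
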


\begin{proof}[Proof of Lemma \ref{LA1}] Clearly, one has for all integer $k\ge 1$, $(1+k)^{-1-\xi}\le \int_{k-1}^k (1+x)^{-1-\xi}dx$. Therefore,
$$
\sum_{k>M2^{n+1}} (1+k)^{-1-\xi}\le \int_{M2^{n+1}-1}^{+\infty}(1+x)^{-1-\xi}dx = \xi^{-1} M^{-\xi} 2^{-(n+1)\xi}.
$$
\end{proof}

\begin{lemma}\label{LA2}
Let $\lambda \in \R$ and $\theta_0 >0$ be fixed. Set $c:=\sum_{m=0}^{+\infty} 2^{-m\theta_0}(1+m)^{|\lambda|} < +\infty.$ Then for all real number $\theta$ such that $|\theta| \geq \theta_0$ and each $n_0,n_1 \in \lbrace 0,\pm 1, \ldots, \pm \infty \rbrace$ satisfying $n_0<n_1$, one has,
\begin{equation}
\label{eq1:LA2-ant}
\sum_{n=n_0}^{n_1} 2^{n\theta} (1+|n|)^{\lambda} \leq c
	\begin{cases}
        2^{n_0 \theta} (1+|n_0|)^{\lambda} & \mbox{if } \theta < 0 \\
        2^{n_1 \theta} (1+|n_1|)^{\lambda} & \mbox{if } \theta >0,
    \end{cases}
\end{equation}
with the convention that $2^{-\infty} (1+\infty)^{\lambda}=0$ and $2^{+\infty} (1+\infty)^{\lambda}=+\infty$
\end{lemma}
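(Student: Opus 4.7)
My plan is to reduce both cases ($\theta<0$ and $\theta>0$) to a single one-sided geometric-type estimate by factoring out the dominant boundary term of the sum. For $\theta \ge \theta_0 > 0$, I would substitute $m = n_1 - n$ (so that $m$ runs over $\{0,1,\dots,n_1 - n_0\}$) to rewrite
$$\sum_{n=n_0}^{n_1} 2^{n\theta}(1+|n|)^\lambda \;=\; 2^{n_1\theta}\sum_{m=0}^{n_1-n_0} 2^{-m\theta}(1+|n_1-m|)^\lambda;$$
symmetrically, for $\theta \le -\theta_0 < 0$, I would substitute $m = n - n_0$ to factor out $2^{n_0\theta}(1+|n_0|)^\lambda$. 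In both cases $m\ge 0$ and $|\theta|\ge \theta_0$, so $2^{-m|\theta|}\le 2^{-m\theta_0}$, which supplies exactly the geometric decay appearing in the definition of~$c$.

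The key ingredient is the elementary polynomial estimate
$$(1+|N-m|)^\lambda \le (1+|N|)^\lambda (1+m)^{|\lambda|}\qquad (m\ge 0,\ N\in\Z,\ \lambda\in\R),$$
which I would establish by splitting on the sign of $\lambda$: for $\lambda\ge 0$ the triangle inequality gives $1+|N-m|\le 1+|N|+m\le (1+|N|)(1+m)$, and raising to the power $\lambda$ concludes; for $\lambda<0$ the triangle inequality in the opposite direction gives $1+|N|\le 1+|N-m|+m\le (1+|N-m|)(1+m)$, which rearranges to the claim after raising to the power $|\lambda|=-\lambda$. Plugging this into the reindexed sum and using the identity $c=\sum_{m\ge 0} 2^{-m\theta_0}(1+m)^{|\lambda|}$ settles both core cases.

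For the extreme cases where $n_0=-\infty$ or $n_1=+\infty$, the reindexing still works, with the range of $m$ extending up to $+\infty$; when the relevant endpoint on the right-hand side of (\ref{eq1:LA2-ant}) is itself $+\infty$ the inequality becomes trivial under the stated conventions, and in every other configuration the argument above applies verbatim. I do not anticipate any genuine obstacle: the only step requiring care is the two-case verification of the polynomial inequality, which is entirely elementary.
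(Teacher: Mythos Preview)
Your proposal is correct and follows essentially the same approach as the paper: both reindex by $m=n-n_0$ (or $m=n_1-n$), factor out the boundary term, and reduce to the two-sided bound $(1+m)^{-1}\le (1+|n_0+m|)/(1+|n_0|)\le 1+m$. Your derivation of this bound via the triangle inequality in both directions together with $1+a+b\le(1+a)(1+b)$ is slightly slicker than the paper's three-case argument on the sign of $n_0$, but the overall structure is identical.
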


\begin{proof}[Proof of Lemma \ref{LA2}]
First, notice that the lemma clearly holds in the following three cases:
\begin{itemize}
\item $n_0=-\infty$ and $n_1=+\infty$;
\item $n_0=-\infty$ and $\theta<0$;
\item $n_1=+\infty$ and $\theta>0$.
\end{itemize}
Indeed, in the latter three cases, (\ref{eq1:LA2-ant}) becomes $+\infty\le +\infty$.
 
Let us study the case where $\theta < 0$ and $-\infty < n_0 < n_1 \leq +\infty$, the case where $\theta > 0$ and $-\infty \le n_0 < n_1 < +\infty$ can be treated similarly. One has 
\begin{align*}
& \sum_{n=n_0}^{n_1} 2^{n\theta} (1+|n|)^{\lambda} \leq  \sum_{m=0}^{+\infty} 2^{(m+n_0)\theta} (1+|m+n_0|)^{\lambda} =  2^{n_0\theta} (1+|n_0|)^{\lambda} \sum_{m=0}^{+\infty} 2^{m\theta} \left( \frac{1+|m+n_0|}{1+|n_0|} \right)^{\lambda} \\
& \leq  2^{n_0\theta} (1+|n_0|)^{\lambda} \sum_{m=0}^{+\infty} 2^{-m\theta_0} \left( \frac{1+|m+n_0|}{1+ |n_0|} \right)^{\lambda}.
\end{align*}
Thus, it remains to show that 
\begin{equation}
\label{eq2:LA2-ant}
\sum_{m=0}^{+\infty} 2^{-m\theta_0} \left( \frac{1+|m+n_0|}{1+ |n_0|} \right)^{\lambda}\le c:=\sum_{m=0}^{+\infty} 2^{-m\theta_0}(1+m)^{|\lambda|}.
\end{equation}
In fact, (\ref{eq2:LA2-ant}) can be obtained by proving the following: for every integer $m\ge 0$, one has,
\begin{equation}
\label{eq3:LA2-ant}
\frac{1}{1+m} \leq \frac{1+|m+n_0|}{1+|n_0|} \leq 1+ m. 
\end{equation}
Clearly the second inequality in (\ref{eq3:LA2-ant}) is satisfied. To prove that the first one holds, we argue by cases:
\begin{itemize}
\item if $n_0 \geq 0$, one gets $\frac{1+|m+n_0|}{1+|n_0|}=\frac{1+m+n_0}{1+n_0}=1+\frac{m}{1+n_0}\geq 1 \geq \frac{1}{1+m}$;
\item if $n_0<0$ and $m\geq -n_0=|n_0|$, then $\frac{1+|m+n_0|}{1+|n_0|} \ge \frac{1}{1+|n_0|}\geq \frac{1}{1+m}$;
\item if  $n_0<0$ and $m<-n_0=|n_0|$, then $$\frac{1+|m+n_0|}{1+|n_0|}=\frac{1-m+|n_0|}{1+|n_0|}=1-\frac{m}{1+|n_0|} \geq 1-\frac{m}{1+m} = \frac{1}{1+m}.$$
\end{itemize}
\end{proof}

The following lemma is a more or less classical result, we refer for instance to \cite{ayache2009linear} for its proof.
\begin{lemma}\label{LA3} \cite{ayache2009linear}
For all fixed real numbers $\theta \in [0,1)$ and $\zeta \geq 0,$ there exists
a constant $c>0$ such that for any $u\in\R$, one has, 
\begin{equation}
\sum_{k\in\Z} \frac{(1+|k|)^{\theta} \log^{\zeta}(2+|k|)}{(2+|u-k|)^2 } \leq c(1+|u|)^{\theta} \log^{\zeta}(2+|u|). \nonumber
\end{equation}
\end{lemma}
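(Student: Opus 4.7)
The statement is a standard weighted-sum estimate, so the plan is a direct splitting argument on the index set $\Z$, with the split point governed by the size of $|u|$. The key dichotomy is: terms with $|k|$ comparable to $|u|$ contribute the main term on the right-hand side, while terms with $|k|$ far from $|u|$ contribute only a tail that is absorbed because $\theta<1$.

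First, I would dispose of the case $|u|\le 2$. Here the right-hand side is bounded below by a positive constant depending only on $\theta,\zeta$, and in the left-hand side the terms with $|k|\le 4$ are bounded by a fixed constant while the terms with $|k|>4$ satisfy $2+|u-k|\ge 1+|k|/2$, so each such term is $\le C(1+|k|)^{\theta-2}\log^{\zeta}(2+|k|)$, which is summable because $\theta-2<-1$. This yields the inequality in this regime.

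Second, assume $|u|>2$, and split
\begin{equation*}
\sum_{k\in\Z}\frac{(1+|k|)^{\theta}\log^{\zeta}(2+|k|)}{(2+|u-k|)^2}=S_{\mathrm{near}}(u)+S_{\mathrm{far}}(u),
\end{equation*}
where $S_{\mathrm{near}}$ is the sum over $\{k\in\Z:|k|\le 2|u|\}$ and $S_{\mathrm{far}}$ is the sum over $\{k\in\Z:|k|>2|u|\}$. For $S_{\mathrm{near}}$ I would use the monotonicity of $x\mapsto x^{\theta}\log^{\zeta}(2+x)$ (for $x$ large enough, which is the case once $|u|>2$) to bound $(1+|k|)^{\theta}\log^{\zeta}(2+|k|)\le C(1+|u|)^{\theta}\log^{\zeta}(2+|u|)$ uniformly on the near region, and then use the elementary inequality $\sum_{k\in\Z}(2+|u-k|)^{-2}\le C'$ with $C'$ independent of $u$, obtained by comparison with $\int_{\R}(1+|x|)^{-2}\,dx$. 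This gives $S_{\mathrm{near}}(u)\le CC'(1+|u|)^{\theta}\log^{\zeta}(2+|u|)$.

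Third, for $S_{\mathrm{far}}$ the point is that on $\{|k|>2|u|\}$ one has $|u-k|\ge |k|-|u|\ge |k|/2$, hence $2+|u-k|\ge 1+|k|/2$. Therefore
\begin{equation*}
S_{\mathrm{far}}(u)\le C\sum_{|k|>2|u|}(1+|k|)^{\theta-2}\log^{\zeta}(2+|k|).
\end{equation*}
Since $\theta-2<-1$, an integral comparison gives $S_{\mathrm{far}}(u)\le C''(1+|u|)^{\theta-1}\log^{\zeta}(2+|u|)$, which is dominated by $(1+|u|)^{\theta}\log^{\zeta}(2+|u|)$. Adding the two bounds yields the claim.

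The argument is entirely routine; there is no real obstacle beyond bookkeeping the constants uniformly in $u$ and handling both signs of $k$, which is why the paper merely cites it to \cite{ayache2009linear}. The only point requiring a line of care is that the monotonicity used for $S_{\mathrm{near}}$ applies to $x\mapsto x^{\theta}\log^{\zeta}(2+x)$ only past a threshold independent of $u$; this is absorbed by enlarging the constant $C$ to cover the finitely many indices where it might fail.
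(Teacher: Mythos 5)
Your argument is correct. Note that the paper itself supplies no proof of Lemma~\ref{LA3} --- it is simply quoted from \cite{ayache2009linear} --- so there is nothing in-paper to compare against; your near/far splitting at $|k|=2|u|$, together with the separate treatment of $|u|\le 2$, is the standard route and every estimate checks out (in particular $\theta-2<-1$ makes the far tail summable, and $\sum_{k\in\Z}(2+|u-k|)^{-2}$ is bounded uniformly in $u$). Two cosmetic remarks: the map $x\mapsto (1+x)^{\theta}\log^{\zeta}(2+x)$ is nondecreasing on all of $[0,+\infty)$ when $\theta,\zeta\ge 0$, so the threshold caveat in your last paragraph is unnecessary; and the far sum is in fact bounded by an absolute constant (the full series $\sum_{k}(1+|k|)^{\theta-2}\log^{\zeta}(2+|k|)$ converges), which is already dominated by the right-hand side since the latter is bounded below by $\log^{\zeta}2>0$, so the integral comparison giving the sharper $(1+|u|)^{\theta-1}$ decay, while correct, is not needed.
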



Now, we are in position to prove Lemma~\ref{LA5}.

\begin{proof}[Proof of Lemma \ref{LA5}]
Let $t,s \in [-M,M]$ be arbitrary and fixed; there is no restriction to assume that $s\ne t$. We denote by $j_0 > -\log_2(2M)-1$  the unique integer such that 
\begin{equation}\label{defj0}
2^{-j_0-1} < |t-s| \leq 2^{-j_0}.
\end{equation}
From now on, for the sake of simplicity we set: 
$$
A_n(t,s,v):=A_n(t,s,v;M,\kappa,\nu,i,\phi) \mbox{ and } B_n(t,s,v):=B_n(t,s,v;M,\kappa,\nu,i,\phi).
$$
Let us first prove that, when $n\rightarrow+\infty$, $A_n(t,s,v)$ converges to $0$, uniformly in $(t,s,v) \in [-M,M]^2 \times [a,b]$. So, in the sequel, we assume that $j$ is an arbitrary integer satisfying $|j| \leq n$. We need to derive suitable upper bounds for the quantity 
\begin{equation}\label{SANk}
A_n^{j}(t,s,v) := \sum_{ |k| > M2^{n+1}}  \frac{ \big|\phi(2^jt-k,v) - \phi(2^js-k,v)\big| }{ |t-s|^{\kappa}}(3+|k|)^{1/\alpha+\nu}.
\end{equation}
For this purpose, we consider two cases $j \leq j_0$ and $j\geq j_0+1$ separately. First, we suppose that
\begin{equation}\label{jj0}
j \leq j_0.
\end{equation}
Using the Mean Value Theorem, (\ref{localisation}), (\ref{defj0}) and (\ref{jj0}), one obtains that 
\begin{align}\label{eq1:ant-lem8}
\nonumber
\big| \phi(2^jt-k,v) - \phi(2^js-k,v) \big| & \leq c_1 2^j |t-s| \sup_{u\in I}(3+|u|)^{-2} \\
									  & \leq c_1 2^j |t-s| (2+|2^jt-k|)^{-2}, 
\end{align}
where $I$ denotes the compact interval with end-points $2^jt-k$ and $2^js-k$. It is worth noticing that, in view of (\ref{defj0}) and (\ref{jj0}), the length of $I$ is at most $1$; this is why the last inequality holds. Next, (\ref{eq1:ant-lem8}) and (\ref{SANk}) entail that
\begin{equation}\label{m1SANk}
A_n^{j}(t,s,v) \leq c_1 2^j |t-s|^{1-\kappa} \sum_{|k| > M2^{n+1}} (3+|k|)^{1/\alpha+\nu} (2+|2^jt-k|)^{-2}.
\end{equation}
Moreover, using the inequalities $|t| \leq M, |j|\leq n$ and $|k| > M2^{n+1},$ one gets 
\begin{equation}\label{m2ker}
(3+|k|)^{1/\alpha+\nu} (2+|2^jt-k|)^{-2} \leq (3+|k|)^{1/\alpha+\nu} (2+|k|- 2^jM)^{-2} \leq c_2 (1+|k|)^{-(2-1/\alpha-\nu)}.
\end{equation}
Putting together (\ref{m1SANk}) and (\ref{m2ker}), one obtains that
\begin{equation*}
A_n^{j}(t,s,v) \leq c_3 2^j |t-s|^{1-\kappa} \sum_{|k| > M2^{n+1}} (1+|k|)^{-(2-1/\alpha-\nu)}.
\end{equation*}
Then Lemma~\ref{LA1} (in which one takes $\xi=1-1/\alpha-\nu$) and Relation (\ref{defj0}), imply that 
\begin{equation}\label{m3SANk}
A_n^{j}(t,s,v) \leq c_4 2^{j_0(\kappa-1)+j-n(1-1/\alpha-\nu)}.
\end{equation}
Let us now study the second case where
\begin{equation}\label{j01j}
j_0+1 \leq j.
\end{equation}
It follows from (\ref{SANk}), (\ref{defj0}) and (\ref{j01j}) that 
\begin{equation}\label{m4SANk}
A_n^j(t,s,v) \leq 2^{j\kappa} \sum_{|k| > M2^{n+1}} \left\lbrace \big| \phi(2^jt-k,v) \big| + \big| \phi(2^js-k,v) \big| \right\rbrace (3+|k|)^{1/\alpha+\nu}.
\end{equation}
Moreover, using (\ref{localisation}) and the fact that $|j| \leq n$, one has for all $(u,v)\in [-M,M]\times [a,b]$ and $k\in\Z$ satisfying $|k|> M2^{n+1},$
\begin{align}\label{m5ker}
\big| \phi(2^ju-k,v)\big| & \leq c_5(3+|2^j u-k|)^{-2} \leq c_5(3+|k| -2^j|u|)^{-2} \nonumber \\
                    & \leq c_5 (3+|k|-2^nM)^{-2} \leq c_6 (3+|k|)^{-2}. 
\end{align}
Combining (\ref{m5ker}) with (\ref{m4SANk}), one gets that
\begin{equation*}
A_n^{j}(t,s,v) \leq c_6 2^{j\kappa+1} \sum_{|k|> M2^{n+1}} (3+|k|)^{-(2-1/\alpha-\nu)}.
\end{equation*}
Thus, it follows from Lemma \ref{LA1} (in which one takes $\xi=1-1/\alpha-\nu$), that
\begin{equation}\label{m6SANk}
A_n^{j}(t,s,v) \leq c_7 2^{j\kappa-n(1-1/\alpha-\nu)}.
\end{equation}
Putting together (\ref{SAN}), (\ref{SANk}), (\ref{m3SANk}) and (\ref{m6SANk}), one obtains that
\begin{equation}\label{m7SAN}
A_n(t,s,v) \leq c_8 2^{-n(1-1/\alpha-\nu)} \left[ 2^{j_0(\kappa-1)} \sum_{j=-\infty}^{j_0} 2^{j(1-v)}(3+|j|)^{i+1/\alpha+\nu} + \sum_{j=j_0+1}^{+\infty} 2^{j(\kappa-v)} (3+|j|)^{i+1/\alpha+\nu}  \right].
\end{equation}
Next, using Lemma~\ref{LA2} with $n_0=-\infty$, $n_1=j_0$, $\theta=1-v>0$, $\theta_0=1-b$ and $\lambda=i+1/\alpha+\nu$, one 
gets that 
\begin{equation}
\label{eq2:ant-lem8}
\sum_{j=-\infty}^{j_0} 2^{j(1-v)}(3+|j|)^{i+1/\alpha+\nu}\le c_9 2^{j_0 (1-v)}(1+|j_0|)^{i+1/\alpha+\nu}
\end{equation}
and using again the same lemma with $n_0=j_0+1$, $n_1=+\infty$, $\theta=\kappa-v<0$, $\theta_0=1/\al$ and $\lambda=i+1/\alpha+\nu$,
one obtains that
\begin{equation}
\label{eq3:ant-lem8}
\sum_{j=j_0+1}^{+\infty} 2^{j(\kappa-v)} (3+|j|)^{i+1/\alpha+\nu} \le c_{10} 2^{j_0(\kappa-v)}(1+|j_0|)^{i+1/\alpha+\nu}.
\end{equation}
Putting together (\ref{m7SAN}), (\ref{eq2:ant-lem8}), (\ref{eq3:ant-lem8}),
the inequality $v-\kappa\ge 1/\al$ and the inequality $j_0> -\log_2 (2M)-1$, 
one obtains that
\begin{equation}
\label{eq4:ant-lem8}
A_n(s,t,v)\le c_{11} 2^{-j_0(v-\kappa)}(1+|j_0|)^{i+1/\alpha+\nu}
2^{-n(1-1/\alpha-\nu)}\le c_{12} 2^{-n(1-1/\alpha-\nu)},
\end{equation}
where
$$c_{12}:=c_{11}\sup\left\{2^{-j/\al}(1+|j|)^{i+1/\alpha+\nu}\,:\,j\in\Z\mbox{
  and } j> -\log_2 (2M)-1\right\}<+\infty.$$ 
The last inequality in (\ref{eq4:ant-lem8}) implies that when $n\rightarrow +\infty$, $A_n(t,s,v)$ converges to $0$, uniformly in $(t,s,v) \in [-M,M]^2 \times [a,b]$.

From now on our goal is to prove that $B_n(t,s,v)$ converges to $0$ uniformly
in $t,s,v$, when $n$ goes to infinity. So, in all the sequel $j$ denotes an arbitrary integer satisfying $|j| \geq n+1.$ First we derive a suitable upper bound for the quantity
\begin{equation}\label{SBNk}
B^{j}(t,s,v) := \sum_{k\in\Z} \frac{ \big|\phi(2^jt-k,v) - \phi(2^js-k,v)\big| }{ |t-s|^{\kappa}}(3+|k|)^{1/\alpha+\nu}.
\end{equation}
As above, we distinguish two cases : $j\leq j_0$ and $j\ge j_0+1$. First, we
suppose that (\ref{jj0}) is verified. Similarly to (\ref{m1SANk}), one has
that
$$
B^j(t,s,v) \leq c_{13} 2^j |t-s|^{1-\kappa} \sum_{k\in\Z} (3+|k|)^{1/\alpha+\nu}
(2+|2^jt-k|)^{-2}.
$$
Then, using (\ref{defj0}), Lemma~\ref{LA3} (in which we take
 $\theta=1/\al+\nu$ and $\zeta=0$), and the fact that $|t| \leq M,$ one
 obtains that,
\begin{equation}\label{m8SBNk}
B^{j}(t,s,v) \leq c_{14} 2^{j+j_0(\kappa-1)} (1+2^j)^{1/\alpha+\nu}.
\end{equation}
Now let us suppose that (\ref{j01j}) is satisfied. By using this relation,
(\ref{defj0}), the triangle inequality, (\ref{localisation}), Lemma~\ref{LA3}
(in which one takes $\theta=1/\alpha+\nu$ and $\zeta=0$) and the fact that $t,s
\in [-M,M],$ one gets that,
\begin{align}\label{m9SBNk}
B^j(t,s,v) & \leq 2^{j\kappa} \sum_{k\in\Z} (3+|k|)^{1/\alpha+\nu} \left\lbrace \big| \phi(2^jt-k,v) \big| +  \big| \phi(2^js-k,v) \big| \right\rbrace  \nonumber \\
& \leq c_{15} 2^{j\kappa} \sum_{k\in \Z} (1+|k|)^{1/\alpha+\nu} \left\lbrace (2+|2^jt-k|)^{-2} + (2+|2^js-k|)^{-2} \right\rbrace \nonumber \\
& \leq c_{16} 2^{j\kappa} \left\lbrace (1+2^j |t|)^{1/\alpha+\nu} + (1+2^j |s|)^{1/\alpha+\nu} \right\rbrace \nonumber \\
& \leq c_{17} 2^{j(\kappa+1/\alpha+\nu)}.
\end{align}
There is no restriction to assume that $n\ge\log_2(M)+2$, then in view of the
inequality $j_0> -\log_2(M)-2$, one has that $-n-1< j_0$ and thus
(\ref{m8SBNk}) entails that, 
\begin{align}
\sum_{j=-\infty}^{-n-1} 2^{-jv} (3+|j|)^{i+1/\alpha+\nu} B^{j}(t,s,v) & \leq c_{14} \sum_{j=-\infty}^{-n-1} 2^{j(1-v)+j_0(\kappa-1)} (1+2^j)^{1/\alpha+\nu} (3+|j|)^{i+1/\alpha+\nu} \nonumber \\
& \leq c_{18} 2^{j_0(\kappa-1)} \sum_{j=-\infty}^{-n-1} 2^{j(1-v)} (3+ |j| )^{i+1/\alpha+\nu}. \nonumber 
\end{align}
Next, using Lemma~\ref{LA2} (in which one takes $n_0=-\infty$, $n_1=-n-1$, $\theta=1-v$, $\theta_0=1-b$
and $\lambda=i+1/\alpha+\nu$), the inequality $2^{j_0(\kappa-1)} < (4M)^{1-\kappa}$ and the
inequality $v\le b$, one gets that
\begin{equation}\label{m10SBN}
\sum_{j=-\infty}^{-n-1} 2^{-jv} (3+ |j|)^{i+1/\alpha+\nu} B^{j}(t,s,v) \leq c_{19} 2^{-n(1-b)} (4+n)^{i+1/\alpha+\nu}.
\end{equation}
Let us now give a suitable upper bound for $\sum_{j\geq n+1}
2^{-jv}(3+|j|)^{i+1/\alpha+\nu} B^j(t,s,v)$. First we assume that $j_0\ge n+1$;
then, using (\ref{m8SBNk}), one has that,
\begin{align}\label{m11SBN}
\sum_{j=n+1}^{j_0} 2^{-jv}(3+|j|)^{i+1/\alpha+\nu} B^{j}(t,s,v) & \leq c_{14} 2^{j_0(\kappa-1)} \sum_{j=-\infty}^{j_0} 2^{j(1-v)}(3+|j|)^{i+1/\alpha+\nu} (1+2^j)^{1/\alpha+\nu} \nonumber \\
& \leq  c_{20} 2^{j_0(\kappa-1+1/\alpha+\nu)} \sum_{j=-\infty}^{j_0} 2^{j(1-v)}(3+|j|)^{i+1/\alpha+\nu} \nonumber \\
& \leq c_{21} 2^{-j_0(a-1/\al-\kappa-\nu)}(3+|j_0|)^{i+1/\alpha+\nu} \nonumber \\
& \leq c_{22} 2^{-n(a-1/\al-\kappa-\nu)}(3+n)^{i+1/\alpha+\nu}.
\end{align}
Observe that the third inequality in (\ref{m11SBN}) follows from Lemma~\ref{LA2} (in which we take $n_0=-\infty$, $n_1=j_0$, $\theta=1-v$, $\theta_0=1-b$ and $\lambda=i+1/\alpha+\nu$) as well as from the inequality $v\ge a$. Also observe that the last inequality in (\ref{m11SBN}), results from the fact that the function $x\mapsto
2^{-x(a-1/\al-\kappa-\nu)}(3+x)^{i+1/\alpha+\nu}$ is continuous over $\R_{+}$ and decreasing for $x$ big enough.

On the other hand, by making use of (\ref{m9SBNk}), one has that 
\begin{align}\label{m12SBN}
\sum_{j=j_0+1}^{+\infty} 2^{-jv}(3+|j|)^{i+1/\alpha+\nu} B^j(t,s,v) & \leq c_{17} \sum_{j=j_0+1}^{+\infty} 2^{j(\kappa+1/\alpha+\nu-v)} (3+|j|)^{i+1/\alpha+\nu}  \nonumber \\
& \leq c_{23} 2^{-j_0(a-1/\al-\kappa-\nu)} (3+|j_0|)^{i+1/\alpha+\nu} \nonumber \\
& \leq c_{24} 2^{-n(a-1/\al-\kappa-\nu)} (3+n)^{i+1/\alpha+\nu}.
\end{align}
Observe that the second inequality in (\ref{m12SBN}) follows from Lemma~\ref{LA2} (in which we take $n_0=j_0+1$, $n_1=+\infty$,  $\theta=\kappa+1/\alpha+\nu-v$, $\theta_0= a-1/\al-\kappa-\nu$ and $\lambda=i+1/\alpha+\nu$) as well as from the inequality $v\ge a$. Also observe that the last inequality in (\ref{m12SBN}), results from the fact that the function $x\mapsto
2^{-x(a-1/\al-\kappa-\nu)}(3+x)^{i+1/\alpha+\nu}$ is continuous over $\R_{+}$ and decreasing for $x$ big enough.

Combining (\ref{m11SBN}) with (\ref{m12SBN}), it follows that one has, in the case where $j_0\ge n+1$,
\begin{equation}
\label{m13SBN}
\sum_{j=n+1}^{+\infty} 2^{-jv}(3+|j|)^{i+1/\alpha+\nu} B^j(t,s,v) \le c_{25} 2^{-n(a-1/\al-\kappa-\nu)} (3+n)^{i+1/\alpha+\nu}.
\end{equation}

Let us now assume that $j_0<n+1$, then, by making use of (\ref{m9SBNk}), one has that
\begin{align}\label{m14SBN}
\sum_{j=n+1}^{+\infty} 2^{-jv}(3+|j|)^{i+1/\alpha+\nu} B^j(t,s,v) & \leq c_{17} \sum_{j=n+1}^{+\infty} 2^{j(\kappa+1/\alpha+\nu-v)} (3+|j|)^{i+1/\alpha+\nu}  \nonumber \\
& \leq c_{26} 2^{-n(a-1/\al-\kappa-\nu)} (3+n)^{i+1/\alpha+\nu},
\end{align}
where the last inequality follows from Lemma~\ref{LA2} (in which we take $n_0=n+1$, $n_1=+\infty$,  $\theta=\kappa+1/\alpha+\nu-v$, $\theta_0= a-1/\al-\kappa-\nu$ and $\lambda=i+1/\alpha+\nu$) as well as from the inequality $v\ge a$.

Finally, (\ref{SBN}), (\ref{SBNk}), (\ref{m10SBN}), (\ref{m13SBN}) and (\ref{m14SBN}) imply that, for all $n\ge \log_2(M)+2$,
\begin{equation}
\label{m15SBN}
B_n(t,s,v)\le c_{27} \big(2^{-n(1-b)}+2^{-n(a-1/\al-\kappa-\nu)}\big) (4+n)^{i+1/\alpha+\nu};
\end{equation}
which in turn entails that when $n\rightarrow +\infty$, $B_n(t,s,v)$ converges to $0$, uniformly in $(t,s,v) \in [-M,M]^2 \times [a,b]$.
\end{proof}

\noindent{\bf Acknowlegment}
\\

We would like to thank Professor Yves Meyer for his interest in our results. His valuable advice greatly 
improved the introduction of the previous version of the manuscript.

\bibliographystyle{plain}
\bibliography{mabibliodethese}
\end{document}